\theoremstyle{plain}
\newtheorem{theorem}{Theorem}[section]
\newtheorem{lemma}[theorem]{Lemma}
\newtheorem{definition-theorem}[theorem]{Definition-Theorem}
\newtheorem{proposition}[theorem]{Proposition}
\newtheorem{corollary}[theorem]{Corollary}
\newtheorem{conjecture}[theorem]{Conjecture}
\theoremstyle{definition}
\newtheorem{definition}[theorem]{Definition}
\newtheorem{example}[theorem]{Example}
\newtheorem{remark}[theorem]{Remark}
\newtheorem{notation}[theorem]{Notation}
\newcommand \bth[1] { \begin{theorem}\label{t#1} }
\newcommand \ble[1] { \begin{lemma}\label{l#1} }
\newcommand \bpr[1] { \begin{proposition}\label{p#1} }
\newcommand \bco[1] { \begin{corollary}\label{c#1} }
\newcommand \bde[1] { \begin{definition}\label{d#1}\rm }
\newcommand \bex[1] { \begin{example}\label{e#1}\rm }
\newcommand \bre[1] { \begin{remark}\label{r#1}\rm }
\newcommand \bnota[1] {\begin{notation}\label{n#1}\rm }
\newcommand {\ele} { \end{lemma} }
\newcommand {\epr} { \end{proposition} }
\newcommand {\eco} { \end{corollary} }
\newcommand {\ede} { \end{definition} }
\newcommand {\eex} { \end{example} }
\newcommand {\ere} { \end{remark} }
\newcommand {\enota} { \end{notation} }
\begin{document}

\title[Projectivity and indecomposability]{Homological branching law for $(\mathrm{GL}_{n+1}(F), \mathrm{GL}_n(F))$: projectivity and indecomposability} 

\author[Kei Yuen Chan]{Kei Yuen Chan}
\address{
Shanghai Center for Mathematical Sciences, Fudan University \\
}
\email{kychan@fudan.edu.cn}
\maketitle

\begin{abstract}
Let $F$ be a non-Archimedean local field. This paper studies homological properties of irreducible smooth representations restricted from $\mathrm{GL}_{n+1}(F)$ to $\mathrm{GL}_n(F)$. A main result shows that each Bernstein component of an irreducible smooth representation of $\mathrm{GL}_{n+1}(F)$ restricted to $\mathrm{GL}_n(F)$ is indecomposable. We also classify all irreducible representations which are projective when restricting from $\mathrm{GL}_{n+1}(F)$ to $\mathrm{GL}_n(F)$. A main tool of our study is a notion of left and right derivatives, extending some previous work joint with Gordan Savin. As a by-product, we also determine the branching law in the opposite direction.
\end{abstract}

\section{Introduction}

\subsection{}
Let $F$ be a non-Archimedean local field. Let $G_n=\mathrm{GL}_n(F)$. Let 
$\mathrm{Alg}(G_n)$ be the category of smooth representations of $G_n$, and all representations in this paper are in $\mathrm{Alg}(G_n)$. This paper is a sequel of \cite{CS18} in studying homological properties of smooth representations of $G_{n+1}$ restricted to $G_n$, which is originally motivated from the study of D. Prasad in his ICM proceeding \cite{Pr18}. In \cite{CS18}, we show that for generic representations $\pi$ and $\pi'$ of $G_{n+1}$ and $G_n$ respectively, the higher Ext-groups 
\[ \mathrm{Ext}^i_{G_n}(\pi, \pi')=0, \mbox{ for $i \geq 1$}, \]
 which was previously conjectured in \cite{Pr18}. This result gives a hope that there is an explicit homological branching law, generalizing the multiplicity one theorem \cite{AGRS}, \cite{SZ} and the local Gan-Gross-Prasad conjectures \cite{GGP09, GGP}.

The main techniques in \cite{CS18} are utilizing Hecke algebra structure developed in \cite{CS17, CS} and simultaneously applying left and right Bernstein-Zelevinsky derivatives, based on the classical approach of using Bernstein-Zelevinsky filtration on representations of $G_{n+1}$ restricted to $G_n$ \cite{Pr93, Pr18}. We shall extend these methods further, in combination of other things, to obtain new results in this paper.

The first part of the paper is to study projectivity under restriction. A supercuspidal representation of a reductive group $G$ is projective and, restricting to a closed subgroup $H$ of $G$, is still projective. It is natural to study the same property in the relative framework, that is to classify irreducible representations of $G$ restricted to $H$ is projective.

In \cite{CS18}, we showed that an essentially square-integrable representation $\pi$ of $G_{n+1}$ is projective when restricted to $G_n$. However, those representations do not account for all irreducible representations whose restriction is projective.  The first goal of the paper is to classify such representations:

\begin{theorem} \label{thm intro proj} (=Theorem \ref{thm class restrict proj})
Let $\pi$ be an irreducible smooth representation of $G_{n+1}$. Then $\pi|_{G_n}$ is projective if and only if 
\begin{enumerate}
\item $\pi$ is essentially square-integrable, or
\item $n+1$ is even, and $\pi \cong \rho_1 \times \rho_2$ for some cuspidal representations $\rho_1, \rho_2$ of $G_{(n+1)/2}$. 
\end{enumerate}
\end{theorem}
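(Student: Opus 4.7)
The plan is to reduce projectivity of $\pi|_{G_n}$ to explicit Ext-vanishing conditions on Bernstein-Zelevinsky derivatives of $\pi$. Restricting $\pi$ first to the mirabolic $P_{n+1}$ gives the classical Bernstein-Zelevinsky filtration, whose $k$-th layer is built from the right derivative $\pi^{(k)}$ via compact induction along the unipotent radical; restricting one further step to $G_n$ multiplies each layer by an induction from a proper parabolic. Consequently, $\pi|_{G_n}$ is projective if and only if (a) each BZ layer is a projective $G_n$-module and (b) the filtration splits as a direct sum of $G_n$-modules. Both conditions can be tested via $\mathrm{Ext}^1_{G_n}$ between successive layers, and Frobenius reciprocity together with second adjointness translates these Ext groups into conditions on left derivatives of right derivatives---precisely the calculus introduced in the paper and extending \cite{CS18}.

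For the forward direction, the essentially square-integrable case is already \cite{CS18}. For $\pi \cong \rho_1 \times \rho_2$ with $\rho_1,\rho_2$ cuspidal of $G_{(n+1)/2}$, cuspidality forces $\pi^{(k)}=0$ for all $0<k<(n+1)/2$ and for all $(n+1)/2<k<n+1$, by the geometric lemma, so the BZ filtration has only three nonzero layers, at $k=0$, $k=(n+1)/2$, and $k=n+1$. A direct computation shows that these surviving layers lie in pairwise distinct Bernstein components of $G_n$, so the filtration splits automatically by Bernstein decomposition. Each summand is then projective: the extremal pieces reduce to compactly induced modules from cuspidals on Levi subgroups, and the middle piece is handled by the same induction-of-cuspidal mechanism that drives \cite{CS18}.

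For the converse, suppose $\pi$ is irreducible and falls into neither class. Using the Zelevinsky classification, I would write $\pi$ as the irreducible subrepresentation associated to a multisegment $\mathfrak m$. Failure to be essentially square-integrable together with failure of the cuspidal product condition forces $\mathfrak m$ either to contain a segment of length at least two together with another nonempty segment, or to consist of at least three segments, or to consist of two cuspidal segments that share an unramified twist in their support. In each of these sub-cases, the BZ filtration has a middle layer $\pi^{(k)}$ with $0<k<n+1$ whose Bernstein block overlaps with that of some other layer, and the left-derivative machinery produces an explicit nonzero class in $\mathrm{Ext}^1_{G_n}$ between the two layers; I would then show this class survives the final induction step from $P_{n+1}$ to $G_n$, providing an obstruction to projectivity.

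The main obstacle will be the converse direction. Producing a candidate Ext$^1$ coupling between BZ layers is straightforward with the derivative formalism, but one must verify that the coupling is not annihilated by the final parabolic induction from $P_{n+1}$ to $G_n$ and that it is genuinely visible from outside $\pi$ (i.e.\ detected by some test representation $\sigma$ of $G_n$). This is exactly where the simultaneous use of left and right derivatives becomes essential: left derivatives of the relevant right derivatives pin down the Bernstein support of the layer, while Frobenius reciprocity converts the Ext$^1$ obstruction into a concrete nonvanishing Hom that can be exhibited. A secondary, but more routine, difficulty is the case analysis on Zelevinsky multisegments needed to certify that the two listed families really are exhaustive.
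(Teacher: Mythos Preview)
Your forward argument for $\pi\cong\rho_1\times\rho_2$ contains a false step: the surviving Bernstein--Zelevinsky layers do \emph{not} lie in pairwise distinct Bernstein components of $G_n$. The bottom layer (coming from the highest derivative $\pi^{(n+1)}$) is the Gelfand--Graev representation $\Pi_n=\mathrm{ind}_{U_n}^{G_n}\psi_n$, whose projection to \emph{every} Bernstein component of $G_n$ is nonzero. The middle layer is isomorphic to $\nu^{1/2}\rho_j\times\Pi_{(n-1)/2}$, which likewise meets infinitely many components, all shared with $\Pi_n$. So Bernstein decomposition cannot split the filtration, and the argument collapses at that point. (Incidentally, there are only two nonzero layers, not three: the filtration of $\pi|_{G_n}$ is indexed by derivatives $\pi^{(i)}$ with $i\ge 1$.)

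The paper sidesteps this entirely by first proving a criterion (Theorem~\ref{thm proj criteria}): $\pi|_{G_n}$ is projective if and only if $\pi$ is generic and every irreducible $G_n$-quotient of $\pi$ is generic. The ``if'' half is \cite{CS18}; the ``only if'' half is an immediate consequence of Prasad's Euler--Poincar\'e formula
\[
\mathrm{EP}_{G_n}(\pi,\pi')=\dim\mathrm{Wh}(\pi)\cdot\dim\mathrm{Wh}(\pi'),
\]
since projectivity forces $\mathrm{EP}=\dim\mathrm{Hom}$, and a nonzero Hom to a non-generic $\pi'$ would then contradict $\mathrm{Wh}(\pi')=0$. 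With this criterion in hand, the classification becomes: (a) for the two listed families, check by a short case analysis on derivatives that every irreducible $G_n$-quotient is generic (Lemma~\ref{lem if direction}); (b) for every other irreducible $\pi$, exhibit an explicit non-generic $\pi'$ with $\mathrm{Hom}_{G_n}(\pi,\pi')\ne 0$ (Lemma~\ref{lem generic bz quotient gp}).

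Your converse strategy---manufacturing an $\mathrm{Ext}^1$ obstruction between layers and tracking it through the induction---might be salvageable, but it is strictly harder than necessary, and you correctly identify the difficulty of showing the class survives. The EP formula is the shortcut you are missing: it converts non-projectivity into the existence of a single nonzero Hom to a non-generic test module, and Hom is far easier to certify through the BZ filtration than Ext (one kills the lower layers by cuspidal-support mismatch and then exhibits a map out of one surviving layer, as in Lemma~\ref{lem generic bz quotient}).
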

\noindent
There are also recent studies of the projectivity under restriction in other settings \cite{APS17}, \cite{La17} and \cite{CS18b}.

A main step in our classification is to the following projectivity criteria:

\begin{theorem}\label{thm proj critera} (=Theorem \ref{thm proj criteria} )
Let $\pi$ be an irreducible smooth representation of $G_{n+1}$. Then $\pi|_{G_n}$ is projective  if and only if the following two conditions hold:
\begin{enumerate}
\item  $\pi$ is generic; and 
\item $\mathrm{Hom}_{G_n}(\pi|_{G_n}, \omega) =0$ for any irreducible non-generic representation $\omega$ of $G_n$.
\end{enumerate}
\end{theorem}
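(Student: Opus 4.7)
The plan is to treat the two directions separately, using as principal inputs the Ext-vanishing for generic targets from \cite{CS18} and the indecomposability of each Bernstein component of $\pi|_{G_n}$ (the main indecomposability result of this paper, stated in the abstract).

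For the reverse direction, assume (1) and (2). Projectivity of $\pi|_{G_n}$ is equivalent to $\mathrm{Ext}^i_{G_n}(\pi|_{G_n},\sigma)=0$ for all $i\ge 1$ and all smooth $\sigma$; by Bernstein decomposition and a d\'evissage on $\sigma$, it suffices to treat irreducible $\sigma$. When $\sigma$ is generic, the vanishing is exactly the main Ext-vanishing theorem of \cite{CS18}. When $\sigma=\omega$ is non-generic, condition (2) and the indecomposability of each component $(\pi|_{G_n})_{\mathfrak{s}}$ together say that $(\pi|_{G_n})_{\mathfrak{s}}$ is indecomposable with a unique irreducible quotient, which must be the generic member $\tau^{\mathfrak{s}}$ of the component. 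The plan is then to identify $(\pi|_{G_n})_{\mathfrak{s}}$ with the projective cover $P(\tau^{\mathfrak{s}})$ by matching Jordan--H\"older content, using the Bernstein--Zelevinsky filtration of $\pi|_{G_n}$ together with the Hecke-algebra description from \cite{CS17, CS}; such an identification forces vanishing of all higher Ext's, including those against $\omega$.

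For the forward direction, assume $\pi|_{G_n}$ is projective. By the indecomposability theorem, each nonzero $(\pi|_{G_n})_{\mathfrak{s}}$ is an indecomposable projective with a unique simple quotient $\omega_{\mathfrak{s}}$. To obtain (1), I would use the Euler--Poincar\'e computations of \cite{CS18}: projectivity forces $\dim\mathrm{Hom}_{G_n}(\pi|_{G_n},\tau)=\mathrm{EP}_{G_n}(\pi,\tau)$ for every generic $\tau$ of $G_n$, and one knows the right-hand side through derivative-based formulas. If $\pi$ were non-generic, these formulas should give $0$ for every generic $\tau$, so $\pi|_{G_n}$ would have no generic irreducible quotient on any component; a structural obstruction, e.g.\ a Whittaker-type argument identifying the generic member as the mandatory top of a projective on the relevant component, or a direct BZ-derivative argument, then produces a contradiction. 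Condition (2) follows in the same circle of ideas: once $\pi$ is known to be generic, the unique simple quotient $\omega_{\mathfrak{s}}$ must coincide with $\tau^{\mathfrak{s}}$, so no non-generic irreducible can be a quotient of $\pi|_{G_n}$.

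The hard part will be the non-generic half of the reverse direction: bootstrapping from Hom-vanishing against non-generic irreducibles to full higher Ext-vanishing is not formal. It requires the concrete identification of $(\pi|_{G_n})_{\mathfrak{s}}$ with a specific projective module on the Bernstein component, which seems to demand both the BZ-filtration analysis and the Hecke algebra realization from \cite{CS17, CS}. The ancillary claim that the top of the relevant indecomposable projective on a component of $\mathrm{GL}_n$ is the generic member will likely need a separate justification via the same Hecke algebra model.
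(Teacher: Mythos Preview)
Your forward direction is much too elaborate and leans on machinery that is neither needed nor well-suited. The paper dispatches it in two lines using Prasad's Euler--Poincar\'e formula (Theorem~\ref{thm nongeneric}): if $\pi|_{G_n}$ is projective then $\mathrm{EP}_{G_n}(\pi,\pi')=\dim\mathrm{Hom}_{G_n}(\pi,\pi')$ for every irreducible $\pi'$, and for any irreducible quotient $\pi'$ of $\pi|_{G_n}$ this is nonzero, so $\dim\mathrm{Wh}(\pi)\cdot\dim\mathrm{Wh}(\pi')\neq 0$, forcing both $\pi$ and $\pi'$ to be generic. You never need indecomposability here, and your route through ``unique simple quotient of each indecomposable projective component'' is not justified: indecomposability (even the strong form proved in this paper) controls the \emph{submodule} lattice, not the cosocle, and a single Bernstein block contains infinitely many generic irreducibles (one per cuspidal support), so there is no canonical ``generic member $\tau^{\mathfrak s}$'' to serve as the top.

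For the reverse direction the paper simply invokes \cite{CS18}: the Hecke-algebra argument there already proves that if $\pi$ is generic and every irreducible $G_n$-quotient of $\pi$ is generic, then $\pi|_{G_n}$ is projective (in fact it is isomorphic to the Gelfand--Graev representation, cf.\ \cite[Cor.~5.5, Thm.~5.6]{CS18}). Your plan instead tries to bootstrap from Hom-vanishing against non-generic $\omega$ to higher Ext-vanishing by identifying $(\pi|_{G_n})_{\mathfrak s}$ with a projective cover via ``matching Jordan--H\"older content''. This step is a genuine gap: the Bernstein components of $\pi|_{G_n}$ are not of finite length, the BZ filtration does not give you a composition series to match, and you have no mechanism that turns ``all simple quotients are generic'' into ``isomorphic to a specific projective''. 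The identification with $\Pi_{\mathfrak s}$ is exactly what the Hecke-algebra computation in \cite{CS18} supplies, and it is not recoverable from the indecomposability theorem of this paper.
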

Theorem \ref{thm proj critera} turns the projectivity problem into a Hom-branching problem. This is a consequence of two things: (1) the Euler-Poincar\'e pairing formula of D. Prasad \cite{Pr18} and (2) the Hecke algebra argument used in \cite{CS18} by G. Savin and the author. Roughly speaking, (1) is used to show non-projectivity while (2) is used to show projectivity. 

The second part of the paper studies  indecomposability of a restricted representation. It is clear that an irreducible representation (except one-dimensional ones) restricted from $G_{n+1}$ to $G_n$ cannot be indecomposable as it has more than one non-zero Bernstein components. However, the Hecke algebra realization in \cite{CS18, CS17} of the projective representations  in Theorem \ref{thm intro proj}  immediately implies that each Bernstein component of those restricted representation is indecomposable. This is a motivation of our study in general case, and precisely we prove:
\begin{theorem} \label{thm b component ind} (Theorem \ref{thm indecomp irred})
Let $\pi$ be an irreducible representation of $G_{n+1}$. Then for each Bernstein component $\tau$ of $\pi|_{G_n}$, any two non-zero $G_n$-submodules of $\tau$ have non-zero intersection.
\end{theorem}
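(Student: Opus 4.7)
The plan is to establish the stronger claim that each Bernstein component $\tau$ of $\pi|_{G_n}$ has a unique irreducible $G_n$-submodule $\sigma$ that is essential in $\tau$, in the sense that every non-zero $G_n$-submodule of $\tau$ contains $\sigma$. Once this is known, any two non-zero submodules of $\tau$ both contain $\sigma$, and so intersect non-trivially.

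My first step would be to set up the Bernstein--Zelevinsky filtration on the restriction to the mirabolic subgroup $P_{n+1}$,
\[ 0 = \pi_{n+2} \subset \pi_{n+1} \subset \cdots \subset \pi_1 = \pi|_{P_{n+1}}, \]
with $\pi_k/\pi_{k+1} \cong (\Phi^+)^{k-1}\Psi^+(\pi^{(k)})$. Restricting further to $G_n$ and projecting onto a fixed Bernstein block $\mathfrak{s}$ (which is exact and preserves submodules) yields a finite filtration of $\tau$ whose successive quotients are the $\mathfrak{s}$-isotypic pieces of parabolically induced representations built from the derivatives $\pi^{(k)}$. Denote by $\tau^\flat$ the smallest non-zero term of this filtration; it corresponds to the largest $k$ contributing to block $\mathfrak{s}$ and is essentially a compact induction from $\pi^{(k)}$. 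Using the Bernstein--Zelevinsky theorem that the highest non-vanishing derivative of an irreducible representation is itself irreducible, together with the compact-induction description of the bottom BZ piece and the exactness of parabolic induction, I expect to show that $\tau^\flat$ has a simple socle which is essential in $\tau^\flat$.

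The main obstacle, and the real content of the theorem, is to then show that every non-zero submodule $M \subset \tau$ meets $\tau^\flat$. Here I would invoke the left/right derivative formalism highlighted in the introduction: the filtration above records the right-derivative structure of $\pi|_{G_n}$, while the mirror construction via the opposite mirabolic produces a dual left-derivative filtration on $\tau$. By exploiting exactness of the derivative functors (in the spirit of the Hecke-algebra arguments of \cite{CS18}) and comparing the two filtrations, the aim is to show that any non-zero submodule of $\tau$ must survive iterated derivatives from both sides, and thus descend into the common deepest layer, which is $\tau^\flat$. Making this symmetric descent argument precise -- in particular, controlling the interaction between a submodule of $\tau$ and both the left and right BZ filtrations simultaneously -- is the step most likely to require new input beyond the classical Bernstein--Zelevinsky toolkit.
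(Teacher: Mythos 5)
Your overall strategy mirrors the paper's, but one of your key intermediate claims is false and the mechanism for the other is left as the gap.

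The claim that the bottom Bernstein--Zelevinsky layer $\tau^\flat$ ``has a simple socle which is essential in $\tau^\flat$'' cannot hold. The bottom layer in block $\mathfrak s$ is $(\pi^{[i^*]}\times \Pi_{i^*-1})_{\mathfrak s}$, a compact induction from the Gelfand--Graev representation, and the paper shows (Lemma \ref{lem zero hom gg}) that $\Pi$ admits no admissible submodule at all; by the same circle of ideas the bottom BZ layer has no irreducible submodule. What actually holds, and what drives the argument, is the weaker property that this layer is \emph{strongly indecomposable}: any two non-zero submodules intersect. This is Theorem \ref{thm strong indecomp}, and it relies on the Hecke-algebra model $\Pi_{\mathfrak s}\cong \mathcal H_{\mathfrak s}\otimes_{\mathcal H_{W,\mathfrak s}}\mathrm{sgn}$ being free of rank one over the commutative subalgebra $\mathcal A_{\mathfrak s}$ (Proposition \ref{prop indecompose gg rep}, Lemma \ref{lem product strong indecomp}). ``Essential simple socle'' is therefore the wrong invariant to pursue; the correct one is strong indecomposability of the bottom layer, established through the Hecke-algebra structure rather than by socle analysis.

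Your ``symmetric descent'' heuristic matches the second half of the paper, but the two ingredients that make it precise are exactly what your write-up leaves as gaps. If a non-zero submodule $\omega\subset\tau$ misses the right-bottom layer, then Lemma \ref{lem bz filtration} (no common submodules between BZ layers $\mathrm{ind}_{R_{n-i}}(\cdot)\boxtimes\psi_i$ and $\mathrm{ind}_{R_{n-j}^-}(\cdot)\boxtimes\psi_j$ for $i\neq j$) forces $\omega$ to also miss the left-bottom layer; chasing both filtrations then places $\omega$ inside matching layers $(\pi^{[j]}\times\Pi_{j-1})_{\mathfrak s}$ and $(\Pi_{j-1}\times{}^{[j]}\pi)_{\mathfrak s}$ for a \emph{common} $j<i^*$. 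Proposition \ref{prop intersect same} then converts this into a common non-zero submodule of the derivatives $\pi^{[j]}$ and ${}^{[j]}\pi$ themselves; that proposition is where the Gelfand--Graev/Bernstein-center machinery of Section \ref{sec submod structure} is really needed and is not a consequence of classical BZ exactness. Finally, the contradiction comes from the asymmetric-derivative Theorem \ref{lem same quo derivative}, whose proof via Speh approximations is a substantial new piece of input. In short, the outline is in the right spirit, but the ``simple essential socle'' step must be replaced by strong indecomposability via the Hecke algebra, and the comparison of left and right filtrations needs Proposition \ref{prop intersect same} and Theorem \ref{lem same quo derivative} made explicit, neither of which follows from the BZ formalism alone.
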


As a consequence, we have:

\begin{corollary}
Let $\pi$ be an irreducible representation of $G_{n+1}$. Then any Bernstein component of $\pi|_{G_n}$ is indecomposable.
\end{corollary}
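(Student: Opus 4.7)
The plan is to deduce the corollary as an immediate consequence of Theorem \ref{thm b component ind}. Let $\tau$ be a Bernstein component of $\pi|_{G_n}$, and suppose for the sake of contradiction that $\tau$ admits a non-trivial direct sum decomposition $\tau = \tau_1 \oplus \tau_2$ as a $G_n$-module, with $\tau_1, \tau_2 \neq 0$. By the definition of a direct sum, $\tau_1 \cap \tau_2 = 0$, so $\tau_1$ and $\tau_2$ are two non-zero $G_n$-submodules of $\tau$ whose intersection is zero. This directly contradicts Theorem \ref{thm b component ind}, which asserts that any two non-zero $G_n$-submodules of the Bernstein component $\tau$ must meet non-trivially. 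Hence no such decomposition exists, and $\tau$ is indecomposable.

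The deduction is thus essentially immediate, and there is no main obstacle in the proof of the corollary itself: the pairwise-intersection hypothesis supplied by Theorem \ref{thm b component ind} is strictly stronger than indecomposability (it rules out, for instance, any direct sum splitting at all, finite or infinite). All of the content of the corollary is therefore already packaged in Theorem \ref{thm b component ind}, and the real work lies in establishing that theorem, not in this corollary.
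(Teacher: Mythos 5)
Your deduction is correct and matches the paper's (implicit) reasoning: the corollary is stated in the introduction as an immediate consequence of Theorem \ref{thm b component ind} with no further argument, precisely because a nontrivial direct sum decomposition $\tau = \tau_1 \oplus \tau_2$ would give two non-zero submodules with zero intersection, contradicting that theorem. Nothing more is needed.
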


In Section \ref{ss remark cuspidal rep}, we explain how to determine which Bernstein component of $\pi$ is non-zero in terms of Zelevinsky segments, and hence Theorem \ref{thm b component ind} essentially parametrizes the indecomposable components of $\pi|_{G_n}$.

For a mirabolic subgroup $M_n$ of $G_{n+1}$, it is known \cite{Ze} that $\pi|_{M_n}$ is indecomposable for an irreducible representation $\pi$ of $G_{n+1}$. The approach in \cite{Ze} uses the Bernstein-Zelevinsky filtration of $\pi$ to $M_n$ and that 
 the bottom piece of the filtration is irreducible. We prove that the Bernstein component of a bottom piece is indecomposable as a $G_n$-module, and then make 
use of left and right derivatives, developed and used to prove main results in \cite{CS18}. The key fact is that left and right derivatives of an irreducible representation are asymmetric.  We now make more precise the meaning of 'asymmetric'. 
We say that an integer $i$ is the level of an irreducible representation $\pi$ of $G_n$ if the left derivative $\pi^{(i)}$ (and hence the right derivative ${}^{(i)}\pi$) is the highest derivative of $\pi$. 

\begin{theorem} \label{thm asymmetric derivative} (=Theorem \ref{lem same quo derivative})
Let $\pi$ be an irreducible smooth representation of $G_{n}$. Let $\nu(g)=|\mathrm{det}(g)|_F$. Suppose $i$ is not the level of $\pi$. Then $\nu^{1/2}\cdot\pi^{(i)}$ and $\nu^{-1/2}\cdot{}^{(i)}\pi$ have no isomorphic irreducible quotients whenever $\nu^{1/2}\cdot\pi^{(i)}$ and $\nu^{-1/2}\cdot{}^{(i)}\pi$ are non-zero. The statement also holds if one replaces quotients by submodules.

\end{theorem}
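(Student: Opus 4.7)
The plan is to compare supercuspidal supports of irreducible quotients of the two shifted derivatives, using the Zelevinsky classification to reduce to a combinatorial problem on multisegments. First, write $\pi = Z(\mathfrak{m})$ for a multisegment $\mathfrak{m}$ and let $h$ denote the level. Zelevinsky's theorem on highest derivatives gives $\pi^{(h)} \cong Z(\mathfrak{m}^-)$, where $\mathfrak{m}^-$ is obtained from $\mathfrak{m}$ by deleting the largest-$\nu$-shift element from each segment of a certain maximal type; an analogous description holds for ${}^{(h)}\pi$ in terms of deleting the smallest-$\nu$-shift elements.

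For $i < h$ the derivative $\pi^{(i)}$ is typically reducible, so the crucial step is to control the supercuspidal supports of its irreducible quotients. Using the geometric lemma (or equivalently iterating Zelevinsky's theorem via the Leibniz rule for derivatives), I would show that any irreducible quotient $\omega$ of $\pi^{(i)}$ satisfies $\mathrm{csupp}(\pi) = \mathrm{csupp}(\omega) \sqcup S_R$, where $S_R$ is a multiset of $i$ cuspidal characters of $G_1$, each of which occurs as a largest-$\nu$-shift element in its cuspidal class inside $\mathrm{csupp}(\pi)$. The $\nu^{1/2}$ twist on $\pi^{(i)}$ shifts these supports up by a half. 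A symmetric statement holds for irreducible quotients of $\nu^{-1/2}\,{}^{(i)}\pi$, now with an extracted multiset $S_L$ consisting of smallest-$\nu$-shift cuspidals, shifted down by a half.

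Suppose for contradiction that $\omega$ is a common irreducible quotient of $\nu^{1/2}\pi^{(i)}$ and $\nu^{-1/2}\,{}^{(i)}\pi$. Writing $X = \mathrm{csupp}(\pi)$ and computing $\mathrm{csupp}(\omega)$ in two ways yields $\nu^{-1/2}(X \setminus S_R) = \nu^{1/2}(X \setminus S_L)$, equivalently $X \setminus S_R = \nu \cdot (X \setminus S_L)$: the complement of the ``right-extracted'' cuspidals in $X$ must coincide with the $\nu$-shift of the complement of the ``left-extracted'' cuspidals. A direct inspection of the segments of $\mathfrak{m}$ shows that this identity can hold only if each extraction saturates its corresponding maximal segment, forcing $i = h$ and contradicting the hypothesis.

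For the submodule version, apply the smooth contragredient functor. Since $\pi^\vee$ is again irreducible of the same level $h$, and since there are natural isomorphisms $(\pi^{(i)})^\vee \cong {}^{(i)}(\pi^\vee)$ (with the $\nu^{\pm 1/2}$ twists matching up correctly), the submodule assertion for $\pi$ is equivalent to the quotient assertion applied to $\pi^\vee$. The main obstacle I anticipate is the second step: precisely characterizing the supercuspidal supports of \emph{all} irreducible quotients of an intermediate derivative (not just the highest one), which requires a careful inductive application of Zelevinsky's theorem combined with geometric-lemma bookkeeping on the multisegment $\mathfrak{m}$.
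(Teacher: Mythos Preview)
There is a genuine gap at your Step~4, the ``direct inspection'' that the cuspidal-support identity forces $i=h$. Even if one upgrades your cuspidal-support control to the multisegment-level statement that actually follows from the embedding $\langle\mathfrak m\rangle^{(i)}\hookrightarrow\zeta(\mathfrak m)^{(i)}$ plus the geometric lemma---namely that every irreducible submodule of $\pi^{(i)}$ is of the form $\langle\mathfrak m^{(i_1,\ldots,i_r)}\rangle$ for \emph{some} tuple with $\sum i_k=i$, and symmetrically on the left---this is only an \emph{over-approximation}: it lists candidate submodules without telling you which tuples actually occur. The resulting combinatorial identity $\nu^{1/2}\mathfrak m^{(i_1,\ldots,i_r)}=\nu^{-1/2}\cdot{}^{(j_1,\ldots,j_r)}\mathfrak m$ can hold for tuples that do not correspond to genuine submodules. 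Concretely, take $\pi=\mathrm{St}([1,\nu^2])$, so $\mathfrak m=\{[\nu^2],[\nu],[1]\}$ and the level is $3$. For $i=2$ one finds
\[
\nu^{1/2}\langle\mathfrak m^{(1,0,1)}\rangle=\nu^{1/2}[\nu]=[\nu^{3/2}]=\nu^{-1/2}[\nu^2]=\nu^{-1/2}\langle{}^{(0,1,1)}\mathfrak m\rangle,
\]
so the two candidate lists overlap even though $i<3$. The theorem is not violated because in reality $\pi^{(2)}=[\nu^2]$ and ${}^{(2)}\pi=[1]$, so $[\nu^{3/2}]$ is a false positive on both sides; but nothing in your argument rules it out. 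This is precisely the work done in the paper's Steps~2--7: the embedding $\langle\mathfrak m\rangle\hookrightarrow\zeta(\mathfrak m)$ is refined to embeddings into products involving Speh representations (Proposition~\ref{lem speh approx}, Lemma~\ref{lem speh multisegment 2}), and the counting Lemmas~\ref{lem segment numbers} and~\ref{lem segment numbers 2} on union--intersection operations are used to pin down which candidates are genuine.

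A secondary point: your description of $S_R$ as consisting of ``largest-$\nu$-shift elements in their cuspidal class'' is not correct. What one actually gets is that the removed cuspidals are the $b$-ends $b(\Delta_k)$ of a subset of segments of $\mathfrak m$, and these need not be maximal in $\mathrm{csupp}(\pi)$. In the Steinberg example above, $\pi^{(1)}=\mathrm{St}([\nu,\nu^2])$ removes the cuspidal $1$, which is the \emph{smallest} element of $\mathrm{csupp}(\pi)$. You correctly anticipate that controlling the quotients of intermediate derivatives is the obstacle; the geometric lemma alone does not resolve it.
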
 

\noindent

We remark that $\nu^{1/2}\cdot \pi^{(i)}$ and $\nu^{-1/2}\cdot {}^{(i)}\pi$ are shifted derivatives in the sense of \cite{Be84}, which has been used in loc. cit. to study unitary representations of $G_n$.

As a by-product of Theorem \ref{thm asymmetric derivative}, we give a complete answer to the branching law in another direction:

\begin{theorem} (=Corollary \ref{cor branching law direction})
Let $\pi_1, \pi_2$ be irreducible representations of $G_{n+1}$ and $G_n$ respectively. Then 
\[  \mathrm{Hom}_{G_n}(\pi_2, \pi_1|_{G_n}) \neq 0
\]
if and only if both $\pi_1$ and $\pi_2$ are one-dimensional and $\pi_2=\pi_1|_{G_n}$.

\end{theorem}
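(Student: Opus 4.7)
The ``if'' direction is immediate: if $\pi_1$ and $\pi_2$ are both one-dimensional with $\pi_2 = \pi_1|_{G_n}$, the identity provides a nonzero element of $\mathrm{Hom}_{G_n}(\pi_2, \pi_1|_{G_n})$. Conversely, if $\pi_1$ is one-dimensional then $\pi_1|_{G_n}$ is irreducible and any nonzero map from an irreducible $\pi_2$ forces $\pi_2 = \pi_1|_{G_n}$. It therefore suffices to rule out embeddings $\iota : \pi_2 \hookrightarrow \pi_1|_{G_n}$ whenever $\pi_1$ is not one-dimensional.

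Assume such an embedding exists. By Theorem~\ref{thm b component ind}, every Bernstein component of $\pi_1|_{G_n}$ has the property that any two nonzero $G_n$-submodules meet nontrivially, so $\iota(\pi_2)$ is (up to isomorphism) the unique irreducible submodule of its ambient Bernstein component. I would analyse this socle using the two Bernstein-Zelevinsky filtrations of $\pi_1$: one coming from the upper mirabolic $M_n^+\subset G_{n+1}$ and one from the lower mirabolic $M_n^-$, both containing $G_n$ as Levi. Restricted to $G_n$, the successive quotients of the $M_n^+$-filtration are compactly induced modules built from the shifted left derivatives $\nu^{1/2}\pi_1^{(k)}$, while those of the $M_n^-$-filtration are built from the shifted right derivatives $\nu^{-1/2}{}^{(k)}\pi_1$. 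A socle analysis along both filtrations should force $\iota(\pi_2)$ to occur as a common irreducible submodule of $\nu^{1/2}\pi_1^{(k)}$ and of $\nu^{-1/2}{}^{(k)}\pi_1$ for some index $k$.

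Theorem~\ref{thm asymmetric derivative} immediately rules out such a common submodule whenever $k$ is not the level of $\pi_1$. In the residual case where $k$ equals the level, the highest left and right derivatives admit an explicit description in terms of the cuspidal support of $\pi_1$; since $\pi_1$ is assumed not to be one-dimensional, its level is positive, and a comparison of cuspidal supports (or central characters) of $\nu^{1/2}\pi_1^{(k)}$ and $\nu^{-1/2}{}^{(k)}\pi_1$ eliminates the common submodule also in this case, yielding the desired contradiction.

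The main obstacle I anticipate is the socle identification step: pinning down the unique irreducible submodule of a Bernstein component of $\pi_1|_{G_n}$ simultaneously as an irreducible submodule of both shifted left- and right-derivative data. The compactly induced modules appearing in the two BZ filtrations do not in general have transparent socles, and matching these two descriptions cleanly enough to invoke Theorem~\ref{thm asymmetric derivative} is the technical heart of the argument; once this is achieved, the asymmetric derivative theorem closes the proof.
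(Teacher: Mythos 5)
Your outline identifies the right high-level ingredients (the two Bernstein--Zelevinsky filtrations and the asymmetry theorem), but the step you flag as the technical heart is exactly where the argument has a gap, and the paper closes that gap with a fact you do not invoke. The key ingredient is Lemma~\ref{lem zero hom gg}: no admissible $G_m$-representation admits a nonzero $G_m$-homomorphism into the Gelfand--Graev representation $\Pi_m$. Via Frobenius reciprocity this kills $\mathrm{Hom}_{G_n}\bigl(\pi_2,\,\mathrm{ind}_{R_{n+1-k}}^{G_n}\pi_1^{[k]}\boxtimes\psi_{k-1}\bigr)$ for every $k\geq 2$ (and likewise for the left filtration), because the factor $\Pi_{k-1}$ sits inside the inducing data. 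Hence a putative $\pi_2\hookrightarrow\pi_1|_{G_n}$ is forced into the bottom layer of \emph{both} filtrations, i.e.\ $\pi_2$ must embed in both $\pi_1^{[1]}$ and ${}^{[1]}\pi_1$ (each is a representation of $G_n$, unlike the higher layers). Since $\pi_1$ is not one-dimensional, $1$ is not its level, so Theorem~\ref{thm asymmetric derivative} (read for submodules, together with Proposition~\ref{prop multi free}) says these two modules have no common irreducible submodule --- contradiction. There is no need to prove that compactly induced layers have transparent socles: for $k\geq 2$ they have no admissible submodule at all, which is far stronger. Also note that the appeal to Theorem~\ref{thm b component ind} is superfluous; the corollary is logically independent of it.

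Your treatment of the residual case $k$ equal to the level is also wrong as stated. The shifted highest left and right derivatives of an irreducible representation are \emph{isomorphic} (this is the converse of Theorem~\ref{lem same quo derivative}, which the paper observes follows from Zelevinsky's highest derivative theorem), so no cuspidal-support or central-character comparison can separate $\nu^{1/2}\pi_1^{(i^*)}$ from $\nu^{-1/2}\,{}^{(i^*)}\pi_1$. What actually dispenses with this case is that, after the elimination for $k\geq 2$ above, only $k=1$ remains in play, and $k=1$ equals the level precisely when $\pi_1$ is one-dimensional --- a case you had already set aside.
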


Computing the structure of a derivative of an arbitrary representation is a difficult question in general. Our approach is to approximate the information of derivatives of irreducible ones by some parabolically induced modules, whose derivatives can be computed via geometric lemma. On the other hand, the Speh representations behave more symmetrically for left and right derivatives, which motivates our proof to involve Speh representations.


Another key ingredient in proving Theorem \ref{thm b component ind} is a study on the submodule structure between left and right Bernstein-Zelevinsky inductions. We explain in Section \ref{sec submod structure} how the submodule structure of an induced module can be partly reflected from the module which induced from (see Proposition \ref{prop intersect same} for a precise statement). The study relies on the Hecke algebra structure of the Gelfand-Graev representation. 


\subsection{Organization of the paper}

Section \ref{bz der generic} studies derivatives of generic representations, which simplifies some computations for Theorem \ref{thm intro proj}. The results also give some guiding examples in the study of this paper and \cite{CS18}.

Section \ref{ss proj restrict} firstly proves a criteria for an irreducible representation to be projective under restriction, and then apply this criteria to give a classification of such class of modules.

Section \ref{sec submod structure} develops a theory of the submodule relation between left and right Bernstein-Zelevinsky inductions. 

Section \ref{sec indecomp}  proves a main result on the indecomposability of an irreducible representation under restriction, which uses results in Section \ref{sec submod structure} and the asymmetric property of derivatives proved in Section \ref{sec proof lemma}.

Section \ref{s preserve indecomp} proves that the Bernstein-Zelevinsky induction preserves indecomposability. This partly generalizes Section \ref{sec submod structure}.

In the first appendix, we explain how an irreducible representation appears as the unique submodule of a product of some Speh representations. In the second appendix, we provide some preliminaries on module theory.

\subsection{Acknowledgements} This article is a part of the project in studying Ext-branching laws, and the author would like to thank Gordan Savin for a number of helpful discussions. Part of the idea on indecomposability for restriction was developed during the participation in the program of 'On the Langlands Program: Endoscopy and Beyond' at IMS of National University of Singapore in January 2019. The author would like to thank the organizers for their warm hospitality. The author would like to thank the anonymous referee for useful comments.

\section{Bernstein-Zelevinsky derivatives of generic representations} \label{bz der generic}

\subsection{Notations}

Let $G_n=\mathrm{GL}_n(F)$. All representations in this paper are in $\mathrm{Alg}(G_n)$ and over $\mathbb C$, and we usually omit the adjective 'smooth'. Let $\mathrm{Irr}(G_n)$ be the set of all irreducible representations of $G_n$ and let $\mathrm{Irr}=\sqcup_{n} \mathrm{Irr}(G_n)$. For an admissible representation $\omega \in \mathrm{Alg}(G_n)$, denote by $\mathrm{JH}(\omega)$ the set of (isomorphism classes of) irreducible composition factors of $\omega$.

 Let $\rho$ be a cuspidal representation of $G_l$. Let $a, b\in \mathbb{C}$ with $b-a \in \mathbb Z_{\geq 0}$. We have a Zelevinsky segment $\Delta=[\nu^a\rho, \nu^b \rho]$, which we may simply call a segment. Denote $a(\Delta)=\nu^a\rho$ and $b(\Delta)=\nu^b\rho$. The relative length of $\Delta$ is defined as $b-a+1$ and the absolute length of $\Delta$ is defined as $l(b-a+1)$. We can truncate $\Delta$ form each side to obtain two segments of absolute length $r(b-a)$: 
\[ 
{}^{-}\Delta= [\nu^{a+1}\rho,  \ldots, \nu^{b}\rho ] \text{ and } \Delta^-= [ \nu^{a}\rho,  \ldots, \nu^{b-1}\rho ]. 
\] 
Moreover, if we perform the truncation $k$-times, the resulting segments will be denoted by ${}^{(kl)}\Delta$ and $\Delta^{(kl)}$. We remark that the convention here is different from the previous paper \cite{CS18} for convenience later. If $i$ is not an integer divisible by $l$, then we set $\Delta^{(i)}$ and ${}^{(i)}\Delta$ to be empty sets. We also denote $\Delta^{\vee}=[\nu^{-b}\rho^{\vee},\nu^{-a}\rho^{\vee}]$. For a singleton segment $[\rho, \rho]$, we abbreviate as $[\rho]$. For $\pi \in \mathrm{Alg}(G_l)$, define $n(\rho)=l$. For $c \in \mathbb C$ and a segment $\Delta=[\nu^a\rho, \nu^b \rho]$, define $\nu^c\Delta=[\nu^{a+c}\rho, \nu^{b+c}\rho]$.

For a Zelevinsky segment $\Delta$, define $\langle \Delta \rangle$ and $\mathrm{St}(\Delta)$ to be the (unique) irreducible submodule and quotient of $\nu^a\rho \times \ldots\times  \nu^b\rho$ respectively. We have 
\[ \mathrm{St}(\Delta)^{\vee} \cong \mathrm{St}(\Delta^{\vee}) \quad \mbox{and} \quad \langle \Delta \rangle^{\vee} \cong \langle \Delta^{\vee} \rangle .
\]

For two cuspidal representations $\rho_1, \rho_2$ of $G_m$, we say that $\rho_1$ precedes $\rho_2$, denoted by $\rho_1 < \rho_2$, if $\nu^c\rho_1 \cong \rho_2$ for some $c>0$. We say two segments $\Delta$ and $\Delta'$ are {\it linked} if $\Delta \not\subset \Delta$, $\Delta \not\subset \Delta'$ and $\Delta \cup \Delta'$ is still a segment. We say that a segment $\Delta$ {\it precedes} $\Delta'$, denoted by $\Delta < \Delta'$, if $b(\Delta)$ precedes $b(\Delta')$; and $\Delta$ and $\Delta'$ are linked. If $\Delta$ does not precede $\Delta'$, write $\Delta \not< \Delta'$. 

A multisegment is a multiset of segments. Let $\mathrm{Mult}$ be the set of multisegments. Let $\mathfrak{m}=\left\{ \Delta_1,\ldots, \Delta_r \right\} \in \mathrm{Mult}$. We relabel the segments in $\mathfrak{m}$ such that for $i<j$, $\Delta_i$ does not precede $\Delta_j$. The modules defined below are independent of the labeling (up to isomorphisms) \cite{Ze}. Define $\zeta(\mathfrak{m})=\langle \Delta_1 \rangle \times \ldots \times \langle \Delta_r \rangle$. Denote by $\langle \mathfrak{m} \rangle$ the unique irreducible submodule of $\zeta(\mathfrak{m})$. Similarly, define $\lambda(\mathfrak{m})=\mathrm{St}(\Delta_1)\times \ldots \times \mathrm{St}(\Delta_r)$. Denote by $\mathrm{St}(\mathfrak{m})$ the unique quotient of $\lambda(\mathfrak{m})$. Both notions $\langle \mathfrak m \rangle$ and $\mathrm{St}(\mathfrak m)$ give a classification of irreducible smooth representations of $G_n$ (\cite[Proposition 6.1]{Ze}, also see \cite{LM16}) i.e. both the maps
\[ \mathfrak m \mapsto \langle \mathfrak m \rangle, \quad \mbox{ and }\quad  \mathfrak m \mapsto \mathrm{St}(\mathfrak m) 
\]
determine bijections from $\mathrm{Mult}$ to $\mathrm{Irr}$. For example, when $\mathfrak m=\left\{[\nu^{-1/2}], [\nu^{1/2}] \right\}$, $\langle \mathfrak m \rangle$ is the Steinberg representation and $\mathrm{St}(\mathfrak m)$ is the trivial representation. The two notions $\langle \mathfrak m \rangle$ and $\mathrm{St}(\mathfrak m)$ are related by the so-called Aubert-Zelevinsky duality, and M\oe glin-Waldspurger algorithm. 

We shall use the following standard fact several times (see \cite[Theorems 1.9, 4.2 and 9.7]{Ze}): if two segments $\Delta, \Delta'$ are not linked, then 
\begin{align} \label{eqn commute product trivial} \langle \Delta \rangle \times \langle \Delta' \rangle \cong \langle \Delta' \rangle \times \langle \Delta \rangle, \end{align}
\begin{align} \label{eqn commute product}
 \mathrm{St}(\Delta) \times \mathrm{St}(\Delta') \cong \mathrm{St}(\Delta') \times \mathrm{St}(\Delta) .\end{align}

Let $\pi \in \mathrm{Irr}(G_n)$. Then $\pi$ is a subquotient of $\rho_1\times \ldots \times \rho_r$ for some irreducible cuspidal representations $\rho_i$ of $G_{n_i}$. The multiset $(\rho_1, \ldots, \rho_r)$, denoted by $\mathrm{csupp}(\pi)$, is called the cuspidal support of $\pi$. We also set
\[ \mathrm{csupp}_{\mathbb Z}(\pi)=\left\{ \nu^{c} \rho : \rho \in \mathrm{csupp}(\pi), c \in \mathbb{Z} \right\} , \]
which is regarded as a set (rather than a multiset).

\subsection{Derivatives and Bernstein-Zelevinsky inductions}
Let $U_n$ (resp. $U_n^-$) be the group of unipotent upper (resp. lower) triangular matrices in $G_n$. For $i\leq n$, let $P_i$ be the parabolic subgroup of $G_n$ containing the block diagonal matrices $\mathrm{diag}(g_1, g_2)$ ($g_1 \in G_i$, $g_2\in G_{n-i}$) and the upper triangular matrices. Let $P_i=M_iN_i$ with the Levi $M_i$ and the unipotent $N_i$. Let $N_i^-$ be the opposite unipotent subgroup of $N_i$. Let $\nu:G_n\rightarrow \mathbb{C}$ given by $\nu(g)=|\mathrm{det}(g)|_F$. Let 
\[  R_{n-i} =\left\{ \begin{pmatrix} g & x\\ 0 & u  \end{pmatrix} \in G_n : g \in \mathrm{GL}_{n-i}(F), u \in U_i, x \in \mathrm{Mat}_{n-i,i}(F) \right\} .
\]
Let $R_{n-i}^-$ be the transpose of $R_{n-i}$.

We shall use $\mathrm{Ind}$ for normalized induction and $\mathrm{ind}$ for normalized induction with compact support. Let $\psi_i$ be a character on $U_i$ given by $\psi_i(u)=\overline{\psi}(u_{1,2}+\ldots +u_{i-1,i})$, where $\overline{\psi}$ is a nondegenerate character on $F$ and $u_{k,k+1}$ is the value in the $(k,k+1)$-entry of $u$. For any irreducible representation $\tau$ of $G_{n-i}$, we extend trivially the $G_{n-i}\times U_i$-representation $\tau \boxtimes \psi_i$ to a $R_{n-i}$-representation. This defines functors from $\mathrm{Alg}(G_{n-i})$ to $\mathrm{Alg}(G_n)$ given by 
\[\pi \mapsto \mathrm{Ind}_{R_{n-i}}^{G_n}\pi \boxtimes \psi_i, \quad \mbox{ and } \pi \mapsto \mathrm{ind}_{R_{n-i}}^{G_n} \pi \boxtimes \psi_i ,\]
both of which will be called (right) Bernstein-Zelevinsky inductions. Similarly, one has left Bernstein-Zelevinsky inductions by using $R_{n-i}^-$ instead of $R_{n-i}$.

Let $\pi$ be a smooth representation of $G_n$. Following \cite{CS18}, define $\pi^{(i)}$ to be the left adjoint functor of $\mathrm{Ind}_{R_{n-i}}^{G_n} \pi \boxtimes \psi_i$. Let $\theta_n:G_n \rightarrow G_n$ given by $\theta_n(g)=g^{-T}$, the inverse transpose of $g$. Define the left derivative 
\begin{align} \label{eqn left derivative}
{}^{(i)}\pi:=\theta_{n-i}(\theta_n(\pi)^{(i)}),
\end{align}
which is left adjoint to $\mathrm{Ind}_{R_{n-i}^-}^{G_n}\pi \boxtimes \psi_i'$, where $\psi_i'(u)=\psi_i(u^T)$ for $u \in U_i^-$. The level of an admissible representation $\pi$ is the largest integer $i^*$ such that $\pi^{(i^*)}\neq 0$ and $\pi^{(j)}=0$ for all $j>i^*$. It follows from (\ref{eqn left derivative}) that if $i^*$ is the level of $\pi$, then ${}^{(i^*)}\pi \neq 0$ and ${}^{(j)}\pi=0$ for all $j>i^*$. When $i^*$ is the level for $\pi$, we shall call $\pi^{(i^*)}$ and ${}^{(i^*)}\pi$ to be the highest left and right derivatives of $\pi$ respectively, where we usually drop the term of left and right if no confusion.

We now define the shifted derivatives as follow: for any $i$,
\[  \pi^{[i]} = \nu^{1/2} \cdot \pi^{(i)} , \quad \mbox{ and }\quad  {}^{[i]}\pi = \nu^{-1/2} \cdot {}^{(i)}\pi .\]
For the details of Bernstein-Zelevinsky filtrations, see \cite{CS18}.

\subsection{On computing derivatives} \label{ss bz expressions}

Let $\Pi_i=\mathrm{ind}_{U_i}^{G_i}\psi_i$ be the Gelfand-Graev representation. Using inductions by stages, we have that
\[  \mathrm{ind}_{R_{n-i}}^{G_n} \pi \boxtimes \psi_i \cong \pi \times \mathrm{ind}_{U_i}^{G_i} \psi_i =\pi \times \Pi_i.
\]
Let $\dot{w}_0 \in G_n$ whose antidiagonal entries are $1$ and other entries are $0$. By the left translation of $\dot{w}_0$ on $\mathrm{ind}_{R_{n-i}^-}^{G_n} \pi \boxtimes \psi_i$, we have that 
\[  \mathrm{ind}_{R_{n-i}^-}^{G_n} \pi\boxtimes \psi_i \cong \left( \mathrm{ind}_{U_i}^{G_i} \psi_i \right)\times \pi =\Pi_i \times \pi .
\]
We similarly have that 
\[ \mathrm{Ind}_{R_{n-i}}^{G_n} \pi \boxtimes \psi_i \cong \pi \times \mathrm{Ind}_{U_i}^{G_i}\psi_i .
\]
Since the right derivative is left adjoint to $\mathrm{Ind}_{R_{n-i}}^{G_n}$, we consequently have:
\[  \pi^{(i)} \cong (\pi_{N_i})_{U_i, \psi_i} ,
\]
where $U_i$ is regarded as the subgroup $G_{n-i}\times G_i$ via the embedding $g \mapsto \mathrm{diag}(1,g)$. We shall use the later expression when computing derivatives in Section \ref{ss classification}. We also have an analogous expression for left derivatives.

We shall often use the following lemma:

\begin{lemma}  \label{lem generic bz quotient}
Let $\pi \in \mathrm{Alg}(G_{n+1})$ and let $\pi'$ be an admissible representation of $G_n$. Suppose there exists $i$ such that  the following conditions hold:
\[ \mathrm{Hom}_{G_{n+1-i}}( \pi^{[i]}, {}^{(i-1)}\pi') \neq 0  ;
\]
and 
\[  \mathrm{Ext}^k_{G_{n+1-j}}( \pi^{[j]}, {}^{(j-1)}\pi'))=0
\]
for all $j=1,\ldots, i-1$ and all $k$. Then $\mathrm{Hom}_{G_n}(\pi,\pi')\neq 0$. 
\end{lemma}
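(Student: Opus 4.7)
The plan is to exploit the Bernstein--Zelevinsky filtration of $\pi|_{G_n}$ together with a long-exact-sequence chase that propagates the nonvanishing Hom at the $i$-th derivative level upward to a nonzero Hom on all of $\pi$.

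First, I would recall from Section~\ref{ss bz expressions} and \cite{CS18} that $\pi|_{G_n}$ carries a canonical finite filtration by $G_n$-submodules
\[ 0 = \tau_{n+2} \subset \tau_{n+1} \subset \cdots \subset \tau_1 \subset \tau_0 = \pi|_{G_n}, \]
whose successive quotients are identified with compactly induced Bernstein--Zelevinsky modules built from the shifted derivatives:
\[ \tau_{j-1}/\tau_j \;\cong\; \mathrm{ind}_{R_{n-(j-1)}^-}^{G_n}\bigl(\pi^{[j]} \boxtimes \psi_{j-1}'\bigr) \;\cong\; \Pi_{j-1} \times \pi^{[j]}, \qquad j=1,\ldots,n+1. \]
Combining compact Frobenius reciprocity with the description of ${}^{(i)}(-)$ as a twisted Jacquet functor (cf.\ Section~\ref{ss bz expressions}), one obtains the key identification
\[ \mathrm{Ext}^k_{G_n}\bigl(\tau_{j-1}/\tau_j,\ \pi'\bigr) \;\cong\; \mathrm{Ext}^k_{G_{n+1-j}}\bigl(\pi^{[j]},\ {}^{(j-1)}\pi'\bigr) \]
for all $j$ and $k$; the $\nu^{1/2}$ twist built into $\pi^{[j]}$ is arranged precisely to absorb the modular characters produced by the normalized induction.

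Under the given hypotheses, the right-hand side vanishes for every $j=1,\ldots,i-1$ and every $k$, so the long exact sequence attached to $0 \to \tau_j \to \tau_{j-1} \to \tau_{j-1}/\tau_j \to 0$ degenerates to isomorphisms $\mathrm{Ext}^k_{G_n}(\tau_{j-1}, \pi') \cong \mathrm{Ext}^k_{G_n}(\tau_j, \pi')$. Iterating, I obtain
\[ \mathrm{Hom}_{G_n}(\pi|_{G_n}, \pi') \;\cong\; \mathrm{Hom}_{G_n}(\tau_{i-1}, \pi'). \]
The short exact sequence $0 \to \tau_i \to \tau_{i-1} \to \tau_{i-1}/\tau_i \to 0$ then yields the injection
\[ 0 \to \mathrm{Hom}_{G_{n+1-i}}\bigl(\pi^{[i]},\ {}^{(i-1)}\pi'\bigr) \hookrightarrow \mathrm{Hom}_{G_n}(\tau_{i-1}, \pi'), \]
whose source is nonzero by the remaining hypothesis. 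Combining the two displays produces the desired nonzero element of $\mathrm{Hom}_{G_n}(\pi, \pi')$.

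The main technical point lies in upgrading the BZ adjunction from Hom to every higher Ext-degree. At degree zero the identification is a formal consequence of compact Frobenius reciprocity together with the computation of ${}^{(i)}(-)$ as $(U_i^-,\psi_i')$-coinvariants along the Jacquet functor; promoting it to all degrees is a Shapiro-type argument that uses that compact induction from the closed subgroup $R_{n-i}^-$ preserves projectives in the category of smooth representations and that the twisted Jacquet functor is exact. Both facts are already in place in the framework of \cite{CS18}, so once invoked the argument reduces to the long-exact-sequence chase sketched above.
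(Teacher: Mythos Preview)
Your approach is exactly the paper's: use the Bernstein--Zelevinsky filtration of $\pi|_{G_n}$, invoke the second adjointness of \cite[Lemma~2.2]{CS18} to identify the Ext-groups of each layer with $\mathrm{Ext}^k_{G_{n+1-j}}(\pi^{[j]},{}^{(j-1)}\pi')$, and then run a long exact sequence chase. Your version spells out the isomorphism $\mathrm{Hom}_{G_n}(\pi,\pi')\cong\mathrm{Hom}_{G_n}(\tau_{i-1},\pi')$ explicitly, while the paper only records the resulting inequality, but the content is the same.

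One slip to correct: you describe the layers as $\mathrm{ind}_{R_{n-(j-1)}^-}^{G_n}(\pi^{[j]}\boxtimes\psi'_{j-1})\cong\Pi_{j-1}\times\pi^{[j]}$, mixing the \emph{right} shifted derivative $\pi^{[j]}$ with the \emph{left} induction from $R^-$. The filtration whose graded pieces involve $\pi^{[j]}$ is the one built from $R_{n-j+1}$, with layers $\mathrm{ind}_{R_{n-j+1}}^{G_n}(\pi^{[j]}\boxtimes\psi_{j-1})\cong\pi^{[j]}\times\Pi_{j-1}$ (cf.\ (\ref{eqn bz layer r})); the $R^-$ filtration has layers $\Pi_{j-1}\times{}^{[j]}\pi$ instead. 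This matters because the second adjointness pairs $\mathrm{ind}_{R}$ with ${}^{(j-1)}(-)$, whereas $\mathrm{ind}_{R^-}$ is paired with $(-)^{(j-1)}$. Once you swap $R^-$ for $R$ (and reverse the product), your argument goes through verbatim and matches the paper.
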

\begin{proof}
The Bernstein-Zelevinsky filtration of $\pi$ gives that there exists 
\[  0 \subset \pi_n \subset \ldots \subset \pi_0 =\pi
\]
such that $\pi_{i-1}/\pi_{i} \cong \mathrm{ind}_{R_{n-i+1}}^{G_n} \pi^{[i]} \boxtimes \psi_i$. Now, by the second adjointness property of derivatives \cite[Lemma 2.2]{CS18}, we have
\[ \mathrm{Ext}^k_{G_n}(\mathrm{ind}_{R_{n-j+1}}^{G_{n}}\pi^{[j]} \boxtimes \psi_{j-1} , \pi') \cong \mathrm{Ext}^k_{G_{n-j+1}}( \pi^{[j]}, {}^{(j-1)}\pi') =0
\]
for all $k$ and $j$. Now a long exact sequence argument gives that
\begin{align}  \mathrm{dim}~\mathrm{Hom}_{G_{n}}(\pi, \pi') & \geq  \mathrm{dim}~\mathrm{Hom}_{G_{n}}(\mathrm{ind}_{R_{n+1-i}}^{G_{n}}\pi^{[i]} \boxtimes \psi_{i-1} , \pi' ) \\
  & = \mathrm{dim}~\mathrm{Hom}_{G_{n+1-i}}( \pi^{[i]}, {}^{(i-1)}\pi') \neq 0 .
\end{align}
\end{proof}

\subsection{Subrepresentation of a standard representation}


\begin{lemma} \label{lem sub rep full}
Let $\mathfrak m \in \mathrm{Mult}$. Suppose all segments in $\mathfrak m$ are singletons  i.e. of relative length $1$. Then $\lambda(\mathfrak m)$ has unique irreducible submodule and quotient. Moreover, the unique irreducible submodule is generic.
\end{lemma}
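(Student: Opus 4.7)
The first assertion reduces immediately to the Zelevinsky classification. Since every $\Delta_i \in \mathfrak m$ is a singleton, $\mathrm{St}([\rho_i]) = \langle [\rho_i] \rangle = \rho_i$, so $\lambda(\mathfrak m) = \zeta(\mathfrak m) = \rho_1 \times \cdots \times \rho_r$ under any ordering in which $[\rho_i]$ does not precede $[\rho_j]$ for $i<j$, and uniqueness of the irreducible submodule $\langle \mathfrak m \rangle$ and of the irreducible quotient $\mathrm{St}(\mathfrak m)$ is just the content of the two classifications stated before the lemma.

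The plan for the genericity of $\langle \mathfrak m \rangle$ is to realize it explicitly as a product of Steinberg-type representations indexed by the maximal cuspidal chains hidden in $\mathfrak m$. Concretely, partition the underlying multiset $\{\rho_1, \ldots, \rho_r\}$ into maximal sub-multisets of the form $\{\nu^{a}\sigma, \nu^{a+1}\sigma, \ldots, \nu^{b}\sigma\}$, each $\nu^{j}\sigma$ occurring with multiplicity one inside a given sub-multiset, and assemble these into a new multisegment $\mathfrak n = \{\Delta_1^{\mathfrak n}, \ldots, \Delta_s^{\mathfrak n}\}$. By maximality the segments of $\mathfrak n$ are pairwise unlinked, so (\ref{eqn commute product}) together with Zelevinsky's irreducibility criterion yields that $\mathrm{St}(\mathfrak n) = \mathrm{St}(\Delta_1^{\mathfrak n}) \times \cdots \times \mathrm{St}(\Delta_s^{\mathfrak n})$ is irreducible. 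Each factor is essentially square-integrable and hence generic, and an irreducible parabolic induction of generic representations is generic, so $\mathrm{St}(\mathfrak n)$ is generic.

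It then suffices to produce an embedding $\mathrm{St}(\mathfrak n) \hookrightarrow \zeta(\mathfrak m)$, because by uniqueness of the irreducible submodule this forces $\langle \mathfrak m \rangle \cong \mathrm{St}(\mathfrak n)$. Starting from the realization of $\mathrm{St}(\Delta)$ as a quotient and passing to the contragredient via $\mathrm{St}(\Delta)^{\vee} \cong \mathrm{St}(\Delta^{\vee})$, one obtains that $\mathrm{St}(\Delta_i^{\mathfrak n})$ embeds as the unique irreducible submodule of the reverse-order product $\nu^{b_i}\sigma_i \times \nu^{b_i - 1}\sigma_i \times \cdots \times \nu^{a_i}\sigma_i$. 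Since parabolic induction is exact, iterating these submodule embeddings one chain at a time produces an embedding of $\mathrm{St}(\mathfrak n)$ into the concatenated product of these reverse-order chain factors. A check then shows that this concatenated ordering of the $\rho_j$'s satisfies the no-preceding condition, so the concatenated product is isomorphic to $\zeta(\mathfrak m)$ and the embedding $\mathrm{St}(\mathfrak n) \hookrightarrow \zeta(\mathfrak m)$ is achieved.

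The principal obstacle is organizational: one must verify that concatenating the decreasing-order chain factors yields a valid Zelevinsky labelling for $\mathfrak m$. Inside a single chain the exponents decrease, so no singleton precedes any later one; between distinct chains, either the underlying cuspidal lines differ or there is a gap of at least two on the same line, and in both cases singletons from different chains are unlinked and so cannot precede one another. Apart from this bookkeeping, the argument relies only on the Zelevinsky classification, the standard irreducibility and commutativity of products of Steinberg-type representations supported on pairwise unlinked segments, and the exactness of parabolic induction.
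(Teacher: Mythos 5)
Your first paragraph matches the paper's proof of the uniqueness claim exactly: since each $\mathrm{St}([\rho_i]) = \langle[\rho_i]\rangle = \rho_i$, one has $\lambda(\mathfrak m) = \zeta(\mathfrak m)$, and the classification then provides the unique irreducible submodule $\langle\mathfrak m\rangle$ and unique irreducible quotient $\mathrm{St}(\mathfrak m)$. For the genericity of $\langle\mathfrak m\rangle$ the paper simply cites \cite{Ze}, whereas you attempt a self-contained construction via a chain decomposition; this is a genuinely different and in principle attractive route, but as written it has a gap.

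The gap is in the concatenation step. Your maximal-chain construction of $\mathfrak n$ is fine, the segments of $\mathfrak n$ are indeed pairwise unlinked, and $\mathrm{St}(\mathfrak n)$ is the unique irreducible generic representation with the same cuspidal support as $\mathfrak m$. The problem is the claim that ``between distinct chains, either the underlying cuspidal lines differ or there is a gap of at least two on the same line.'' This dichotomy omits the case where one chain is properly contained in another, which occurs whenever a cuspidal value repeats in $\mathfrak m$. Take $\mathfrak m = \{[1], [\nu], [\nu^2], [\nu]\}$. The chain decomposition gives $\mathfrak n = \{[1,\nu^2], [\nu]\}$; these segments are unlinked by nesting, not by a gap. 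The reverse-order singleton sequences are $(\nu^2, \nu, 1)$ and $(\nu)$, and neither concatenation is a valid Zelevinsky labelling: in $(\nu^2, \nu, 1, \nu)$ the singleton $[1]$ at position $3$ precedes $[\nu]$ at position $4$, while in $(\nu, \nu^2, \nu, 1)$ the singleton $[\nu]$ at position $1$ precedes $[\nu^2]$ at position $2$. So the concatenated product need not equal $\zeta(\mathfrak m)$, which here is $\nu^2 \times \nu \times \nu \times 1$ in an interleaved order; and one cannot pass from $\mathrm{St}([1,\nu^2]) \times \nu \hookrightarrow \nu^2 \times \nu \times 1 \times \nu$ to an embedding into $\nu^2 \times \nu \times \nu \times 1$ by commuting, since $1 \times \nu \not\cong \nu \times 1$. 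Repairing this requires a one-segment-at-a-time peeling argument in the spirit of the paper's proof of Proposition \ref{thm subrep}, rather than a one-chain-at-a-time concatenation; the paper avoids the issue by invoking \cite{Ze} directly.
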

\begin{proof}
By definitions, $\lambda(\mathfrak{m})=\zeta(\mathfrak{m})$ and hence has unique submodule and quotient. Since all segments in $\mathfrak{m}$ are singletons, the submodule is generic \cite{Ze}.
\end{proof}


It is known (see \cite{JS83}\footnote{The author would like to thank D. Prasad for mentioning this reference in a disucssion.}) that $\lambda(\mathfrak{m})$ always has a generic representation as the unique submodule. We shall prove a slightly stronger statement, using Zelevinsky theory:

\begin{proposition} \label{thm subrep}
Let $\mathfrak{m} \in \mathrm{Mult}$. Then $\lambda(\mathfrak{m})$ can be embedded to $\lambda(\mathfrak m')$ for some $\mathfrak m' \in \mathrm{Mult}$ whose segments are singletons. In particular, $\lambda(\mathfrak{m})$ has a unique irreducible submodule and moreover, the submodule is generic.
\end{proposition}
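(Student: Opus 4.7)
My plan is to argue by induction on the total excess $e(\mathfrak{m}) := \sum_{i=1}^{r}(|\Delta_i|-1)$ of $\mathfrak{m} = \{\Delta_1, \ldots, \Delta_r\}$. The base case $e(\mathfrak{m}) = 0$ is when every segment in $\mathfrak{m}$ is already a singleton; then $\mathfrak{m}' = \mathfrak{m}$ works, and the ``in particular'' statement is precisely \leref{sub rep full}.

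For the inductive step, pick some non-singleton segment $\Delta = [\nu^a\rho, \nu^b\rho] \in \mathfrak{m}$. The key input is the embedding
\[
\mathrm{St}(\Delta) \hookrightarrow \nu^b\rho \times \mathrm{St}(\Delta^-),
\]
which follows from the characterization of $\mathrm{St}(\Delta)$ as the unique irreducible submodule of the decreasing product $\nu^b\rho \times \nu^{b-1}\rho \times \cdots \times \nu^a\rho$ together with associativity/induction by stages; there is also the symmetric version $\mathrm{St}(\Delta) \hookrightarrow \mathrm{St}({}^{-}\Delta) \times \nu^a\rho$. Substituting into the product defining $\lambda(\mathfrak{m})$ and using exactness of parabolic induction produces
\[
\lambda(\mathfrak{m}) \hookrightarrow \lambda(\mathfrak{m}_1),
\]
where $\mathfrak{m}_1$ is obtained from $\mathfrak{m}$ by replacing $\Delta$ by the pair $\{[\nu^b\rho], \Delta^-\}$ (or $\{{}^{-}\Delta, [\nu^a\rho]\}$), and satisfies $e(\mathfrak{m}_1) = e(\mathfrak{m})-1$. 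Applying the inductive hypothesis to $\mathfrak{m}_1$ then yields a multisegment of singletons $\mathfrak{m}'$ with $\lambda(\mathfrak{m}_1) \hookrightarrow \lambda(\mathfrak{m}')$, and the composite gives what we want.

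The ``in particular'' statement follows from the embedding into $\lambda(\mathfrak{m}')$: since every segment of $\mathfrak{m}'$ is a singleton, $\mathrm{St}(\sigma) = \langle\sigma\rangle$ for each such $\sigma$ so $\lambda(\mathfrak{m}') = \zeta(\mathfrak{m}')$, and by \leref{sub rep full} this has a unique irreducible submodule $\langle \mathfrak{m}'\rangle$, which is generic. An embedding into such a module transfers both uniqueness and genericity of the socle to $\lambda(\mathfrak{m})$.

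The main technical difficulty I anticipate is verifying, at each inductive step, that the list $(\mathfrak{m} \setminus \{\Delta\}) \cup \{[\nu^b\rho], \Delta^-\}$ really admits an ordering with no earlier segment preceding a later one, so that the right-hand side of the substitution is genuinely of the form $\lambda(\mathfrak{m}_1)$. Both the choice of which $\Delta$ to peel (for example, one whose boundary $\nu^b\rho$ or $\nu^a\rho$ is not linked to any other segment's endpoint --- an extremal $\Delta$ with respect to $b(\Delta)$ or $a(\Delta)$ among non-singletons) and the direction of the peeling matter, and the non-linked commutation relation (\ref{eqn commute product}) will be used to re-order factors into the correct $\lambda$-form whenever the naive substitution falls short.
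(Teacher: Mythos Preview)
Your plan matches the paper's proof; the paper resolves the ordering issue you anticipate by choosing a cuspidal representation $\rho$ maximal for the precedence order among all cuspidal representations occurring in $\mathfrak{m}$, and then taking $\Delta$ of \emph{shortest} relative length among the segments $\Delta'$ with $b(\Delta')\cong\rho$. With this choice $\Delta$ may be placed first in $\lambda(\mathfrak{m})$, the peeled singleton $[b(\Delta)]$ remains leftmost, and $\mathrm{St}(\Delta^{-})$ commutes past every $\mathrm{St}(\Delta_j)$ with $b(\Delta_j)\cong\rho$ (these are unlinked since $\Delta^{-}\subset\Delta\subset\Delta_j$ by the shortest-length choice), yielding $\lambda(\mathfrak{m})\hookrightarrow b(\Delta)\times\lambda(\mathfrak{m}\setminus\{\Delta\}+\Delta^{-})$; the induction is then applied to the second factor.
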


\begin{proof}

Let $\rho$ be a cuspidal representation in $\mathfrak m$ such that for any cuspidal representation $\rho'$ in $\mathfrak m$, $\rho \not < \rho'$. Let $\Delta$ be a segment in $\mathfrak m$ with the shortest relative length among all segments $\Delta'$ in $\mathfrak m$ with $b(\Delta') \cong \rho$.

By definition of $\lambda(\mathfrak m)$, we have that
\[   \lambda(\mathfrak m) \cong  \mathrm{St}(\Delta) \times  \lambda(\mathfrak m\setminus \left\{ \Delta \right\}) .
\]
On the other hand, we have that 
\[ \mathrm{St}(\Delta) \hookrightarrow b(\Delta) \times \mathrm{St}(\Delta^-) .
\]
Thus we have that
\begin{align} \label{eqn embedding sec 2}
 \lambda(\mathfrak m) & \hookrightarrow b(\Delta) \times \mathrm{St}(\Delta^-)\times  \lambda(\mathfrak m\setminus \left\{ \Delta \right\})  \\ \label{eqn iso segments}
& \cong b(\Delta) \times \lambda(\mathfrak m\setminus \left\{ \Delta \right\}+\Delta^-) 
\end{align}
We now explain the last isomorphism (\ref{eqn iso segments}), and it suffices to show 
\[  \lambda(\mathfrak m \setminus \left\{ \Delta \right\}+\Delta^-) \cong \mathrm{St}(\Delta^-) \times \lambda(\mathfrak m \setminus \left\{ \Delta \right\}) .
\]
To this end, we write $\mathfrak m=\left\{ \Delta, \Delta_1, \ldots, \Delta_k, \Delta_{k+1}, \ldots, \Delta_s \right\}$ such that 
\[ b(\Delta) \cong b(\Delta_1) \cong \ldots \cong b(\Delta_k)
\]
and $b(\Delta_j) \not\cong b(\Delta)$ for $j \geq k+1$. Then we have that 
\begin{align*}
\mathrm{St}(\Delta^-)\times \lambda(\mathfrak m \setminus \left\{\Delta \right\} ) \cong & \mathrm{St}(\Delta^-)\times \mathrm{St}(\Delta_1)\times \ldots \times \mathrm{St}(\Delta_s) \\
 \cong & \mathrm{St}(\Delta_1)\times \ldots \times \mathrm{St}(\Delta_k)\times \mathrm{St}(\Delta^-) \times \mathrm{St}(\Delta_{k+1})\times \ldots \times \mathrm{St}(\Delta_s) \\
\cong & \lambda(\mathfrak m\setminus \left\{\Delta \right\}+\Delta^-)
\end{align*}
where the second isomorphism follows from applying (\ref{eqn commute product}) few times. Here we need to use our choice of $\Delta$, which guarantees that $\Delta^-$ and $\Delta_j$ are unlinked for $j=1, \ldots, k$.


Now $\lambda(\mathfrak m\setminus \left\{ \Delta \right\} +\Delta^-)$ embeds to $\lambda(\mathfrak m')$ by induction for some $\mathfrak m'\in \mathrm{Mult}$ with all segments to be singletons. Thus this gives that $b(\Delta) \times \lambda(\mathfrak m\setminus \left\{ \Delta \right\}+\Delta^-)$ embeds to $b(\Delta)\times \lambda(\mathfrak m') \cong \lambda(\mathfrak m'+b(\Delta))$, and so does $\lambda(\mathfrak m)$ by (\ref{eqn embedding sec 2}).

The second assertion follows from Lemma \ref{lem sub rep full}.

\end{proof}

\subsection{Derivatives of generic representations}

Recall that the socle (resp. cosocle) of an admissible representation $\pi$ of $G_n$ is the maximal semisimple submodule (resp. quotient) of $\pi$.
\begin{lemma} \label{lem coscole socle}
Let $\pi \in \mathrm{Irr}(G_n)$. Then the cosocle of $\pi^{(i)}$ (resp. ${}^{(i)}\pi$) is isomorphic to the socle of $\pi^{(i)}$ (resp. ${}^{(i)}\pi$). 
\end{lemma}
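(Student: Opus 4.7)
The plan is to derive the result from the isomorphism
\[
(\pi^{(i)})^{\vee} \;\cong\; \theta_{n-i}(\pi^{(i)}),
\]
valid for $\pi \in \mathrm{Irr}(G_n)$, and then to exploit that on semisimple $G_{n-i}$-modules the covariant auto-equivalence $\theta_{n-i}$ and the contravariant equivalence $(-)^{\vee}$ produce the same representation, while the former preserves socle and cosocle and the latter exchanges them. The ${}^{(i)}\pi$ case follows by the symmetric argument applied to $\pi^{\vee}$ (or directly, using the definition of the left derivative).

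For the first step I would invoke the contragredient compatibility $(\omega^{(i)})^{\vee} \cong {}^{(i)}(\omega^{\vee})$ for admissible finite-length $\omega \in \mathrm{Alg}(G_n)$; this is the derivative analogue of Casselman's duality between the Jacquet module $\omega_{N}$ and the opposite Jacquet module $(\omega^{\vee})_{N^{-}}$, and is deducible from Casselman's theorem together with the fact that $\psi$-twisted coinvariants on a smooth representation dualise to $\psi^{-1}$-twisted invariants compatibly with the Levi action. Specialising to irreducible $\pi$, the Gelfand--Kazhdan identity $\theta_n \sigma \cong \sigma^{\vee}$ applied to $\sigma = \pi^{\vee}$ gives $\theta_n(\pi^{\vee}) \cong \pi$, and substituting into the definition (\ref{eqn left derivative}) yields ${}^{(i)}(\pi^{\vee}) = \theta_{n-i}\bigl(\theta_n(\pi^{\vee})^{(i)}\bigr) \cong \theta_{n-i}(\pi^{(i)})$. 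Combining the two gives the claimed identity.

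For the second step, applying socle to both sides and using that $\theta_{n-i}$ commutes with socle while $(-)^{\vee}$ exchanges socle and cosocle,
\[
\theta_{n-i}\bigl(\mathrm{soc}(\pi^{(i)})\bigr) \;\cong\; \mathrm{soc}\bigl(\theta_{n-i}(\pi^{(i)})\bigr) \;\cong\; \mathrm{soc}\bigl((\pi^{(i)})^{\vee}\bigr) \;\cong\; \mathrm{cosoc}(\pi^{(i)})^{\vee}.
\]
On the semisimple representation $\mathrm{soc}(\pi^{(i)})$ the functors $\theta_{n-i}$ and $(-)^{\vee}$ coincide (each irreducible summand $\sigma$ is sent to $\sigma^{\vee}$ in both cases), so the left-hand side is isomorphic to $\mathrm{soc}(\pi^{(i)})^{\vee}$; taking contragredient once more delivers $\mathrm{soc}(\pi^{(i)}) \cong \mathrm{cosoc}(\pi^{(i)})$, as desired. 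The main obstacle in this plan is the first step: the contragredient compatibility $(\omega^{(i)})^{\vee} \cong {}^{(i)}(\omega^{\vee})$ is a Casselman-type fact that requires careful modulus-character bookkeeping and some care with the interplay between $\psi$-twisted coinvariants and their contragredient $\psi^{-1}$-twisted invariants, after which the remaining socle/cosocle manipulation is purely formal.
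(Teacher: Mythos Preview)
Your argument is correct and follows essentially the same route as the paper's proof: both establish the self-duality relation $\theta_{n-i}(\pi^{(i)}) \cong (\pi^{(i)})^{\vee}$ (the paper writes the analogous chain for ${}^{(i)}\pi$) by combining the definition of the left derivative via $\theta$, the Gelfand--Kazhdan identity $\theta_n(\pi) \cong \pi^{\vee}$ for irreducible $\pi$, and the contragredient compatibility $(\omega^{(i)})^{\vee} \cong {}^{(i)}(\omega^{\vee})$, which the paper simply cites as \cite[Lemma~2.2]{CS18} rather than deriving it from Casselman duality. The socle/cosocle deduction you spell out in your second step is exactly the implicit final step in the paper.
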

\begin{proof}
This is almost the same as the proof of \cite[Lemma 2.2]{CS18}. More precisely, it follows  from that for an irreducible $G_n$-representation $\pi$,
\[ {}^{(i)}\pi \cong \theta_{n-i}(\theta_n(\pi)^{(i)}) \cong \theta_{n-i} ((\pi^{\vee})^{(i)})\cong \theta_{n-i}(({}^{(i)}\pi)^{\vee} )\]
and the fact \cite{BZ1} that $\theta_{n-i}(\tau)\cong \tau^{\vee}$ for any irreducible $G_{n-i}$-representation $\tau$. The last isomorphism follows from \cite[Lemma 2.2]{CS18}.
\end{proof}

\begin{proposition} \label{prop multi free}
Let $\pi \in \mathrm{Irr}(G_{n+1})$. The socle and cosocle of $\pi^{(i)}$ (and ${}^{(i)}\pi$) are multiplicity-free. 
\end{proposition}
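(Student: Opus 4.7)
My plan is to reduce the statement to a computation on products of cuspidal representations, using the two preceding results in tandem.

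First, by Lemma \ref{lem coscole socle}, the socle and cosocle of $\pi^{(i)}$ are isomorphic, so it suffices to prove the socle of $\pi^{(i)}$ is multiplicity-free. The analogous statement for ${}^{(i)}\pi$ will then follow by applying the same argument to $\theta_{n+1}(\pi) \cong \pi^{\vee}$, via the relation ${}^{(i)}\pi \cong \theta_{n+1-i}(\theta_{n+1}(\pi)^{(i)})$ recalled in the proof of Lemma \ref{lem coscole socle}.

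Next, I would apply Proposition \ref{thm subrep} to embed $\pi$ into $\lambda(\mathfrak{m}') = \rho_1 \times \cdots \times \rho_k$ for some multisegment $\mathfrak{m}'$ of singleton segments, so that each $\rho_j$ is cuspidal. Because the derivative $(-)^{(i)}$ is exact (being the composition of a Jacquet functor with a twisted Jacquet functor, both exact), this embedding gives rise to $\pi^{(i)} \hookrightarrow (\rho_1 \times \cdots \times \rho_k)^{(i)}$, and hence to an inclusion of socles
\[
\mathrm{soc}(\pi^{(i)}) \subseteq \mathrm{soc}((\rho_1 \times \cdots \times \rho_k)^{(i)}).
\]
It thus suffices to establish multiplicity-freeness of the socle of the derivative of a product of cuspidals.

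For the final step, I would invoke the Leibniz rule for Bernstein--Zelevinsky derivatives (an iterated consequence of the geometric lemma): $(\rho_1 \times \cdots \times \rho_k)^{(i)}$ carries a filtration whose non-zero layers are indexed by subsets $S \subseteq \{1,\ldots,k\}$ with $\sum_{j \in S} n(\rho_j) = i$, and whose $S$-layer is isomorphic, up to a $\nu$-shift depending on $S$, to $\prod_{j \notin S} \rho_j$. Each such layer corresponds to a multisegment of singletons, and by Lemma \ref{lem sub rep full} has an irreducible (hence multiplicity-free) socle. The main obstacle is to rule out the possibility that two distinct subsets $S \neq S'$ contribute isomorphic socle constituents to the total filtered module. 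This I would handle by comparing cuspidal supports of the socle generators: the $\nu$-shifts arising in the Leibniz rule refine the retained multisets $\{\rho_j : j \notin S\}$ into an invariant that distinguishes the different $S$'s (even when the unshifted multisets coincide due to repeated cuspidals among the $\rho_j$). Once this is in hand, any irreducible appearing in the total socle is trapped in a unique layer, and multiplicity-freeness follows.
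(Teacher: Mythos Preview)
Your proposed route has a genuine gap at the final step, and it is structurally different from the paper's argument. A minor issue first: Proposition~\ref{thm subrep} concerns $\lambda(\mathfrak m)$, not an arbitrary irreducible $\pi$; the embedding of $\pi\cong\langle\mathfrak m\rangle$ into a product of cuspidals does exist (via $\langle\mathfrak m\rangle\hookrightarrow\zeta(\mathfrak m)$ and then embedding each $\langle\Delta\rangle$), but not by citing that proposition. The serious problem is your claim that $\nu$-shifts in the Leibniz filtration distinguish the subsets $S$. In the conventions of this paper (see Lemma~\ref{lem filt standard} specialised to singleton segments), the successive quotients of $(\rho_1\times\cdots\times\rho_k)^{(i)}$ are exactly $\prod_{j\notin S}\rho_j$ with \emph{no} additional twist. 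So whenever a cuspidal repeats among the $\rho_j$, two distinct subsets $S\neq S'$ give identical layers. Already for $\pi=\rho\times\rho$ (irreducible, $\rho$ cuspidal in $G_m$) the module $\pi^{(m)}$ has two layers both isomorphic to $\rho$; your argument cannot separate them, and what the proposition actually asserts in this case---that $\pi^{(m)}$ is a non-split self-extension of $\rho$---is invisible to the filtration bookkeeping.

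The paper's proof uses a completely different mechanism: the multiplicity-one theorem of \cite{AGRS}, which you never invoke. Given an irreducible quotient $\pi_0$ of $\pi^{(i)}$, one pads it with a cuspidal $\pi_1$ of $G_{i-1}$ foreign to $\mathrm{csupp}_{\mathbb Z}(\pi_0)$, so that the lower Bernstein--Zelevinsky strata contribute nothing (by cuspidal-support comparison), and obtains
\[
\dim\mathrm{Hom}_{G_{n+1-i}}(\pi^{[i]},\pi_0)\ \le\ \dim\mathrm{Hom}_{G_n}(\pi,\pi_0\times\pi_1)\ \le\ 1,
\]
the last inequality being \cite{AGRS}. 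The key idea---that a repeated summand in the cosocle of a derivative would force a restriction multiplicity $\ge 2$ for the pair $(G_{n+1},G_n)$---is the content your combinatorial reduction is missing.
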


\begin{proof}
 Let $\pi_0$ be an irreducible quotient of $\pi^{(i)}$. Let $\pi_1$ be a cuspidal representation of $G_{i-1}$ which is not in $\mathrm{cupp}_{\mathbb Z}(\pi_0)$. Then, by comparing cuspidal supports, 
\[ \mathrm{Ext}^j_{G_{n+1-k}}( \pi^{[k]}, {}^{(k-1)}(\pi_0 \times \pi_1))=0 \]
 for all $j$ and $k<i$. With a long exact sequence argument using Bernstein-Zelevinsky filtration (similar to the proof of Lemma \ref{lem generic bz quotient}), we have that
\begin{align*}
 \dim~ \mathrm{Hom}_{G_n}(\pi, \pi_0\times \pi_1) &\geq \mathrm{dim}~ \mathrm{Hom}_{G_{n+1-i}}(\pi^{[i]},{}^{(i-1)}(\pi_0\times \pi_1)) 
\end{align*}
Now one applies the geometric lemma to obtain a filtration on ${}^{(i-1)}(\pi_0\times \pi_1)$, and then by comparing cuspidal supports, the only possible layer that can contribute the above non-zero Hom is $\pi_0$. Hence,
\begin{align*}
 \dim~ \mathrm{Hom}_{G_n}(\pi, \pi_0\times \pi_1)& \geq \mathrm{dim}~\mathrm{Hom}_{G_{n+1-i}}( \pi^{[i]}, \pi_0). 
\end{align*}
 The first dimension is at most one by \cite{AGRS} and so is the second dimension. This implies the cosocle statement by Lemma \ref{lem generic bz quotient} and the socle statement follows from Lemma \ref{lem coscole socle}.
\end{proof}

A representation $\pi$ of $G_n$ is called {\it generic} if $\pi^{(n)}\neq 0$. The Zelevinsky classification of irreducible generic representations is in \cite{Ze}, that is, $\mathrm{St}(\mathfrak{m})$ is generic if and only if any two segments in $\mathfrak{m}$ are unlinked. With Proposition \ref{prop multi free}, the following result essentially gives a combinatorial description on the socle and cosocle of the derivatives of a generic representation.

\begin{corollary} \label{cor generic quo sub}
Let $\pi \in \mathrm{Irr}(G_{n+1})$ be generic. Then any simple quotient and submodule of $\pi^{(i)}$ (resp. ${}^{(i)}\pi$)  is generic. 
\end{corollary}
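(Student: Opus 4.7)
Since $\pi$ is generic, Zelevinsky's classification provides a factorization $\pi\cong\lambda(\mathfrak m)=\mathrm{St}(\Delta_1)\times\cdots\times\mathrm{St}(\Delta_r)$ in which $\mathfrak m=\{\Delta_1,\ldots,\Delta_r\}$ consists of pairwise unlinked segments. The plan is to compute $\pi^{(i)}$ by the geometric lemma and to reduce the claim, layer by layer, to Proposition~\ref{thm subrep}, then pass from submodules to quotients by Lemma~\ref{lem coscole socle}.

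Applying the Leibniz rule for the right derivative to this factorization of $\pi$, one obtains a filtration of $\pi^{(i)}$ whose successive quotients are twisted products
\[
\mathrm{St}(\Delta_1^{(i_1)})\times\cdots\times\mathrm{St}(\Delta_r^{(i_r)}),\quad i_1+\cdots+i_r=i,
\]
where a factor whose truncated segment is empty is interpreted as the trivial $G_0$-representation and successive quotients containing a vanishing factor are discarded. Each non-zero successive quotient is thus a product of Steinberg representations of the truncated segments, and after using (\ref{eqn commute product}) to sort the unlinked Steinberg factors into the canonical $\lambda$-ordering, it is isomorphic to a standard module $\lambda(\mathfrak m'_{\vec i})$. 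By Proposition~\ref{thm subrep}, $\lambda(\mathfrak m'_{\vec i})$ has a unique irreducible submodule, and this submodule is generic. A standard argument for filtered modules shows that any simple submodule of $\pi^{(i)}$ embeds into some successive quotient, so every simple submodule of $\pi^{(i)}$ is generic. Applying Lemma~\ref{lem coscole socle}, which identifies the cosocle and socle of $\pi^{(i)}$ as isomorphic semisimple modules, every simple quotient of $\pi^{(i)}$ is also generic.

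The statement for the left derivative ${}^{(i)}\pi$ follows either by the symmetric left-derivative version of the argument above, or by transporting through the identity ${}^{(i)}\pi\cong\theta_{n-i}(\theta_n(\pi)^{(i)})$ together with the fact that $\theta$ preserves irreducibility and genericity, so that $\theta_n(\pi)$ is again generic and the generic sub/quotients of $\theta_n(\pi)^{(i)}$ yield generic sub/quotients of ${}^{(i)}\pi$. The most delicate step in the plan is the passage from a Leibniz successive quotient in its naturally inherited ordering to a canonical $\lambda(\mathfrak m'_{\vec i})$: truncation of the $\Delta_j$ may produce new linked pairs among the truncated segments, for which (\ref{eqn commute product}) is not available; one must verify that the unlinked commutations still available—possibly combined with a judicious choice of the initial ordering of $\Delta_1,\ldots,\Delta_r$ in $\pi$ (which is free, since these segments are pairwise unlinked)—suffice to bring each such successive quotient into the canonical $\lambda$-ordering, at which point Proposition~\ref{thm subrep} applies directly.
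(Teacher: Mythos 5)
Your plan is essentially the paper's proof: factor $\pi$ as $\mathrm{St}(\Delta_1)\times\cdots\times\mathrm{St}(\Delta_r)$ with pairwise unlinked $\Delta_j$, apply the Leibniz rule to filter $\pi^{(i)}$ by products of derivatives of Steinbergs, reduce each layer to Proposition~\ref{thm subrep}, and finish with Lemma~\ref{lem coscole socle}. The one cosmetic difference is that you argue on socles and deduce cosocles, while the paper argues on cosocles by writing each layer as $\lambda(\mathfrak m')^\vee$ with $\mathfrak m'=\{({}^{(i_1)}\Delta_1)^\vee,\ldots,({}^{(i_r)}\Delta_r)^\vee\}$ and deduces socles.

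There are two points to fix. First, the derivative of a generalized Steinberg truncates on the \emph{left}, not the right: $\mathrm{St}(\Delta)^{(kl)}\cong\mathrm{St}({}^{(kl)}\Delta)$ (it is $\langle\Delta\rangle$ whose derivative truncates on the right, $\langle\Delta\rangle^{(kl)}\cong\langle\Delta^{(kl)}\rangle$). So your Leibniz layers should be $\mathrm{St}({}^{(i_1)}\Delta_1)\times\cdots\times\mathrm{St}({}^{(i_r)}\Delta_r)$, not $\mathrm{St}(\Delta_1^{(i_1)})\times\cdots$. This matters because left truncation preserves $b(\Delta_j)$, and that is exactly what rescues the step you flag. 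Second, the concern you raise about new linked pairs after truncation is genuine, but sorting via~(\ref{eqn commute product}) is the wrong tool (it only applies to unlinked pairs); the correct resolution is that with a suitable initial labeling \emph{no sorting is needed}. Label $\mathfrak m$ so that the $b(\Delta_j)$ are in the appropriate monotone order (for your socle-first route: $b(\Delta_i)\not< b(\Delta_j)$ for $i<j$). Since ${}^{(i_j)}\Delta_j$ has the same right endpoint as $\Delta_j$, the truncated segments inherit the same $b$-order. If two truncated segments with $b({}^{(i_k)}\Delta_k)\not< b({}^{(i_l)}\Delta_l)$ also had $a({}^{(i_k)}\Delta_k)\not< a({}^{(i_l)}\Delta_l)$ reversed, they would be nested and hence \emph{unlinked}; so whenever they are linked, the inherited order is already a $\lambda$-ordering and each layer is literally $\lambda(\mathfrak n_{\vec i})$ with $\mathfrak n_{\vec i}=\{{}^{(i_1)}\Delta_1,\ldots,{}^{(i_r)}\Delta_r\}$. (This is also the tacit justification for the paper's identification of the layer with $\lambda(\mathfrak m')^\vee$ under its chosen labeling $b(\Delta_j)\not< b(\Delta_i)$ for $i<j$.) With these two corrections your argument is complete and matches the paper's.
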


\begin{proof}
By Lemma \ref{lem coscole socle}, it suffices to prove the statement for quotient. Let $\mathfrak{m}=\left\{\Delta_1, \ldots, \Delta_r \right\}$ be the Zelevinsky segment $\mathfrak{m}$ such that 
\[ \pi \cong \mathrm{St}(\mathfrak{m})= \lambda(\mathfrak{m})=\mathrm{St}(\Delta_1)\times \ldots \times \mathrm{St}(\Delta_r) . \]
Since any two segments in $\mathfrak{m}$ are unlinked, we can label in any order and so we shall assume that for $i<j$, $b(\Delta_j)$ does not precede $b(\Delta_i)$. Then geometric lemma produces a filtration on $\pi^{(i)}$ whose successive subquotient is isomorphic to 
\[ \mathrm{St}({}^{(i_1)}\Delta_1)\times \ldots \times \mathrm{St}({}^{(i_r)}\Delta_r) , \]
where $i_1+\ldots+i_r=i$. The last module is isomorphic to $\lambda(\mathfrak{m}')^{\vee}$, where 
\[ \mathfrak{m}'=\left\{ ({}^{(i_1)}\Delta_1)^{\vee}, \ldots, ({}^{(i_r)}\Delta_r)^{\vee} \right\} . \]
If $\pi'$ is a simple quotient of $\pi^{(i)}$, then $\pi'$ is a simple quotient of one successive subquotient in the filtration, or in other words, $\pi'$ is a simple submodule of $\lambda(\mathfrak{m}')$ for a multisegment $\mathfrak{m}'$. Now the result follows from Proposition \ref{thm subrep}.
\end{proof}

\begin{remark}
One can formulate the corresponding statement of Proposition \ref{prop multi free} for affine Hecke algebra level by using the sign projective module in \cite{CS17}. Then it might be interesting to ask for an analogue result for affine Hecke algebra over fields of positive characteristics.
\end{remark}

Here we give a consequence to branching law (c.f. \cite{GGP}, \cite{Gu18}, \cite{Ch20}):

\begin{corollary}
Let $\pi \in \mathrm{Irr}(G_{n+1})$ be generic. Let $\pi' \in \mathrm{Irr}(G_n)$ and let $\mathfrak m \in \mathrm{Mult}$ with $\pi'\cong \langle \mathfrak m\rangle$. If $\mathrm{Hom}_{G_n}(\pi,\langle \mathfrak{m} \rangle ) \neq 0$, then each segment in $\mathfrak{m}$ has relative length at most $2$.
\end{corollary}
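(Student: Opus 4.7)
The plan is to combine the Bernstein–Zelevinsky filtration of $\pi|_{G_n}$ with the genericity restrictions of Corollary \ref{cor generic quo sub} and the rigidity of derivatives of Speh representations. Suppose for contradiction that $\mathrm{Hom}_{G_n}(\pi,\langle\mathfrak m\rangle)\neq 0$ yet some segment $\Delta^*\in\mathfrak m$ has relative length at least $3$.

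Applying the Bernstein–Zelevinsky filtration of $\pi|_{G_n}$ and running the associated long exact sequences from the bottom of the filtration upward (an argument analogous to the proof of Lemma \ref{lem generic bz quotient}), the nonvanishing of the Hom forces some layer to admit a nonzero map into $\langle\mathfrak m\rangle$; by the second adjointness property of derivatives there exists $i$ with
\[
\mathrm{Hom}_{G_{n+1-i}}\bigl(\pi^{[i]},\,{}^{(i-1)}\langle\mathfrak m\rangle\bigr)\neq 0.
\]
The image of such a nonzero map is a nonzero quotient of $\pi^{[i]}$, so by Corollary \ref{cor generic quo sub} (together with the fact that twisting by $\nu^{1/2}$ preserves genericity) it has an irreducible generic quotient $\tau$. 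In particular $\tau$ is an irreducible generic composition factor of ${}^{(i-1)}\langle\mathfrak m\rangle$.

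After relabelling the segments of $\mathfrak m$ so that $\Delta_j\not<\Delta_{j'}$ for $j<j'$, there is an embedding $\langle\mathfrak m\rangle\hookrightarrow\zeta(\mathfrak m)=\langle\Delta_1\rangle\times\cdots\times\langle\Delta_r\rangle$. Exactness of the Bernstein–Zelevinsky derivative then forces $\tau$ to be a composition factor of ${}^{(i-1)}\zeta(\mathfrak m)$. By the geometric lemma, ${}^{(i-1)}\zeta(\mathfrak m)$ is filtered by modules of the form $\prod_j {}^{(k_j)}\langle\Delta_j\rangle$ with $\sum_j k_j=i-1$ and $k_j\geq 0$. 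The crucial input is that each Speh representation $\langle\Delta_j\rangle$ has level equal to the degree $l_j$ of its underlying cuspidal, so only $k_j\in\{0,l_j\}$ contribute; the corresponding factor is (up to a $\nu^{1/2}$-shift) the Speh representation attached to either $\Delta_j$ itself or ${}^-\Delta_j$.

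Finally, by Zelevinsky's theory \cite{Ze}, every composition factor of a product of Speh representations $\prod_j\langle\Delta'_j\rangle$ has the form $\langle\mathfrak m''\rangle$ with $\mathfrak m''$ obtained from $\{\Delta'_j\}_j$ by iteratively replacing linked pairs $\Delta_a<\Delta_b$ by $\{\Delta_a\cup\Delta_b,\Delta_a\cap\Delta_b\}$ (dropping empty intersections). Such operations never decrease the maximal segment length, and in our input the segment corresponding to $\Delta^*$ is either $\Delta^*$ (of relative length $\geq 3$) or ${}^-\Delta^*$ (of relative length $\geq 2$). Hence every composition factor of the product has some non-singleton segment and is therefore non-generic, contradicting the existence of $\tau$. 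The main obstacle is justifying the two combinatorial ingredients — the level identity $\mathrm{level}(\langle\Delta\rangle)=l$ for a single-segment Zelevinsky representation, and the non-decreasing behaviour of the maximal segment length under the Zelevinsky merge operations — both of which are standard consequences of Zelevinsky's classification.
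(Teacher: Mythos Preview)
Your proof is correct and follows the same architecture as the paper's: the Bernstein--Zelevinsky filtration together with Corollary~\ref{cor generic quo sub} produces a generic composition factor of ${}^{(i-1)}\zeta(\mathfrak m)$, and the geometric lemma reduces the question to products $\prod_j\langle\Delta'_j\rangle$ with $\Delta'_j\in\{\Delta_j,{}^-\Delta_j\}$. The only cosmetic difference is the final step: the paper observes directly (via the highest derivative) that such a product has a generic composition factor only if every $\langle\Delta'_j\rangle$ is cuspidal, whereas you reach the same conclusion through the union--intersection description of composition factors and monotonicity of the maximal segment length.
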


\begin{proof}
Write $\mathfrak{m}=\left\{ \Delta_1,\ldots, \Delta_r\right\} $ such that $\Delta_i$ does not precede $\Delta_j$ if $i <j$. Let $\pi_0=\langle \mathfrak m \rangle$. By using the Bernstein-Zelevinsky filtration, $\mathrm{Hom}_{G_n}(\pi, \pi_0 ) \neq 0$ implies that  
\[ \mathrm{Hom}_{G_{n+1-i}}( \pi_1^{[i]}, {}^{(i-1)}\pi_0) \neq 0 \]
 for some $i \geq 1$ \cite{CS18} (c.f. Lemma \ref{lem generic bz quotient}). Hence Corollary \ref{cor generic quo sub} implies that ${}^{(i-1)}\pi_0$ is generic for some $i \geq 1$. Since
\[  \pi_0 \hookrightarrow \zeta(\mathfrak m) \]
we have ${}^{(i-1)}\pi_0 \hookrightarrow {}^{(i-1)}\zeta(\mathfrak m)$ and so ${}^{(i-1)}\zeta(\mathfrak m)$ has a generic composition factor. On the other hand, geometric lemma gives that  ${}^{(i-1)}\pi_0$ admits a filtration whose successive quotients are isomorphic to $ {}^{(i_1)}\langle \Delta_1  \rangle \times \ldots \times {}^{(i_r)} \langle \Delta_r \rangle$ where $i_1, \ldots, i_r$ run  for all sums equal to $i-1$. Then at least one such quotient is non-degenerate and so in that quotient, all ${}^{(i_k)}\langle \Delta_k \rangle$ are cuspidal. Following from the derivatives on $\langle \Delta \rangle$, $\Delta$ can have at most of relative length $2$. 
\end{proof}

Another consequence is on the indecomposability of derivatives of generic representations.

\begin{corollary}
Let $\pi \in \mathrm{Irr}(G_{n+1})$ be generic. Then the projections of $\pi^{(i)}$ and ${}^{(i)}\pi$ to any cuspidal support component have unique simple quotient and submodule. In particular, the projections of $\pi^{(i)}$ and ${}^{(i)} \pi$ to any cuspidal support component are indecomposable.
\end{corollary}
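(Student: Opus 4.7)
The plan is to combine the three previous ingredients: the multiplicity-freeness of the socle/cosocle of $\pi^{(i)}$ and ${}^{(i)}\pi$ (Proposition \ref{prop multi free}), the genericity of every simple subquotient at the top and bottom (Corollary \ref{cor generic quo sub}), and the classical fact that for any fixed cuspidal support there is at most one irreducible generic representation of $G_m$ up to isomorphism (a consequence of the uniqueness of the Whittaker functional on a parabolically induced representation from cuspidals).

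More precisely, fix a cuspidal support component and let $M$ denote the projection of $\pi^{(i)}$ to that component, which is a direct summand and hence a quotient and a submodule of $\pi^{(i)}$. First I would observe that $M$ is of finite length, since $\pi^{(i)}$ is of finite length for $\pi$ irreducible. Any simple submodule (resp. quotient) of $M$ is a simple submodule (resp. quotient) of $\pi^{(i)}$ lying in the given cuspidal support component; by Corollary \ref{cor generic quo sub} it is generic, and by the uniqueness of the generic irreducible representation with a prescribed cuspidal support, any two such simples are mutually isomorphic. Proposition \ref{prop multi free} then forces the socle (resp. cosocle) of $M$ to consist of a single copy of this common irreducible, i.e.\ to be simple. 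The argument for ${}^{(i)}\pi$ is identical.

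For the last sentence, I would appeal to the standard fact that a finite-length module with simple socle is indecomposable: any non-trivial direct sum decomposition $M = A \oplus B$ would give two non-zero submodules $A, B$ with zero intersection, each having non-zero socle by finite length, hence two disjoint simple submodules in $M$, contradicting the simplicity of $\mathrm{soc}(M)$.

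The only step that requires some care is the uniqueness-of-generic-representation input, which is used in an essential way to upgrade ``generic'' (from Corollary \ref{cor generic quo sub}) to ``unique up to isomorphism'' after pinning down the cuspidal support; the rest is just a socle/cosocle bookkeeping. If one wished to avoid quoting Rodier's uniqueness, one could instead re-derive it from the Zelevinsky classification by checking directly that among multisegments with a prescribed cuspidal support, at most one consists of pairwise unlinked segments, but that digression does not seem necessary here.
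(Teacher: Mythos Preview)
Your proposal is correct and follows essentially the same approach as the paper: the paper's proof is the one-sentence observation that for a fixed cuspidal support there is a unique irreducible generic representation, after which the result follows from Proposition \ref{prop multi free} and Corollary \ref{cor generic quo sub}. Your write-up simply spells out the finite-length and simple-socle-implies-indecomposable steps that the paper leaves implicit.
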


\begin{proof}
For a fixed cuspidal support, there is a unique (up to isomorphism) irreducible smooth generic representation. Now the result follows from Proposition \ref{prop multi free} and Corollary \ref{cor generic quo sub}.
\end{proof}

The analogous statement is not true in general if one replaces an irreducible generic representation by an arbitrary irreducible representation. We give an example below.

\begin{example}
Let $\mathfrak m=\left\{ [1,\nu],[\nu^{-1},1], [1], [\nu^{-1},\nu] \right\}$. We note that
\[ \langle \mathfrak m \rangle =\langle [1,\nu], [\nu^{-1},1]\rangle \times \langle [1] \rangle \times \langle [\nu^{-1},\nu]\rangle .
\]
We compute the components of $\langle \mathfrak m \rangle^{(2)}$, at the cuspidal support $\left\{ \nu^{-1},\nu^{-1}, 1,1, 1, \nu \right\}$. 
In this case, there are four composition factors whose Zelevinsky multisegments are:
\[   \mathfrak m_1= \left\{ [1], [\nu^{-1}] , [1], [\nu^{-1}, \nu]  \right\}, \quad \mathfrak m_2 =\left\{ [1,\nu], [\nu^{-1}], [1], [\nu^{-1},1] \right\}
\]
\[   \mathfrak m_3= \left\{ [1,\nu], [\nu^{-1},1], [\nu^{-1},1] \right\}  \mbox{ (multiplicity 2) }
\]
Note that the socle and cosocle coincide by Lemma \ref{lem coscole socle}. Note $\langle \mathfrak m_1\rangle$ and $\langle \mathfrak m_3 \rangle$ are in socle (and so cosocle). $\langle \mathfrak m_2\rangle$ cannot be a submodule or quotient. Thus the only possible structure is two indecomposable modules. One of them has composition factors of $\langle \mathfrak m_3 \rangle$ (with multiplicity 2) and $\langle \mathfrak m_2 \rangle$. Another one is a simple module isomorphic to $\langle \mathfrak m_1 \rangle$.

\end{example}

\section{Projectivity }\label{ss proj restrict}

\subsection{Projectivity Criteria}

We need the following formula of D. Prasad:

\begin{theorem}\cite{Pr18} \label{thm nongeneric}
Let $\pi_1$ and $\pi_2$ be admissible representations of $\mathrm{GL}_{n+1}(F)$ and $\mathrm{GL}_n(F)$ respectively. Then
\[ \sum_{i \in \mathbb Z} (-1)^i\mathrm{dim}~ \mathrm{Ext}^i_{G_n}(\pi_1, \pi_2) = \mathrm{dim}~ \mathrm{Wh}(\pi_1) \cdot \mathrm{dim}~\mathrm{Wh}(\pi_2) ,
\]
where $\mathrm{Wh}(\pi_1)=\pi_1^{(n+1)}$ and $\mathrm{Wh}(\pi_2)= \pi_2^{(n)}$.
\end{theorem}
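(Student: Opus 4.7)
The plan is to combine the Bernstein--Zelevinsky filtration of $\pi_1|_{G_n}$ with additivity of the Euler--Poincar\'e characteristic and the second adjointness of Bernstein--Zelevinsky inductions as developed in \cite{CS18}. I would first invoke the BZ filtration
\[ 0 = \pi_1^{n+1} \subset \pi_1^n \subset \cdots \subset \pi_1^0 = \pi_1 \]
whose $i$-th successive quotient is $\mathrm{ind}_{R_{n-i+1}}^{G_n}(\pi_1^{[i]} \boxtimes \psi_{i-1})$. Admissible representations of reductive $p$-adic groups have finite cohomological dimension (Schneider--Stuhler), so all Ext groups here vanish in sufficiently high degree and $\mathrm{EP}_{G_n}(-, \pi_2)$ is well-defined and additive along short exact sequences. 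Combined with the second adjointness (Lemma~2.2 of \cite{CS18}), this yields
\[ \sum_{i \in \mathbb{Z}} (-1)^i \dim \mathrm{Ext}^i_{G_n}(\pi_1, \pi_2) = \sum_{i=1}^{n+1} \mathrm{EP}_{G_{n+1-i}}(\pi_1^{[i]}, {}^{(i-1)}\pi_2). \]

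The top term $i = n+1$ lives on the trivial group $G_0 = \{1\}$, where all higher Ext vanishes and the formula collapses to a product of vector-space dimensions:
\[ \mathrm{EP}_{G_0}(\pi_1^{[n+1]}, {}^{(n)}\pi_2) = \dim \pi_1^{(n+1)} \cdot \dim \pi_2^{(n)} = \dim \mathrm{Wh}(\pi_1) \cdot \dim \mathrm{Wh}(\pi_2), \]
which matches exactly the right-hand side of the claimed identity.

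The main obstacle is then to prove the vanishing
\[ \mathrm{EP}_{G_{n+1-i}}(\pi_1^{[i]}, {}^{(i-1)}\pi_2) = 0 \quad \text{for every } 1 \leq i \leq n. \]
My preferred route is through Kazhdan's elliptic Euler--Poincar\'e formula, which expresses $\mathrm{EP}_G$ as an integral of virtual characters over the regular elliptic set of $G$ (suitably adjusted for the central character). The derivatives $\pi_1^{[i]}$ and ${}^{(i-1)}\pi_2$ are built from admissible representations via Jacquet-type functors involving a unipotent radical, and their virtual characters should be concentrated away from the regular elliptic set of $G_{n+1-i}$, forcing the pairing to vanish on each intermediate layer. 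A more hands-on alternative is to iterate the BZ filtration on the second argument ${}^{(i-1)}\pi_2$, decomposing each of these smaller EP pairings further by the same mechanism; the principal technical headache then is tracking the shifts $\nu^{\pm 1/2}$ and the cuspidal supports through repeated applications of the geometric lemma, and verifying that the iteration terminates with only the top-layer Whittaker contribution surviving.
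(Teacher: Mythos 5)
The paper does not contain a proof of this theorem: it is quoted directly from Prasad~\cite{Pr18}, so there is no in-paper argument to compare against. Evaluating your sketch on its own terms: the Bernstein--Zelevinsky filtration, the reduction via second adjointness (which is also what guarantees that the Ext groups of the non-admissible BZ layers against $\pi_2$ are finite-dimensional, so that Euler--Poincar\'e additivity is legitimate), and the identification of the $i=n+1$ term with $\dim\pi_1^{(n+1)}\cdot\dim\pi_2^{(n)}$ are all correct. The one real issue is the vanishing of the intermediate terms $\mathrm{EP}_{G_{n+1-i}}(\pi_1^{[i]},{}^{(i-1)}\pi_2)$ for $1\leq i\leq n$, which is the entire content of the theorem, and neither of your two proposed routes actually closes it.

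Route (a) is incorrect as a mechanism: characters of derivatives are not concentrated away from the regular elliptic set. The highest shifted derivative of an essentially square-integrable representation $\mathrm{St}(\Delta)$ is again essentially square-integrable, and the highest derivative of a Speh representation is again a Speh representation; these have genuinely elliptic character support. Route (b) is left incomplete. The missing observation, which makes the proof short, is that for any $m\geq 1$ the Euler--Poincar\'e pairing on $\mathrm{GL}_m(F)$ vanishes identically on pairs of admissible representations: $\mathrm{GL}_m(F)$ surjects onto $\mathbb{Z}$ via $\mathrm{val}\circ\det$, and in the associated Hochschild--Serre spectral sequence the $\mathbb{Z}$-cohomology of any finite-dimensional module contributes equal dimensions in degrees $0$ and $1$ and nothing higher, so the alternating sum is zero. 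Hence every intermediate layer, which lives on $G_{n+1-i}$ with $n+1-i\geq 1$, contributes zero, and only the top layer on the trivial group $G_0$ survives. The Kazhdan--Schneider--Stuhler elliptic pairing formula, which presupposes compact center (or a fixed central character), is not the right tool here.
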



\begin{lemma} \label{lem deriv gln}
Let $\pi$ be an irreducible $G_{n+1}$-representation. If $\pi^{(i)}$ has a non-generic irreducible submodule or quotient for some $i$, then there exists a non-generic representation $\pi'$ of $G_n$ such that $\mathrm{Hom}_{G_n}(\pi, \pi')\neq 0$. The statement still holds if we replace $\pi^{(i)}$ by ${}^{(i)}\pi$.
\end{lemma}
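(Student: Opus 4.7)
The plan is to apply Lemma \ref{lem generic bz quotient} with a carefully constructed non-generic representation $\pi'$ of $G_n$. Two quick reductions simplify the setup. First, by Lemma \ref{lem coscole socle} the socle and cosocle of $\pi^{(i)}$ (and of ${}^{(i)}\pi$) are isomorphic as semisimple modules, so a non-generic irreducible submodule automatically yields a non-generic irreducible quotient; I may therefore assume $\sigma$ is a non-generic irreducible quotient. Second, the identity ${}^{(i)}\pi = \theta_{n+1-i}(\theta_{n+1}(\pi)^{(i)})$, combined with the fact that $\theta_{n+1}$ restricts to $\theta_n$ on $G_n\hookrightarrow G_{n+1}$ and thus induces a bijection on $\mathrm{Hom}_{G_n}$-spaces that preserves non-genericity, reduces the ${}^{(i)}\pi$-statement to the $\pi^{(i)}$-statement applied to $\theta(\pi)$. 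So I assume $\sigma$ is a non-generic irreducible quotient of $\pi^{(i)}$, and set $\sigma' := \nu^{1/2}\sigma$, which is a non-generic quotient of $\pi^{[i]}$.

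The construction of $\pi'$ splits on $i$. When $i = 1$, I set $\pi' := \sigma'$: the Ext-vanishing hypothesis of Lemma \ref{lem generic bz quotient} is vacuous and the Hom condition reduces to $\sigma'$ being a quotient of $\pi^{[1]}$, which holds by construction. When $i \geq 2$, I pick a cuspidal representation $\rho$ of $G_{i-1}$ whose inertial equivalence class lies outside $\mathrm{csupp}_{\mathbb Z}(\pi)$, and set $\pi' := \sigma' \times \rho$. Since $\rho$ is cuspidal, its only nonzero left derivatives are ${}^{(0)}\rho = \rho$ and ${}^{(i-1)}\rho = \mathbf{1}$. Applying the geometric lemma to $(\pi')^{(n)}$ leaves only one surviving layer, $(\sigma')^{(n+1-i)} \times \mathbf{1}$, which vanishes because $\sigma'$ is non-generic; hence $\pi'$ is non-generic.

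The next step is to verify the hypotheses of Lemma \ref{lem generic bz quotient} for this $\pi'$. The same two-piece dichotomy gives, via the geometric lemma, a short exact sequence
\[ 0 \to \sigma' \to {}^{(i-1)}\pi' \to {}^{(i-1)}\sigma' \times \rho \to 0, \]
with $\sigma'$ as the submodule (the layer where all the derivative is absorbed by $\rho$). Composing the embedding $\sigma' \hookrightarrow {}^{(i-1)}\pi'$ with a surjection $\pi^{[i]} \twoheadrightarrow \sigma'$ produces a nonzero element of $\mathrm{Hom}_{G_{n+1-i}}(\pi^{[i]}, {}^{(i-1)}\pi')$. For the Ext-vanishing: when $1 \leq j < i$, the inequality $j - 1 < i - 1$ forces the only surviving layer of ${}^{(j-1)}\pi'$ to be ${}^{(j-1)}\sigma' \times \rho$; its cuspidal support contains the inertial class of $\rho$, which by choice lies outside $\mathrm{csupp}_{\mathbb Z}(\pi^{[j]}) \subset \mathrm{csupp}_{\mathbb Z}(\pi)$. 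Hence $\pi^{[j]}$ and ${}^{(j-1)}\pi'$ lie in disjoint Bernstein components and all $\mathrm{Ext}^k$ between them vanish. Lemma \ref{lem generic bz quotient} then delivers $\mathrm{Hom}_{G_n}(\pi,\pi') \neq 0$ with $\pi'$ non-generic.

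I expect the main subtlety to be securing $\sigma'$ as a genuine \emph{submodule} of ${}^{(i-1)}\pi'$, not merely as a subquotient, since only then does the Hom pull back cleanly through the filtration. This is precisely what the cuspidality of $\rho$ achieves: all intermediate geometric-lemma layers vanish, leaving only the two-step extension displayed above. A secondary concern is keeping the left/right derivative conventions and the $\theta$-reduction for the ${}^{(i)}\pi$ case aligned, but this is bookkeeping once the sign conventions for the geometric lemma and the $\theta$-bijection are fixed.
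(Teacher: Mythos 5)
Your proof is correct and follows essentially the same route as the paper: reduce to the quotient case via socle-equals-cosocle, use $\theta$ to transfer to the left-derivative case, choose a cuspidal $\rho$ of $G_{i-1}$ with disjoint cuspidal support, set $\pi' = \nu^{1/2}\sigma \times \rho$, and check the Hom/Ext conditions of Lemma~\ref{lem generic bz quotient}. You are in fact slightly more careful than the paper at one point: you correctly insist that $\sigma'$ must appear as a \emph{submodule} of ${}^{(i-1)}\pi'$ in order to compose with the surjection $\pi^{[i]}\twoheadrightarrow\sigma'$, whereas the paper's printed proof says ``simple quotient,'' which is evidently a slip of the pen.
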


\begin{proof}
By Lemma \ref{lem coscole socle}, it suffices just to consider that $\pi^{(i)}$ has a non-generic irreducible quotient, say $\lambda$. Now let 
\[\pi'=(\nu^{1/2}\lambda) \times \tau , \]where $\tau$ is a cuspidal representation such that $\tau$ is not an unramified twist of a cuspidal representation appearing in any segment in $\mathfrak{m}$. Here $\mathfrak{m}$ is a multisegment with $\pi \cong \langle \mathfrak{m} \rangle$. Now 
\[\mathrm{Ext}^k_{G_{n+1-j}}(\pi^{[j]}, {}^{(j-1)}\pi')=0 \]
 for $j< n(\tau)$ and any $k$ since $\tau$ is in $\mathrm{cupp}({}^{(j-1)}\pi')$ whenever it is nonzero while $\tau \not\in \mathrm{cupp}( \pi^{[j]})$. Moreover, ${}^{(n(\tau)-1)}\pi' $ has a simple quotient isomorphic to $\nu^{1/2}\lambda$. This checks the Hom and Ext conditions in Lemma \ref{lem generic bz quotient} and hence proves the lemma. The proof for ${}^{(i)}\pi$ is almost identical with switching left and right derivatives in suitable places.
\end{proof}

\begin{theorem} \label{thm proj criteria} 
Let $\pi\in \mathrm{Irr}(G_{n+1})$. Then the following conditions are equivalent:
\begin{enumerate}
\item $\pi|_{G_n}$ is projective 
\item $\pi$ is generic and any irreducible quotient of $\pi|_{G_n}$ is generic.
\end{enumerate}
\end{theorem}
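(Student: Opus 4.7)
The plan is to prove both implications, using Prasad's Euler--Poincaré formula (Theorem \ref{thm nongeneric}) as the principal computational tool, supplemented by the Ext-vanishing for generic pairs of \cite{CS18} and by the Bernstein--Zelevinsky filtration.

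For the direction $(1)\Rightarrow (2)$, assume $\pi|_{G_n}$ is projective. Being a nonzero smooth $G_n$-representation, it admits some irreducible quotient $\omega$ by Zorn's lemma. Projectivity annihilates $\mathrm{Ext}^i_{G_n}(\pi,\omega)$ for $i\ge 1$, so Theorem \ref{thm nongeneric} collapses to
\[
  \dim \mathrm{Hom}_{G_n}(\pi,\omega) \;=\; \dim \mathrm{Wh}(\pi)\cdot \dim \mathrm{Wh}(\omega).
\]
Nonvanishing of the left side forces both Whittaker spaces to be nonzero, so $\pi$ and $\omega$ are both generic; applied to an arbitrary irreducible quotient, this gives (2).

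For $(2)\Rightarrow (1)$, I want $\mathrm{Ext}^i_{G_n}(\pi,M)=0$ for every $i\ge 1$ and every $M\in\mathrm{Alg}(G_n)$. Standard reductions (finite cohomological dimension on each Bernstein block, compatibility of Ext with filtered colimits, and long exact sequences within composition series inside a block) bring this down to $M=\omega$ irreducible. If $\omega$ is generic, the generic Ext-vanishing of \cite{CS18} applies directly to $(\pi,\omega)$, and Theorem \ref{thm nongeneric} additionally pins $\dim\mathrm{Hom}_{G_n}(\pi,\omega)=1$. The hard case is $\omega$ irreducible and non-generic.

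For non-generic $\omega$, condition (2) gives $\mathrm{Hom}_{G_n}(\pi,\omega)=0$, but Theorem \ref{thm nongeneric} only yields the alternating identity $\sum_{i\ge 1}(-1)^i\dim\mathrm{Ext}^i_{G_n}(\pi,\omega)=0$, which is strictly weaker than the pointwise vanishing needed. My plan to upgrade it is to run through the Bernstein--Zelevinsky filtration of $\pi|_{G_n}$ in the spirit of the proof of Lemma \ref{lem generic bz quotient}: the second adjointness of derivatives identifies the Ext into the $j$-th layer with $\mathrm{Ext}^k_{G_{n+1-j}}(\pi^{[j]},{}^{(j-1)}\omega)$, so pointwise vanishing of $\mathrm{Ext}^i_{G_n}(\pi,\omega)$ reduces to vanishing of these layer Exts for all relevant $(j,k)$. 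The structural input is that $\pi$ is generic, hence by Corollary \ref{cor generic quo sub} the socle and cosocle of each $\pi^{[j]}$ are generic. Turning this constituent-level information into genuine Ext-vanishing against the whole non-generic $\omega$ is the main obstacle, and is expected to require the Hecke-algebra realization from \cite{CS17,CS18} that identifies the relevant Bernstein components of $\pi|_{G_n}$ with projective modules over affine Hecke algebras whose simple quotients correspond precisely to the generic representations permitted by (2). This Hecke-algebra step is the technical heart of the argument.
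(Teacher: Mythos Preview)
Your proposal is correct and takes essentially the same approach as the paper. For $(1)\Rightarrow(2)$ you argue exactly as the paper does, via Prasad's Euler--Poincar\'e formula; for $(2)\Rightarrow(1)$ the paper simply cites \cite{CS18} without further detail, and your sketch correctly identifies that the argument there rests on the Hecke-algebra realization of the Bernstein components rather than on any purely filtration-theoretic reasoning. Your intermediate attempt to handle non-generic $\omega$ layerwise via the Bernstein--Zelevinsky filtration is not how \cite{CS18} proceeds and would not by itself close the gap (as you yourself note), but since you ultimately defer to the Hecke-algebra input, the overall logic agrees with the paper.
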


\begin{proof}
For (2) implying (1), it is proved in \cite{CS18}. We now consider $\pi$ is projective. All higher Exts vanish and so $\mathrm{EP}(\pi, \pi')=\mathrm{dim}~\mathrm{Hom}_{G_n}(\pi,\pi')$ for any irreducible $\pi'$ of $G_n$. If $\pi'$ is an irreducible quotient of $\pi$, then $\mathrm{EP}(\pi,\pi')\neq 0$ and hence $\pi$ is generic by Theorem \ref{thm nongeneric}. But Theorem \ref{thm nongeneric} also implies $\pi'$ is generic. This proves (1) implying (2).

\end{proof}

\subsection{Classification} \label{ss classification}

\begin{definition} \label{def restricted proj}
 We say that an irreducible representation $\pi$ of $G_{n+1}$ is {\it relatively projective} if either one of the following conditions holds:
\begin{enumerate}
\item[(i)] $\pi$ is essentially square-integrable;
\item[(ii)] $\pi$ is isomorphic to $\pi_1 \times \pi_2$ for some cuspidal representations $\pi_1, \pi_2$ of $G_{(n+1)/2}$ with $\pi_1 \not\cong \nu^{\pm 1}\pi_2$. 
\end{enumerate}
In particular, a relatively-projective representation is generic. The condition $\pi_1 \not\cong \nu^{\pm 1}\pi_2$ is in fact automatic from $\pi$ being irreducible.

We can formulate the conditions (i) and (ii) combinatorially as follows. Let $\pi \cong \mathrm{St}(\mathfrak{m})$ for a multisegment $\mathfrak{m}=\left\{\Delta_1, \ldots, \Delta_r\right\}$. Then (i) is equivalent to  $r=1$; and (ii) is equivalent to that $r=2$, and $\Delta_1$ and $\Delta_2$ are not linked, and the relative lengths of $\Delta_1$ and $\Delta_2$ are both $1$. 
\end{definition}

\begin{lemma} \label{lem generic bz quotient gp}
Let $\pi \in \mathrm{Irr}(G_{n+1})$ be not relatively projective. Then there exists an irreducible non-generic representation $\pi'$ of $G_{n}$ such that $\mathrm{Hom}_{G_n}(\pi, \pi') \neq 0$. 
\end{lemma}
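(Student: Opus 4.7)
My plan is a case analysis on the multisegment $\mathfrak m = \{\Delta_1,\ldots,\Delta_r\}$ with $\pi \cong \mathrm{St}(\mathfrak m)$. The hypothesis that $\pi$ is not relatively projective (Definition \ref{def restricted proj}) splits into three sub-cases: (A) $\pi$ is non-generic, i.e.\ $\mathfrak m$ contains a linked pair; (B1) $\pi$ is generic and some $\Delta_k$ has relative length $\geq 2$; and (B2) $\pi$ is generic with all $\Delta_i$ singletons and $r \geq 3$. The two workhorses are Lemma \ref{lem deriv gln}, which builds a non-generic $\pi'$ from any non-generic simple sub/quotient of a derivative of $\pi$, and Lemma \ref{lem generic bz quotient}, which certifies a non-zero Hom via a derivative-matching criterion.

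In Case (A), I would exhibit a non-generic simple quotient (or submodule) of some $\pi^{(i)}$ or ${}^{(i)}\pi$ and apply Lemma \ref{lem deriv gln}. Take $\rho_0 = b(\Delta)$ for $\Delta$ a top segment in the linked pair, and set $i = n(\rho_0)$. Applying the geometric lemma to the standard module $\lambda(\mathfrak m)=\mathrm{St}(\Delta_1)\times\cdots\times\mathrm{St}(\Delta_r)$ (of which $\pi$ is the unique irreducible quotient) exposes a layer of $\pi^{(i)}$ isomorphic to a Steinberg module attached to a multisegment still carrying a linked pair; any simple quotient of such a module is non-generic, and Lemma \ref{lem deriv gln} concludes.

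In Cases (B1)/(B2), Corollary \ref{cor generic quo sub} forbids a direct appeal to Lemma \ref{lem deriv gln}: every simple sub/quotient of a derivative of a generic irreducible is again generic. Instead I construct $\pi'$ by hand as a twisted determinant character $\chi \circ \det_n$ (or a parabolic induction of such a character with auxiliary generic factors when $n$ is large), and verify the hypotheses of Lemma \ref{lem generic bz quotient} at level $i = n(b(\Delta_k))$ for an appropriate segment $\Delta_k$. The character $\chi$ is tuned so that ${}^{(i-1)}(\chi \circ \det_n)$, itself a shifted determinant character computable from the known derivatives of $\mathrm{triv}_n$, matches a summand of $\pi^{[i]}$, where $\pi^{[i]}$ is computed via the geometric lemma applied to $\mathrm{St}(\Delta_1)\times\cdots\times\mathrm{St}(\Delta_r)$. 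The Ext-vanishings at lower levels $j<i$ then follow from a Bernstein-block comparison: arranging $\chi$ by an unramified twist so that the cuspidal support of $\chi \circ \det_n$ (a single inertial class, with multiplicity) is disjoint from the mixed cuspidal supports of $\pi^{[j]}$ for $j<i$ (which involve distinct inertial classes coming from multiple $\Delta_l$'s) forces $\mathrm{Ext}^\ast_{G_{n+1-j}}(\pi^{[j]},{}^{(j-1)}\pi')=0$.

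The hard part will be Cases (B1)/(B2), where $\pi'$ cannot be extracted from a derivative of $\pi$ and must be constructed manually. One must simultaneously secure the non-triviality of the key Hom at level $i$ and the Ext-vanishings at lower levels. Careful bookkeeping of $\nu^{\pm 1/2}$-shifts in the derivative formulas, together with the Bernstein-block disjointness argument secured by unramified twisting of $\chi$, is the decisive technical step; for $r \geq 4$ or for multisegments with repeated cuspidals the same character-of-determinant strategy adapts, but one may need to replace $\chi \circ \det_n$ by a product $(\chi \circ \det_k) \times \sigma$ with $\sigma$ an auxiliary generic factor, in order to align the derivative shape with that of $\pi^{[i]}$.
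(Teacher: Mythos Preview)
Your plan for Cases (B1)/(B2) has a genuine gap. If $\pi' = \chi \circ \det_n$ is a character, then ${}^{(i-1)}\pi' = 0$ for every $i \geq 3$, so Lemma~\ref{lem generic bz quotient} can only be applied with $i \in \{1,2\}$. But then the target ${}^{(i-1)}\pi'$ is a character of $G_{n+1-i}$ with $n+1-i \geq n-1$, and you need it to be a quotient of $\pi^{[i]}$. By the very Corollary~\ref{cor generic quo sub} you invoke, every simple quotient of $\pi^{[i]}$ is generic, and a character of $G_m$ is generic only when $m \leq 1$; this forces $n \leq 2$. So the determinant-character construction cannot work beyond tiny cases, and your fallback $(\chi \circ \det_k) \times \sigma$ is left entirely unspecified. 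The paper's construction is quite different in spirit: it takes $\pi' = \mathrm{St}(\mathfrak m')$ with $\mathfrak m' = \{\nu^{1/2}\Delta,\ [\nu^{-1/2}a(\Delta)],\ [\tau]\}$ for a carefully chosen $\Delta \in \mathfrak m$ and an auxiliary cuspidal $\tau$ in a fresh inertial class. The non-genericity of $\pi'$ comes from the linked pair $\nu^{1/2}\Delta$, $[\nu^{-1/2}a(\Delta)]$, and the point is that the \emph{left} derivative ${}^{(k+l)}\pi'$ strips these away, leaving the \emph{generic} submodule $\nu^{1/2}\mathrm{St}({}^{-}\Delta)$, which then matches a generic simple quotient of $\pi^{[k+l+1]}$ --- exactly as Corollary~\ref{cor generic quo sub} demands. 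The Ext-vanishing at lower levels is secured by the auxiliary cuspidal $\tau$ (and $\nu^{-1/2}a(\Delta)$), not by Bernstein-block disjointness of a twisted character.

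Your Case~(A) is also problematic as sketched: the geometric lemma gives a filtration on $\lambda(\mathfrak m)^{(i)}$, not on $\pi^{(i)}$, and since $\pi$ is only a quotient of $\lambda(\mathfrak m)$, a non-generic layer in the former need not produce a non-generic simple quotient of the latter. In fact the paper's proof simply does not treat the non-generic case separately; its case division (Case~1: $r \geq 3$, or $r=2$ with both segments of length $\geq 2$, or $r=2$ with disjoint segments; Case~2: $r=2$, one singleton, nonempty intersection) is organized purely by the shape of $\mathfrak m$, and the argument in Case~1 explicitly invokes genericity. For the intended application (Theorem~\ref{thm class restrict proj}) the non-generic case is already dispatched by Theorem~\ref{thm proj criteria}, so this is not an obstacle there, but it means your Case~(A) is both unnecessary and, as written, unjustified.
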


\begin{proof}

It suffices to construct an irreducible non-generic representation $\pi'$ satisfying the Hom and Ext properties in Lemma \ref{lem generic bz quotient}.

Let $\mathfrak{m}=\left\{ \Delta_1,\ldots, \Delta_r \right\}$ be a multisegment such that $\pi \cong \mathrm{St}(\mathfrak{m})$. We divide into few cases.

{\bf Case 1:} $r\geq 3$; or when $r=2$, each segment has relative length at least $2$; or when $r=2$, $\Delta_1 \cap \Delta_2=\emptyset$. We choose a segment $\Delta'$ in $\mathfrak{m}$ with the shortest absolute length. Now we choose a maximal segment $\Delta$ in $\mathfrak{m}$ with the property that $\Delta' \subset \Delta$. By genericity, $\nu^{-1}a(\Delta) \notin \Delta_k$ for any $\Delta_k\in \mathfrak{m}$. Let 
\[\mathfrak{m}'=\left\{ \nu^{1/2}\Delta, [\nu^{-1/2}a(\Delta)], [\tau] \right\} , \]
 where $\tau$ is a cuspidal representation so that $\mathrm{St}(\mathfrak{m}')$ is a representation of $G_n$ and $\tau$ is not an unramified twist of any cuspidal representation appearing in a segment of $\mathfrak{m}$. To make sense of the construction, it needs the choices and the assumptions on this case. Let $k=n(a(\Delta)), l=n(\tau)$. Let 
\[ \pi'=\mathrm{St}(\mathfrak{m}') . \]
Now as for $i <k+l+1$, either $\nu^{-1/2}a(\Delta)$ or $\tau$ appears in the cuspidal support of ${}^{(i-1)}\pi$, but not in that of $\nu^{1/2}\pi^{(i)}$, 
\[ \mathrm{Ext}^j_{G_{n+1-i}}(\nu^{1/2}\cdot \pi^{(i)}, {}^{(i-1)}\pi') = 0 \]
and all $j$. By Corollary \ref{cor generic quo sub}, $\nu^{1/2}\cdot \pi^{(k+l+1)}$ has a simple generic quotient isomorphic to $\nu^{1/2}\mathrm{St}({}^-\Delta)$. On the other hand, a submodule structure of ${}^{(k+l)}\pi'$ can be computed as follows: 
\begin{align}  \label{eqn derivative compute}
 0 \neq     &\mathrm{Hom}_{G_n}(\lambda(\mathfrak m'), \mathrm{St}(\mathfrak m')) \\
\cong &  \mathrm{Hom}_{G_{n-k-l} \times G_{k+l}}(  \mathrm{St}(\nu^{1/2}\cdot{}^-\Delta)  \boxtimes (\nu^{-1/2}a(\Delta) \times \tau), \mathrm{St}(\mathfrak m')_{N_{k+l}^-}) , 
\end{align}
 Here the non-zeroness comes from the fact that $\mathrm{St}(\mathfrak m')$ is the unique quotient of $\lambda(\mathfrak m')$, and the isomorphism follows from Frobenius reciprocity. Since taking the derivative is an exact functor, we have that $ \mathrm{St}(\nu^{1/2}\cdot {}^-\Delta) $ is a subrepresentation of ${}^{(k+l)}\mathrm{St}(\mathfrak m')$ (see Section \ref{ss bz expressions}). Thus we have 
\[ \mathrm{Hom}_{G_{n-k-l}}( \pi^{[k+l+1]}, {}^{(k+l)}\pi') \neq 0 .\]

{\bf Case 2}: $r=2$ with $\Delta_1 \cap \Delta_2 \neq \emptyset$ and one segment having relative length $1$ (and not both having relative length $1$ by the definition of relatively-projective type). By switching the labeling on segments if necessary, we assume that $\Delta_1 \subset \Delta_2$. Let $p$ and let $l$ be the absolute and relative length of $\Delta_2$ respectively. Let 
\[ \mathfrak{m}'=\left\{  [\nu^{1/2}a(\Delta_1)], [\nu^{3/2-l}a(\Delta_1), \nu^{-1/2}a(\Delta_1)], [\tau] \right\} ,  \hspace{0.2 cm} \pi'=\mathrm{St}(\mathfrak{m}') ,\] where $\tau$ is a cuspidal representation of $G_{k}$ (here $k$ is possibly zero) so that $\mathrm{St}(\mathfrak{m}')$ is a $G_n$-representation. Note that $\pi'$ is non-generic. By Corollary \ref{cor generic quo sub} and geometric lemma, a simple quotient $\nu^{1/2}\cdot \pi^{(p)}$ is isomorphic to $\nu^{1/2}a(\Delta_1)$. Similar computation as in (\ref{eqn derivative compute}) gives that a simple module of ${}^{(p-1)}\pi'$ is isomorphic to $\nu^{1/2}a(\Delta_1)$. This implies the non-vanishing $\mathrm{Hom}$ between those two $G_{n+1-p}$-representations.

 We now prove the vanishing $\mathrm{Ext}$-groups in order to apply Lemma \ref{lem generic bz quotient}. Now applying the Bernstein-Zelevinsky derivatives $(j=1,\ldots, p-1)$, unless $a(\Delta_1)\cong b(\Delta_2)$, we have that $\nu^{3/2-l}a(\Delta_1)$ is a cuspidal support for ${}^{(j-1)}\pi'$ whenever ${}^{(j-1)}\pi'$ is nonzero and is not a cuspidal support for $\nu^{1/2}\pi^{(j)}$. It remains to consider $a(\Delta_1)\cong b(\Delta_2)$. We can similarly consider the cuspidal support for $\nu^{-1/2}a(\Delta_1)$ and $\nu^{1/2}a(\Delta_1)$ to make conclusion.
\end{proof}

\begin{lemma} \label{lem if direction}
Let $\pi \in \mathrm{Irr}(G_{n+1})$. If $\pi$ is (generic) relatively-projective, then $\pi|_{G_n}$ is projective.
\end{lemma}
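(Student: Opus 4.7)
The plan is to apply the projectivity criterion of Theorem \ref{thm proj criteria}: it suffices to check that $\pi$ is generic and that every irreducible quotient of $\pi|_{G_n}$ is generic. Genericity of $\pi$ is immediate from Definition \ref{def restricted proj}: a single segment is trivially unlinked with itself, and in case (ii) the two singleton segments $[\rho_1], [\rho_2]$ are unlinked precisely because $\rho_1 \times \rho_2$ is irreducible (forcing $\rho_1 \not\cong \nu^{\pm 1}\rho_2$). Case (i) is the essentially square-integrable case, for which projectivity of $\pi|_{G_n}$ was already established in \cite{CS18}, so condition (2) of Theorem \ref{thm proj criteria} is automatic there; I only need to treat case (ii).

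For case (ii), write $\pi = \rho_1 \times \rho_2 \cong \mathrm{Ind}_{P_l}^{G_{2l}}(\rho_1 \boxtimes \rho_2)$ with $l = (n+1)/2$. Rather than verifying the quotient condition directly, the plan is to establish the stronger statement that $\pi|_{G_n}$ is projective by computing all nonzero derivatives and exploiting the Bernstein-Zelevinsky filtration
\[ 0 \subset \pi_n \subset \cdots \subset \pi_0 = \pi|_{G_n}, \qquad \pi_{i-1}/\pi_i \cong \mathrm{ind}_{R_{n+1-i}}^{G_n}(\pi^{[i]}\boxtimes\psi_{i-1}). \]
The first step is to compute $\pi^{(i)}$ by applying the geometric lemma to the double cosets $P_l \backslash G_{2l}/P_i$. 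The cuspidality of $\rho_1$ and $\rho_2$ kills every contribution that would require a nontrivial Jacquet of either factor, leaving only the cases $i = 0$ (giving $\pi$ itself), $i = l$ (giving a semisimple $G_l$-module whose two constituents are appropriate shifts of $\rho_1$ and $\rho_2$), and $i = n+1$ (the one-dimensional Whittaker). In particular, each nonzero $\pi^{[i]}$ with $i \geq 1$ is a direct sum of cuspidal or one-dimensional pieces on $G_{n+1-i}$.

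The second step uses this derivative structure. By the second adjointness of derivatives \cite[Lemma 2.2]{CS18}, for any smooth $\pi'$ of $G_n$,
\[ \mathrm{Ext}^k_{G_n}\bigl(\mathrm{ind}_{R_{n+1-i}}^{G_n}(\pi^{[i]}\boxtimes\psi_{i-1}),\, \pi'\bigr) \cong \mathrm{Ext}^k_{G_{n+1-i}}(\pi^{[i]}, {}^{(i-1)}\pi'). \]
Since cuspidal representations are projective in $\mathrm{Alg}(G_l)$ (their matrix coefficients have compact support modulo centre) and one-dimensional representations of the trivial group are trivially projective, each nonzero subquotient $\pi_{i-1}/\pi_i$ of the filtration is projective as a $G_n$-module. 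A long-exact-sequence argument across the (effectively two-step) filtration then gives $\mathrm{Ext}^k_{G_n}(\pi|_{G_n}, \pi') = 0$ for all $k \geq 1$ and all $\pi'$, establishing projectivity of $\pi|_{G_n}$.

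The main obstacle lies in the derivative computation: tracking the correct normalization and Weyl-element shifts across the $P_l \backslash G_{2l}/P_i$ double cosets, and verifying the cuspidality-driven vanishing at all intermediate $i$ (including the careful identification of $\pi^{(l)}$ as semisimple with exactly two cuspidal constituents, which uses $\rho_1 \not\cong \nu^{\pm 1}\rho_2$ to avoid any extension between the two pieces). Once the derivative structure is pinned down, the projectivity conclusion is a formal diagram chase.
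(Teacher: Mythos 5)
Your derivative computation is correct: for $\pi=\rho_1\times\rho_2$ with $\rho_1,\rho_2$ cuspidal of $G_l$ ($l=(n+1)/2$), the only nonzero $\pi^{(i)}$ with $i\geq 1$ occur at $i=l$ (a direct sum of two cuspidal twists) and $i=n+1$ (one-dimensional). The gap is the step where you claim each nonzero Bernstein--Zelevinsky layer $\pi_{i-1}/\pi_i\cong\mathrm{ind}_{R_{n+1-i}}^{G_n}(\pi^{[i]}\boxtimes\psi_{i-1})$ is projective because $\pi^{[i]}$ is projective. For $i=l$, $\pi^{[l]}$ is a sum of cuspidal representations of $G_l$, and these are \emph{not} projective in $\mathrm{Alg}(G_l)$: the cuspidal Bernstein block is equivalent to modules over a Laurent polynomial ring, so a cuspidal $\rho$ has $\mathrm{Ext}^1_{G_l}(\rho,\rho)\neq 0$ (the nontrivial self-extension in the unramified-twist direction). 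Compact support of matrix coefficients only modulo the (noncompact) centre buys projectivity in the fixed-central-character category, not in $\mathrm{Alg}(G_l)$. Concretely, taking $\pi'=\rho_1\times\tau$ for suitable cuspidal $\tau$ of $G_{l-1}$ makes ${}^{(l-1)}\pi'$ a twist of $\rho_1$ and gives $\mathrm{Ext}^1_{G_n}\bigl(\mathrm{ind}_{R_l}^{G_n}(\pi^{[l]}\boxtimes\psi_{l-1}),\pi'\bigr)\neq 0$, so the $i=l$ layer is not projective and the ``formal diagram chase'' cannot close.

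This is not a repairable bookkeeping issue: the projectivity of $\pi|_{G_n}$ genuinely does not decompose along the Bernstein--Zelevinsky filtration. (Already for $\pi$ Steinberg, essentially all layers are non-projective, yet $\pi|_{G_n}$ is the Gelfand--Graev representation and hence projective.) The paper's proof therefore does \emph{not} argue layer-by-layer; instead it verifies condition (2) of Theorem \ref{thm proj criteria} directly --- taking any irreducible $\pi'$ with $\mathrm{Hom}_{G_n}(\pi,\pi')\neq 0$, reading off from which of the two nonzero derivatives the Hom arises, and concluding in each case that $\pi'$ is generic --- and then appeals to the implication $(2)\Rightarrow(1)$ of Theorem \ref{thm proj criteria}, whose proof in \cite{CS18} is a Hecke-algebra argument realizing $\pi|_{G_n}$ as the Gelfand--Graev representation. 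That Hecke-algebra step is the genuine content you are trying to replace, and the filtration by itself is too coarse to supply it.
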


\begin{proof}
When $\pi$ is essentially square-integrable, it is proved in \cite{CS18}. We now assume that $\pi$ is in the case (2) of Definition \ref{def restricted proj}. It is equivalent to prove the condition (2) in Theorem \ref{thm proj criteria}. Let $\pi' \in \mathrm{Irr}(G_n)$ with $\mathrm{Hom}_{G_n}(\pi,\pi')\neq 0$. We have to show that $\pi'$ is generic. Note that the only non-zero derivative of $\pi^{(i)}$ can occur when $i=n+1$ and $\frac{n+1}{2}$.

{\bf Case 1:} $\mathrm{Hom}_{G_{(n+1)/2}}(\pi^{[(n+1)/2]}, {}^{((n-1)/2)}\pi') \neq 0$. For (1), let 
\[ \pi = \rho_1 \times \rho_2\]
for some irreducible cuspidal representations $\rho_1, \rho_2$ of $G_{(n+1)/2}$ with $\rho_1 \not \cong \nu^{\pm 1}\rho_2$, and 
\[ \mathfrak{m}'=\left\{ \Delta_1',\ldots, \Delta_s' \right\} \quad \mbox{ for $\pi'\cong \mathrm{St}(\mathfrak{m}')$} . \]
 By a simple count on dimensions, we must have $\Delta_k' \cong \nu^{1/2}\rho_1$ or $ \cong \nu^{1/2}\rho_2$ for some $k$. Using dimensions again, we have for $l\neq k$, $[\rho_1]$ and $[\rho_2]$ are unlinked to $\Delta_l'$ and so 
\[\pi' \cong (\nu^{1/2}\cdot \rho_r)\times \mathrm{St}(\mathfrak{m}'\setminus \left\{ \Delta_k \right\}) ,\]
for $r=1$ or $2$. Then 
\[ {}^{((n-1)/2)}\pi'\cong\nu^{1/2}\cdot \rho_r , \]
which implies 
\[ {}^{((n-1)/2)}\mathrm{St}(\mathfrak{m}'\setminus \left\{ \Delta_k' \right\})\neq 0 .\]
 Thus $\mathrm{St}(\mathfrak{m}'\setminus \left\{ \Delta_k' \right\})$ is generic and so is $\pi'$. 

{\bf Case 2:}  $\mathrm{Hom}_{G_{(n+1)/2}}( \pi^{[(n+1)/2]}, {}^{(n-1)/2}\pi') = 0$. We must have 
\[ \mathrm{Hom}_{G_{n+1-i}}(\pi^{[n+1]}, {}^{(n)}\pi')\neq 0 \]
 and so ${}^{(n)}\pi'\neq 0 $. Hence $\pi'$ is generic. 
\end{proof}

We now achieve the classification of irreducible representations which are projective when restricted from $G_{n+1}$ to $G_n$:

\begin{theorem} \label{thm class restrict proj}
Let $\pi \in \mathrm{Irr}(G_{n+1})$. Then $\pi|_{G_n}$ is projective if and only if $\pi$ is relatively projective in Definition \ref{def restricted proj}. 
\end{theorem}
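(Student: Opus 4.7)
The plan is to combine the projectivity criteria of Theorem \ref{thm proj criteria} with the two lemmas immediately preceding the theorem statement: Lemma \ref{lem generic bz quotient gp} (which handles the ``only if'' direction) and Lemma \ref{lem if direction} (which handles the ``if'' direction). No new ingredients are needed; the theorem is essentially a synthesis.

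For the ``if'' direction, suppose $\pi$ is relatively projective in the sense of Definition \ref{def restricted proj}. Then Lemma \ref{lem if direction} directly asserts that $\pi|_{G_n}$ is projective, so there is nothing further to prove. One should just cite it.

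For the ``only if'' direction, suppose $\pi|_{G_n}$ is projective. By Theorem \ref{thm proj criteria}, $\pi$ is generic, and moreover every irreducible quotient of $\pi|_{G_n}$ is generic, i.e.\ $\mathrm{Hom}_{G_n}(\pi,\pi')=0$ for every irreducible non-generic $\pi' \in \mathrm{Irr}(G_n)$. Now argue by contradiction: assume $\pi$ is not relatively projective. Then Lemma \ref{lem generic bz quotient gp} produces an irreducible non-generic representation $\pi'$ of $G_n$ with $\mathrm{Hom}_{G_n}(\pi,\pi') \neq 0$, directly contradicting condition~(2) of Theorem \ref{thm proj criteria}. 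Hence $\pi$ must be relatively projective.

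The main obstacle has already been overcome in the two supporting lemmas. In particular, the hard work lies in Lemma \ref{lem generic bz quotient gp}, where one must produce an explicit non-generic $\pi'$ of $G_n$ receiving a nonzero map from $\pi$ whenever $\pi$ fails the multisegment criterion of Definition \ref{def restricted proj}; this required the case analysis on the shape of the underlying multisegment $\mathfrak m$ with $\pi \cong \mathrm{St}(\mathfrak m)$ and the careful cuspidal-support bookkeeping needed to apply Lemma \ref{lem generic bz quotient}. Since all of that is already in hand, the proof of Theorem \ref{thm class restrict proj} itself amounts to a short two-sentence assembly.
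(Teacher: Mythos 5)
Your proposal is correct and matches the paper's proof: the ``if'' direction is exactly Lemma \ref{lem if direction}, and the ``only if'' direction is the contrapositive via Lemma \ref{lem generic bz quotient gp}. The only cosmetic difference is that you route the ``only if'' step through Theorem \ref{thm proj criteria}, while the paper cites Theorem \ref{thm nongeneric} directly --- but since the relevant implication of Theorem \ref{thm proj criteria} is itself proved from Theorem \ref{thm nongeneric}, the substance is identical.
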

 
\begin{proof}

The if direction is proved in Lemma \ref{lem if direction}. The only if direction follows from Lemma \ref{lem generic bz quotient gp} and Theorem \ref{thm nongeneric}.
\end{proof}

One advantage for such classification is that those restricted representations admit a more explicit realization as shown in \cite{CS18}:

\begin{theorem}
Let $\pi, \pi' \in \mathrm{Irr}(G_{n+1})$. If $\pi$ and $\pi'$ are relatively projective, then $\pi|_{G_n} \cong \pi'|_{G_n}$. In particular, $\pi|_{G_n}$ is isomorphic to the Gelfand-Graev representation $\mathrm{ind}_{U_n}^{G_n} \psi_n$. 
\end{theorem}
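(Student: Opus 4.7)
The plan is to identify both $\pi|_{G_n}$ and $\pi'|_{G_n}$ with the Gelfand-Graev representation $\Pi_n = \mathrm{ind}_{U_n}^{G_n} \psi_n$ by matching their $\mathrm{Hom}$-profiles to all irreducibles and then exploiting projectivity on both sides. The main claim $\pi|_{G_n} \cong \pi'|_{G_n}$ is then an immediate consequence.

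First, I invoke Theorem \ref{thm class restrict proj}: since $\pi$ is relatively projective, $\pi|_{G_n}$ is projective, so $\mathrm{Ext}^i_{G_n}(\pi|_{G_n}, \sigma) = 0$ for all $i \geq 1$ and $\sigma \in \mathrm{Alg}(G_n)$. Since $\pi$ is in particular generic, $\dim\mathrm{Wh}(\pi) = 1$, and the Euler--Poincar\'e formula of Prasad (Theorem \ref{thm nongeneric}) collapses to
\[ \dim \mathrm{Hom}_{G_n}(\pi|_{G_n}, \sigma) = \dim \mathrm{Wh}(\sigma) \quad \text{for every } \sigma \in \mathrm{Irr}(G_n). \]

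Second, $\Pi_n$ is itself projective (compact induction preserves projectivity, and the character $\psi_n$ is projective in $\mathrm{Alg}(U_n)$), and Frobenius reciprocity yields the same equality
\[ \dim \mathrm{Hom}_{G_n}(\Pi_n, \sigma) = \dim \mathrm{Wh}(\sigma) \quad \text{for every } \sigma \in \mathrm{Irr}(G_n). \]
Thus $\pi|_{G_n}$ and $\Pi_n$ are two projective $G_n$-modules with identical $\mathrm{Hom}$-dimensions to every irreducible representation.

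Third, I pass to the Bernstein decomposition of $\mathrm{Alg}(G_n)$. By Rodier's theorem, each Bernstein component $\mathfrak{s}$ of $G_n$ contains a unique irreducible generic representation $\sigma_{\mathfrak{s}}$. The $\mathfrak{s}$-projections $(\pi|_{G_n})_{\mathfrak{s}}$ and $(\Pi_n)_{\mathfrak{s}}$ are each projective in $\mathrm{Alg}(G_n)_{\mathfrak{s}}$, and by the Hom-computation above both have cosocle isomorphic to $\sigma_{\mathfrak{s}}$ with multiplicity one. By uniqueness of projective covers in the (finite-type over its Bernstein center) block $\mathrm{Alg}(G_n)_{\mathfrak{s}}$, the two $\mathfrak{s}$-projections are isomorphic. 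Summing over $\mathfrak{s}$ gives $\pi|_{G_n} \cong \Pi_n$, and the same argument for $\pi'$ yields $\pi'|_{G_n} \cong \Pi_n$.

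The principal obstacle is justifying the projective-cover step: one must verify that $(\pi|_{G_n})_{\mathfrak{s}}$ is finitely generated in $\mathrm{Alg}(G_n)_{\mathfrak{s}}$, so that the projective-cover uniqueness actually applies. This can be established from the finiteness properties of the Bernstein--Zelevinsky filtration of $\pi|_{G_n}$ together with admissibility of the derivatives $\pi^{(i)}$. A more hands-on route that sidesteps this issue is to build an explicit morphism: using projectivity of $\Pi_n$, lift the surjection $\pi|_{G_n} \twoheadrightarrow \bigoplus_{\mathfrak{s}} \sigma_{\mathfrak{s}}$ to a map $\Pi_n \to \pi|_{G_n}$ that is an isomorphism on cosocles in every Bernstein block, and then promote it to a global isomorphism by a Nakayama-type argument block-by-block.
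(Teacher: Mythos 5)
The paper itself dispatches this theorem in two lines: genericity follows from the classification, and then it cites \cite[Corollary 5.5 and Theorem 5.6]{CS18}, where the identification of $\pi|_{G_n}$ with $\Pi_n$ is carried out by an explicit Hecke algebra comparison of the Bernstein--Zelevinsky filtration. Your approach is genuinely different, but it contains a real gap.

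The problem is in the Bernstein-block step. Rodier-type uniqueness of generic representations holds \emph{pointwise} on the Bernstein variety: for each cuspidal support (i.e.\ each closed point) there is a unique generic irreducible. A Bernstein \emph{component} $\mathfrak{s}$, however, is an entire connected component of that variety and contains a continuous family of non-isomorphic generic irreducibles (unramified twists, for a start). Consequently the cosocle of $(\pi|_{G_n})_{\mathfrak{s}}$, or of $(\Pi_n)_{\mathfrak{s}}$, is not ``$\sigma_{\mathfrak{s}}$ with multiplicity one''; indeed, these modules have infinitely many pairwise non-isomorphic simple quotients and do not have a largest semisimple quotient at all (compare $\mathbb{C}[x]$ as a module over itself, which is precisely the shape of $(\Pi_n)_{\mathfrak{s}}$ restricted to $\mathcal{A}_{\mathfrak{s}}$ by Theorem \ref{thm affine}). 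So there is no projective cover to speak of, and the Nakayama step has no module-radical to act on.

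Even if one tries to recast the argument without ``projective cover'' language, the underlying claim --- that two finitely generated projective modules in a Bernstein block are isomorphic once they have the same $\mathrm{Hom}$-dimensions to every simple --- is false in this generality. A Bernstein block is finite type over a finitely generated commutative ring but far from Artinian or local; already for a Dedekind domain with nontrivial class group, a non-free rank-one projective and the free rank-one module have identical $\mathrm{Hom}$-spaces to all simples yet are not isomorphic. So matching $\mathrm{Hom}$-profiles (which you did establish correctly from Prasad's Euler--Poincar\'e formula together with projectivity) is necessary but not sufficient. The first two steps of your argument are fine and are indeed ingredients in the actual proof of Theorem \ref{thm class restrict proj}; what is missing is a genuine identification mechanism. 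The paper obtains this from the explicit Hecke algebra realization of $(\Pi_n)_{\mathfrak{s}}$ as $\mathcal{H}_{\mathfrak{s}} \otimes_{\mathcal{H}_{W,\mathfrak{s}}} \mathrm{sgn}$ (Theorem \ref{thm affine}) and a corresponding Hecke-theoretic analysis of the Bernstein--Zelevinsky layers of $\pi|_{G_n}$, which is much more rigid information than $\mathrm{Hom}$-dimensions alone.
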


\begin{proof}
We have shown that $\pi$ and $\pi'$ have to be generic. Then we apply \cite[Corollary 5.5 and Theorem 5.6]{CS18}.
\end{proof}

\section{Submodule structure of Bernstein-Zelevinsky filtrations} \label{sec submod structure}

 Several insights come from the affine Hecke algebra realization of Gelfand-Graev representations. We shall first recall those results.

\subsection{Affine Hecke algebras} \label{ss affine hecke}

\begin{definition}
The affine Hecke algebra $\mathcal H_{l}(q)$ of type $A$ is an associative algebra over $\mathbb{C}$ generated by $\theta_1,\ldots, \theta_{l}$ and $T_w$ ($w \in S_{l}$) satisfying the relations:
\begin{enumerate}
\item $\theta_i\theta_j=\theta_j\theta_i$;
\item $T_{s_k}\theta_k-\theta_{k+1}T_{s_k}=(q-1)\theta_k$, where $q$ is a certain prime power and $s_k$ is the transposition between the numbers $k$ and $k+1$;
\item $T_{s_k}\theta_i=\theta_i T_{s_k}$, where $i \neq k, k+1$
\item $(T_{s_k}-q)(T_{s_k}+1)=0$;
\item $T_{s_k}T_{s_{k+1}}T_{s_k}=T_{s_{k+1}}T_{s_k}T_{s_{k+1}}$.
\end{enumerate}
Let $\mathcal A_l(q)$ be the (commutative) subalgebra generated by $\theta_1, \ldots, \theta_l$. Let $\mathcal H_{W,l}(q)$ be the subalgebra generated by $T_{s_1},\ldots, T_{s_{l-1}}$. Let $\mathrm{sgn}$ be the $1$-dimensional $\mathcal H_{W,l}(q)$-module characterized by $T_{s_k}$ acting by $-1$. 
\end{definition}

It is known from \cite[Proposition 3.11]{Lu89} that the center $\mathcal Z_l$ of $\mathcal H_l(q)$ has a basis $\left\{ z_M =\sum_{w \in S_l} \theta_1^{i_{w(1)}}\ldots \theta_l^{i_{w(l)}} \right\}_M$, where $M=(i_1, \ldots, i_l)$ runs for all $l$-tuples in $\mathbb{Z}^l/S_l$.

Bernstein decomposition asserts that
\[  \mathfrak{R}(G_n) \cong \prod_{\mathfrak{s} \in \mathfrak{B}(G_n)} \mathfrak{R}_{\mathfrak s}(G_n) ,\]
where $\mathfrak{R}(G_n)$ is the category of smooth $G_n$-representations, $\mathfrak{B}(G_n)$ is the set of inertial equivalence classes of $G_n$ and $\mathfrak R_{\mathfrak s}(G_n)$ is the full subcategory of $\mathfrak{R}(G_n)$ associated to $\mathfrak s$ (see \cite{BK2}). For a smooth representation $\pi$ of $G_n$, define $\pi_{\mathfrak s}$ to be the projection of $\pi$ to the component $\mathfrak R_{\mathfrak s}(G_n)$. 

For each $\mathfrak s \in \mathfrak B(G_n)$, \cite{BK} and \cite{BK2} associate with a compact group $K_{\mathfrak s}$ and a finite-dimensional representation $\tau$ of $K_{\mathfrak s}$, such that the convolution algebra \[ \mathcal H(K_{\mathfrak s}, \tau) := \left\{ f : G_n \rightarrow \mathrm{End}(\tau^{\vee}) : f(k_1gk_2) = \tau^{\vee}(k_1)\circ f(g) \circ \tau^{\vee}(k_2)  \mbox{ for $k_1,k_2 \in K_{\mathfrak s}$} \right\} \]
is isomorphic to the product $\mathcal H_{n_1}(q_1) \otimes \ldots \otimes \mathcal H_{n_r}(q_r)$ of affine Hecke algebra of type $A$, denoted by $\mathcal H_{\mathfrak s}$. For a smooth representation $\pi$ of $G_n$, the algebra $\mathcal H(K_{\mathfrak s},\tau)$ acts naturally on the space $\mathrm{Hom}_{K_{\mathfrak s}}(\tau, \pi) \cong (\tau^{\vee}\otimes \pi)^{K_{\mathfrak s}}$. This defines an equivalence of categories:
\begin{equation} \label{eqn equi cat h modules}
  \mathfrak{R}_{\mathfrak s}(G_n) \cong  \mbox{ category of $\mathcal H_{\mathfrak s}$-modules } .
\end{equation}
By abuse notation, we shall identify $\mathrm{Hom}_{K_{\mathfrak s}}(\tau, \pi)$ with $\pi_{\mathfrak s}$ under (\ref{eqn equi cat h modules}) and consider $\pi_{\mathfrak s}$ as $\mathcal H_{\mathfrak s}$-module.

Let $\mathcal A_{\mathfrak s} =\mathcal A_{n_1}(q_1)\otimes \ldots \otimes \mathcal A_{n_r}(q_r)$. Let $\mathcal H_{W,\mathfrak s}=\mathcal H_{W,n_1}(q_1)\otimes \ldots \otimes \mathcal H_{W, n_r}(q_r)$. Let 
\[\mathrm{sgn}_{\mathfrak s} =\mathrm{sgn}\boxtimes \ldots \boxtimes \mathrm{sgn}\]  as an $\mathcal H_{W,\mathfrak s}$-module. Note that in \cite{CS17}, we proved when $\mathfrak s$ is a simply type, but the generalization to all types follows from \cite{BK2} and a simple generalization of \cite[Theorem 2.1]{CS17}. The center of $\mathcal H_{\mathfrak s}$ is equal to $\mathcal Z_1 \otimes \ldots \otimes \mathcal Z_r$, where each $\mathcal Z_k$ is the center of $\mathcal H_{n_k}(q_k)$.

Recall that $\Pi_n=\mathrm{ind}_{U_n}^{G_n} \psi_n$. We may simply write $\Pi$ for $\Pi_n$ if there is no confusion.

\begin{theorem}\cite{CS17} \label{thm affine}
 For any $\mathfrak s \in \mathfrak B(G_n)$, the Bernstein component of the Gelfand-Graev representation $\Pi_{\mathfrak s}$ is isomorphic to $\mathcal H_{\mathfrak s} \otimes_{\mathcal H_{W,\mathfrak s}} \mathrm{sgn}_{\mathfrak s}$. 
\end{theorem}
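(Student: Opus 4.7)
The plan is to exhibit both $\Pi_{\mathfrak s}$ and $\mathcal H_{\mathfrak s}\otimes_{\mathcal H_{W,\mathfrak s}}\mathrm{sgn}_{\mathfrak s}$ as the projective object representing the same functor on $\mathcal H_{\mathfrak s}$-modules, and conclude by Yoneda. On the $G_n$-side, Frobenius reciprocity gives that for any $\pi \in \mathfrak R_{\mathfrak s}(G_n)$,
\[
\mathrm{Hom}_{G_n}(\Pi_n,\pi) \cong \mathrm{Hom}_{U_n}(\psi_n,\pi|_{U_n}) = \mathrm{Wh}(\pi),
\]
so $\Pi_{\mathfrak s}$ represents the Whittaker functor $\mathrm{Wh}(-)$ on $\mathfrak R_{\mathfrak s}(G_n)$, and it is projective therein since $\Pi_n$ is projective in $\mathrm{Alg}(G_n)$ (being compactly induced from a closed subgroup). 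On the Hecke algebra side, because $\mathcal H_{\mathfrak s}$ is a free right $\mathcal H_{W,\mathfrak s}$-module and $\mathrm{sgn}_{\mathfrak s}$ is a direct summand of the regular representation of $\mathcal H_{W,\mathfrak s}$, the module $\mathcal H_{\mathfrak s}\otimes_{\mathcal H_{W,\mathfrak s}}\mathrm{sgn}_{\mathfrak s}$ is a projective $\mathcal H_{\mathfrak s}$-module, and by the usual tensor-hom adjunction it represents
\[
M \ \longmapsto\ \mathrm{Hom}_{\mathcal H_{W,\mathfrak s}}(\mathrm{sgn}_{\mathfrak s},M|_{\mathcal H_{W,\mathfrak s}}),
\]
the sign-isotypic subspace of $M$ with respect to the finite-type Hecke subalgebra.

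The heart of the proof is then to show that, under the equivalence (\ref{eqn equi cat h modules}), the Whittaker functor on $\mathfrak R_{\mathfrak s}(G_n)$ matches the sign-isotypic functor on $\mathcal H_{\mathfrak s}$-modules. I would first treat the simple-type case, which is essentially \cite{CS17}. Choose a Bushnell--Kutzko cover $(K_{\mathfrak s},\tau)$ realizing $\mathcal H_{\mathfrak s}$, and analyze the intersection data of $K_{\mathfrak s}$ with $U_n$ to compute $\mathrm{Hom}_{K_{\mathfrak s}}(\tau,\Pi_n)$ as an $\mathcal H(K_{\mathfrak s},\tau)$-module. The explicit computation should show that the $\mathcal H_{W,\mathfrak s}$-action on this space factors through the sign character, which identifies $\Pi_{\mathfrak s}$ with a direct summand of $\mathcal H_{\mathfrak s}\otimes_{\mathcal H_{W,\mathfrak s}}\mathrm{sgn}_{\mathfrak s}$; dimension/projectivity considerations upgrade this to an isomorphism via Yoneda.

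For the general case, I would invoke the cover theory of \cite{BK2}, which reduces an arbitrary Bernstein block of $G_n$ to a product of simple-type blocks via parabolic induction, matching the tensor decomposition $\mathcal H_{\mathfrak s}\cong \mathcal H_{n_1}(q_1)\otimes \cdots \otimes \mathcal H_{n_r}(q_r)$ and compatible with $\mathcal H_{W,\mathfrak s}=\bigotimes_k \mathcal H_{W,n_k}(q_k)$ and $\mathrm{sgn}_{\mathfrak s}=\boxtimes_k \mathrm{sgn}$. The required ingredient is that the Gelfand--Graev representation is itself multiplicative along this parabolic reduction (a mild Whittaker--parabolic compatibility, essentially the ``simple generalization of \cite[Theorem~2.1]{CS17}'' mentioned in the text), which then propagates the simple-type answer to all $\mathfrak s$.

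The main obstacle lies in the simple-type computation: pinning down the sign character as the correct $\mathcal H_{W,\mathfrak s}$-isotype of the Whittaker space at the level of the cover. This involves a delicate support and sign analysis (reflecting the Steinberg-like nature of the generic member of each block) and is where \cite{CS17} does the technical work. Once this base case is established, the general statement follows almost formally from projectivity, the universal property of the induced module $\mathcal H_{\mathfrak s}\otimes_{\mathcal H_{W,\mathfrak s}}\mathrm{sgn}_{\mathfrak s}$, and Yoneda's lemma.
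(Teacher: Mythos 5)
The paper does not actually prove this statement: Theorem \ref{thm affine} is imported verbatim from \cite{CS17}, and the surrounding text only remarks that the simple-type case is done in \cite{CS17} and that the general case follows from \cite{BK2} together with a simple extension of \cite[Theorem 2.1]{CS17}. Your overall architecture (realize both sides as projective generators representing the same functor, identify the Whittaker functor with the sign-isotype functor under the Bushnell--Kutzko equivalence, then reduce to simple types via cover theory) is consistent with what those references do, and you correctly flag the Whittaker/sign-isotype matching as the genuine technical content. That part of your proposal is fine.

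However, your opening paragraph contains two real errors that matter for the logic. First, the isomorphism
\[
\mathrm{Hom}_{G_n}(\Pi_n,\pi)\ \cong\ \mathrm{Hom}_{U_n}(\psi_n,\pi|_{U_n})
\]
is not Frobenius reciprocity and is in fact the wrong functor. Compact induction $\mathrm{ind}_H^{G}$ is left adjoint to restriction only when $H$ is \emph{open}; $U_n$ is closed but not open, so no such adjunction is available formally. The correct statement, which is a nontrivial theorem of Bernstein (going through the second adjointness that this paper invokes as \cite[Lemma 2.2]{CS18}, and is also present in \cite{BH}), is that $\mathrm{Hom}_{G_n}(\Pi_n,\pi)$ is naturally isomorphic to the space of Whittaker \emph{coinvariants} $\pi^{(n)}=\pi_{U_n,\psi_n}$, not the space of $\psi_n$-\emph{invariants} $\pi^{U_n,\psi_n}=\mathrm{Hom}_{U_n}(\psi_n,\pi|_{U_n})$. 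These two spaces have the same dimension for admissible $\pi$ but are different functors, and the distinction is not cosmetic: it is precisely the exactness of the coinvariant functor (the Bernstein--Zelevinsky derivative) that makes $\mathrm{Hom}_{G_n}(\Pi_n,-)$ exact. Second, and relatedly, the justification ``$\Pi_n$ is projective\ldots being compactly induced from a closed subgroup'' is false as a general principle; compact induction from a closed non-open subgroup does not preserve projectivity. The projectivity of $\Pi_n$ is a theorem (it is what the paper cites \cite{CS} and \cite{BH} for), and it follows exactly from the second-adjointness identification with the exact coinvariant functor that you have replaced by an incorrect ``Frobenius reciprocity.'' In short: the Yoneda skeleton is right and your identification of the technical crux is right, but the two ``formal'' facts you use to set it up are neither formal nor correctly stated, and fixing them is where the actual work of \cite{CS18}/\cite{BH} lives.
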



\begin{lemma}\label{lem zero hom gg}
Let $\pi \in \mathrm{Alg}(G_n)$ be admissible. Then 
\[\mathrm{Hom}_{G_n}(\pi, \Pi)=0 . \]
\end{lemma}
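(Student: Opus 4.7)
I would proceed by Bernstein decomposition combined with the Hecke algebra description from Theorem~\ref{thm affine}. Since $\mathrm{Hom}_{G_n}(\pi,\Pi) = \prod_{\mathfrak s \in \mathfrak B(G_n)} \mathrm{Hom}_{G_n}(\pi_{\mathfrak s},\Pi_{\mathfrak s})$, it suffices to show each factor vanishes. Fix $\mathfrak s$. The image of any $G_n$-map $\pi_{\mathfrak s} \to \Pi_{\mathfrak s}$ is simultaneously a subrepresentation of $\Pi_{\mathfrak s}$ and a quotient of the admissible representation $\pi_{\mathfrak s}$; since taking invariants under a compact open subgroup is exact, quotients of admissibles are admissible. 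Hence it is enough to prove that $\Pi_{\mathfrak s}$ has no nonzero admissible subrepresentation.

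Next, transport the problem along the equivalence of categories (\ref{eqn equi cat h modules}). By Theorem~\ref{thm affine}, $\Pi_{\mathfrak s}$ corresponds to $M_{\mathfrak s} := \mathcal H_{\mathfrak s} \otimes_{\mathcal H_{W,\mathfrak s}} \mathrm{sgn}_{\mathfrak s}$. An admissible subrepresentation $V \subseteq \Pi_{\mathfrak s}$ corresponds to a submodule of $M_{\mathfrak s}$ that is finite-dimensional over $\mathbb C$: letting $K' \subseteq K_{\mathfrak s}$ be the (compact open) kernel of the action of $K_{\mathfrak s}$ on $\tau$, one has $\mathrm{Hom}_{K_{\mathfrak s}}(\tau, V) \hookrightarrow \mathrm{Hom}_{\mathbb C}(\tau, V^{K'})$, which is finite-dimensional by admissibility. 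So the remaining task is purely algebraic: show that $M_{\mathfrak s}$ contains no nonzero finite-dimensional $\mathcal H_{\mathfrak s}$-submodule.

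For this, I would invoke the Bernstein presentation, which exhibits $\mathcal H_{\mathfrak s}$ as a free right $\mathcal H_{W,\mathfrak s}$-module with basis the Bernstein elements $\{\theta^\lambda\}$. Tensoring over $\mathcal H_{W,\mathfrak s}$ with $\mathrm{sgn}_{\mathfrak s}$ then gives an isomorphism of $\mathcal A_{\mathfrak s}$-modules $M_{\mathfrak s} \cong \mathcal A_{\mathfrak s}$ via $a \otimes 1 \mapsto a$. By the Chevalley--Shephard--Todd theorem (applied factor by factor for the product Hecke algebra), $\mathcal A_{\mathfrak s}$ is free of rank $|W_{\mathfrak s}|$ over its $W_{\mathfrak s}$-invariants, and these invariants coincide with the center $\mathcal Z_{\mathfrak s}$ described at the beginning of Section~\ref{ss affine hecke}. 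In particular, $M_{\mathfrak s}$ is torsion-free as a $\mathcal Z_{\mathfrak s}$-module. On the other hand, any finite-dimensional $\mathcal H_{\mathfrak s}$-module has only finitely many generalized central characters and is thus annihilated by an ideal of finite codimension in $\mathcal Z_{\mathfrak s}$; torsion-freeness then forces its image in $M_{\mathfrak s}$ to be zero, completing the argument.

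The main (relatively mild) obstacle is the bookkeeping needed to match the admissibility condition on the $G_n$-representation side with finite-dimensionality on the Hecke module side, and to assemble the Bernstein presentation and the center description cleanly for the product Hecke algebra $\mathcal H_{\mathfrak s}$. Once those preliminaries are in place, the torsion-freeness argument is immediate, and the lemma follows.
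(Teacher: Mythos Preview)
Your proposal is correct and follows essentially the same route as the paper: reduce to a single Bernstein component, pass through the equivalence with $\mathcal H_{\mathfrak s}$-modules, identify $\Pi_{\mathfrak s}$ with $\mathcal H_{\mathfrak s}\otimes_{\mathcal H_{W,\mathfrak s}}\mathrm{sgn}_{\mathfrak s}\cong \mathcal A_{\mathfrak s}$ as an $\mathcal A_{\mathfrak s}$-module, and conclude that it has no nonzero finite-dimensional submodule. The only difference is cosmetic: the paper reduces to irreducible $\pi$ and then simply observes that the Laurent polynomial algebra $\mathcal A_{\mathfrak s}$ has no nonzero finite-dimensional $\mathcal A_{\mathfrak s}$-submodule, whereas you keep $\pi$ admissible and phrase the same fact as torsion-freeness over the center $\mathcal Z_{\mathfrak s}$; your detour through Chevalley--Shephard--Todd and central characters is sound but unnecessary once you have $M_{\mathfrak s}\cong \mathcal A_{\mathfrak s}$.
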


\begin{proof}
It suffices to prove for $\pi \in \mathrm{Irr}(G_n)$. Let $\mathfrak{s} \in \mathfrak B(G_n)$ such that $\pi \in \mathfrak R_{\mathfrak s}(G_n)$. Now $\pi_{\mathfrak s}$ is an irreducible finite-dimensional $\mathcal H_{\mathfrak s}$-module, but there is no finite dimensional submodule for 
\[(\mathrm{ind}_{U_n}^{G_n}\psi_n)_{\mathfrak s}\cong \mathcal H_{\mathfrak s} \otimes_{\mathcal H_{W,\mathfrak s}} \mathrm{sgn}_{\mathfrak s}\] 
since there is no finite-dimensional submodule of $\mathcal A_{\mathfrak s}$ as $\mathcal A_{\mathfrak s}$-module. Hence the Hom space is zero.
\end{proof}

\begin{remark}
One can generalize Lemma \ref{lem zero hom gg} to a connected quasisplit reductive group $G$ with a non-compact center.\footnote{This is also pointed out by the referee. The author would like to thank the referee for that.} For an irreducible $\pi \in \mathfrak R_{\mathfrak s}(G)$, 
\[  \mathrm{Hom}_G(\pi, \Pi) \cong \mathrm{Hom}_G(\pi, \Pi_{\mathfrak s}) \cong \mathrm{Ext}^{d(\mathfrak s)}_G(\Pi_{\mathfrak s}, D(\pi) ),
\]
where $D$ is the Aubert-Schneider-Stuhler-Zelevinsky dual, and $d(\mathfrak s)$ is the cohomological dimension of the Bernstein block $\mathfrak R_{\mathfrak s}(G)$. The last isomorphism follows from \cite{NP20} (also see \cite{SS97, Ch16}). Since $G$ has non-compact center, $d(\mathfrak s) \geq 1$. Hence, last Ext is zero by the projectivity of $\Pi$ \cite{CS}. The admissible case follows from the irreducible case.
\end{remark}

\subsection{Inertial equivalence classes} \label{ss inertial equ class}

For the following discussions, see, for example, \cite{Be92} and \cite{BK2}. An inertial equivalence class $\mathfrak s$ of $G_n$ can be represented by a pair $[G_{m_1}\times \ldots \times G_{m_r}, \rho_1 \boxtimes \ldots \boxtimes \rho_r]$, where $m_1+\ldots +m_r=n$ and each $\rho_k$ is a cuspidal $G_{m_k}$-representation. Two pairs 
\[ [G_{m_1}\times \ldots \times G_{m_r}, \rho_1\boxtimes \ldots \boxtimes \rho_r], [G_{m_1'}\times \ldots \times G_{m_s'}, \rho_1'\boxtimes \ldots \boxtimes \rho_s'] \]
represent the same inertial equivalence class if and only if $r=s$ and there exists a permutation $\sigma \in S_r$ such that
\[ G_{m_1}=G_{m_{\sigma(1)}'}, \ldots, G_{m_r}=G_{m_{\sigma(r)}'} \]
and 
\[ \rho_1\cong \chi_1 \otimes \rho_{\sigma(1)}', \ldots , \rho_r \cong \chi_r \otimes \rho_{\sigma(r)}' 
\]
for some unramified character $\chi_k$ of $G_{m_k}$ ($k=1,\ldots, r$). 

Let 
\[ \mathfrak s_1=[G_{m_1}\times \ldots \times G_{m_r}, \rho_1\boxtimes \ldots \boxtimes \rho_r] \in \mathfrak B(G_{n_1}), \]
\[ \mathfrak s_2=[G_{m_1'}\times \ldots \times G_{m_s'}, \rho_1' \boxtimes \ldots \boxtimes \rho_s'] \in \mathfrak B(G_{n_2}) .
\]
Then $\pi_1 \times \pi_2$ lies in $\mathfrak R_{\mathfrak s}(G_{n_1+n_2})$, where 
\[ \mathfrak s=[G_{m_1}\times \ldots \times G_{m_r}\times G_{m_1'}\times \ldots \times G_{m_s'}, \rho_1\boxtimes \ldots \boxtimes \rho_r \boxtimes \rho_1'\boxtimes \ldots \boxtimes \rho_s']
\]

From this, one deduces the following two lemmas:
\begin{lemma} \label{lem unique bcomponent product}
Let $\pi_1 \in \mathfrak R_{\mathfrak s}(G_{n_1})$ for some $\mathfrak s\in \mathfrak B(G_{n_1})$ and let $\pi_2\in \mathrm{Alg}(G_{n_2})$. Fix $\mathfrak t \in \mathfrak B(G_{n_1+n_2})$ with $(\pi_1 \times \pi_2)_{\mathfrak t}\neq 0$. There exists a unique $\mathfrak s' \in \mathfrak B(G_{n_1+n_2})$ such that $\pi_1 \times (\pi_2)_{\mathfrak s'} \cong (\pi_1 \times \pi_2)_{\mathfrak t}$.
\end{lemma}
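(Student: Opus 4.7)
The plan is to combine the Bernstein decomposition of $\pi_2$ with the combinatorial description of inertial equivalence classes recalled in Section \ref{ss inertial equ class}, and then reduce uniqueness to a cancellation property for multisets.

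First, I would apply Bernstein decomposition to $\pi_2$, which (since parabolic induction is exact and commutes with direct sums) gives
\[ \pi_1 \times \pi_2 \;\cong\; \bigoplus_{\mathfrak{s}' \in \mathfrak{B}(G_{n_2})} \pi_1 \times (\pi_2)_{\mathfrak{s}'}. \]
By the description of the inertial class of a product recalled immediately before the statement, each summand $\pi_1 \times (\pi_2)_{\mathfrak{s}'}$ lies entirely in a single Bernstein component of $G_{n_1+n_2}$, which I will denote $F(\mathfrak{s}, \mathfrak{s}')$: its cuspidal datum is the concatenation of those of $\mathfrak{s}$ and $\mathfrak{s}'$. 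Projecting the display above to the $\mathfrak{t}$-component yields
\[ (\pi_1 \times \pi_2)_{\mathfrak{t}} \;\cong\; \bigoplus_{\mathfrak{s}' :\, F(\mathfrak{s}, \mathfrak{s}') = \mathfrak{t}} \pi_1 \times (\pi_2)_{\mathfrak{s}'}. \]

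It remains to show that the assignment $\mathfrak{s}' \mapsto F(\mathfrak{s}, \mathfrak{s}')$ is injective. Granting this, the hypothesis $(\pi_1 \times \pi_2)_{\mathfrak{t}} \neq 0$ forces the indexing set on the right-hand side to be a singleton, so there is exactly one $\mathfrak{s}' \in \mathfrak{B}(G_{n_2})$ with $F(\mathfrak{s}, \mathfrak{s}') = \mathfrak{t}$, and for this $\mathfrak{s}'$ the full summand $\pi_1 \times (\pi_2)_{\mathfrak{s}'}$ coincides with $(\pi_1 \times \pi_2)_{\mathfrak{t}}$. To prove injectivity, I would identify $\mathfrak{B}(G_N)$ with the set of finite multisets on the set $\mathcal{I}$ of inertial classes of pairs $(G_m, \rho)$, where $\rho$ is a cuspidal representation of $G_m$ taken up to unramified twist; this identification is exactly the content of the criterion for equality of pairs recalled in Section \ref{ss inertial equ class}. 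Under it, $F(\mathfrak{s}, \mathfrak{s}')$ corresponds to the multiset sum $m(\mathfrak{s}) + m(\mathfrak{s}')$, and so injectivity of $\mathfrak{s}' \mapsto F(\mathfrak{s}, \mathfrak{s}')$ reduces to the cancellation law for multiset addition on $\mathcal{I}$, which is immediate by comparing multiplicities at each element of $\mathcal{I}$.

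I do not expect a serious obstacle. The only step requiring care is the translation between the equivalence relation on cuspidal data (a common permutation of factors combined with unramified twists) and equality of multisets on $\mathcal{I}$; once this translation is in place, the injectivity is a one-line combinatorial cancellation argument.
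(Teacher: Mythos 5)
Your proof is correct and makes explicit exactly the argument the paper leaves implicit: the paper gives no proof beyond the remark that the lemma is deduced from the preceding description of inertial classes of products, which is precisely the Bernstein-decompose-and-cancel-multisets argument you spell out. You also silently correct what is evidently a typo in the statement — $\mathfrak{s}'$ should range over $\mathfrak{B}(G_{n_2})$, not $\mathfrak{B}(G_{n_1+n_2})$ — and this is the right reading given how the lemma is invoked in Corollary \ref{cor intersection} and Theorem \ref{thm strong indecomp}.
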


\begin{lemma} \label{lem product noncusp}
Let $\pi_1 \in \mathrm{Alg}(G_{n_1})$ and let $\pi_2 \in \mathrm{Alg}(G_{n_2})$ with $n_1, n_2 \neq 0$. Then any irreducible subquotient of $\pi_1 \times \pi_2$ is not cuspidal.
\end{lemma}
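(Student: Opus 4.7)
The plan is to extract this from the inertial equivalence class bookkeeping recalled in Section \ref{ss inertial equ class}. Let $\pi$ be an irreducible subquotient of $\pi_1\times\pi_2$ and assume for contradiction that $\pi$ is cuspidal as a representation of $G_n$. Then $\pi$ lies in the Bernstein component $\mathfrak R_{\mathfrak t}(G_n)$ where $\mathfrak t = [G_n,\pi]$, an inertial class represented by a Levi with a single block of full rank.

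First I would reduce to the case when $\pi_1$ and $\pi_2$ each lie in a single Bernstein component. Using the Bernstein decompositions $\pi_i = \bigoplus_{\mathfrak s_i}(\pi_i)_{\mathfrak s_i}$ together with the exactness of parabolic induction (which respects direct sums), the projection $(\pi_1\times\pi_2)_{\mathfrak t}$ receives a nonzero contribution from some $(\pi_1)_{\mathfrak s_1}\times(\pi_2)_{\mathfrak s_2}$, and by replacing $\pi_i$ with $(\pi_i)_{\mathfrak s_i}$ I may assume $\pi_1\in\mathfrak R_{\mathfrak s_1}(G_{n_1})$ and $\pi_2\in\mathfrak R_{\mathfrak s_2}(G_{n_2})$.

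Second, I would apply Lemma \ref{lem unique bcomponent product}, or more directly the description of inertial classes for products stated just before it, to identify $\mathfrak t$ explicitly: it is represented by the Levi obtained by concatenating the Levis underlying $\mathfrak s_1$ and $\mathfrak s_2$. Writing $\mathfrak s_i = [G_{m_1^{(i)}}\times\cdots\times G_{m_{r_i}^{(i)}},\rho_1^{(i)}\boxtimes\cdots\boxtimes\rho_{r_i}^{(i)}]$, the hypothesis $n_i>0$ forces $r_i\ge 1$, so $\mathfrak t$ is represented by a Levi with $r_1+r_2\ge 2$ blocks. Comparing with $\mathfrak t = [G_n,\pi]$, which has exactly one block, contradicts the uniqueness criterion for inertial-class representatives recalled in Section \ref{ss inertial equ class} (two classes agree only if the numbers of blocks match after a permutation).

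I expect the only mildly delicate point to be the reduction to the single-Bernstein-component case. For general smooth $\pi_1,\pi_2$ one needs the observation that if an irreducible representation $\pi$ is a subquotient of a direct sum of smooth representations, it is already a subquotient of one of the summands, which handles the passage from $\pi_i$ to $(\pi_i)_{\mathfrak s_i}$; everything after that is a direct read-off from the inertial calculus already set up.
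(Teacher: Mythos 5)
Your argument is correct and matches what the paper intends: the paper does not write out a proof for this lemma but explicitly says it is deduced from the inertial-class discussion immediately preceding it, which is precisely the bookkeeping you carry out (a cuspidal $\pi$ sits in a component $[G_n,\pi]$ with one Levi block, while $\pi_1\times\pi_2$ sits in components with $r_1+r_2\ge 2$ blocks, and block counts are an invariant of the inertial class). The reduction to single Bernstein components that you flag as "mildly delicate" is indeed fine, since the Bernstein decomposition already forces $\pi$ to live in a single component $\mathfrak t$ and $(\pi_1\times\pi_2)_{\mathfrak t}$ is a direct sum of pieces $(\pi_1)_{\mathfrak s_1}\times(\pi_2)_{\mathfrak s_2}$.
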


\subsection{Indecomposability of Gelfand-Graev representations}

\begin{proposition} \label{prop indecompose gg rep}
For any $\mathfrak{s} \in \mathfrak{B}(G_n)$, any two non-zero submodules $\pi_1, \pi_2$ in $(\mathrm{ind}_{U_n}^{G_n}\psi_n)_{\mathfrak s}$ has non-zero intersection. In particular, $(\mathrm{ind}_{U_n}^{G_n}\psi_n)_{\mathfrak s}$ is indecomposable.
\end{proposition}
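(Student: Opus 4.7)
The plan is to translate the question into a statement about $\mathcal{H}_{\mathfrak{s}}$-submodules of $M := \mathcal{H}_{\mathfrak{s}} \otimes_{\mathcal{H}_{W,\mathfrak{s}}} \mathrm{sgn}_{\mathfrak{s}}$ via Theorem \ref{thm affine} and the category equivalence (\ref{eqn equi cat h modules}), and then to reduce it to a statement about ideals in a commutative integral domain.

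First, I would pin down the $\mathcal{A}_{\mathfrak{s}}$-module structure of $M$. By the Bernstein--Lusztig decomposition, one has $\mathcal{H}_{\mathfrak{s}} \cong \mathcal{A}_{\mathfrak{s}} \otimes_{\mathbb{C}} \mathcal{H}_{W,\mathfrak{s}}$ as an $(\mathcal{A}_{\mathfrak{s}}, \mathcal{H}_{W,\mathfrak{s}})$-bimodule, the left $\mathcal{A}_{\mathfrak{s}}$-action being multiplication on the first tensor factor and the right $\mathcal{H}_{W,\mathfrak{s}}$-action being multiplication on the second. Tensoring with $\mathrm{sgn}_{\mathfrak{s}}$ over $\mathcal{H}_{W,\mathfrak{s}}$ therefore yields that $M$ is free of rank one as a left $\mathcal{A}_{\mathfrak{s}}$-module, generated by $v_0 := 1 \otimes 1$. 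The assignment $a \cdot v_0 \mapsto a$ gives an $\mathcal{A}_{\mathfrak{s}}$-module isomorphism $M \cong \mathcal{A}_{\mathfrak{s}}$ under which $\mathcal{A}_{\mathfrak{s}}$-submodules of $M$ correspond bijectively to ideals of $\mathcal{A}_{\mathfrak{s}}$.

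Next I would exploit the fact that $\mathcal{A}_{\mathfrak{s}} = \mathcal{A}_{n_1}(q_1) \otimes \ldots \otimes \mathcal{A}_{n_r}(q_r)$ is a tensor product of (Laurent) polynomial algebras, hence an integral domain. Any two non-zero ideals $I,J$ of an integral domain have non-zero intersection: choose $0 \neq f \in I$ and $0 \neq g \in J$, then $fg$ is a non-zero element of $I \cap J$.

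Combining the two points finishes the proof: if $N_1$ and $N_2$ are non-zero $\mathcal{H}_{\mathfrak{s}}$-submodules of $M$, they are in particular non-zero $\mathcal{A}_{\mathfrak{s}}$-submodules of $M \cong \mathcal{A}_{\mathfrak{s}}$, so their intersection is a non-zero ideal, and it is an $\mathcal{H}_{\mathfrak{s}}$-submodule since intersection commutes with restriction of structure. Indecomposability is then a formal consequence, as any non-trivial direct sum decomposition $M = N_1 \oplus N_2$ would force $N_1 \cap N_2 = 0$. I do not anticipate any serious obstacle; the only substantive input is the freeness of $M$ over $\mathcal{A}_{\mathfrak{s}}$, which is a direct consequence of the Bernstein--Lusztig presentation of $\mathcal{H}_{\mathfrak{s}}$ recalled in Section \ref{ss affine hecke}.
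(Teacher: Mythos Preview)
Your argument is correct and follows essentially the same route as the paper's own proof: both invoke Theorem \ref{thm affine} to identify $\Pi_{\mathfrak s}$ with $\mathcal H_{\mathfrak s}\otimes_{\mathcal H_{W,\mathfrak s}}\mathrm{sgn}_{\mathfrak s}$, observe that restriction to $\mathcal A_{\mathfrak s}$ gives the free rank-one module $\mathcal A_{\mathfrak s}$, and then use that non-zero ideals in $\mathcal A_{\mathfrak s}$ intersect non-trivially. You are in fact slightly more precise than the paper, which merely cites commutativity of $\mathcal A_{\mathfrak s}$ where the integral-domain property is what is actually used.
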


\begin{proof}
By Theorem \ref{thm affine},
 \[\Pi_{\mathfrak s}|_{\mathcal A_{\mathfrak{s}}} \cong \mathcal A_{\mathfrak{s}} .\] Since $\mathcal A_{\mathfrak s}$ is commutative, any two non-zero $\mathcal A_{\mathfrak s}$-submodules of $\mathcal A_{\mathfrak{s}}$ have non-zero intersection. Hence any two non-zero submodules of $\Pi_{\mathfrak s}$ also have non-zero intersection. 
\end{proof}

\begin{remark}
We give another proof for the indecomposability of $\Pi_{\mathfrak s}$ as below, whose argument can be applied to other connected quasisplit reductive groups $G$. For any $\mathfrak s \in \mathfrak R(G)$, \cite{BH} showed that $\mathrm{End}_G(\Pi_{\mathfrak s})$ is isomorphic to the Bernstein center $\mathfrak Z_{\mathfrak s}$ of $\mathfrak R_{\mathfrak s}(G)$. Since $\mathfrak R_{\mathfrak s}(G)$ is an indecomposable category, we also have $\mathfrak Z_{\mathfrak s}$ is indecomposable as a $\mathfrak Z_{\mathfrak s}$-module. This implies that $\mathrm{End}_G(\Pi_{\mathfrak s})$ is indecomposable as a $\mathfrak Z_{\mathfrak s}$-module and hence $\Pi_{\mathfrak s}$ is indecomposable in $\mathfrak R_{\mathfrak s}(G)$.

\end{remark}

\subsection{Jacquet functors on Gelfand-Graev representations}

\begin{lemma} \label{lem restrict}
Let $P=LN$ be the parabolic subgroup containing upper triangular matrices and block-diagonal matrices $\mathrm{diag}(g_1,\ldots, g_r)$ with $g_k \in  G_{i_k}$, where $i_1+\ldots +i_r=n$. Then $(\Pi_n )_N \cong \Pi_{i_1} \boxtimes \ldots \boxtimes \Pi_{i_r}$.
\end{lemma}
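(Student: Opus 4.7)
The plan is to reduce the computation to a classical Whittaker-model calculation via Yoneda's lemma. For any smooth representation $\tau$ of $L$, I would first apply Bernstein's second adjointness between the normalized Jacquet functor $(-)_N$ and the normalized parabolic induction $\mathrm{Ind}_{P^-}^{G_n}$ from the opposite parabolic $P^- = LN^-$ to obtain
\[
\mathrm{Hom}_L\bigl((\Pi_n)_N,\, \tau\bigr) \;\cong\; \mathrm{Hom}_{G_n}\bigl(\Pi_n,\, \mathrm{Ind}_{P^-}^{G_n}\tau\bigr).
\]
Since $\Pi_n = \mathrm{ind}_{U_n}^{G_n}\psi_n$ is a compact induction, Frobenius reciprocity identifies the right-hand side with the twisted Whittaker space $(\mathrm{Ind}_{P^-}^{G_n}\tau)^{U_n,\psi_n}$.

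Next I would carry out a Rodier-type analysis of the $U_n$-orbits on $P^-\backslash G_n$. A direct count shows that the orbit through the identity is open and dense, with stabilizer $U_n\cap P^- = U_{i_1}\times\cdots\times U_{i_r} =: U_L$; moreover $\psi_n|_{U_L}$ factors as $\psi_L := \psi_{i_1}\boxtimes\cdots\boxtimes\psi_{i_r}$, because the cross-block entries $u_{i_1+\cdots+i_j,\,i_1+\cdots+i_j+1}$ defining $\psi_n$ vanish on $U_L$. Evaluation at $1$ then yields a linear map $(\mathrm{Ind}_{P^-}^{G_n}\tau)^{U_n,\psi_n}\to \tau^{U_L,\psi_L}$, which is injective by smoothness and density of the open cell and surjective by the standard construction of a $\psi_n$-Whittaker functional out of a $\psi_L$-Whittaker functional by integration (or its regularization) over the open cell. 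For every non-open orbit represented by a non-trivial $w\in W_L\backslash W/W_L$, the character governing the stabilizer—namely $u\mapsto \delta_{P^-}^{1/2}(wuw^{-1})\psi_n(u)$ on $U_n\cap w^{-1}P^-w$—is non-trivial on a simple-root subgroup of $U_n$ by the non-degeneracy of $\psi_n$, and this forces any section of $\mathrm{Ind}_{P^-}^{G_n}\tau$ to vanish at such $w$. This gives a natural identification
\[
(\mathrm{Ind}_{P^-}^{G_n}\tau)^{U_n,\psi_n} \;\cong\; \tau^{U_L,\psi_L} \;\cong\; \mathrm{Hom}_L\bigl(\mathrm{ind}_{U_L}^L\psi_L,\, \tau\bigr) \;=\; \mathrm{Hom}_L\bigl(\Pi_{i_1}\boxtimes\cdots\boxtimes\Pi_{i_r},\, \tau\bigr),
\]
using Frobenius for the compact induction $\Pi_L = \mathrm{ind}_{U_L}^L\psi_L$ in the last step.

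Chaining these identifications produces a functorial isomorphism $\mathrm{Hom}_L((\Pi_n)_N,\tau)\cong \mathrm{Hom}_L(\Pi_{i_1}\boxtimes\cdots\boxtimes\Pi_{i_r},\tau)$, and Yoneda's lemma then delivers the stated isomorphism. The main technical hurdle is the Rodier-type vanishing on the non-open Bruhat cells: one has to check case-by-case that every non-trivial double coset representative $w$ conjugates some simple-root subgroup of $U_n$ into $U_n^-$ while leaving a simple-root subgroup inside $U_n$ on which $\psi_n^w$ is non-trivial. Keeping track of the $\delta_{P}^{1/2}$-normalizations so that the final isomorphism is $L$-equivariant on the nose (rather than up to a twist) is a separate but routine bookkeeping exercise.
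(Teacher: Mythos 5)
The paper's proof is a one-step application of the geometric lemma \cite[Theorem 5.2]{BZ} to $(\mathrm{ind}_{U_n}^{G_n}\psi_n)_N$: among the layers of the resulting filtration indexed by $P\backslash G_n/U_n$, every layer with $w(N)\not\subset U_n^-$ is annihilated because $w(N)\cap U_n$ contains a simple-root subgroup on which $\psi_n$ is nontrivial, and the unique surviving layer is exactly $\Pi_{i_1}\boxtimes\cdots\boxtimes\Pi_{i_r}$.

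Your Yoneda-style argument is a dualized reformulation of this, but it contains a genuine gap at the step you call Frobenius reciprocity. The subgroup $U_n$ is closed but not open in $G_n$, so $\mathrm{ind}_{U_n}^{G_n}$ is \emph{not} left adjoint to restriction, and there is no natural identification of $\mathrm{Hom}_{G_n}(\mathrm{ind}_{U_n}^{G_n}\psi_n,\,\sigma)$ with the Whittaker-invariants $\sigma^{U_n,\psi_n}$ for a general smooth $\sigma$. (You cannot define the unit of the adjunction: there is no smooth delta-function at $U_n$ in $\mathrm{ind}_{U_n}^{G_n}\psi_n$, and the would-be counit requires an integral over $G_n/U_n$ that does not converge.) The tool that actually applies here is the second adjointness of \cite[Lemma 2.2]{CS18}, which gives $\mathrm{Hom}_{G_n}(\Pi_n,\sigma)\cong {}^{(n)}\sigma$, the top \emph{left} derivative — a twisted Jacquet \emph{coinvariants} space attached to the opposite unipotent — not an invariants space. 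The same issue recurs at your final step identifying $\tau^{U_L,\psi_L}$ with $\mathrm{Hom}_L(\Pi_L,\tau)$. This distinction is not cosmetic in your setting: $\mathrm{Ind}_{P^-}^{G_n}\tau$ is typically far from admissible, and for nonadmissible modules Whittaker invariants and twisted coinvariants are genuinely different.

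Once you replace the false Frobenius step with the correct second adjointness, your step (c) becomes: compute ${}^{(n)}\bigl(\mathrm{Ind}_{P^-}^{G_n}\tau\bigr)$. That computation is again the geometric lemma on the same double-coset stratification, with the same non-degeneracy vanishing on the non-open cells; it is precisely the calculation that the paper performs once, directly, on $\Pi_n$ itself. So after the fix, the Yoneda detour and the Rodier-type evaluation map do not buy you anything — they reorganize the argument but still require the identical double-coset analysis — whereas the paper's covariant application of the geometric lemma is shorter and avoids the adjointness pitfall entirely.
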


\begin{proof}
Let $w$ be a permutation matrix in $G_n$. Then $w(N) \cap U_n$ contains a unipotent subgroup $\left\{ I_n +tu_{k,k+1} : t \in F \right\}$ for some $k$ if and only if $w(N) \not\subset U_n^-$. Here $u_{k,k+1}$ is a matrix with $(k,k+1)$-entry $1$ and other entries $0$. For any such $w$, it gives that $PwB$ is the same unique open orbit in $G_n$. Now the geometric lemma in \cite[Theorem 5.2]{BZ} gives the lemma.
\end{proof}

\subsection{Some lemmas}

For $\pi \in \mathrm{Irr}$, let $\mathrm{csupp}'(\pi)$ be the cuspidal support of $\pi$ but without multiplicities, and in particular, $\mathrm{csupp}'(\pi)$ is a set. For an admissible representation $\pi$ of $G_n$, let 
\[ \mathrm{csupp}'(\pi) =\bigcup_{\omega \in \mathrm{JH}(\pi)} \mathrm{csupp}'(\omega) .
\]

\begin{lemma} \label{lem product cusp support}
Let $\omega \in \mathrm{Alg}(G_{n_1})$ be admissible with $n_1 \neq 0$. Then, for any irreducible subquotient $\lambda$ of $\Pi_{n_2} \times \omega$, $\mathrm{csupp}'(\lambda) \cap \mathrm{csupp}'(\omega) \neq \emptyset$. Similarly, for any irreducible subquotient $\lambda$ of $\omega \times \Pi_{n_2}$, $\mathrm{csupp}'(\lambda) \cap\mathrm{csupp}'(\omega)\neq \emptyset$.
\end{lemma}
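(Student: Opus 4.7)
My plan is to use the Bernstein decomposition to break $\Pi_{n_2} \times \omega$ into tractable pieces, isolate a single irreducible constituent of $\omega$, and then invoke additivity of cuspidal support under parabolic induction. Since the $\omega \times \Pi_{n_2}$ case is symmetric (e.g.\ via $\theta_{n_1+n_2}$), I will argue only the first.

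First, decompose $\omega = \bigoplus_{\mathfrak{t} \in \mathfrak{B}(G_{n_1})} \omega_{\mathfrak{t}}$ and $\Pi_{n_2} = \bigoplus_{\mathfrak{s} \in \mathfrak{B}(G_{n_2})} (\Pi_{n_2})_{\mathfrak{s}}$ via Bernstein decomposition. Since $\omega$ is admissible and the block $\mathfrak{R}_{\mathfrak{t}}(G_{n_1})$ is equivalent (Section \ref{ss affine hecke}) to the category of $\mathcal{H}_{\mathfrak{t}}$-modules under which admissible representations correspond to finite-dimensional modules, each $\omega_{\mathfrak{t}}$ has finite length. We then have $\Pi_{n_2} \times \omega = \bigoplus_{\mathfrak{s},\mathfrak{t}} (\Pi_{n_2})_{\mathfrak{s}} \times \omega_{\mathfrak{t}}$, and since $\lambda$ is irreducible it must occur as a subquotient of a single direct summand, say $(\Pi_{n_2})_{\mathfrak{s}_0} \times \omega_{\mathfrak{t}_0}$.

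Next, filter $\omega_{\mathfrak{t}_0}$ by its (finite) Jordan--H\"{o}lder series; since parabolic induction is exact, this yields a finite filtration of $(\Pi_{n_2})_{\mathfrak{s}_0} \times \omega_{\mathfrak{t}_0}$ whose successive quotients are of the form $(\Pi_{n_2})_{\mathfrak{s}_0} \times \sigma$ with $\sigma$ an irreducible subquotient of $\omega$. Hence $\lambda$ is a subquotient of some such $(\Pi_{n_2})_{\mathfrak{s}_0} \times \sigma$. Because $n_1 \neq 0$, we have $\sigma \neq 0$, so its cuspidal support $(\rho_1, \ldots, \rho_s)$ is nonempty. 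Writing $\sigma$ as a subquotient of $\rho_1 \times \cdots \times \rho_s$ and using exactness of parabolic induction once more, $\lambda$ is a subquotient of $(\Pi_{n_2})_{\mathfrak{s}_0} \times \rho_1 \times \cdots \times \rho_s$.

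The finishing move is the standard fact that cuspidal support is additive under parabolic induction: any irreducible subquotient of $\tau \times \rho_1 \times \cdots \times \rho_s$ (with $\rho_j$ cuspidal) has cuspidal support of the form $\mathrm{csupp}(\tau_0) \sqcup (\rho_1, \ldots, \rho_s)$ for some irreducible subquotient $\tau_0$ of $\tau$. Applying this, each $\rho_j$ appears literally in $\mathrm{csupp}(\lambda)$, so $\mathrm{csupp}'(\lambda) \cap \mathrm{csupp}'(\omega) \supseteq \mathrm{csupp}'(\sigma) \neq \emptyset$.

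The main subtlety I expect is the last step: one must know that each $\rho_j$ (and not merely an unramified twist of it) genuinely appears in the cuspidal support of any irreducible subquotient of $(\Pi_{n_2})_{\mathfrak{s}_0} \times \rho_1 \times \cdots \times \rho_s$. The Bernstein block of $\lambda$ alone only determines the cuspidal support up to unramified twist, so one needs a slightly stronger input -- namely the geometric lemma applied to the parabolic with Levi $G_{n_2} \times G_{n(\rho_1)} \times \cdots \times G_{n(\rho_s)}$, which pins down the specific cuspidals $\rho_j$ that enter as inducing data. Because $(\Pi_{n_2})_{\mathfrak{s}_0}$ is not of finite length, one has to phrase the conclusion in terms of existence of some irreducible subquotient $\tau_0$ rather than a full Jordan--H\"{o}lder filtration, but this is harmless as we only need the $\sigma$-part of the cuspidal support to carry over to $\lambda$.
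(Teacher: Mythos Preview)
Your proposal is correct, and the core input (the geometric lemma) is the same as in the paper, but the organization differs. You first reduce, via Bernstein decomposition and a Jordan--H\"older filtration of the admissible $\omega$, to the case where $\omega$ is an irreducible $\sigma$, and then further to a product of cuspidals, before invoking additivity of cuspidal support under parabolic induction; you correctly flag that this last step really is the geometric lemma applied to the parabolic with Levi $G_{n_2} \times G_{n(\rho_1)} \times \cdots \times G_{n(\rho_s)}$, and that Bernstein-block information alone would only determine the $\rho_j$ up to unramified twist. The paper instead argues directly without any preliminary reduction of $\omega$: it embeds $\lambda \hookrightarrow \rho_1 \times \cdots \times \rho_r$ (the cuspidal support of $\lambda$), uses Frobenius reciprocity to see $\rho_1 \boxtimes \cdots \boxtimes \rho_r$ as a subquotient of $(\Pi_{n_2} \times \omega)_N$, and then reads off from the geometric lemma filtration of the latter (together with Lemma~\ref{lem subquot}) that some $\rho_i$ must lie in $\mathrm{csupp}'(\omega)$. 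Your reduction to irreducible $\omega$ is clean and makes the last step a citation of a standard fact, while the paper's one-shot application of the geometric lemma avoids the reduction entirely; either way the content is the same.
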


\begin{proof}

We only sketch the proof for the first statement. Suppose $\lambda$ is an irreducible subquotient of $\Pi_{n_2} \times \omega$. We consider that 
\[ \lambda \hookrightarrow \rho_1 \times \ldots \times \rho_r\]
for some cuspidal representations $\rho_1, \ldots, \rho_r$. The adjointness gives a Jacquet functor associated to a unipotent subgroup $N$:
\[  \lambda_N \rightarrow \rho_1 \boxtimes \ldots \boxtimes \rho_r .
\]
Using the geometric lemma on $(\Pi_{n_2}\times \omega)_N$ to obtain a filtration, we then use $\rho_1 \boxtimes \ldots \boxtimes \rho_r $ appears in a subquotient of $(\Pi_{n_2}\times \omega)_N$ (also see Lemma \ref{lem subquot}) to obtain that $\rho_i \in \mathrm{csupp}'(\omega)$ for some some $i$. 
\end{proof}

\begin{corollary} \label{cor submodule version of pre lem}
We use the same setting as in the previous lemma. Let $\pi$ be a submodule of $\Pi_{n_2}\times \omega$, and let $\pi'$ be a submodule of $\omega \times \Pi_{n_2}$. Then 
\begin{enumerate}
\item for any irreducible subquotient $\tau$ of $\pi$, $\mathrm{csupp}'(\tau) \cap \mathrm{csupp}'(\omega) \neq \emptyset$; and
\item for any irreducible subquotient $\tau'$ of $\pi'$, $\mathrm{csupp}'(\tau') \cap \mathrm{csupp}'(\omega)\neq \emptyset$.
\end{enumerate}

\end{corollary}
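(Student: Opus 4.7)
The plan is to observe that this corollary follows immediately from Lemma \ref{lem product cusp support} by a formal transitivity argument about subquotients. For part (1), let $\tau$ be any irreducible subquotient of $\pi$. Since $\pi$ is a submodule of $\Pi_{n_2} \times \omega$, any composition factor of $\pi$ is in particular a composition factor (and hence an irreducible subquotient) of $\Pi_{n_2} \times \omega$. Thus $\tau$ is itself an irreducible subquotient of $\Pi_{n_2} \times \omega$, and Lemma \ref{lem product cusp support} applied to $\tau$ gives $\mathrm{csupp}'(\tau) \cap \mathrm{csupp}'(\omega) \neq \emptyset$. Part (2) is entirely analogous, invoking instead the second assertion of Lemma \ref{lem product cusp support} concerning $\omega \times \Pi_{n_2}$.

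There is essentially no obstacle here: the content of the statement is a direct transfer of the lemma from the ambient parabolically induced module to an arbitrary submodule. The only reason to record it separately is that in later sections (most likely in the indecomposability arguments of Section \ref{sec indecomp}, which concern Bernstein-Zelevinsky inductions built precisely out of products of the form $\Pi_{n_2} \times \omega$ or $\omega \times \Pi_{n_2}$), one often works directly with a submodule rather than with the full induced module, and this corollary packages the cuspidal-support constraint in the form needed there.
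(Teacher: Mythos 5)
Your proof is correct and is exactly the intended deduction: the paper gives no proof for this corollary, leaving it as immediate from Lemma \ref{lem product cusp support}, and the transitivity of the subquotient relation (a subquotient of a submodule is a subquotient of the ambient module) is precisely the observation needed.
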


\subsection{Bernstein center} \label{ss bern center}

The Bernstein center of a category $\mathfrak R$ is defined as the endomorphism ring of the identity functor in $\mathfrak R$. Denote by $\mathfrak Z_n$ the Bernstein center of $\mathfrak R(G_n)$. For $\mathfrak s \in \mathfrak B(G_n)$, denote by $\mathfrak Z_{\mathfrak s}$ the Bernstein center of $\mathfrak R_{\mathfrak s}(G_n)$, which is a finitely-generated commutative algebra \cite[Theorem 2.13]{BD}. We have
\[  \mathfrak Z_n \cong \prod_{\mathfrak s \in \mathfrak B(G_n)} \mathfrak Z_{\mathfrak s} .
  \]
It follows from \cite[Corollaire 3.4]{BD} that for any maximal ideal $\mathcal J$ of $\mathfrak Z_n$ and any finitely-generated $\pi$ of $G_n$, $\pi/\mathcal J(\pi)$ is admissible. For more properties of the Bernstein center, see, e.g. \cite{BD}, \cite{BH}, \cite{Ha14}.
 
\begin{lemma} \label{lem annihilate pi cusp}
Let $\omega \in \mathrm{Alg}(G_{n_1})$ be admissible. Fix $\tau \in \mathrm{Irr}(G_{n_1+n_2})$ such that $\mathrm{csupp}'(\omega) \cap \mathrm{csupp}'(\tau) =\emptyset$. Let $\mathcal J$ be the maximal ideal in $\mathfrak Z_{n_1+n_2}$ which annihilates $\tau$. 
\begin{enumerate}
\item Let $\pi$ be a submodule of $\Pi_{n_2} \times \omega$. Then $\pi/\mathcal J(\pi)=0$.
\item Let $\pi'$ be a submodule of $\omega \times \Pi_{n_2}$. Then $\pi'/\mathcal J(\pi')=0$. 
\end{enumerate}
\end{lemma}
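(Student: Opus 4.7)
The plan is to reduce to the cyclic (hence finitely generated) case and use the admissibility property of $\pi/\mathcal{J}(\pi)$ recalled in Section \ref{ss bern center}. Concretely, I will show that every element $v \in \pi$ lies in $\mathcal{J}(\pi)$, which forces $\pi/\mathcal{J}(\pi)=0$. Part (2) is entirely symmetric, using the second half of Corollary \ref{cor submodule version of pre lem}, so I will only discuss (1).

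For $v \in \pi$, let $\langle v \rangle = G_{n_1+n_2} \cdot v$, a cyclic (hence finitely generated) submodule of $\pi$. By the cited consequence of \cite[Corollaire 3.4]{BD}, the quotient $\langle v \rangle/\mathcal{J}(\langle v\rangle)$ is admissible, so it has finite length. Any irreducible subquotient $\sigma$ of it is annihilated by $\mathcal{J}$, and is in particular an irreducible subquotient of $\pi$. Since $\pi \subseteq \Pi_{n_2} \times \omega$, Corollary \ref{cor submodule version of pre lem}(1) gives $\mathrm{csupp}'(\sigma) \cap \mathrm{csupp}'(\omega) \neq \emptyset$.

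The key step is then to identify the cuspidal support of $\sigma$ with that of $\tau$. Since $\mathcal{J}$ is the maximal ideal of $\mathfrak{Z}_{n_1+n_2}$ annihilating $\tau$, it projects to a maximal ideal $\mathcal{J}_{\mathfrak{s}}$ of the factor $\mathfrak{Z}_{\mathfrak{s}}$ for the unique Bernstein block $\mathfrak{s}$ containing $\tau$ (and contains the remaining factors of $\mathfrak{Z}_{n_1+n_2}$). Hence any irreducible $\sigma$ with $\mathcal{J}\sigma=0$ lies in $\mathfrak{R}_{\mathfrak{s}}(G_{n_1+n_2})$ and shares the infinitesimal character of $\tau$. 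Via the classical description of $\mathfrak{Z}_{\mathfrak{s}}$ as the ring of regular functions on the variety of unramified twists of the cuspidal support parameters, modulo the finite stabilizer, the infinitesimal character determines the cuspidal support as a multiset. Therefore $\mathrm{csupp}(\sigma)=\mathrm{csupp}(\tau)$ and in particular $\mathrm{csupp}'(\sigma)=\mathrm{csupp}'(\tau)$, which by hypothesis is disjoint from $\mathrm{csupp}'(\omega)$, contradicting the conclusion of the previous paragraph.

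This contradiction shows $\langle v\rangle/\mathcal{J}(\langle v\rangle) = 0$, so $v \in \mathcal{J}(\langle v \rangle) \subseteq \mathcal{J}(\pi)$. Since $v \in \pi$ was arbitrary, $\pi=\mathcal{J}(\pi)$ and (1) follows. The main obstacle is the claim that irreducibles with the same infinitesimal character have the same cuspidal support (as a multiset); this is a standard feature of the Bernstein center theory for $G_n$ and should not require any new input beyond what is already cited in Section \ref{ss bern center}.
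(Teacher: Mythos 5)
Your proof is correct and follows essentially the same strategy as the paper: obtain admissibility of the relevant quotient via \cite[Corollaire 3.4]{BD}, identify the cuspidal support of any irreducible subquotient annihilated by $\mathcal J$ with that of $\tau$ via the Bernstein center, and contradict Corollary \ref{cor submodule version of pre lem}(1). The only difference is technical: the paper proves directly that each Bernstein component of $\pi$ is finitely generated (using that each component of $\Pi_{n_2}$ is finitely generated by \cite{BH}, that $\omega$ is admissible hence finitely generated, and the Noetherian property of Bernstein blocks from \cite{BD}), and then applies the admissibility criterion to $\pi/\mathcal J(\pi)$ itself, whereas you reduce to cyclic submodules $\langle v\rangle$, which are trivially finitely generated, and show $v \in \mathcal J(\langle v\rangle) \subseteq \mathcal J(\pi)$ for every $v$. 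Your reduction neatly sidesteps the need to invoke finite generation of $\Pi_{\mathfrak s}$ and Noetherianity of the block, at the cost of a pointwise argument; both are sound and cite the same underlying inputs from \cite{BD}.
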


\begin{proof}
We only prove (1) and a proof for (2) is similar. Each Bernstein component of $\Pi$ is finitely generated \cite{BH} and so each Bernstein component of $ \pi$ is also finitely-generated \cite{BD}. Thus $\pi/\mathcal J (\pi)$ is admissible \cite{BD} and is annihilated by $\mathcal J$. If $\pi/\mathcal J(\pi)$ is non-zero, then $\pi$ has a subquotient with irreducible composition factors isomorphic to $\tau'$ with $\mathrm{csupp}(\tau')=\mathrm{csupp}(\tau)$ \cite[Proposition 2.11]{BD}. However, this contradicts Corollary \ref{cor submodule version of pre lem}(1).
\end{proof}

\subsection{Geometric lemma on $\omega \times \Pi_i$} \label{sec geo lem}

Let $\pi_1 \in \mathrm{Alg}(G_{n_1})$ and let $\pi_2 \in \mathrm{Alg}(G_{n_2})$. Consider $N_j \subset G_{n_1+n_2}$ for some $j$. The geometric lemma \cite{BZ} gives that $(\pi_1 \times \pi_2)_{N_j}$ admits a filtration:
\begin{align} \label{eqn geom filt 1}
  \omega_r \subset \omega_{r-1} \subset \ldots \subset \omega_0=(\pi_1 \times \pi_2)_{N_j} 
\end{align}
such that, for each $r-1 \geq k \geq 0$,
\[  \omega_{k}/\omega_{k+1} \cong  \mathrm{Ind}^{G_{n-j}\times G_{j}}_P((\pi_1)_{N_{k}} \boxtimes (\pi_2)_{N_{j-k}})^w ,
\]
where $P$ is the parabolic subgroup in $G_{n-j}\times G_j$ containing matrices $\mathrm{diag}(g_1, g_2, g_3, g_4)$ with $g_1 \in G_{n_1-k}$, $g_2 \in G_{n_2-j+k}$, $g_3 \in G_{k}$ and $g_4 \in G_{j-k}$, and upper triangular matrices, and $((\pi_1)_{N_{k}} \boxtimes (\pi_2)_{N_{j-k}})^w$ is a $P$-representation with underlying space $(\pi_1)_{N_{k}} \boxtimes (\pi_2)_{N_{j-k}}$ such that the action of the unipotent part is trivial and the action of the Levi part $G_{n_1-k}\times G_{n_2-j+k}\times G_k \times G_{j-k}$ is given by:
\[ \mathrm{diag}(g_1, g_2, g_3,g_4). v_1\otimes v_2 = (\mathrm{diag}(g_1,g_3).v_1) \otimes (\mathrm{diag}(g_2, g_4).v_2) ,
\]
where $v_1 \in (\pi_1)_{N_k}$ and $v_2 \in (\pi_2)_{N_{j-k}}$. Here we consider the space to be zero if $k> n_1$ or $j-k> n_2$.

\begin{lemma} \label{lem first filtration on product}
Let $\omega \in \mathrm{Alg}(G_{n-i})$ be admissible. Let $\tau \in \mathrm{Irr}(G_i)$ such that $\mathrm{csupp}'(\omega) \cap \mathrm{csupp}'(\tau)=\emptyset$. Let $\mathcal J'$ be the maximal ideal in $\mathfrak Z_i$ which annihilates $\tau$, and set $\mathcal J=\mathcal J'\otimes \mathfrak Z_{n-i} \subset \mathfrak Z_{i} \otimes \mathfrak Z_{n-i}$. Set $N=N_{n-i}$. Then $(\omega \times \Pi_i)_N$ admits, as $G_{i} \times G_{n-i}$-representations, a short exact sequence
\[  0 \rightarrow \kappa_1 \rightarrow (\omega \times \Pi_i)_N \rightarrow \kappa_2 \rightarrow 0
\]
such that 
\begin{enumerate}
\item $\kappa_2/\mathcal J(\kappa_2)=0$, and
\item  $\kappa_1/\mathcal J(\kappa_1)$ is admissible and is isomorphic to $\tau \boxtimes \omega$ for some $\tau \in \mathrm{Alg}(G_i)$.
\end{enumerate}

\end{lemma}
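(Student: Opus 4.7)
The plan is to apply the geometric lemma of Section \ref{sec geo lem} to $(\omega\times\Pi_i)_N$, identify the ``closed orbit'' subquotient as $\kappa_1$, and use cuspidal-support together with Bernstein-center arguments in the spirit of Lemma \ref{lem annihilate pi cusp} to show that all remaining pieces die modulo $\mathcal J$. Concretely, applying the geometric lemma with $\pi_1=\omega$, $\pi_2=\Pi_i$ and $j=n-i$, I obtain a filtration of $(\omega\times\Pi_i)_N$ whose successive quotients, as $G_i\times G_{n-i}$-modules, take the form
\[
Q_k \;=\; \mathrm{Ind}_P^{G_i\times G_{n-i}}\bigl((\omega)_{N_k}\boxtimes (\Pi_i)_{N_{n-i-k}}\bigr)^{w},
\]
where, by Lemma \ref{lem restrict}, $(\Pi_i)_{N_{n-i-k}}\cong\Pi_{n-i-k}\boxtimes\Pi_{2i-n+k}$.

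The piece corresponding to $k=n-i$ is the closed orbit: $(\omega)_{N_{n-i}}=\omega$, $(\Pi_i)_{N_0}=\Pi_i$, the parabolic $P$ collapses to the full Levi $G_i\times G_{n-i}$, and the Weyl twist is trivial. I take $\kappa_1$ to be this submodule, so $\kappa_1\cong\Pi_i\boxtimes\omega$ as $G_i\times G_{n-i}$-representations. Since $\mathcal J=\mathcal J'\otimes\mathfrak Z_{n-i}$ acts on this product only through $\mathcal J'$ on the $\Pi_i$ factor,
\[
\kappa_1/\mathcal J(\kappa_1) \;\cong\; \bigl(\Pi_i/\mathcal J'(\Pi_i)\bigr)\boxtimes\omega.
\]
Because each Bernstein component of $\Pi_i$ is finitely generated (\cite{BH}), the Bernstein center theory recalled in Section \ref{ss bern center} shows $\Pi_i/\mathcal J'(\Pi_i)$ is admissible, which yields claim (2) with $\tau:=\Pi_i/\mathcal J'(\Pi_i)$.

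For claim (1), set $\kappa_2=(\omega\times\Pi_i)_N/\kappa_1$; it inherits a filtration with subquotients $Q_k$, $k<n-i$. For such $k$ we have $n-i-k\geq 1$, and the $G_i$-side of the Levi acts on $(\omega)_{N_k}$ through its $G_{n-i-k}$-factor. Hence any irreducible composition factor $\sigma\cong\sigma_1\boxtimes\sigma_2$ of $Q_k$ has $G_i$-cuspidal support $\mathrm{csupp}(\sigma_1)$ containing some element of $\mathrm{csupp}'(\omega)$, coming from a composition factor of $(\omega)_{N_k}|_{G_{n-i-k}}$. By the disjointness hypothesis $\mathrm{csupp}(\sigma_1)\neq\mathrm{csupp}(\tau)$, so $\mathcal J$ does not annihilate $\sigma$, i.e.\ $\sigma/\mathcal J(\sigma)=0$. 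Since $Q_k$ is finitely generated (its Bernstein components are, by \cite{BH}), $Q_k/\mathcal J(Q_k)$ is admissible by Section \ref{ss bern center}; any irreducible subquotient of $Q_k/\mathcal J(Q_k)$ would simultaneously be a subquotient of $Q_k$ and be annihilated by $\mathcal J$, which has just been ruled out. Hence $Q_k/\mathcal J(Q_k)=0$, and right exactness of the coinvariant functor propagates this through the finite filtration to give $\kappa_2/\mathcal J(\kappa_2)=0$.

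The main obstacle lies in the cuspidal-support analysis of $\kappa_2$: the Weyl twist $w$ in the geometric lemma entangles the $G_i$- and $G_{n-i}$-actions on the Levi datum $X_k$, so one must carefully track through the description in Section \ref{sec geo lem} that $(\omega)_{N_k}|_{G_{n-i-k}}$ genuinely feeds cuspidal support into the $G_i$-factor (and not only into $G_{n-i}$). Once this bookkeeping is in place, the disjointness hypothesis closes the argument in direct analogy with the proof of Lemma \ref{lem annihilate pi cusp}.
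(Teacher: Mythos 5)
Your proof is correct and follows essentially the same route as the paper: identify the ``twisted'' bottom layer $\Pi_i\boxtimes\omega$ of the geometric-lemma filtration as $\kappa_1$, and show the remaining layers vanish modulo $\mathcal J$ using the disjoint-cuspidal-support hypothesis plus right-exactness of the coinvariant functor. The only cosmetic difference is that the paper refines the filtration by composition factors of $(\omega)_{N_k}$ to land in the situation of Lemma \ref{lem annihilate pi cusp}, while you argue directly at the level of the $Q_k$; the underlying cuspidal-support argument is the same.
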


\begin{proof}
We explicate the filtration (\ref{eqn geom filt 1}) in this situation. The bottom layer takes the form 
\[ \kappa_1 :=  \Pi_i \boxtimes \omega .
\]
(Note that both $N_{n-i}$ on $\omega$ and $N_{0}$ on $\Pi_i$ are trivial groups.) By applying $\mathcal J$ on $\kappa_1$, we obtain (2).

 Now set $\kappa_2 =\pi/\kappa_1$. In order to prove $\kappa_2/\mathcal J(\kappa_2)=0$, we continue to consider the filtration (\ref{eqn geom filt 1}). Now $\kappa_2$ admits a filtration whose successive quotients take the form:
\[ \mathrm{Ind}^{G_{n-i}\times G_{i}}_P((\omega)_{N_{k}} \boxtimes (\Pi_i)_{N_{n-i-k}})^w
\]
for $k<n-i$. Since $\omega$ is admissible, $(\omega)_{N_k}$ admits a (finite) filtration of irreducible $G_{n-i-k}\times G_k$-representations. Thus we obtain a finite filtration of $\kappa_2$ whose successive quotients take the form
\begin{align} \label{eqn fine quotient}
 Q=(\tau' \times \Pi_{2i+k-n}) \boxtimes (\tau'' \times \Pi_{n-i-k}),
\end{align}
where $\tau' \in \mathrm{Irr}(G_{n-k-i})$ and $\tau'' \in \mathrm{Irr}(G_k)$ satisfying 
\[ \mathrm{csupp}'(\tau'), \mathrm{csupp}'(\tau'') \subset \mathrm{csupp}'(\omega).\] Note that $G_{n-k-i}$ is not the trivial group (i.e. $n-k-i \neq 0$) and $\mathrm{csupp}'(\tau')\cap \mathrm{csupp}'(\tau)=\emptyset$, and so by Lemma \ref{lem annihilate pi cusp}, $(\tau'\times \Pi_{2i+k-n})/\mathcal J'(\tau'\times \Pi_{2i+k-n})=0$. Thus, successive quotients in (\ref{eqn fine quotient}) give 
\[   Q/\mathcal J(Q)=0.
\]


Thus we have proved that $\kappa_2$ admits a filtration
\[  0=F_0 \subset F_1 \subset \ldots \subset F_l =\kappa_2
\]
such that $Q_k/\mathcal J(Q_k)=0$, where $Q_k =F_k/F_{k-1}$ for 
$k=1, \ldots, l$. Now the short exact sequence
\[   0 \rightarrow F_{k-1} \rightarrow F_k \rightarrow Q_k \rightarrow 0
\]
gives a right exact sequence
\[  F_{k-1}/\mathcal J(F_{k-1}) \rightarrow F_k/\mathcal J(F_k) \rightarrow Q_k/\mathcal J(Q_k) \rightarrow 0
\]
Thus inductively, we have $F_k/\mathcal J(F_k)=0$ for all $k$, including $k=l$. This proves (1).

\end{proof}


\begin{lemma} \label{lem short exact z invariant}
Keep the same setting as the previous lemma. Let $\pi$ be a non-zero submodule of $\omega \times \Pi_i$. Then $\pi_N$, as a $G_i \times G_{n-i}$-representation, admits a short exact sequence:
\[ 0 \rightarrow \kappa_1' \rightarrow \pi_N \rightarrow \kappa_2' \rightarrow 0
\]
such that $\kappa_2'/\mathcal J(\kappa_2')=0$ and moreover, $\kappa_1'$ is a submodule of $\Pi_i \boxtimes \omega$.
\end{lemma}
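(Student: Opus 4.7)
My approach is to intersect $\pi_N$ with the short exact sequence produced in Lemma~\ref{lem first filtration on product}. By exactness of the Jacquet functor, the inclusion $\pi \hookrightarrow \omega \times \Pi_i$ gives $\pi_N \hookrightarrow (\omega \times \Pi_i)_N$. I would set
\[ \kappa_1' := \pi_N \cap \kappa_1 \qquad \text{and} \qquad \kappa_2' := \pi_N/\kappa_1', \]
where $\kappa_1 \cong \Pi_i \boxtimes \omega$ is the bottom layer from Lemma~\ref{lem first filtration on product}. By construction $\kappa_1'$ is a submodule of $\Pi_i \boxtimes \omega$ (as required), and $\kappa_2'$ embeds naturally into $\kappa_2$; only the identity $\kappa_2'/\mathcal J(\kappa_2') = 0$ remains to be proven.

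Second, I would intersect the filtration $0 = F_0 \subset F_1 \subset \cdots \subset F_l = \kappa_2$ from the proof of Lemma~\ref{lem first filtration on product} with $\kappa_2'$ to form $F_k' := \kappa_2' \cap F_k$, whose successive quotients $Q_k' := F_k'/F_{k-1}'$ embed into $Q_k = (\tau' \times \Pi_{2i+k-n}) \boxtimes (\tau'' \times \Pi_{n-i-k})$. The same inductive argument (applying right exactness of $M \mapsto M/\mathcal J(M)$ to the short exact sequences $0 \to F_{k-1}' \to F_k' \to Q_k' \to 0$) as at the end of the proof of Lemma~\ref{lem first filtration on product} reduces the goal to verifying $Q_k'/\mathcal J(Q_k') = 0$ for every $k$.

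Third, for this last step I would adapt the cuspidal support argument in Lemma~\ref{lem annihilate pi cusp}. Its hypothesis needs finite generation of $Q_k'$ on each Bernstein component of $G_i \times G_{n-i}$, which follows from a chain of Noetherian reductions: $\omega \times \Pi_i$ is finitely generated on each Bernstein component of $G_n$ (each Bernstein component of $\Pi_i$ is finitely generated by \cite{BH}, and parabolic induction preserves finite generation); Bernstein blocks are Noetherian via the Bushnell--Kutzko equivalence with modules over products of affine Hecke algebras of type $A$, so the submodule $\pi$ is finitely generated on each component, and this property then descends to $\pi_N$, to $\kappa_2'$, and hence to each $Q_k'$. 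Consequently $Q_k'/\mathcal J(Q_k')$ is admissible; were it nonzero, it would have an irreducible subquotient $\lambda \boxtimes \mu$ on which $\mathcal J = \mathcal J' \otimes \mathfrak Z_{n-i}$ acts by zero, so $\mathcal J'$ annihilates $\lambda$ and $\mathrm{csupp}(\lambda) = \mathrm{csupp}(\tau)$ as multisets. But $\lambda$ is an irreducible subquotient of $\tau' \times \Pi_{2i+k-n}$, so Lemma~\ref{lem product cusp support} gives $\mathrm{csupp}'(\lambda) \cap \mathrm{csupp}'(\tau') \neq \emptyset$; combined with $\mathrm{csupp}'(\tau') \subset \mathrm{csupp}'(\omega)$ and $\mathrm{csupp}'(\omega) \cap \mathrm{csupp}'(\tau) = \emptyset$, this is a contradiction.

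The step I expect to be the main obstacle is the finite generation ingredient: a submodule of $\omega \times \Pi_i$ is not obviously finitely generated in the full category $\mathrm{Alg}(G_n)$, and one must invoke Noetherianness of individual Bernstein blocks to propagate the property through the intersection, the Jacquet functor, and the subquotient operations down to each $Q_k'$. Once this is secured, the remainder is a mechanical iteration of the template in the proof of Lemma~\ref{lem first filtration on product}.
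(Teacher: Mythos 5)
Your proof follows exactly the same route as the paper's: intersect $\pi_N$ with the short exact sequence $0 \to \kappa_1 \to (\omega\times\Pi_i)_N \to \kappa_2 \to 0$ from Lemma~\ref{lem first filtration on product}, refine the resulting $\kappa_2'$ by intersecting with the filtration $F_\bullet$ on $\kappa_2$, and kill each subquotient $Q_k'$ by the cuspidal-support/Bernstein-center argument of Lemmas~\ref{lem product cusp support} and \ref{lem annihilate pi cusp}. You make explicit the finite-generation and Noetherianness bookkeeping that the paper leaves implicit in the phrase ``one can follow the argument in Lemmas~\ref{lem product cusp support} and \ref{lem annihilate pi cusp}''; this is a correct and useful elaboration, not a different approach.
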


\begin{proof}
We use the notations in the proof of Lemma \ref{lem first filtration on product}. Thus we have a short exact sequence for $(\omega \times \Pi_i)_N$. Hence, intersecting with $\pi$, we have a short exact sequence for $\pi_N$:
\[   0 \rightarrow \kappa_1' \rightarrow \pi_N \rightarrow \kappa_2' \rightarrow 0 .
\]

We can then further obtain a fine filtration on $\kappa_2'$ with each successive quotient $Q'$ satisfying:
\[  Q' \hookrightarrow  (\tau' \times \Pi_{2i+k-n}) \boxtimes (\tau'' \times \Pi_{n-i-k})
\]
(see (\ref{eqn fine quotient})). Since $\mathrm{csupp}'(\tau)\cap \mathrm{csupp}'(\tau')=\emptyset$, one can follow the argument in Lemmas \ref{lem product cusp support} and \ref{lem annihilate pi cusp} to conclude that $Q'/\mathcal J(Q')=0$.

\end{proof}

We also derive another consequences, and later on we will remove an assumption of the following lemma in Corollary \ref{cor intersection}. 

\begin{lemma} \label{lem admit quotient}
Keep the same setting as the previous two lemmas. Again let $\pi$ be a non-zero submodule of $\omega \times \Pi_i$. Suppose $\pi_N/\mathcal J(\pi_N) \neq 0$. Then there exists a non-zero submodule $\tau$ of $\pi_N$ such that 
\begin{enumerate}
\item $\tau \hookrightarrow \Pi_i \boxtimes \omega$, and 
\item regard $\tau$ as $G_{n-i}$-representation via the embedding $g \mapsto \mathrm{diag}(I_{i},g)$, 
\[ \mathrm{Hom}_{G_{n-i}}(\omega', \tau) \neq 0 .
\]
for some submodule $\omega'$ of $\omega$.
\end{enumerate}
\end{lemma}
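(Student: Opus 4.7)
The plan has two stages, one for each conclusion of the lemma.

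For (1), I would invoke Lemma~\ref{lem short exact z invariant} to obtain a short exact sequence
\[ 0 \to \kappa_1' \to \pi_N \to \kappa_2' \to 0 \]
with $\kappa_1'\hookrightarrow \Pi_i\boxtimes\omega$ and $\kappa_2'/\mathcal J(\kappa_2')=0$. Applying the right-exact functor $(-)/\mathcal J(-)$ to this sequence, together with the hypothesis $\pi_N/\mathcal J(\pi_N)\neq 0$, forces $\kappa_1'/\mathcal J(\kappa_1')\neq 0$, hence $\kappa_1'\neq 0$. Setting $\tau:=\kappa_1'$ immediately settles (1).

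The guiding observation for (2) is that whenever $\sigma\subseteq\omega$ is an irreducible $G_{n-i}$-subrepresentation, $\Pi_i\boxtimes\sigma$ restricted through $g\mapsto\mathrm{diag}(I_i,g)$ is canonically isomorphic to $V\otimes\sigma$, where $V$ is the underlying vector space of $\Pi_i$ carrying the trivial $G_{n-i}$-action. Consequently $\Pi_i\boxtimes\sigma$ is $G_{n-i}$-semisimple, and any non-zero $G_{n-i}$-submodule of it contains a copy of $\sigma$. The main obstacle is that $\omega$ itself need not have finite length, so I cannot directly filter it by a composition series. To circumvent this I would pick any non-zero $v\in\tau$, write $v=\sum_{j=1}^r u_j\otimes w_j\in\Pi_i\boxtimes\omega$ with $u_1,\dots,u_r$ linearly independent, and set $\omega'':=\langle w_1,\dots,w_r\rangle_{G_{n-i}}\subseteq\omega$. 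Since $\omega''$ is finitely generated and admissible (as a submodule of the admissible $\omega$), it has finite length; and $v\in M:=\tau\cap(\Pi_i\boxtimes\omega'')$ shows $M\neq 0$.

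Finally I would iterate along a composition series of $\omega''$. Starting from $\omega_{-1}:=0$ and assuming inductively that $M\cap(\Pi_i\boxtimes\omega_{k-1})=0$, so that $M$ embeds into $\Pi_i\boxtimes(\omega''/\omega_{k-1})$ via the projection, I choose an irreducible subrepresentation $\omega_k/\omega_{k-1}$ of $\omega''/\omega_{k-1}$. Either the image of $M$ meets $\Pi_i\boxtimes(\omega_k/\omega_{k-1})$ non-trivially, in which case the semisimplicity observation above produces a non-zero $G_{n-i}$-map $\omega_k/\omega_{k-1}\to M\subseteq\tau$, and pre-composing with the surjection $\omega_k\twoheadrightarrow\omega_k/\omega_{k-1}$ yields the desired non-zero map from $\omega':=\omega_k\subseteq\omega$; or $M\cap(\Pi_i\boxtimes\omega_k)=0$ and the iteration proceeds to step $k+1$. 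Because $\omega''$ has finite length, the second alternative cannot persist: otherwise $M$ would eventually inject into $\Pi_i\boxtimes 0=0$, contradicting $M\neq 0$. Hence termination in the good case is automatic, completing the argument.
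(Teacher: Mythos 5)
Your proof of (1) is identical to the paper's: apply the right-exact functor $(-)/\mathcal J(-)$ to the short exact sequence of Lemma \ref{lem short exact z invariant}, use $\kappa_2'/\mathcal J(\kappa_2')=0$ and $\pi_N/\mathcal J(\pi_N)\neq 0$ to force $\kappa_1'/\mathcal J(\kappa_1')\neq 0$, hence $\kappa_1'\neq 0$, and set $\tau:=\kappa_1'$. For (2), the paper simply writes ``(2) follows from (1)'' with no further detail; your proposal supplies a genuine argument for that assertion, and it is correct. You rightly observe that the entire content is that $\Pi_i\boxtimes\omega$ restricted through $g\mapsto\mathrm{diag}(I_i,g)$ is $V\otimes\omega$ with $V$ carrying the trivial $G_{n-i}$-action, and you correctly handle the issue that $\omega$ is only admissible (not of finite length) by passing to the finitely generated --- hence finite-length --- submodule $\omega''$ generated by the components of a chosen nonzero vector of $\tau$. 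Your composition-series iteration inside $\Pi_i\boxtimes\omega''$ is sound and terminates. A marginally shorter route to the same conclusion: the cyclic $G_{n-i}$-module $\langle G_{n-i}\cdot v\rangle\subseteq\tau$ is isomorphic, via the linear independence of $u_1,\dots,u_r$, to a finitely generated admissible submodule of $\omega^{\oplus r}$; it therefore has finite length and admits an irreducible submodule $\sigma$, and any nonzero coordinate projection of $\sigma$ embeds it into $\omega$. Either way, you have filled in a step the paper treats as immediate, which is a useful clarification rather than a divergence in approach.
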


\begin{proof}
We keep using the notations in the proof of the previous lemma. Note that (2) follows from (1). For (1), it suffices to show that $\kappa_1'$ in Lemma \ref{lem short exact z invariant} is non-zero. Indeed, we have the following right exact sequence:
\[  (\kappa_1')/\mathcal J(\kappa_1') \rightarrow \pi_N/\mathcal J(\pi_N) \rightarrow (\kappa_2')/\mathcal J(\kappa_2') \rightarrow 0 .
\]
The last term is zero by Lemma \ref{lem short exact z invariant}. Now $\pi_N/\mathcal J(\pi_N)\neq 0$ implies $(\kappa_1')/\mathcal J(\kappa_1') \neq 0$. Hence $\kappa_1' \neq 0$.
\end{proof}

\subsection{Geometric lemma on $\Pi_i \times \omega$}

We can obtain a parallel version of Lemma \ref{lem admit quotient} for $(\Pi_i \times \omega)_{N_{n-i}}$. Since we do not really need that, we only state without a proof. 
\begin{lemma} \label{lem adimit quotient 2}
Let $\omega \in \mathrm{Alg}(G_{n-i})$ be admissible. Let $\tau \in \mathrm{Irr}(G_i)$ such that $\mathrm{csupp}'(\omega) \cap \mathrm{csupp}'(\tau)=\emptyset$. Let $\mathcal J'$ be the maximal ideal in $\mathfrak Z_i$ which annihilates $\tau$, and set $\mathcal J=\mathcal J'\otimes \mathfrak Z_{n-i} \in \mathfrak Z_{i} \otimes \mathfrak Z_{n-i}$. Set $N=N_{n-i}$. Then $(\Pi_i \times \omega)_N$ admits a short exact sequence:
\[  0 \rightarrow \eta_1 \rightarrow (\Pi_i \times \omega)_N \rightarrow \eta_2 \rightarrow 0
\]
such that $\eta_1/\mathcal J(\eta_1)=0$. Moreover, suppose $\pi$ is a submodule of $\Pi_i \times \omega$. If $\pi_N/\mathcal J(\pi_N)\neq 0$, then $\pi \cap \eta_1 \neq \pi$. 
\end{lemma}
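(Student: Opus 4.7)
My plan is to mirror the proof of Lemma \ref{lem first filtration on product} (and its submodule refinement Lemma \ref{lem short exact z invariant}), with the roles of $\Pi_i$ and $\omega$ interchanged. I would apply the geometric lemma of Section \ref{sec geo lem} to $\pi_1 = \Pi_i$, $\pi_2 = \omega$, $j = n-i$, obtaining a filtration $\omega_r \subset \cdots \subset \omega_0 = (\Pi_i \times \omega)_N$. At $k = 0$, both Jacquet subgroups are trivial and the Levi of $P$ fills all of $G_i \times G_{n-i}$, so the top quotient $\omega_0/\omega_1$ is simply $\Pi_i \boxtimes \omega$ with no twist or induction needed. Setting $\eta_2 := \Pi_i \boxtimes \omega$ and $\eta_1 := \omega_1$ then gives the desired short exact sequence. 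The submodule $\eta_1$ is filtered by the layers at $k \geq 1$; using Lemma \ref{lem restrict} to write $(\Pi_i)_{N_k} \cong \Pi_{i-k} \boxtimes \Pi_k$ and picking an irreducible subquotient $\tau' \boxtimes \tau''$ of $\omega_{N_{n-i-k}}$ (with $\tau' \in \mathrm{Irr}(G_{n-i-k})$, $\tau'' \in \mathrm{Irr}(G_k)$, and both cuspidal supports contained in $\mathrm{csupp}'(\omega)$), I would track the Weyl twist to see that the corresponding piece of $\omega_k/\omega_{k+1}$ is
\[
(\Pi_{i-k} \times \tau'') \boxtimes (\Pi_k \times \tau')
\]
as a $G_i \times G_{n-i}$-representation.

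For $k \geq 1$, $\tau''$ is a genuine representation of the nontrivial group $G_k$ with $\mathrm{csupp}'(\tau'') \cap \mathrm{csupp}'(\tau) = \emptyset$. Lemma \ref{lem product cusp support} applied to $\Pi_{i-k} \times \tau''$ then forces every irreducible subquotient of $\Pi_{i-k} \times \tau''$ to meet $\mathrm{csupp}'(\tau'')$ in its cuspidal support, hence to have cuspidal support distinct from $\mathrm{csupp}(\tau)$. Every composition factor $\sigma_1 \boxtimes \sigma_2$ of $\eta_1$ therefore satisfies $\mathrm{csupp}(\sigma_1) \neq \mathrm{csupp}(\tau)$, and the characterization via \cite[Proposition 2.11]{BD} used in the proof of Lemma \ref{lem annihilate pi cusp} yields $\eta_1/\mathcal{J}(\eta_1) = 0$. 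The same cuspidal-support obstruction is inherited by any $G_i \times G_{n-i}$-submodule of $\eta_1$; so if $\pi_N \subseteq \eta_1$ for some submodule $\pi \subseteq \Pi_i \times \omega$, the same argument gives $\pi_N/\mathcal{J}(\pi_N) = 0$. Contrapositively, $\pi_N/\mathcal{J}(\pi_N) \neq 0$ forces $\pi_N \not\subseteq \eta_1$, i.e. $\pi \cap \eta_1 \neq \pi$.

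The main obstacle is the bookkeeping of the $w$-twist in the geometric lemma, namely verifying that the $G_k$ factor from $\omega_{N_{n-i-k}}$ ends up on the $G_i$-side (paired with $\Pi_{i-k}$) while the $G_k$ factor from $(\Pi_i)_{N_k}$ ends up on the $G_{n-i}$-side (paired with $\tau'$). This computation is the mirror image of the one producing equation \eqref{eqn fine quotient} in Lemma \ref{lem first filtration on product}, and once it is carried out the rest of the argument is a direct transcription.
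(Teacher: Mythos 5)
The paper explicitly states this lemma without proof (``Since we do not really need that, we only state without a proof''), presenting it as a parallel version of Lemma~\ref{lem admit quotient}; your argument is precisely the mirror-image of the proofs of Lemmas~\ref{lem first filtration on product}--\ref{lem admit quotient} that the paper has in mind, and it is correct. You correctly observe that interchanging $\Pi_i$ and $\omega$ reverses the role of the extreme layer in the geometric-lemma filtration, so that $\Pi_i\boxtimes\omega$ now appears at $k=0$ as the top quotient $\omega_0/\omega_1$ rather than as the bottom submodule, and your tracking of the $w$-twist yielding $(\Pi_{i-k}\times\tau'')\boxtimes(\Pi_k\times\tau')$ for $k\geq 1$ is consistent with the computation behind (\ref{eqn fine quotient}). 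One small imprecision worth flagging: since $\eta_1$ is not admissible, the phrase ``every composition factor $\sigma_1\boxtimes\sigma_2$ of $\eta_1$'' is not literally meaningful; the correct formulation is the one the paper uses in the proof of Lemma~\ref{lem first filtration on product} -- filter $\eta_1$ (resp.\ $\pi_N\cap\eta_1$) by the finitely many refined layers, apply Lemma~\ref{lem annihilate pi cusp} to the $G_i$-factor $\Pi_{i-k}\times\tau''$ of each layer $Q$ to obtain $Q/\mathcal{J}(Q)=0$, and then conclude by right-exactness of $(\cdot)/\mathcal{J}(\cdot)$ along the finite filtration. With that adjustment your proof is a direct transcription of the paper's own technique, as you anticipate.
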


We now prove a lemma that is needed later:

\begin{lemma} \label{lem second geom lemma}
Let $\omega \in \mathrm{Alg}(G_{n-i})$ be admissible. Let $\pi$ be a submodule of $\Pi_i \times \omega$. Set $N=N_{n-i}$. Then there exists a short exact sequence on $\pi_N$:
\[  0 \rightarrow \eta_1  \rightarrow \pi_N \rightarrow \eta_2 \rightarrow 0
\]
such that, as $G_i \times G_{n-i}$-representations,
\begin{enumerate}
\item[(1)] $\eta_2$ embeds to $\Pi_i \boxtimes \omega$;
\item[(2)] regard as $G_{n-i}$-representations via the embedding $g \mapsto \mathrm{diag}(I_i, g)$, $\eta_1$ does not have any admissible $G_{n-i}$-submodule;
\end{enumerate}
In particular, regard $\pi_N$ as $G_{n-i}$-representation, if $\tau\in \mathrm{Irr}(G_{n-i})$ embeds $\pi_N$, then $\tau$ is an irreducible submodule of $\omega$.
\end{lemma}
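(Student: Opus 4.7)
The plan is to mirror the proof of Lemmas \ref{lem first filtration on product}--\ref{lem short exact z invariant}, but on the opposite side of the product: for $\omega \times \Pi_i$ the piece $\Pi_i \boxtimes \omega$ appears as a \emph{submodule} of the geometric-lemma filtration, while for $\Pi_i \times \omega$, Bernstein's second adjointness applied to the open $P_i$--$P_{n-i}$ double coset makes it appear instead as a \emph{quotient}. Concretely, the geometric lemma of Section \ref{sec geo lem} yields a canonical short exact sequence of $G_i \times G_{n-i}$-modules
\[ 0 \longrightarrow E \longrightarrow (\Pi_i \times \omega)_{N_{n-i}} \longrightarrow \Pi_i \boxtimes \omega \longrightarrow 0. \]
Exactness of the Jacquet functor gives $\pi_N \hookrightarrow (\Pi_i \times \omega)_N$, so setting $\eta_1 := \pi_N \cap E$ and $\eta_2 := \pi_N/\eta_1$ immediately produces $\eta_2 \hookrightarrow \Pi_i \boxtimes \omega$, proving (1).

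For (2), I will analyze the filtration structure of $E$. Its successive quotients are
\[ Q_k \cong \mathrm{Ind}_P^{G_i \times G_{n-i}}\bigl((\Pi_i)_{N_k} \boxtimes (\omega)_{N_{n-i-k}}\bigr)^w, \qquad 1 \le k \le \min(i, n-i), \]
and, by Lemma \ref{lem restrict}, $(\Pi_i)_{N_k} \cong \Pi_{i-k} \boxtimes \Pi_k$. Tracking the ``crossed'' Weyl twist in the action of the Levi $L = L' \times L''$ (with $L'' = G_k \times G_{n-i-k}$ lying in the $G_{n-i}$-factor) reveals that the $G_k$-factor of $L''$ acts precisely on the Gelfand-Graev piece $\Pi_k$. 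Consequently, as a $G_{n-i}$-representation, $Q_k$ is isomorphic to a (possibly infinite) direct sum of copies of a parabolic induction of the form $\Pi_k \times (\cdots)$ in $G_{n-i}$, with $\Pi_k$ occupying the $G_k$-Levi block. By Frobenius reciprocity combined with Lemma \ref{lem zero hom gg} --- which says $\mathrm{Hom}_{G_k}(\sigma, \Pi_k) = 0$ for every admissible $\sigma$ --- such a module admits no admissible $G_{n-i}$-submodule. A d\'evissage on the filtration of $E$ (take the smallest $k$ for which a hypothetical admissible $G_{n-i}$-submodule $V \subset \eta_1$ meets $E_{\le k}$, and contradict the vanishing on $Q_k$) then forces $V = 0$, establishing (2).

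The ``in particular'' clause follows at once: for an irreducible $G_{n-i}$-embedding $\tau \hookrightarrow \pi_N$, part (2) excludes $\tau \subset \eta_1$, so $\tau$ has nonzero image in $\eta_2 \hookrightarrow \Pi_i \boxtimes \omega$. Restricting the target to the $G_{n-i}$-factor gives a direct sum of $\dim_{\mathbb C} \Pi_i$ copies of $\omega$, so an irreducible $\tau$ embeds only as a $G_{n-i}$-submodule of $\omega$.

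The main obstacle will be the structural analysis in (2): unwinding the Weyl twist in the geometric lemma precisely enough to place the Gelfand-Graev factor $\Pi_k$ on the $G_{n-i}$-side rather than the $G_i$-side of each $Q_k$. Without this correct placement, Lemma \ref{lem zero hom gg} would only kill $G_k$-submodules inside the $G_i$-factor and would not constrain admissible $G_{n-i}$-submodules of $Q_k$.
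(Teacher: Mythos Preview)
Your proposal is correct and follows essentially the same approach as the paper: both use the geometric lemma to place $\Pi_i \boxtimes \omega$ as the top quotient of $(\Pi_i \times \omega)_N$, set $\eta_1 = \pi_N \cap E$, and then refine the filtration on $E$ (via Lemma~\ref{lem restrict} and the admissibility of $\omega$) so that each successive quotient has $G_{n-i}$-factor of the form $\Pi_l \times \omega''$ with $l \geq 1$, whence Lemma~\ref{lem zero hom gg} plus Frobenius reciprocity kills any irreducible $G_{n-i}$-submodule. The paper's write-up is slightly more explicit about the shape of the refined layers, recording them as $(\Pi_{i-l} \times \omega') \boxtimes (\Pi_l \times \omega'')$, but your identification of the Weyl twist is exactly the point, and your d\'evissage and ``in particular'' arguments match the paper's.
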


\begin{proof}
Again we consider the filtration from the geometric lemma (\ref{eqn geom filt 1}). We refine the filtration by incorporating the the filtrations on Jacquet functors of $\omega$ by simple composition factors. We also use Lemma \ref{lem restrict}. Then we obtain a filtration on $(\Pi_i \times \omega)_N$:
\[  0 \subset \eta_r' \subset \ldots \subset \eta_0'=(\Pi_i \times \omega)_N
\]
such that $\eta_0'/\eta_{1}' \cong \Pi_i \boxtimes \omega$ and for $k>1$,
\begin{align}\label{eqn iso quotient gg} \eta_{k-1}'/ \eta_{k}' \cong (\Pi_{i-l} \times \omega') \boxtimes (\Pi_l \times \omega'')
\end{align}
for some $l \geq 1$ and $\omega' \in \mathrm{Irr}(G_l)$ and $\omega'' \in \mathrm{Irr}(G_{n-i-l})$.

We set $\eta_1=\eta_1'\cap \pi_N$ and $\eta_2= \pi_N/(\eta_1'\cap \pi_N)$. It remains to check that the short exact sequence
\[ 0 \rightarrow \eta_1 \rightarrow \pi_N \rightarrow \eta_2 \rightarrow 0
\]
satisfies the required properties. (1) is immediate from the definition of $\eta_2$.

Now we consider (2). Suppose $\tau \in \mathrm{Irr}(G_{n-i})$ such that 
\[  \mathrm{Hom}_{G_{n-i}}(\tau, \eta_1 ) \neq 0 
\]
and we shall arrive a contradiction. Then
\[ \mathrm{Hom}_{G_{n-i}}(\tau, (\pi_N \cap \eta_{k-1}')/(\pi_N\cap \eta_k')) \neq 0
\]
for some $k\geq 2$. Since 
\[  (\pi_N \cap \eta_{k-1}')/(\pi_N\cap \eta_k') \hookrightarrow \eta_{k-1}'/\eta_k' ,
\]
we have that, by the expression (\ref{eqn iso quotient gg}),
\[  \mathrm{Hom}_{G_{n-i}}(\tau, \Pi_l\times \omega'') \neq 0
\]
for some $\omega'' \in \mathrm{Alg}(G_{n-i-l})$. However, the last Hom is zero as $l>0$ (see Lemma \ref{lem zero hom gg} as well as the proof of Lemma \ref{lem bz filtration} below). This gives a contradiction and thus we have (2).
\end{proof}

\subsection{Intersection properties of Bernstein-Zelevinsky layers}


The main tool of this section is the exactness of Jacquet functor and the existence of certain irreducible subquotient in Corollary \ref{lem exist quotient}, which reduces the problem of studying submodules of a representation to certain submodules of the Jacquet module. The key property needed for Gelfand-Graev representation is Lemma \ref{lem compos factor 1}, in which such construction is most conveniently done in the setting of affine Hecke algebra.

The following result can be viewed as a generalization of the fact that the Gelfand-Graev representation does not admit an admissible submodule.

\begin{lemma} \label{lem bz filtration}
For $i\neq j$, let $\omega, \tau$ be admissible $G_{n-i}$-representation and $G_{n-j}$-representation respectively. There is no non-zero isomorphic  submodule of $\mathrm{ind}^{G_n}_{R_{n-i}} \omega \boxtimes \psi_i$ and $\mathrm{ind}_{R_{n-j}^-}^{G_n} \tau  \boxtimes \psi_j$.
\end{lemma}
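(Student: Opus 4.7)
The plan is induction on $n$, with base case $n=1$ handled by Lemma \ref{lem zero hom gg}. For the inductive step, assume the lemma for all $n' < n$; WLOG $i < j$, and suppose for contradiction that a non-zero $\pi$ embeds in both $\omega \times \Pi_i \cong \mathrm{ind}_{R_{n-i}}^{G_n}(\omega \boxtimes \psi_i)$ and $\Pi_j \times \tau \cong \mathrm{ind}_{R_{n-j}^-}^{G_n}(\tau \boxtimes \psi_j)$. Apply the Jacquet functor $(\cdot)_N$ with $N = N_{n-j}$ (Levi $L \cong G_j \times G_{n-j}$), so that $\pi_N$ embeds in both $(\Pi_j \times \tau)_N$ and $(\omega \times \Pi_i)_N$.

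On the left side, Lemma \ref{lem second geom lemma}(1) (whose construction does not rely on the present lemma) gives a short exact sequence
\[
0 \longrightarrow \eta_1 \longrightarrow \pi_N \longrightarrow \eta_2 \longrightarrow 0
\]
with $\eta_2 \hookrightarrow \Pi_j \boxtimes \tau$, where $\eta_1$ admits a filtration whose successive quotients embed into $(\Pi_{j-l} \times \omega') \boxtimes (\Pi_l \times \omega'')$ for some $l \geq 1$. On the right side, the geometric lemma of Section \ref{sec geo lem}, combined with Lemma \ref{lem restrict} and a refinement by irreducible composition factors of the admissible $(\omega)_{N_k}$, gives a filtration on $(\omega \times \Pi_i)_N$ with successive quotients of the form $(\sigma' \times \Pi_{i+j+k-n}) \boxtimes (\sigma'' \times \Pi_{n-j-k})$ for $k$ in a suitable range. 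The key observation is that in each such layer, the $G_{n-j}$-factor is either an admissible representation or is of the form $\Pi_m \times (\text{admissible})$ with $m \geq 1$; the admissible case occurs only for $\eta_2$ on the left (with $G_{n-j}$-factor inside $\tau$) and for the $k = n-j$ layer on the right (with $G_{n-j}$-factor a composition factor $\omega_{\mathrm{fst}}$ of $(\omega)_{N_{n-j}}$). By the inductive hypothesis applied on the strictly smaller group $G_{n-j}$, each such $\Pi_m \times (\text{admissible})$ admits no non-zero admissible $G_{n-j}$-submodule, so any admissible $G_{n-j}$-submodule of $\pi_N$ must be localized to these two boundary layers.

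To conclude, one invokes a Bernstein-center argument (in the spirit of Lemmas \ref{lem admit quotient} and \ref{lem adimit quotient 2}) to guarantee that $\pi_N$ contains a non-zero admissible $G_{n-j}$-submodule $M$. By the above localization, $M$ sits simultaneously inside $\eta_2 \hookrightarrow \Pi_j \boxtimes \tau$ and inside the $k=n-j$ layer $(\omega_{\mathrm{snd}} \times \Pi_i) \boxtimes \omega_{\mathrm{fst}}$, where $\omega_{\mathrm{snd}} \in \mathrm{Irr}(G_{j-i})$. After reducing to the case $\tau$ and $\omega_{\mathrm{fst}}$ irreducible, $M$ factors as $A \boxtimes \tau$ with $A$ a non-zero common $G_j$-submodule of $\Pi_j$ and $\omega_{\mathrm{snd}} \times \Pi_i$. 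Applying the inductive hypothesis of the present lemma once more on the strictly smaller group $G_j$ --- now with right parameter $i' = i$ and left parameter $j' = j$, which differ since $i < j$ --- rules out such a common $G_j$-submodule, yielding the desired contradiction. The main obstacle will be executing the Bernstein-center localization to guarantee the non-zero admissible $G_{n-j}$-submodule of $\pi_N$, and carefully bookkeeping the Weyl-element twists in the geometric lemma so that the mismatch between the Gelfand-Graev sizes $i$ and $j$ is correctly matched to the inductive hypothesis on $G_j$.
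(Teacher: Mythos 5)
Your proposal takes a genuinely different route from the paper. The paper works in the case $i>j$, applies the Jacquet functor $(\cdot)_{N_i}$ (with $i$ the \emph{larger} BZ size), gets two geometric-lemma filtrations on $\pi_{N_i}$, invokes Lemma~\ref{lem two filtration intersect} to find a common submodule of one layer from each side, disposes of the two extreme layers with the Bernstein-center trick from \cite{BH}, and then \emph{recurses} on the remaining layers (the pair $(\omega'\times\Pi_k,\ \Pi_l\times\tau')$ on $G_{n-i}$, or the pair on the $G_i$-factor), arguing that the BZ-size parameters strictly decrease. There is no explicit induction on $n$; termination is via a recursion on the derivative index. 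You instead assume $i<j$, apply $(\cdot)_{N_{n-j}}$ (the Jacquet functor matching the admissible factor $\tau$ of $\Pi_j\times\tau$, which makes the closed-orbit layer $\Pi_j\boxtimes\tau$ visible), run a parallel filtration on the $\omega\times\Pi_i$ side, and then induct on $n$ by passing to a common $G_j$-submodule problem of $\Pi_j$ (left BZ of size $j$) and $\omega_{\mathrm{snd}}\times\Pi_i$ (right BZ of size $i$). Your termination argument ($j<n$) is cleaner than the paper's, and the reduction to $G_j$ is correct once $\tau$ is replaced by an irreducible subquotient via Lemma~\ref{lem reduction}.

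There are, however, real gaps. First, the Bernstein-center step producing a non-zero admissible $G_{n-j}$-submodule $M$ of $\pi_{N_{n-j}}$ is the \emph{left-BZ} analogue of Corollary~\ref{cor intersection}, and the paper neither proves it nor can you borrow the right-BZ version (Corollary~\ref{cor intersection} uses $\pi\hookrightarrow\omega\times\Pi_i$ with $N_{n-i}$, which is the wrong Jacquet functor for your argument). You would need a left-sided analogue of the chain Lemma~\ref{lem compos factor 1}~$\Rightarrow$~Lemma~\ref{lem exist quotient}~$\Rightarrow$~Corollary~\ref{cor intersection}; Lemma~\ref{lem adimit quotient 2} is the natural intermediate step, but the paper states it without proof. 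Second, your $n$-induction has a boundary case $j=n$ (so $\tau$ trivial, $\Pi_j\times\tau=\Pi_n$) in which the reduction lands back on $G_j=G_n$; this case must be handled separately, for instance by Corollary~\ref{cor intersection} applied to $\pi\hookrightarrow\omega\times\Pi_i$ followed by Lemma~\ref{lem zero hom gg}. Third, a sign/orientation point: the $G_{n-j}$-factor of the non-extreme layers on the $\omega\times\Pi_i$ side is a \emph{right} BZ induction $\omega''_2\times\Pi_{n-j-k}$, not $\Pi_m\times(\text{admissible})$ as you write, so your ``no admissible $G_{n-j}$-submodule'' claim needs the right-BZ version of the second-adjointness argument, not a citation of the left-BZ computation in Lemma~\ref{lem second geom lemma}(2). (Both versions do hold by the second-adjointness argument, and neither genuinely depends on Lemma~\ref{lem bz filtration}, so the circularity you flag is avoidable --- but it must be spelled out rather than cited.) Finally note you actually use both parts (1) and (2) of Lemma~\ref{lem second geom lemma}, and it is part (2) whose proof in the paper forward-references the present lemma.
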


\begin{proof}

We only prove for $i>j$ and the case for $j<i$ is similar. Then we have that $(\mathrm{ind}^{G_n}_{R_{n-i}} \omega \otimes \psi_i)_{N_i}$ admits a filtration that successive quotients take the form 
\[  ( \omega' \times \Pi_k) \boxtimes (\omega'' \times \Pi_{i-k} ).
\]
Similarly,  $(\mathrm{ind}^{G_n}_{R_{n-i}^-} \tau \boxtimes \psi_j)_{N_i}$ admits a filtration that successive quotients take the form
\[  (\Pi_l \times \tau')  \boxtimes (\Pi_{j-l}\times  \tau''),
\]
where $\tau'$ is an admissible $G_{n-i-l}$-representation and $\tau''$ is an admissible $G_{i-j+l}$ representation.
Since Jacquet functor is exact, it reduces, by Lemma \ref{lem two filtration intersect}, to see that there is no subrepresentation for 
\[  (\Pi_l \times \tau')  \boxtimes (\Pi_{j-l}\times  \tau''), \quad \mbox{ and }  ( \omega' \times \Pi_k) \boxtimes (\omega'' \times \Pi_{i-k} ).
\]
for all $k,l$.

In the case that $k=0$ and $l=j$; or $k=n-i$ and $l=0$, a certain element in the Bernstein center annihilates $\omega''$ or $\omega'$, but not on $\Pi_i$ or $\Pi_{n-i}$ respectively (by \cite{BH}). Now to prove other cases, we notice that either $k>l$ or $i-k>j-l$ (which happens when $l\geq k$). In the first case, we apply $N_k \subset G_{n-i}$ on the first factor, and in the second case, apply $N_{i-k}$ on the second factor. Then repeat the process and the process terminates as each process the value on $i$ decreases.

\end{proof}

\begin{lemma} \label{lem compos factor 1}
Let $\mathfrak s \in \mathfrak B(G_n)$. Let $\tau=\langle \mathfrak m \rangle =\zeta(\mathfrak m)$ be an irreducible principle series in $\mathfrak R_{\mathfrak s}(G_n)$, where $\mathfrak m \in \mathrm{Mult}$ with all segments to be singletons. Then there exists a $G_n$-filtration on $\Pi_{\mathfrak s}$:
\[                    \ldots     \subsetneq    \pi_2  \subsetneq  \pi_1  \subsetneq \pi_0=\Pi_{\mathfrak s}
\]
such that 
\begin{enumerate}
\item each successive quotient $\pi_{r-1}/\pi_{r}$  is admissible and 
\[\mathrm{JH}(\pi_{r-1}/\pi_r)=\left\{\tau\right\}= \left\{\langle \mathfrak m\rangle \right\}\] and;
\item 
\[  \bigcap_{r\geq 0} \pi_r = 0 .
\]
\end{enumerate}
\end{lemma}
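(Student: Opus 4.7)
The plan is to take $\mathcal{J}\subset \mathfrak Z_{\mathfrak s}$ to be the maximal ideal of the Bernstein center annihilating $\tau$, and to set $\pi_r := \mathcal{J}^r\, \Pi_{\mathfrak s}$. Since $\mathcal J$ acts by $G_n$-endomorphisms, this is automatically a descending chain of $G_n$-submodules with $\pi_0=\Pi_{\mathfrak s}$.

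The key ingredient is the affine-Hecke-algebra realization. By Theorem \ref{thm affine} and the equivalence $\mathfrak R_{\mathfrak s}(G_n)\simeq \mathcal H_{\mathfrak s}\text{-mod}$, the module $\Pi_{\mathfrak s}$ is identified with $\mathcal H_{\mathfrak s}\otimes_{\mathcal H_{W,\mathfrak s}}\mathrm{sgn}_{\mathfrak s}$, which as an $\mathcal A_{\mathfrak s}$-module is isomorphic to $\mathcal A_{\mathfrak s}$, and the action of $\mathcal J\subset \mathcal A_{\mathfrak s}^{W_{\mathfrak s}}\subset \mathcal A_{\mathfrak s}$ is just multiplication. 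Therefore $\pi_r$ corresponds to the ideal $(\mathcal J\mathcal A_{\mathfrak s})^r \subset \mathcal A_{\mathfrak s}$. Since $\mathcal A_{\mathfrak s}$ is a Noetherian integral domain and $\mathcal J\mathcal A_{\mathfrak s}$ is a proper nonzero ideal (nonzero because $\mathcal A_{\mathfrak s}$ is a free $\mathfrak Z_{\mathfrak s}$-module so $\mathfrak Z_{\mathfrak s}\hookrightarrow \mathcal A_{\mathfrak s}$), Krull's intersection theorem yields $\bigcap_r \pi_r =0$, giving (2). Strict descent follows at once: if $\pi_r=\pi_{r-1}$, iterating $\mathcal J\pi_{r-1}=\pi_{r-1}$ gives $\pi_{r-1}\subset\bigcap_k\pi_k=0$, whence $(\mathcal J\mathcal A_{\mathfrak s})^{r-1}=0$, contradicting that $\mathcal A_{\mathfrak s}$ is a domain.

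For the remaining parts of (1), admissibility comes from the Bernstein center: $\Pi_{\mathfrak s}$ is finitely generated as a $G_n$-representation by \cite{BH}, and since $\mathcal J^{r-1}$ is a finitely generated ideal in the Noetherian ring $\mathfrak Z_{\mathfrak s}$, the submodule $\pi_{r-1}=\mathcal J^{r-1}\Pi_{\mathfrak s}$ is finitely generated, so \cite[Corollaire 3.4]{BD} yields admissibility of $\pi_{r-1}/\mathcal J\pi_{r-1}=\pi_{r-1}/\pi_r$. Since this quotient is annihilated by $\mathcal J$, every Jordan-Hölder constituent lies in $\mathfrak R_{\mathfrak s}(G_n)$ with infinitesimal character $W_{\mathfrak s}\cdot\chi$, hence with the same cuspidal support as $\tau$ by \cite[Proposition 2.11]{BD}. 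The hypothesis $\zeta(\mathfrak m)=\langle\mathfrak m\rangle$ forces, via Zelevinsky's classification, that the singletons in $\mathfrak m$ are pairwise unlinked, so no two cuspidals in $\mathrm{csupp}(\tau)$ differ by $\nu^{\pm 1}$; consequently $\mathfrak m$ is the only multisegment with this cuspidal support and $\tau$ is the unique irreducible with it. This last step---pinning the Jordan-Hölder content down to exactly $\{\tau\}$ rather than a larger set of irreducibles sharing the same infinitesimal character---is the main obstacle, and it is where both the singleton and unlinked hypotheses on $\mathfrak m$ are essential.
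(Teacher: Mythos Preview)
Your proof is correct and follows essentially the same approach as the paper: both take $\pi_r=\mathcal J^r\Pi_{\mathfrak s}$ for $\mathcal J$ the maximal ideal of the (Bernstein) center annihilating $\tau$, work through the Hecke-algebra realization $\Pi_{\mathfrak s}|_{\mathcal A_{\mathfrak s}}\cong\mathcal A_{\mathfrak s}$, and use Krull's intersection theorem for (2). Your justification that $\tau$ is the unique irreducible with its central character (via unlinked singletons forcing a unique multisegment on that cuspidal support) in fact makes explicit what the paper states as a bare fact; the only small omission is that you justify $\mathcal J\mathcal A_{\mathfrak s}$ nonzero but not proper, though properness is immediate since $\mathcal A_{\mathfrak s}$ is module-finite over $\mathfrak Z_{\mathfrak s}$ (or since $\mathcal J$ annihilates $\tau\neq 0$).
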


\begin{proof}

We shall prove this by the Hecke algebra model of the Gelfand-Graev representation in Section \ref{ss affine hecke}, and we shall use those notations. Let $\mathcal Z$ be the center of $\mathcal H_{\mathfrak s}$ and let $\mathcal J$ be the maximal ideal in $\mathcal Z$ annihilating $\tau_{\mathfrak s}$. By Theorem \ref{thm affine}, 
\[ \Pi_{\mathfrak s} \cong \mathcal H_{\mathfrak s}\otimes_{\mathcal H_{W,\mathfrak s}} \mathrm{sgn} ,\]
We consider the filtration given by $\pi_k=\mathcal J^k\Pi\cong \mathcal J^k\mathcal H_{\mathfrak s}\otimes_{\mathcal H_{W,\mathfrak s}}\mathrm{sgn}$.  Since $\mathcal H_{\mathfrak s}$ is finitely-generated as a $\mathcal Z$-module, each successive quotient of the filtration from $\pi_k$ is finite-dimensional. With the fact that $\tau_{\mathfrak s}$ is the only irreducible $\mathcal H_{\mathfrak s}$-module annihilated by $\mathcal J$, we have that any irreducible composition factor of $\pi_k/\pi_{k-1}$ is isomorphic to $\tau_{\mathfrak s}$. This gives the corresponding property (1) in the group side. For (2), we have that $\pi_k|_{\mathcal A_{\mathfrak s}} \cong \mathcal J^k\mathcal A_{\mathfrak s}$. Now $\bigcap_k \mathcal J^k =0$ and $\mathcal A_{\mathfrak s}=\mathcal Za_1+\ldots +\mathcal Za_k$ for finitely many $a_1, \ldots a_k \in \mathcal A_{\mathfrak s}$, and this gives that $\bigcap_r \pi_r =0$ as desired. 

\end{proof}



\begin{lemma} \label{lem exist quotient}
Let $\mathfrak s \in \mathfrak{B}(G_{n-i})$ and let $\mathfrak t \in \mathfrak B(G_i)$. Let $\omega$ be an irreducible representation in $\mathfrak R_{\mathfrak s}(G_{n-i})$. Let $\pi$ be a non-zero submodule of $\omega \times (\Pi_i)_{\mathfrak t}$. Let $\tau \in \mathfrak R_{\mathfrak t}(G_i)$ be an irreducible principle series as in Lemma \ref{lem compos factor 1} and further assume that $\mathrm{csupp}_{\mathbb{Z}}(\tau) \cap \mathrm{csupp}_{\mathbb Z}(\omega)=\emptyset$. Then there exists a submodule $\pi'$ of $\omega \times (\Pi_i)_{\mathfrak t}$ such that 
\begin{enumerate}
\item the projection from $\pi$ to $(\omega \times (\Pi_i)_{\mathfrak t})/\pi'$ is non-zero; and 
\item  the image $\pi/(\pi' \cap \pi)$ of the projection is admissible and any irreducible subquotient of the image is isomorphic to $\omega \times \tau$.
\end{enumerate}
\end{lemma}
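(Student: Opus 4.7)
The plan is to build $\pi'$ out of the filtration on the Gelfand--Graev representation provided by \leref{compos factor 1}. Applying that lemma to the block $(\Pi_i)_{\mathfrak t}$, we obtain a descending chain
\[
\ldots \subsetneq \pi_2 \subsetneq \pi_1 \subsetneq \pi_0=(\Pi_i)_{\mathfrak t}
\]
with each $\pi_{r-1}/\pi_r$ admissible, every composition factor isomorphic to $\tau$, and $\bigcap_r \pi_r=0$. Product with $\omega$ is exact (parabolic induction), so we get a descending chain $\omega\times \pi_r$ of $G_n$-submodules of $\omega\times (\Pi_i)_{\mathfrak t}$ whose successive quotients are $\omega\times(\pi_{r-1}/\pi_r)$.

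The first step is to observe that parabolic induction commutes with this particular nested intersection: realizing $\mathrm{Ind}_P^{G_n}(\omega\boxtimes (\Pi_i)_{\mathfrak t})$ as smooth equivariant functions $f\colon G_n\to \omega\otimes (\Pi_i)_{\mathfrak t}$, the submodule $\omega\times \pi_r$ corresponds to those $f$ with $f(g)\in \omega\otimes \pi_r$ for every $g$, and hence $\bigcap_r (\omega\times \pi_r)$ consists of the $f$ with $f(g)\in \omega\otimes\bigcap_r\pi_r=0$ for every $g$. Therefore $\bigcap_r(\omega\times \pi_r)=0$. Since $\pi\neq 0$, there is a smallest $r$ for which $\pi\not\subset \omega\times \pi_r$; set $\pi'=\omega\times \pi_r$. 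This immediately gives property (1).

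For (2), the natural inclusion induces an embedding
\[
\pi/(\pi\cap \pi')\ \hookrightarrow\ \bigl(\omega\times (\Pi_i)_{\mathfrak t}\bigr)\big/\bigl(\omega\times \pi_r\bigr)\ \cong\ \omega\times \bigl((\Pi_i)_{\mathfrak t}/\pi_r\bigr),
\]
using exactness of $\omega\times(-)$ again. The quotient $(\Pi_i)_{\mathfrak t}/\pi_r$ is of finite length with every composition factor isomorphic to $\tau$, so $\omega\times\bigl((\Pi_i)_{\mathfrak t}/\pi_r\bigr)$ is of finite length with every composition factor appearing among the irreducible subquotients of $\omega\times \tau$. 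The hypothesis $\mathrm{csupp}_{\mathbb Z}(\omega)\cap \mathrm{csupp}_{\mathbb Z}(\tau)=\emptyset$ ensures that no segment of $\omega$ (viewed through its Zelevinsky multisegment) is linked to any segment of $\tau$, and hence, by Zelevinsky's irreducibility criterion together with \eqref{eqn commute product trivial}, $\omega\times \tau$ is itself irreducible (and admissible). Thus every composition factor of $\omega\times\bigl((\Pi_i)_{\mathfrak t}/\pi_r\bigr)$ equals $\omega\times \tau$, and the same is true for its subrepresentation $\pi/(\pi\cap \pi')$, which is therefore of finite length and admissible.

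I expect the only mildly delicate point to be the interchange of parabolic induction with the countable intersection $\bigcap_r \pi_r$; this is where we really use the concrete function-space description of $\mathrm{Ind}_P^{G_n}$ rather than an abstract exactness statement. Everything else is a direct application of \leref{compos factor 1} and Zelevinsky's combinatorics of unlinked segments.
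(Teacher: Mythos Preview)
Your proof is correct and follows essentially the same route as the paper: induce the filtration of \leref{compos factor 1} up to $\omega\times(\Pi_i)_{\mathfrak t}$, use that the intersection of the induced filtration is zero to find $\pi'=\omega\times\pi_r$, and read off property~(2) from the embedding of $\pi/(\pi\cap\pi')$ into a finite-length module with all factors $\omega\times\tau$. You supply two details the paper leaves implicit---the function-space argument for $\bigcap_r(\omega\times\pi_r)=0$ and the irreducibility of $\omega\times\tau$ via unlinked segments (which the paper records in \reref{mk lem product quoteint z}(1))---but the strategy is the same.
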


\begin{remark} \label{rmk lem product quoteint z}
\begin{enumerate}
\item  The condition that $\mathrm{csupp}_{\mathbb{Z}}(\tau) \cap \mathrm{csupp}_{\mathbb Z}(\omega)=\emptyset$ implies that $\tau \times \omega$ is irreducible. 
\item One can replace an irreducible representation $\omega$ with an admissible representation $\omega$ in $\mathfrak R_{\mathfrak s}(G_{n-i})$. Then one replaces $\mathrm{csupp}_{\mathbb Z}(\omega)$ by $\cup \mathrm{csupp}_{\mathbb Z}(\omega')$, where the union runs for all $\omega' \in \mathrm{JH}(\omega)$; and replace the last sentence ''irreducible subquotient isomorphic to $\omega \times \tau$'' with ''irreducible subquotient isomorphic to $\omega' \times \tau$ for some $\omega' \in \mathrm{JH}(\omega)$''. In order to prove such statement, one considers a filtration on the admissible representation $\omega$:
\[  0 \subset \tau_1 \subset \ldots \subset \tau_r=\omega ,
\]
with irreducible successive quotients. This induces a filtration on $\omega \times (\Pi_i)_{\mathfrak t}$. Intersecting $\pi$ with $\tau_k \times (\Pi_i)_{\mathfrak t}$, we obtain a filtration for $\pi$. Now we consider the largest $k$ such that $\pi \cap (\tau_k \times (\Pi_i)_{\mathfrak t}) \neq \pi \cap (\tau_{k-1}\times (\Pi_i)_{\mathfrak t})$, and obtain
\[ 0\neq \pi \cap (\tau_k \times (\Pi_i)_{\mathfrak t})/\pi \cap (\tau_{k-1}\times (\Pi_i)_{\mathfrak t}) \hookrightarrow (\tau_k/\tau_{k-1}) \times  (\Pi_i)_{\mathfrak t} .
\]
This embedding reduces to the original Lemma \ref{lem exist quotient} and one can obtain the required submodule. 
\item There are only countably many elements in $\mathrm{csupp}_{\mathbb Z}(\omega)$ and so such $\tau$ always exists. 
\end{enumerate}
\end{remark}

\begin{proof}
 Let 
\[    \ldots \subset   \lambda_2' \subset \lambda_1'  \subset \lambda_0' =(\Pi_i)_{\mathfrak t}
\]
be a filtration of $(\Pi_i)_{\mathfrak t}$ with the properties in Lemma \ref{lem compos factor 1}. Then we have a filtration
\[   \ldots \subset \omega \times \lambda_2' \subset \omega \times \lambda_1' \subset \omega \times \lambda_0' =\omega \times (\Pi_i)_{\mathfrak s}
\]
whose successive subquotients, by exactness of parabolic induction, are admissible with composition factors isomorphic to $\omega \times \tau$. Since 
\[ \bigcap_k (\omega \times \lambda_k') \subset \left(\omega \times \bigcap_k \lambda_k' \right)=0 ,\]
we must have that the projection of $\pi$ to $(\omega \times (\Pi_i)_{\mathfrak s})/(\omega \times \lambda_k')$ is non-zero for sufficiently large $k$. Take such $\pi'=\omega \times \lambda_k'$. Hence, we obtain property (1). Now (2) follows from (1) and the description of the filtration above since $\pi/(\pi'\cap \pi)$ embeds to $(\omega \times (\Pi_i)_{\mathfrak t})/\pi'$. 
\end{proof}


We now can show the existence of certain irreducible $(1 \times G_{n-i})$-submodule in $(\omega \times \Pi_i)_{N_{n-i}}$. 

\begin{corollary} \label{cor intersection}
Let $0 \neq \omega \in \mathrm{Alg}(G_{n-i})$ be admissible. Set $N=N_{n-i}$. Let $\pi$ be a non-zero submodule of $\omega \times \Pi_i$. Regard $\pi_N$ as a $G_{n-i}$-representation via the embedding $g \mapsto \mathrm{diag}(I_i,g)$, 
\[  \mathrm{Hom}_{G_{n-i}}(\omega', \pi_N) \neq 0 
\]
for some irreducible submodule $\omega'$ of $\omega$.
\end{corollary}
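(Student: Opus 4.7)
The approach is to verify the hypothesis $\pi_N/\mathcal J(\pi_N)\neq 0$ of Lemma \ref{lem admit quotient} for a well-chosen auxiliary representation, and then to upgrade the resulting $G_{n-i}$-submodule of $\pi_N$ so that it becomes, up to isomorphism, an irreducible submodule of $\omega$. As a first reduction, write $\omega \times \Pi_i = \bigoplus_{\mathfrak t \in \mathfrak B(G_i)} \omega \times (\Pi_i)_{\mathfrak t}$ as $G_n$-representations; since $\pi\neq 0$, there exists $\mathfrak t_1 \in \mathfrak B(G_i)$ with $\pi \cap (\omega \times (\Pi_i)_{\mathfrak t_1}) \neq 0$, and replacing $\pi$ by this intersection (which only shrinks $\pi_N$ by exactness of the Jacquet functor) one may assume $\pi \subseteq \omega \times (\Pi_i)_{\mathfrak t_1}$.

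Next, pick an irreducible principal series $\tau_0 = \zeta(\mathfrak m) \in \mathfrak R_{\mathfrak t_1}(G_i)$ with $\mathfrak m$ consisting only of singletons and $\mathrm{csupp}_{\mathbb Z}(\tau_0) \cap \mathrm{csupp}_{\mathbb Z}(\omega) = \emptyset$; such $\tau_0$ exists because within $\mathfrak t_1$ one may twist the representing cuspidal data by a generic unramified character, and the disjointness condition excludes only countably many values. Lemma \ref{lem exist quotient} combined with Remark \ref{rmk lem product quoteint z} then produces a submodule $\pi' \subseteq \omega \times (\Pi_i)_{\mathfrak t_1}$ such that $\pi/(\pi \cap \pi')$ is a nonzero admissible module with composition factors of the form $\omega_0 \times \tau_0$ for $\omega_0 \in \mathrm{JH}(\omega)$, each irreducible by the disjointness. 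Extracting an irreducible quotient gives a surjection $\pi \twoheadrightarrow \sigma := \omega_0 \times \tau_0$; applying the Jacquet functor yields $\pi_N \twoheadrightarrow \sigma_N$. By the geometric lemma (as in the proof of Lemma \ref{lem first filtration on product}, with $\Pi_i$ replaced by $\tau_0$), $\sigma_N$ admits $\tau_0 \boxtimes \omega_0$ as a subrepresentation. Since $\sigma_N$ has finite length and $\tau_0 \boxtimes \omega_0$ is a composition factor annihilated by $\mathcal J := \mathcal J' \otimes \mathfrak Z_{n-i}$, a standard argument via the decomposition of finite-length modules into local components at maximal ideals of the Bernstein center gives $\sigma_N/\mathcal J(\sigma_N) \neq 0$, whence $\pi_N/\mathcal J(\pi_N) \neq 0$ by right exactness. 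Lemma \ref{lem admit quotient} then produces a nonzero $G_i \times G_{n-i}$-submodule $\tilde\tau \subseteq \pi_N$ with $\tilde\tau \hookrightarrow \Pi_i \boxtimes \omega$.

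Finally, one extracts an irreducible submodule of $\omega$ inside $\tilde\tau$. Fix a nonzero vector $v = \sum_{j=1}^{k} a_j \otimes b_j \in \tilde\tau$ with $a_1,\ldots,a_k \in \Pi_i$ linearly independent; since $G_{n-i}$ acts trivially on the $\Pi_i$-factor of $\Pi_i \boxtimes \omega$, the cyclic $G_{n-i}$-submodule $M \subseteq \tilde\tau$ generated by $v$ lies in $\bigoplus_{j=1}^{k} a_j \otimes \omega \cong \omega^{\oplus k}$. Thus $M$ is simultaneously admissible (as a subrepresentation of an admissible module) and finitely generated (being cyclic), hence of finite length, so it contains a nonzero irreducible $G_{n-i}$-subrepresentation $\omega'$. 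Some coordinate projection $\omega^{\oplus k} \to \omega$ is nonzero on $\omega'$ and hence injective by irreducibility, identifying $\omega'$ with an irreducible submodule of $\omega$; the chain $\omega' \subseteq M \subseteq \tilde\tau \subseteq \pi_N$ yields the required nonvanishing Hom. The main technical obstacle is the nonvanishing $\pi_N/\mathcal J(\pi_N) \neq 0$, which requires converting the abstract submodule $\pi$ into an explicit irreducible quotient via Lemma \ref{lem exist quotient} and then running a Bernstein-center argument on the Jacquet module of that quotient; the final extraction of an irreducible submodule of $\omega$ from $\tilde\tau$ is then an elementary cyclic-submodule argument exploiting the decomposition $\Pi_i \boxtimes \omega \cong \omega^{\oplus \infty}$ as $G_{n-i}$-representation.
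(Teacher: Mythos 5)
Your proof follows essentially the same strategy as the paper's: pick a detecting $\tau_0 \in \mathrm{Irr}(G_i)$ whose $\mathbb Z$-cuspidal support is disjoint from that of $\omega$, use Lemma \ref{lem exist quotient} (plus Remark \ref{rmk lem product quoteint z}) to extract an irreducible quotient of the form $\omega_0 \times \tau_0$, propagate nonvanishing of $\mathcal J$-coinvariants back to $\pi_N$, and finally apply Lemma \ref{lem admit quotient}. Where you differ from the paper is mostly in which reduction you perform and in the level of detail. The paper first decomposes $\omega = \bigoplus_k \omega_k$ by Bernstein components of $G_{n-i}$ and then further reduces $\widetilde\pi$ to a single component of $G_n$, so that $\mathfrak t \in \mathfrak B(G_i)$ is pinned down via Lemma \ref{lem unique bcomponent product}; you instead decompose the $\Pi_i$ factor. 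Strictly speaking, Remark \ref{rmk lem product quoteint z}(2) is stated for $\omega$ in a single Bernstein component (and proceeds via a finite filtration by irreducibles), so your application of it to a general admissible $\omega$ has a small gap: you should also intersect with $\omega_{k^*} \times (\Pi_i)_{\mathfrak t_1}$ for a single component $\omega_{k^*}$ of $\omega$, exactly as the paper does. This is an easy fix and does not affect the logic.

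On the other hand, you flesh out two points that the paper asserts without explanation, and this is a genuine improvement. First, the paper simply states $(\tau \times \omega)_N/\mathcal J((\tau \times \omega)_N) \neq 0$; you justify it by noting via the geometric lemma that $\tau_0 \boxtimes \omega_0$ sits as the bottom (closed-orbit) piece of $\sigma_N$ and then invoking the support/Nakayama argument for the finite-length module $\sigma_N$ over the Bernstein center of $G_i \times G_{n-i}$. Second, Lemma \ref{lem admit quotient} only produces a (not necessarily irreducible) submodule $\omega'$ of $\omega$ with $\mathrm{Hom}_{G_{n-i}}(\omega', \tau) \neq 0$, whereas the Corollary requires an \emph{irreducible} $\omega'$; the paper's proof quietly upgrades to "irreducible" without comment, and your cyclic-submodule argument inside $\Pi_i \boxtimes \omega \cong \omega^{\oplus \infty}$ (as a $G_{n-i}$-module) supplies exactly the missing step. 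Both approaches buy roughly the same thing, but your version is more careful precisely where the published argument is terse, at the modest cost of the missing reduction of $\omega$ to a single Bernstein component, which you should add.
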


\begin{proof}
In view of Lemma \ref{lem admit quotient}, we have to find an ideal $\mathcal J$ in the Bernstein center such that $\pi_N/\mathcal J(\pi_N) \neq 0$ and some additional properties. To this end, we write 
\[ \omega =\omega_1 \oplus \ldots \oplus \omega_r \]
 such that each $\omega_k \in \mathfrak R_{\mathfrak s_k}(G_{n-i})$ for some $\mathfrak s_k$ and $\mathfrak s_k \neq \mathfrak s_l$ if $k \neq l$. Then 
\[  \omega \times \Pi_i = \omega_1 \times \Pi_i \oplus \ldots \oplus \omega_r \times \Pi_i .
\]
Thus, by using lemma \ref{lem reduction}, there exists a non-zero submodule $\widetilde{\pi}$ of $\pi$ such that
\[ \widetilde{\pi} \hookrightarrow \omega_{k^*} \times \Pi_i \]
 for some $k^*$. By projecting $\widetilde{\pi}$ to a Bernstein component if necessary, we also assume that $\widetilde{\pi}$ lies in $\mathfrak R_{\mathfrak u}(G_{n})$ for some $\mathfrak u \in \mathfrak B(G_{n})$.  Now $\mathfrak u$ and $\mathfrak s_{k^*}$ also determine a $\mathfrak t \in \mathfrak B(G_i)$ such that 
\[(\omega_{k^*} \times \Pi_i)_{\mathfrak u}=\omega_{k^*} \times (\Pi_i)_{\mathfrak t} \]
 (see Lemma \ref{lem unique bcomponent product}). Now we can choose $\tau$ satisfying assumptions in Lemma \ref{lem compos factor 1} (see Remark \ref{rmk lem product quoteint z} (2) and (3)). Thus we also have the condition that 
\[ \mathrm{csupp}'(\tau) \cap \mathrm{csupp}'(\omega_{k^*}) = \emptyset .\] Now we set $\mathcal J' \subset  \mathfrak Z_i$ the maximal ideal annihilating $\tau$, and set $\mathcal J=\mathcal J'\boxtimes \mathfrak Z_{n-i}$, an element in $\mathfrak Z_{i}\otimes \mathfrak Z_{n-i}$. Now, by Lemma \ref{lem exist quotient}, there is a surjection:
\[ \widetilde{\pi} \rightarrow  \tau \times \omega \rightarrow 0
\]
and so a surjection:
\[ \widetilde{\pi}_N \rightarrow (\tau \times \omega)_N \rightarrow 0 .
\]
Since $(\tau \times \omega)_N/\mathcal J((\tau \times \omega)_N) \neq 0$, we obtain $\widetilde{\pi}_N/\mathcal J(\widetilde{\pi}_N)$. Now, by Lemma \ref{lem admit quotient}, we obtain 
\[ 0\neq  \mathrm{Hom}_{G_{n-i}}(\omega', \widetilde{\pi}) \hookrightarrow \mathrm{Hom}_{G_{n-i}}(\omega', \pi_N)
\]
for some irreducible submodule $\omega'$ of $\omega_{k^*}$. This proves the corollary.
\end{proof}

\begin{proposition} \label{prop intersect same}
Let $ \omega_1, \omega_2 \in \mathrm{Alg}(G_{n-i})$ be admissible and non-zero. If $\mathrm{ind}_{R_{n-i}}^{G_n} \omega_1 \boxtimes \psi_i$ and $\mathrm{ind}_{R_{n-i}^-}^{G_n} \omega_2 \boxtimes \psi_i$ have isomorphic non-zero submodules, then $\omega_1$ and $\omega_2$ have isomorphic non-zero submodules.
\end{proposition}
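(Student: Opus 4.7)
\medskip\noindent\textbf{Proof plan for Proposition \ref{prop intersect same}.}

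The plan is to exploit the two Jacquet-module computations already carried out in Lemmas \ref{lem second geom lemma} and Corollary \ref{cor intersection}, which say exactly that a single submodule $\pi$ sitting inside $\omega \times \Pi_i$ (resp.\ inside $\Pi_i \times \omega$) remembers a genuine submodule of $\omega$ after taking the Jacquet functor along $N = N_{n-i}$. Let $\pi$ denote a common non-zero submodule; by the isomorphisms
\[
\mathrm{ind}_{R_{n-i}}^{G_n}\omega_1\boxtimes\psi_i \cong \omega_1\times \Pi_i, \qquad
\mathrm{ind}_{R_{n-i}^-}^{G_n}\omega_2\boxtimes\psi_i \cong \Pi_i\times \omega_2
\]
recalled in Section \ref{ss bz expressions}, we may regard $\pi$ simultaneously as a submodule of $\omega_1\times \Pi_i$ and of $\Pi_i\times\omega_2$. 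The whole proof will consist in applying the two lemmas to these two embeddings and comparing the outputs.

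First I would apply Corollary \ref{cor intersection} to the embedding $\pi \hookrightarrow \omega_1\times \Pi_i$. This provides an irreducible submodule $\omega_1'$ of $\omega_1$ together with a non-zero map
\[
\omega_1' \longrightarrow \pi_N
\]
of $G_{n-i}$-representations, where $G_{n-i}$ is embedded through $g \mapsto \mathrm{diag}(I_i,g)$; since $\omega_1'$ is irreducible, this map is automatically injective, so $\omega_1' \hookrightarrow \pi_N$.

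Next I would feed this same embedding $\omega_1' \hookrightarrow \pi_N$ into Lemma \ref{lem second geom lemma} applied to the other side $\pi \hookrightarrow \Pi_i\times \omega_2$. The last sentence of that lemma tells us exactly that any irreducible $G_{n-i}$-subrepresentation of $\pi_N$ (with respect to the same embedding $g \mapsto \mathrm{diag}(I_i,g)$) is actually an irreducible submodule of $\omega_2$. Hence $\omega_1'$ embeds into $\omega_2$ as well. Combined with $\omega_1' \hookrightarrow \omega_1$, this exhibits a common non-zero irreducible submodule of $\omega_1$ and $\omega_2$, which is the desired conclusion.

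The argument is essentially a routine matching of conventions: the only thing to watch is that both Corollary \ref{cor intersection} and Lemma \ref{lem second geom lemma} use the same realization of $G_{n-i}$ inside the Levi of $P_{n-i}$ (namely via $g\mapsto \mathrm{diag}(I_i,g)$), and the same choice of $N=N_{n-i}$; once this bookkeeping is confirmed, no further work is required. In particular there is no serious obstacle, the whole statement being a clean combination of the two directional Jacquet computations established in the preceding subsections.
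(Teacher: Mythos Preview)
Your proposal is correct and is essentially identical to the paper's own proof: the paper also takes a common non-zero submodule $\pi$, applies Corollary~\ref{cor intersection} to the embedding $\pi\hookrightarrow\omega_1\times\Pi_i$ to produce an irreducible $\omega'\subset\omega_1$ with $\omega'\hookrightarrow\pi_N$, and then invokes Lemma~\ref{lem second geom lemma} on the embedding $\pi\hookrightarrow\Pi_i\times\omega_2$ to conclude that $\omega'$ is a submodule of $\omega_2$. Your remark that the two lemmas use the same $N=N_{n-i}$ and the same embedding $g\mapsto\mathrm{diag}(I_i,g)$ is exactly the compatibility check needed.
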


\begin{proof}


Let $0 \neq \pi \in \mathrm{Alg}(G_n)$ such that 
\[  \pi \stackrel{\iota_1}{\hookrightarrow}  \omega_1 \times \Pi_i, \quad \mbox{ and} \quad \pi \stackrel{\iota_2}{\hookrightarrow} \Pi_i \times \omega_2 .\]
By Corollary \ref{cor intersection}, there exists an irreducible submodule $\omega'$ of $\omega_1$ such that
\[  \mathrm{Hom}_{G_{n-i}}(\omega', \pi_N)\neq 0 .
\]
By Lemma \ref{lem second geom lemma}, $\omega'$ is an irreducible submodule of $\omega_2$. This proves the proposition.

\end{proof}




\subsection{Strong indecomposability}

A $G_n$-representation $\pi$ is said to be strongly indecomposable if any two non-zero submodules of $\pi$ have non-zero intersection.

We now prove a main property of a Bernstein-Zelevinsky layer. In Section \ref{s preserve indecomp}, we shall prove a variation, which says that the Bernstein-Zelevinsky induction preserves indecomposability.

\begin{lemma} \label{lem product strong indecomp}
Let $\pi_1 \in \mathrm{Alg}(G_{n_1})$ and $\pi_2 \in \mathrm{Alg}(G_{n_2})$. Suppose $\pi_1$ is strongly indecomposable and $\pi_2$ is irreducible. Then $\pi_1 \boxtimes \pi_2$ is also strongly indecomposable, as a $G_{n_1}\times G_{n_2}$-representation.
\end{lemma}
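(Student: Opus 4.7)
The plan is to reduce the statement to the strong indecomposability of $\pi_1$ by showing that every non-zero $G_{n_1}\times G_{n_2}$-submodule $\sigma$ of $\pi_1 \boxtimes \pi_2$ has the form $\sigma = \sigma_1 \boxtimes \pi_2$ for some non-zero $G_{n_1}$-submodule $\sigma_1$ of $\pi_1$. Granting this, given two non-zero submodules $\sigma = \sigma_1 \boxtimes \pi_2$ and $\sigma' = \sigma_1' \boxtimes \pi_2$, the intersection is $(\sigma_1 \cap \sigma_1') \boxtimes \pi_2$, and strong indecomposability of $\pi_1$ gives $\sigma_1 \cap \sigma_1' \neq 0$, hence $\sigma \cap \sigma' \neq 0$.

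To establish the structural claim, I would define
\[ \sigma_1 = \left\{ v \in \pi_1 : v \otimes w \in \sigma \text{ for all } w \in \pi_2 \right\}, \]
which is visibly a $G_{n_1}$-submodule of $\pi_1$, with the inclusion $\sigma_1 \boxtimes \pi_2 \subset \sigma$ being tautological. For the reverse inclusion, take any $x \in \sigma$ and write $x = \sum_{i=1}^{r} v_i \otimes w_i$ with $r$ minimal, which forces $\{v_i\}$ and $\{w_i\}$ to each be linearly independent. Since $\pi_2$ is an irreducible smooth $\mathbb{C}$-representation of $G_{n_2}$, Schur's lemma gives $\mathrm{End}_{G_{n_2}}(\pi_2) = \mathbb{C}$, so by the Jacobson density theorem the image of the Hecke algebra $\mathcal{H}(G_{n_2})$ in $\mathrm{End}_{\mathbb{C}}(\pi_2)$ acts densely; in particular, for each $i$ there exists $a_i \in \mathcal{H}(G_{n_2})$ annihilating $w_j$ for $j \neq i$ and with $a_i \cdot w_i \neq 0$. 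Applying $(1, a_i)$ to $x$ yields $v_i \otimes (a_i \cdot w_i) \in \sigma$, and irreducibility of $\pi_2$ gives $\mathcal{H}(G_{n_2}) \cdot (a_i \cdot w_i) = \pi_2$, so $v_i \otimes \pi_2 \subset \sigma$, i.e.\ $v_i \in \sigma_1$. Hence $x \in \sigma_1 \otimes \pi_2$, and in particular $\sigma_1 \neq 0$ whenever $\sigma \neq 0$.

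The only mildly subtle point is invoking Schur's lemma plus Jacobson density in the smooth-representation setting, but this is standard: irreducible smooth representations of $G_{n_2}$ over $\mathbb{C}$ have countable dimension, so the usual Dixmier/Bernstein version of Schur's lemma applies and the density argument goes through verbatim on finite-dimensional subspaces of $\pi_2$. Everything else is formal manipulation of the tensor product, so I do not anticipate any further obstacle.
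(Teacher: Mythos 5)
Your proof is correct and takes the same route as the paper: the paper simply asserts in one line that every $G_{n_1}\times G_{n_2}$-submodule of $\pi_1\boxtimes\pi_2$ has the form $\tau\boxtimes\pi_2$ with $\tau\subset\pi_1$ a submodule, which is exactly the structural claim you establish (with Schur's lemma and Jacobson density supplying the details the paper leaves implicit).
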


\begin{proof}
This follows from that any submodule of $\pi_1\boxtimes \pi_2$ is the subspace $\tau \boxtimes \pi_2 \subset \pi_1 \boxtimes \pi_2$ for some submodule $\tau$ of $\pi_1$.
\end{proof}

\begin{lemma} \label{lem jacquet mod inclusion}
Let $\pi_1, \pi_2, \pi$ be in $\mathrm{Alg}(G_n)$ such that $\pi_1 \hookrightarrow \pi$ and $\pi_2 \hookrightarrow \pi$. Then $(\pi_1 \cap \pi_2)_N \cong (\pi_1)_N \cap (\pi_2)_N$. Here the later intersection is taken in $\pi_N$. 
\end{lemma}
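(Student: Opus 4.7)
The plan is to use the exactness of the Jacquet functor $(\cdot)_N$ together with the standard short exact sequence for intersections.

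First, I would set up the short exact sequence
\[ 0 \longrightarrow \pi_1 \cap \pi_2 \xrightarrow{\;x \mapsto (x,x)\;} \pi_1 \oplus \pi_2 \xrightarrow{\;(a,b) \mapsto a-b\;} \pi_1 + \pi_2 \longrightarrow 0, \]
where the intersection and the sum are formed inside $\pi$. Applying the Jacquet functor, which is exact, yields
\[ 0 \longrightarrow (\pi_1 \cap \pi_2)_N \longrightarrow (\pi_1)_N \oplus (\pi_2)_N \longrightarrow (\pi_1 + \pi_2)_N \longrightarrow 0. \]

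Next, I would identify the second map concretely. Exactness of $(\cdot)_N$ applied to each inclusion $\pi_i \hookrightarrow \pi$ shows that $(\pi_i)_N$ embeds into $\pi_N$, so that $(\pi_1)_N \cap (\pi_2)_N$ makes sense as a submodule of $\pi_N$. The map $(\pi_1)_N \oplus (\pi_2)_N \to (\pi_1 + \pi_2)_N \subset \pi_N$ becomes $(a,b) \mapsto a - b$, and its kernel is precisely the set of pairs $(a,b)$ whose images in $\pi_N$ agree, i.e. $(\pi_1)_N \cap (\pi_2)_N$. Combining with the exact sequence identifies $(\pi_1 \cap \pi_2)_N$ with this kernel, giving the desired isomorphism.

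There is essentially no obstacle here beyond bookkeeping: the only subtle point is to justify that the images $(\pi_i)_N \hookrightarrow \pi_N$ are honest inclusions (so that their set-theoretic intersection inside $\pi_N$ coincides with the categorical kernel above), but this is immediate from exactness of the Jacquet functor applied to each $0 \to \pi_i \to \pi$.
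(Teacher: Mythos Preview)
Your proof is correct. Both your argument and the paper's rely only on the exactness of the Jacquet functor, but the organization differs. The paper works directly with the natural map $(\pi_1\cap\pi_2)_N \to \pi_N$: it observes that this map factors through $(\pi_1)_N \cap (\pi_2)_N$, and then chases a small commutative diagram to extract both injectivity and surjectivity. Your approach instead packages everything into the single short exact sequence
\[
0 \longrightarrow \pi_1\cap\pi_2 \longrightarrow \pi_1\oplus\pi_2 \longrightarrow \pi_1+\pi_2 \longrightarrow 0
\]
and applies the exact functor once; the identification of the kernel of the difference map in $\pi_N$ then gives the result in one step. Your route is the standard Mayer--Vietoris-style argument and is arguably cleaner and more transparent, especially since the surjectivity step in the paper's diagram chase is left somewhat implicit. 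Both arguments are equally elementary and prove exactly the same thing.
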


\begin{proof}
We have the natural projection $p:\pi_1 \cap \pi_2 \rightarrow \pi_N$ as linear spaces. Since the image of the projection lies in both $(\pi_1)_N$ and $(\pi_2)_N$, the projection factors through the embedding $(\pi_1)_N \cap (\pi_2)_N$ to $\pi_N$. We have the following commutative diagram:
\[ \xymatrix{ \pi_1 \cap \pi_2 \ar[r]^p \ar[dr] & \pi_N \\ & (\pi_1)_N \cap (\pi_2)_N \ar@{^{(}->}[u] \\   }\] 
Now taking the Jacquet functor in the diagram, $p$ induces an isomorphism from $(\pi_1\cap \pi_2)_N$ onto $(\pi_1\cap \pi_2)_N \subset \pi_N$. Thus the map from $(\pi_1)_N \cap (\pi_2)_N$ to $(\pi_1 \cap \pi_2)_N$ is surjective. Hence we obtain an isomorphism between $(\pi_1)_N \cap (\pi_2)_N$ and $(\pi_1 \cap \pi_2)_N$.
\end{proof}

\begin{theorem} \label{thm strong indecomp}
Let $\omega \in \mathrm{Irr}(G_{n-i})$. For any $\mathfrak s \in \mathfrak B(G_n)$, $(\mathrm{ind}_{R_{n-i}}^{G_n} \omega \boxtimes \psi_i)_{\mathfrak s}$ is strongly indecomposable whenever it is non-zero.
\end{theorem}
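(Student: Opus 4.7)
The plan is as follows. Given two non-zero submodules $\pi_1, \pi_2$ of $(\omega \times \Pi_i)_{\mathfrak s}$, the goal is to show $\pi_1 \cap \pi_2 \neq 0$. By Lemma \ref{lem unique bcomponent product}, there is a unique $\mathfrak t \in \mathfrak B(G_i)$ with $(\omega \times \Pi_i)_{\mathfrak s} = \omega \times (\Pi_i)_{\mathfrak t}$, so it suffices to work with this concrete model. Setting $N = N_{n-i}$, Lemma \ref{lem jacquet mod inclusion} gives $(\pi_1 \cap \pi_2)_N \cong (\pi_1)_N \cap (\pi_2)_N$, so the problem reduces to producing a non-zero element in $(\pi_1)_N \cap (\pi_2)_N$ inside $(\omega \times (\Pi_i)_{\mathfrak t})_N$.

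The heart of the argument is to locate non-zero $G_i \times G_{n-i}$-submodules $\tau_k \subset (\pi_k)_N$ ($k = 1, 2$) sitting inside the bottom piece of the geometric-lemma filtration on $(\omega \times \Pi_i)_N$. By Lemma \ref{lem first filtration on product} (combined with the $\mathfrak s$-projection), this bottom piece is the $G_i \times G_{n-i}$-submodule $\kappa_1 \cong (\Pi_i)_{\mathfrak t} \boxtimes \omega$. Following the strategy in the proof of Corollary \ref{cor intersection}, I would pick an irreducible principal series $\tau$ in $\mathfrak R_{\mathfrak t}(G_i)$ of the type supplied by Lemma \ref{lem compos factor 1}, chosen so that $\mathrm{csupp}_{\mathbb Z}(\tau)$ is disjoint from $\mathrm{csupp}_{\mathbb Z}(\omega)$; set $\mathcal J = \mathcal J' \otimes \mathfrak Z_{n-i}$, where $\mathcal J' \subset \mathfrak Z_i$ is the maximal ideal annihilating $\tau$. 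Applying Lemma \ref{lem exist quotient} to each $\pi_k$ yields a non-zero quotient with all composition factors isomorphic to $\omega \times \tau$, so $(\pi_k)_N/\mathcal J((\pi_k)_N) \neq 0$, and Lemma \ref{lem admit quotient} then provides non-zero $\tau_k \subset (\pi_k)_N$ that embed into $\kappa_1$. Crucially, the choice of $\mathcal J$ depends only on $\omega$ and $\mathfrak t$, and hence works uniformly for both $\pi_1$ and $\pi_2$.

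To close the argument, I would use that $\omega$ is irreducible: every non-zero $G_i \times G_{n-i}$-submodule of $(\Pi_i)_{\mathfrak t} \boxtimes \omega$ is of the form $\tau' \boxtimes \omega$ for some non-zero $G_i$-submodule $\tau' \subset (\Pi_i)_{\mathfrak t}$, by the same observation underlying Lemma \ref{lem product strong indecomp}. Writing $\tau_k = \tau_k' \boxtimes \omega$, the strong indecomposability of $(\Pi_i)_{\mathfrak t}$ from Proposition \ref{prop indecompose gg rep} gives $\tau_1' \cap \tau_2' \neq 0$, whence $\tau_1 \cap \tau_2 = (\tau_1' \cap \tau_2') \boxtimes \omega \neq 0$; this forces $(\pi_1)_N \cap (\pi_2)_N \neq 0$, completing the proof. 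The main technical subtlety lies in the second paragraph: coordinating a single Bernstein-center ideal $\mathcal J$ so that both Jacquet images are simultaneously visible inside the common bottom piece $\kappa_1$. Once that is in place, the reduction to strong indecomposability of a Bernstein component of the Gelfand-Graev representation is mechanical.
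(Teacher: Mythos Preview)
Your proposal is correct and follows essentially the same route as the paper: reduce via Lemma~\ref{lem jacquet mod inclusion} to showing $(\pi_1)_N\cap(\pi_2)_N\neq 0$, use the argument of Corollary~\ref{cor intersection} (via Lemmas~\ref{lem compos factor 1}, \ref{lem exist quotient}, \ref{lem admit quotient}) to produce non-zero $\tau_k\subset(\pi_k)_N$ landing in the bottom layer $(\Pi_i)_{\mathfrak t}\boxtimes\omega$, and finish with strong indecomposability of that layer from Proposition~\ref{prop indecompose gg rep} and Lemma~\ref{lem product strong indecomp}. One minor remark: you do not actually need a \emph{single} $\mathcal J$ for both $\pi_1,\pi_2$---any choice lands $\tau_k$ in the same fixed bottom piece $\kappa_1$, so the ``coordination'' you flag is automatic.
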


\begin{proof}

 Let $\pi_1$ and $\pi_2$ be non-zero submodules of $(\omega \times \Pi_i)_{\mathfrak s}$. Thus we have injections:
\[  \pi_1 \stackrel{\iota_1}{\hookrightarrow} ( \omega \times \Pi_i)_{\mathfrak s}\subset \omega \times \Pi_i \quad \mbox{ and } \quad \pi_2  \stackrel{\iota_2}{\hookrightarrow} (\omega \times \Pi_i)_{\mathfrak s} \subset \omega \times \Pi_i .\]
Following the same argument as in Corollary \ref{cor intersection} (which uses Lemma \ref{lem compos factor 1}), for $j=1,2$, we have a non-zero submodule $\tau_j$ of $(\pi_j)_N$ such that 
\[\tau_j \hookrightarrow \Pi_i \boxtimes \omega.\]
The Bernstein component $\mathfrak R_{\mathfrak s}(G_n)$ and the Bernstein component for $\omega$ determines that 
\[\tau_j \hookrightarrow (\Pi_i)_{\mathfrak t} \boxtimes \omega \]
 for some $\mathfrak t \in \mathfrak B(G_i)$ (c.f. Lemma \ref{lem unique bcomponent product}). Now, as $(\Pi_i)_{\mathfrak t} \boxtimes \omega$ is strongly indecomposable (Lemma \ref{lem product strong indecomp} and Proposition \ref{prop indecompose gg rep}), we conclude 
\begin{align} \label{eqn non-zero intersect bottom}
\tau_1 \cap \tau_2 \neq 0 .
\end{align}

 To show $\pi_1 \cap \pi_2 \neq 0$, it suffices to show that $(\pi_1 \cap \pi_2)_N \neq 0$ as the Jacquet functor is exact. By Lemma \ref{lem jacquet mod inclusion}, it is equivalent to show that $(\pi_1)_N \cap (\pi_2)_N \neq 0$, which indeed follows from (\ref{eqn non-zero intersect bottom}):
\[ 0 \neq \tau_1 \cap \tau_2 \subset (\pi_1)_N \cap (\pi_2)_N . \]
\end{proof}



\section{Indecomposability of restricted representations} \label{sec indecomp}

\subsection{Indecomposability of restriction } \label{ss indecomposable}

We now prove our main result, and the proof is in a similar spirit to that of Theorem \ref{thm strong indecomp}:

\begin{theorem} \label{thm indecomp irred}
Let $\pi \in \mathrm{Irr}(G_{n+1})$. Then for each $\mathfrak s \in \mathfrak R(G_n)$, $\pi_{\mathfrak s}$ is strongly indecomposable whenever it is nonzero i.e. for any two non-zero submodules $\tau, \tau'$ of $\pi_{\mathfrak s}$, $\tau \cap \tau' \neq 0$.
\end{theorem}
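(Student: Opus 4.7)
The plan is to adapt the strategy of the proof of Theorem \ref{thm strong indecomp}, now applied to the full Bernstein-Zelevinsky filtration on $\pi|_{G_n}$ rather than to a single layer. I would work simultaneously with the right BZ filtration $\pi|_{G_n} = \pi_{[0]} \supset \pi_{[1]} \supset \ldots \supset \pi_{[n+1]} = 0$ whose subquotients are $L_i = \mathrm{ind}_{R_{n+1-i}}^{G_n}\pi^{[i]}\boxtimes \psi_{i-1}$, and the analogous left BZ filtration with layers $L_i^- = \mathrm{ind}_{R_{n+1-i}^-}^{G_n}{}^{[i]}\pi\boxtimes \psi_{i-1}$. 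Both can be projected to the Bernstein component $\mathfrak s$. For any non-zero $\eta \subset \pi_{\mathfrak s}$, let $i^R(\eta)$ (resp.\ $i^L(\eta)$) be the smallest integer such that $\eta$ maps non-trivially into $L_{i^R(\eta),\mathfrak s}$ (resp.\ $L_{i^L(\eta),\mathfrak s}^-$).

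Fix two non-zero submodules $\tau, \tau' \subset \pi_{\mathfrak s}$. The key claim will be that $i^R(\eta) = i^L(\eta)$ for any non-zero $\eta \subset \pi_{\mathfrak s}$. This is where the asymmetric property of derivatives enters: the non-zero image of $\eta$ in $L_{i^R(\eta),\mathfrak s}$ yields, via Corollary \ref{cor intersection} (taking Jacquet $N = N_{n+1-i^R(\eta)}$), a non-zero submodule of the relevant Bernstein component of $\pi^{[i^R(\eta)]}$; a parallel argument on the left gives a non-zero submodule of ${}^{[i^L(\eta)]}\pi$ in the matching Bernstein component. Proposition \ref{prop intersect same} lets me match these across filtrations, so if $i^R(\eta) \neq i^L(\eta)$, I would obtain isomorphic non-zero submodules of $\pi^{[i^R(\eta)]}$ and ${}^{[i^L(\eta)]}\pi$ at non-level indices, contradicting Theorem \ref{thm asymmetric derivative}.

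Once $i^R(\tau) = i^R(\tau') = i$, both project non-trivially into the common layer $L_{i,\mathfrak s}$, and I invoke Theorem \ref{thm strong indecomp}; since that theorem is stated for $\omega \in \mathrm{Irr}$, I first need a mild extension to non-irreducible $\pi^{[i]}$, using the multiplicity-free structure of $\mathrm{soc}(\pi^{[i]})$ from Proposition \ref{prop multi free} together with Lemma \ref{lem product strong indecomp} to reduce to an irreducible socle component within the chosen $\mathfrak s$-block. A Jacquet-functor step analogous to the end of the proof of Theorem \ref{thm strong indecomp}, combined with Lemma \ref{lem jacquet mod inclusion}, then lifts the intersection in the layer back to $\tau \cap \tau' \neq 0$ in $\pi_{\mathfrak s}$.

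The main obstacle will be establishing the level-matching claim $i^R(\eta) = i^L(\eta)$, because the two filtrations are not related by a single Jacquet functor, so one must track how a \emph{single} submodule $\eta$ produces compatible data on both sides before invoking Theorem \ref{thm asymmetric derivative}. A secondary technical point is the extension of Theorem \ref{thm strong indecomp} to layers where $\pi^{[i]}$ is reducible, which requires the socle analysis of Section \ref{bz der generic}.
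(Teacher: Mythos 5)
Your overall strategy is the paper's: play the left and right Bernstein--Zelevinsky filtrations of $\pi_{\mathfrak s}$ against each other, force the matching layers to be at a common index, force that index to be the level $i^*$ via the asymmetric-derivative theorem, and conclude with Theorem \ref{thm strong indecomp}. However, two of your assembly steps do not fit together as stated. The deduction that $i^R(\eta)=i^L(\eta)$ cannot go through Proposition \ref{prop intersect same}: that proposition only compares $\mathrm{ind}_{R_{n-i}}^{G_n}\omega_1\boxtimes\psi_i$ with $\mathrm{ind}_{R_{n-i}^-}^{G_n}\omega_2\boxtimes\psi_i$ at the \emph{same} index $i$, and for mismatched indices $\pi^{[i^R]}$ and ${}^{[i^L]}\pi$ are representations of different groups, so Theorem \ref{thm asymmetric derivative} does not even apply. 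The tool that rules out mismatched indices is Lemma \ref{lem bz filtration} (combined with Lemma \ref{lem two filtration intersect}, which gives a single non-zero common submodule landing in a right layer and a left layer). Only once the indices agree does Proposition \ref{prop intersect same} transfer the common submodule down to $\pi^{[i]}$ and ${}^{[i]}\pi$, and \emph{then} Theorem \ref{thm asymmetric derivative} says $i$ must be the level $i^*$. You miss that last consequence, and it is exactly what closes the concern you flagged: $\pi^{[i^*]}$ is the highest derivative of an irreducible representation, hence irreducible (Zelevinsky), so Theorem \ref{thm strong indecomp} applies verbatim and no extension to reducible $\pi^{[i]}$ or socle analysis from Section \ref{bz der generic} is needed.

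The second gap is the lifting step at the end. The paper avoids it entirely: the piece $\tau=(\pi_{i^*-1})_{\mathfrak s}$ carrying the highest-derivative layer is the bottom of the increasing right-filtration, hence a genuine \emph{submodule} of $\pi_{\mathfrak s}$, not just a subquotient. The paper's claim is that every non-zero submodule $\omega$ of $\pi_{\mathfrak s}$ satisfies $\omega\cap\tau\neq 0$ (proved by contradiction: if $\omega\cap\tau=0$ then $\omega$ embeds into $\pi_{\mathfrak s}/\tau$, and by Lemma \ref{lem bz filtration} also into $\pi_{\mathfrak s}/\tau'$, after which Lemmas \ref{lem two filtration intersect}, \ref{lem bz filtration}, Proposition \ref{prop intersect same} and Theorem \ref{thm asymmetric derivative} give the contradiction at an index $<i^*$). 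Once both submodules meet $\tau$ non-trivially, strong indecomposability of $\tau$ finishes the argument with no lift. Your invocation of Lemma \ref{lem jacquet mod inclusion} is misplaced here -- it is internal to the proof of Theorem \ref{thm strong indecomp} (relating Jacquet modules of intersections), not a device for lifting intersections from BZ subquotients of $\pi_{\mathfrak s}$ back to $\pi_{\mathfrak s}$. Relatedly, your definition ``smallest $i$ such that $\eta$ maps non-trivially into $L_{i,\mathfrak s}$'' naturally produces a \emph{quotient} of $\eta$ in a layer, whereas Proposition \ref{prop intersect same} and Theorem \ref{thm asymmetric derivative} need a common \emph{submodule}; Lemma \ref{lem two filtration intersect} is the right device and does not require fixing the layer indices in advance.
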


\begin{proof}

There exists a Bernstein-Zelevinsky $G_n$-filtration on $\pi$ with
\begin{align} \label{eqn bz filtration}
    \pi_n  \subset \pi_{n-1} \subset \ldots \subset \pi_1 \subset \pi_0 =\pi
\end{align}
such that 
\begin{align} \label{eqn bz layer r}   \pi_{i}/\pi_{i+1} \cong \mathrm{ind}_{R_{n-i}}^{G_n}  \pi^{[i+1]} \boxtimes \psi_{i} .
\end{align}
We also have a $G_n$-filtration on $\pi$ with
\begin{align} \label{eqn bz filtration l}
   {}_n\pi \subset {}_{n-1} \pi \subset \ldots \subset {}_1\pi \subset {}_0\pi =\pi 
	\end{align}
such that
\begin{align} \label{eqn bz layer l}
   {}_{i}\pi/{}_{i+1}\pi \cong \mathrm{ind}_{R_{n-i}^-}^{G_n}  {}^{[i+1]}\pi \boxtimes \psi_{i}.
	\end{align}

Let $i^*$ be the level of $\pi$. For $\mathfrak s \in \mathfrak B(G_n)$ such that $\pi_{\mathfrak s} \neq 0$, we also have $(\pi_{i^*-1})_{\mathfrak s} \neq 0$ (see Section \ref{ss remark cuspidal rep} for the detail). For notation simplicity, we set $\tau =(\pi_{i^*-1})_{\mathfrak s}$ and $\tau'=({}_{i^*-1}\pi)_{\mathfrak s}$, both regarded as subspaces of $\pi$.

Let $\omega$ and $\gamma$ be two non-zero submodules of $\pi_{\mathfrak s}$. \\

\noindent
{\it Claim:} $\omega \cap \tau \neq 0$. \\
{\it Proof of the claim:} Suppose not. Then, the natural projection gives an injection 
\begin{align}\label{eqn embedding}   \omega \hookrightarrow  \pi_{\mathfrak s}/\tau
\end{align}

By Lemma \ref{lem bz filtration}, there is no isomorphic submodules between $\pi_{\mathfrak s}/\tau$ and $\tau'$. This implies $\omega \cap \tau'=0$. Hence, we also have an injection:
\begin{align}\label{eqn embedding 2}   \omega \hookrightarrow  \pi_{\mathfrak s}/\tau'
\end{align}


By (\ref{eqn bz filtration}), (\ref{eqn bz filtration l}), (\ref{eqn embedding}), (\ref{eqn embedding 2}) and Lemma \ref{lem two filtration intersect}, there exists a $G_n$-representation which is isomorphic to submodules of 
\[  (  \pi^{[j]} \times \Pi_{j-1})_{\mathfrak s}, \quad \mbox{ and } \quad  (\Pi_{k-1} \times {}^{[k]}\pi )_{\mathfrak s}
\]
for some $j,k< i^*$. By Lemma \ref{lem bz filtration} again, we must have $j=k$. However, Proposition \ref{prop intersect same} contradicts to the following Theorem \ref{lem same quo derivative} below (whose proof does not depend on this result). This proves the claim. \\

Since $\omega$ is an arbitrary submodule of $\pi$, we also have $\gamma \cap \tau \neq 0$. Since $\pi^{(i^*)}$ is irreducible, $\tau$ is strongly indecomposable by Theorem \ref{thm strong indecomp}. This implies $\omega \cap \gamma \neq 0$.

\end{proof}

\begin{theorem} \label{lem same quo derivative}
Let $\pi \in \mathrm{Irr}(G_{n+1})$. If $i$ is not the level for $\pi$, then $\pi^{[i]}$ and ${}^{[i]}\pi$ do not have an isomorphic irreducible quotient, and also do not have an isomorphic irreducible submodule whenever the two derivatives are not zero.

\end{theorem}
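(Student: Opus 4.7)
The strategy is to argue by contradiction, pivoting on the asymmetric $\nu^{\pm 1/2}$ shifts that distinguish $\pi^{[i]}$ from ${}^{[i]}\pi$. First, by Lemma \ref{lem coscole socle} applied to both $\pi^{(i)}$ and ${}^{(i)}\pi$, socle and cosocle carry the same irreducible constituents, so it suffices to rule out the existence of a common irreducible submodule; assume $\sigma$ is one. Translating back through the shifts, this means $\nu^{-1/2}\sigma \hookrightarrow \pi^{(i)}$ and $\nu^{1/2}\sigma \hookrightarrow {}^{(i)}\pi$, with $i < i^{*}$ where $i^{*}$ denotes the level of $\pi$.

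The next step is to approximate $\pi$ by a module whose derivatives are computable. Invoking the first appendix, realize $\pi$ as the unique irreducible submodule of a product of Speh representations $S = \mathrm{Sp}_{1}\times\cdots\times \mathrm{Sp}_{r}$. Since the derivative functors are exact (they are Jacquet functors composed with Whittaker coinvariants, as recorded in Section \ref{ss bz expressions}), the embeddings above extend to
\[ \nu^{-1/2}\sigma \hookrightarrow \pi^{(i)} \hookrightarrow S^{(i)}, \qquad \nu^{1/2}\sigma \hookrightarrow {}^{(i)}\pi \hookrightarrow {}^{(i)}S. \]
By the geometric lemma, both $S^{(i)}$ and ${}^{(i)}S$ admit filtrations whose successive subquotients are parabolically induced products of partial derivatives $\mathrm{Sp}_{k}^{(i_{k})}$ (respectively ${}^{(i_{k})}\mathrm{Sp}_{k}$) with $\sum_{k}i_{k}=i$. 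The irreducible $\nu^{-1/2}\sigma$ and $\nu^{1/2}\sigma$ must each appear as a submodule of at least one such induced piece.

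The crux is to exploit the symmetry satisfied by Speh representations: for a Speh $\mathrm{Sp}(\Delta,m)$, the highest left and right derivatives are again (unramified twists of) Speh representations related by an explicit $\nu$-shift, and more generally the partial derivatives on each side can be matched up to a controlled twist. Substituting this into the two geometric-lemma filtrations produces, for each choice of indices $(i_{1},\ldots,i_{r})$, a matching pair of constituents whose cuspidal supports differ by a prescribed $\nu$-power dictated by how many segments have been stripped. The condition that $\sigma$ simultaneously embeds into pieces on both sides (after the respective $\nu^{1/2}$ and $\nu^{-1/2}$ twists) forces the cuspidal support of $\sigma$ to match a full $\nu$-shifted copy of itself, which is possible only when each $i_{k}$ is maximal, i.e. when $i = i^{*}$, contradicting $i < i^{*}$.

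The main obstacle is the bookkeeping in the third step: after passing to a product of Speh representations, one must identify precisely which constituents of the geometric-lemma filtrations of $S^{(i)}$ and ${}^{(i)}S$ can contain the twisted copies $\nu^{-1/2}\sigma$ and $\nu^{1/2}\sigma$, and verify that the unique coincidence of cuspidal support up to a full $\nu$-shift forces maximal index tuples on every Speh factor. This Speh-level symmetry, combined with the $\nu^{1/2}$ versus $\nu^{-1/2}$ gap of exactly one $\nu$, is what singles out $i=i^{*}$ as the only value allowing coincidence; all other values produce an irreparable mismatch in cuspidal supports, giving the asserted contradiction.
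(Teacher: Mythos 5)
Your outline shares the paper's broad structure (reduce to submodules via Lemma~\ref{lem coscole socle}, approximate $\pi$ via Speh representations, use the geometric lemma to filter the derivatives), but the decisive step is both vague and, more seriously, rests on a false premise. You write that the simultaneous embeddings of $\sigma$ into pieces on both sides ``force the cuspidal support of $\sigma$ to match a full $\nu$-shifted copy of itself, which is possible only when each $i_k$ is maximal.'' This cannot work: the shifts $\nu^{1/2}$ and $\nu^{-1/2}$ in the definitions $\pi^{[i]}=\nu^{1/2}\pi^{(i)}$ and ${}^{[i]}\pi=\nu^{-1/2}{}^{(i)}\pi$ are designed precisely so that the cuspidal supports coincide. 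Concretely, for a segment $\Delta=[\nu^a\rho,\nu^b\rho]$ one has $\nu^{1/2}\Delta^- = [\nu^{a+1/2}\rho,\nu^{b-1/2}\rho] = \nu^{-1/2}({}^-\Delta)$, so the derivative stubs $\nu^{1/2}\mathfrak m^{(i_1,\ldots,i_r)}$ and $\nu^{-1/2}\,{}^{(j_1,\ldots,j_r)}\mathfrak m$ can have identical cuspidal support without the index tuples being maximal. Cuspidal-support counting alone therefore yields no contradiction for any $i$; the whole content of the theorem is elsewhere.

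The invariant that actually detects the asymmetry is the \emph{shape} of the multisegment, not its support. The paper's argument tracks, for each relative length $l$, the number $N(\mathfrak m,l)$ of segments of that length and shows (Lemmas~\ref{lem segment numbers}, \ref{lem segment numbers 2}) that union--intersection operations can only raise these counts in a controlled, asymmetric way. When $i$ is not the level, one can choose a segment $\Delta_{k^*}$ of maximal relative length $L$ with $i_{k^*}'=0$ or $j_{k^*}'=0$, and one then obtains (via the Speh decomposition of Proposition~\ref{lem speh approx} and the embeddings of Lemma~\ref{lem speh multisegment 2}) a strict inequality on the number of segments of length $L$ containing $\nu^{1/2}\Delta^*$ when approaching from the right-derivative side versus an equality when approaching from the left-derivative side (the ``(***)'' versus ``(****)'' contradiction). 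Your proposal names none of these ingredients — it neither introduces $N(\mathfrak m,l)$, nor the union--intersection bookkeeping, nor the careful ordering and properties ($\Diamond$), ($\Diamond\Diamond$) of the Speh decomposition that make the counting possible — so the argument as written does not go through, and the specific mechanism by which a common submodule is ruled out remains unaddressed.
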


The proof of Theorem \ref{lem same quo derivative} will be carried out in Section \ref{sec proof lemma}. Note that the converse of the above theorem is also true, which follows directly from the well-known highest derivative due to Zelevinsky \cite[Theorem 8.1]{Ze}.

\subsection{Non-zero Bernstein components} \label{ss remark cuspidal rep}
Let $\pi \in \mathrm{Irr}(G_{n+1})$. In order to give an explicit parametrization of indecomposable components of $\pi|_{G_n}$, we also have to determine when $\pi_{\mathfrak s}\neq 0$ for $\mathfrak s \in \mathfrak B(G_n)$. Indeed this can be done as follows. Write $\pi \cong \langle \mathfrak m \rangle $ for a multisegment $\mathfrak m=\left\{ \Delta_1, \ldots, \Delta_k \right\}$. Let $\pi'$ be the (right) highest derivative of $\pi$. Then we obtain a multiset 
\[\mathrm{csupp}(\pi')=( \rho_1, \ldots, \rho_p ) .\]
 This multiset determines a cuspidal $G_{k_1}\times \ldots \times G_{k_p}$-representation $\rho_1\boxtimes \ldots \boxtimes \rho_p$. Now we pick positive integers $k_{p+1}, \ldots, k_q$ such that $k_1+\ldots +k_q=n$, and pick cuspidal representations $\rho_{p+1}, \ldots, \rho_q$ of $G_{k_{p+1}}, \ldots, G_{k_q}$ respectively. Then for the inertial equivalence class 
\[\mathfrak s=[G_{k_1}\times \ldots \times G_{k_q}, \rho_1\boxtimes \ldots \boxtimes \rho_q],\]
we have that $\pi_{\mathfrak s}\neq 0$, which follows from that the bottom Bernstein-Zelevinsky layer $(  \pi^{[i^*]}\times \Pi_{i^*-1})_{\mathfrak s} \subset \pi_{\mathfrak s}$ is non-zero. Here $i^*$ is the level of $\pi$. 

Indeed for any $\mathfrak s \in \mathfrak B(G_n)$ with $\pi_{\mathfrak s} \neq 0$, $\mathfrak s$ arises in the above way. To see this, we need the following lemma:

\begin{lemma} \label{lem cusp support condition}
Let $\pi \in \mathrm{Irr}(G_{n+1})$. Let $i^*$ be the level of $\pi$. Then, for any $i\leq i^*$ with $\pi^{[i]}\neq 0$ and for any $\omega\in \mathrm{JH}(\pi^{[i]})$, $ \mathrm{csupp}(\pi^{[i^*]}) \subset \mathrm{csupp}(\omega)$ (counting multiplicities).
\end{lemma}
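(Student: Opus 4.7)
My plan is to combine the Zelevinsky classification of $\pi$, Zelevinsky's highest-derivative theorem, and the geometric lemma applied to the standard embedding $\pi \hookrightarrow \zeta(\mathfrak m)$.

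First, write $\pi \cong \langle \mathfrak m \rangle$ for a multisegment $\mathfrak m = \{\Delta_1, \ldots, \Delta_r\}$, and let $R$ be the set of cuspidals $\rho$ that are maximal in $\mathrm{csupp}_{\mathbb Z}(\pi)$ (that is, no $\rho' \in \mathrm{csupp}_{\mathbb Z}(\pi)$ satisfies $\rho < \rho'$). For each $\rho \in R$, set $S_\rho = \{j : b(\Delta_j) \cong \rho\}$, and let $S = \bigcup_{\rho \in R} S_\rho$. Zelevinsky's highest-derivative theorem (\cite[Theorem 8.1]{Ze}) identifies $i^* = \sum_{\rho \in R} n(\rho)|S_\rho|$ and gives $\pi^{(i^*)} \cong \langle \mathfrak m^* \rangle$, where $\mathfrak m^*$ is obtained from $\mathfrak m$ by replacing each $\Delta_j$ with $j \in S$ by $\Delta_j^-$. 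In particular, $\mathrm{csupp}(\pi^{(i^*)}) = \mathrm{csupp}(\pi) \setminus T^*$ as multisets, where $T^* = \bigsqcup_{\rho \in R} |S_\rho|\cdot [\rho]$. Since $\pi^{[i]} = \nu^{1/2}\pi^{(i)}$ differs by a uniform character twist (which preserves cuspidal-support containments), it suffices to prove $\mathrm{csupp}(\pi^{(i^*)}) \subset \mathrm{csupp}(\omega)$ for each $\omega \in \mathrm{JH}(\pi^{(i)})$.

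Next, since the right-derivative functor is exact (being a composition of Jacquet and exact coinvariant functors) and $\pi \hookrightarrow \zeta(\mathfrak m)$, we get $\pi^{(i)} \hookrightarrow \zeta(\mathfrak m)^{(i)}$. The geometric lemma applied to $\zeta(\mathfrak m) = \langle \Delta_1 \rangle \times \cdots \times \langle \Delta_r \rangle$ then filters $\zeta(\mathfrak m)^{(i)}$ by subquotients of the form $\langle \Delta_1^{(k_1 n(\rho_1))} \rangle \times \cdots \times \langle \Delta_r^{(k_r n(\rho_r))} \rangle$ with $\sum_j k_j n(\rho_j) = i$. Consequently, any $\omega \in \mathrm{JH}(\pi^{(i)})$ has cuspidal support $\mathrm{csupp}(\pi) \setminus T_\omega$, where $T_\omega = \bigsqcup_j \{b(\Delta_j)/\nu^l : 0 \leq l < k_j\}$ is the multiset of truncated cuspidals for some tuple $(k_1, \ldots, k_r)$.

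The crucial step is to show that only truncation patterns with $k_j = 0$ for $j \notin S$ and $k_j \in \{0,1\}$ for $j \in S$ can produce composition factors of $\pi^{(i)}$ (as opposed to the larger $\zeta(\mathfrak m)^{(i)}$). Granted this, every element of $T_\omega$ is of the form $b(\Delta_j) \cong \rho$ for some $\rho \in R$ and $j \in S_\rho$, with at most $|S_\rho|$ copies of each such $\rho$, so $T_\omega \subset T^*$ as multisets; this immediately gives $\mathrm{csupp}(\pi^{(i^*)}) \subset \mathrm{csupp}(\omega)$. I would establish the crucial step by induction on $|S|$ (or equivalently on $i^*$), combined with an iterated application of Zelevinsky's highest-derivative theorem: if $\omega$ arose from a pattern with $k_j > 0$ for some $j \notin S$ or $k_j \geq 2$ for some $j \in S$, further derivation along the maximal cuspidals of $\omega$ would produce a non-zero contribution at some level exceeding $i^*$, contradicting the definition of level. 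The main obstacle is this crucial step, since one must rigorously distinguish composition factors of the submodule $\pi^{(i)}$ from those of the ambient $\zeta(\mathfrak m)^{(i)}$; this requires exploiting the specific structure $\pi = \langle \mathfrak m \rangle \hookrightarrow \zeta(\mathfrak m)$ rather than relying only on cuspidal-support bookkeeping.
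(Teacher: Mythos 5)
Your proposal contains a genuine error in its statement of Zelevinsky's highest derivative theorem, and this error propagates through the rest of the argument. Zelevinsky \cite[Theorem 8.1]{Ze} gives the highest derivative of $\langle\mathfrak m\rangle$ as $\langle\mathfrak m^-\rangle$, where $\mathfrak m^- = \{\Delta_1^-,\ldots,\Delta_r^-\}$ truncates \emph{every} segment of $\mathfrak m$ (discarding the ones that become empty), so that $i^* = \sum_{j=1}^r n(b(\Delta_j))$ and $\mathrm{csupp}(\pi^{(i^*)}) = \mathrm{csupp}(\pi)\setminus\{b(\Delta_1),\ldots,b(\Delta_r)\}$ as multisets. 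It does \emph{not} truncate only the segments whose right endpoint $b(\Delta_j)$ is $<$-maximal, as your $S$ and $T^*$ assume. (Concrete check: for $\pi=\langle\{[\nu^0],[\nu^{-1}]\}\rangle$, the Steinberg of $G_2$, your formula gives $i^*=1$, but in fact $i^*=2$.)

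As a consequence, your ``crucial step'' --- that only truncation patterns with $k_j>0$ restricted to $j\in S$ occur among composition factors of $\pi^{(i)}$ --- is both false and unnecessary. It is false: take $\pi=\langle[\nu^{-2},\nu^{-1}]\rangle\times\langle[\nu^1,\nu^2]\rangle$; then $\pi^{(1)}$ has a composition factor coming from truncating $[\nu^{-2},\nu^{-1}]$, whose endpoint is not maximal. It is unnecessary because the geometric lemma already gives an automatic one-step bound on truncations: for a single segment, $\langle\Delta\rangle^{(i)}=0$ unless $i\in\{0,\,n(\rho)\}$, and $\langle\Delta\rangle^{(n(\rho))}=\langle\Delta^-\rangle$. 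Hence every successive quotient of $\zeta(\mathfrak m)^{(i)}$ is of the form $\zeta(\mathfrak n)$ with $\mathfrak n$ obtained from $\mathfrak m$ by truncating each segment \emph{at most once}. The paper's argument is exactly this: $\pi^{[i]}\hookrightarrow\zeta(\mathfrak m)^{[i]}$, every composition factor lands in some $\zeta(\mathfrak n)$ with $\mathfrak n\in\mathcal M:=\{\{\Delta_1^{\#},\ldots,\Delta_k^{\#}\}:\#\in\{[0],[-]\}\}$, so $\mathrm{csupp}(\omega)=\bigcup_{\Delta\in\mathfrak n}\Delta$ always contains $\bigcup_j\Delta_j^{[-]}=\mathrm{csupp}(\pi^{[i^*]})$. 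Once you replace your $T^*$ and the ``crucial step'' with the correct highest-derivative theorem and the vanishing $\langle\Delta\rangle^{(i)}=0$ for $0<i<n(\rho)$ or $i>n(\rho)$, the proof collapses to a two-line cuspidal-support comparison, with no induction or distinction between composition factors of the submodule $\pi^{(i)}$ and of $\zeta(\mathfrak m)^{(i)}$ needed.
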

\begin{proof}
Let $\mathfrak m \in \mathrm{Mult}$ such that $\pi \cong \langle \mathfrak m \rangle$. Now by definition, we have that 
\[ \langle \mathfrak m \rangle \hookrightarrow \zeta(\mathfrak m) 
\]
and so $\langle \mathfrak m \rangle^{[i]} \hookrightarrow \zeta(\mathfrak m)^{[i]}$. 

For any segment $\Delta$, set $\Delta^{[0]}=\nu^{1/2}\Delta$ and set $\Delta^{[-]}=\nu^{1/2}\Delta^{-}$. We also set $\mathfrak m=\left\{ \Delta_1, \ldots , \Delta_k \right\}$ and set 
\[ \mathcal M=\left\{ \left\{ \Delta_1^{\#},\ldots, \Delta_k^{\#} \right\}: \#=[0],[-] \right\} . \]
Now the geometric lemma gives a filtration on $\zeta(\mathfrak m)^{[i]}$ whose successive quotients are isomorphic to $\zeta(\mathfrak n)$ for some $\mathfrak n  \in \mathcal M$ (see Lemma \ref{lem filt standard} below). Since $\langle \mathfrak m \rangle^{[i]} \hookrightarrow \zeta(\mathfrak m)^{[i]}$ as discussed before, any  $\omega \in \mathrm{JH}(\langle \mathfrak m \rangle^{[i]})$ is a composition factor of $\zeta(\mathfrak n)$ for some $\mathfrak n \in \mathcal M$. Hence, $\mathrm{csupp}(\omega) =\cup_{\Delta \in \mathfrak n}\Delta$ (counting multiplicities).

On the other hand, $\pi^{[i^*]}=\langle \left\{ \Delta_1^{[-]}, \ldots, \Delta_k^{[-]} \right\} \rangle$. Hence $\mathrm{csupp}(\pi^{[i^*]}) \subset \mathrm{csupp}(\omega)$ for any $\omega \in \mathrm{JH}(\pi^{[i]})$. 
\end{proof}

Now we go back to consider that $\mathfrak s \in \mathfrak B(G_n)$ with $\pi_{\mathfrak s} \neq 0$. By the Bernstein-Zelevinsky filtration, 
\[ \pi_{\mathfrak s} \neq 0 \Rightarrow (\pi^{[i]}\times \Pi_{i-1})_{\mathfrak s} \neq 0 \]
 for some $i \leq i^*$. Now, as a similar manner to what we did for the bottom layer above, we could determine (abstractly) all possible $\mathfrak s' \in \mathfrak B(G_n)$ with $( \pi^{[i]}\times \Pi_{i-1})_{\mathfrak s'}\neq 0$. Then by Lemma \ref{lem cusp support condition}, one sees that, 
\[ (\pi^{[i]} \times \Pi_{i-1})_{\mathfrak s} \neq 0 \Rightarrow ( \pi^{[i^*]}\boxtimes \Pi_{i^*-1})_{\mathfrak s} \neq 0 . \]



\subsection{Indecomposability of Zelevinsky induced modules}

Let $\mathfrak m \in \mathrm{Mult}$. In \cite{BZ}, it is shown that the restriction of $\zeta(\mathfrak m)$ to the mirabolic subgroup is strongly indecomposable. One may expect the following conjecture, which is a stronger statement of Theorem \ref{thm indecomp irred}:

\begin{conjecture}
Let $\mathfrak m$ be a multisegment for $n+1$. Then any Bernstein component of $\zeta(\mathfrak m)|_{G_n}$ is strongly indecomposable. 
\end{conjecture}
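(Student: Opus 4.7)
The plan is to follow the two-stage strategy of the proof of Theorem \ref{thm indecomp irred}: establish strong indecomposability of the bottom Bernstein--Zelevinsky layer, then lift via an asymmetric-derivative comparison. Let $\mathfrak m$ be a multisegment of total absolute length $n+1$, with level $i^*$. Applying the geometric lemma to the product of Speh representations $\zeta(\mathfrak m)=\langle \Delta_1\rangle\times\cdots\times\langle \Delta_r\rangle$, one first checks that $\zeta(\mathfrak m)^{(i^*)}\cong\zeta(\mathfrak m^-)$, where $\mathfrak m^-$ is obtained from $\mathfrak m$ by truncating once from the right each segment whose rightmost cuspidal support realizes the level; in particular the top derivative is again of Zelevinsky type, which opens the door to an inductive argument.

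The first main step is to extend Theorem \ref{thm strong indecomp} so that the bottom layer $(\mathrm{ind}_{R_{n+1-i^*}}^{G_n}\,\nu^{1/2}\zeta(\mathfrak m^-)\boxtimes\psi_{i^*-1})_{\mathfrak s}$ is strongly indecomposable. The existing argument handles irreducible $\omega$ by exhibiting a common submodule of any two non-zero submodules inside $(\Pi_i)_{\mathfrak t}\boxtimes\omega$, whose strong indecomposability hinges on Lemma \ref{lem product strong indecomp} and hence requires irreducibility of $\omega$. For $\omega=\nu^{1/2}\zeta(\mathfrak m^-)$ I would induct on the composition length of $\omega$, using the short exact sequence
\[ 0\to \mathrm{ind}\,\nu^{1/2}\langle \mathfrak m^-\rangle\boxtimes\psi\to \mathrm{ind}\,\nu^{1/2}\zeta(\mathfrak m^-)\boxtimes\psi\to \mathrm{ind}\,\nu^{1/2}\bigl(\zeta(\mathfrak m^-)/\langle \mathfrak m^-\rangle\bigr)\boxtimes\psi\to 0 \]
arising from the unique irreducible submodule $\langle \mathfrak m^-\rangle\hookrightarrow\zeta(\mathfrak m^-)$, and reducing to the claim that every non-zero submodule of the middle term meets the left term non-trivially. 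Corollary \ref{cor intersection} and Lemma \ref{lem second geom lemma}, applied iteratively along the socle filtration of $\zeta(\mathfrak m^-)$, should produce the required common piece, at which point Theorem \ref{thm strong indecomp} applies directly to the irreducible bottom of the socle filtration.

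The second main step is to upgrade Theorem \ref{lem same quo derivative} to $\zeta(\mathfrak m)$: when $i\neq i^*$, $\zeta(\mathfrak m)^{[i]}$ and ${}^{[i]}\zeta(\mathfrak m)$ share no isomorphic irreducible quotient or submodule. The geometric lemma yields explicit filtrations of $\zeta(\mathfrak m)^{(i)}$ and ${}^{(i)}\zeta(\mathfrak m)$ with successive quotients of the form $\prod_j\langle \Delta_j^{(i_j)}\rangle$ and $\prod_j\langle {}^{(i_j)}\Delta_j\rangle$ respectively (with $\sum_j i_j=i$), so one can enumerate the cuspidal supports of possible irreducible quotients and submodules on each side and compare; the asymmetry comes from the same shift phenomenon used in the irreducible case, applied successive quotient by successive quotient. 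With both ingredients in hand, one concludes essentially verbatim as in Theorem \ref{thm indecomp irred}: Lemma \ref{lem bz filtration}, Proposition \ref{prop intersect same}, and the generalized asymmetric derivative force every non-zero submodule of $\zeta(\mathfrak m)_{\mathfrak s}$ to meet the bottom BZ layer, and Step 1 then makes any two such submodules intersect.

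The principal obstacle is Step 1. Lemma \ref{lem product strong indecomp} fails the moment the second factor is reducible, and the socle-filtration induction replacing it must be carried out compatibly with projection to the Bernstein component $\mathfrak s$, so that the quotient terms in the short exact sequence above do not contribute ``ghost'' submodules outside the Bernstein block of interest. A second subtlety is verifying that the identification $\zeta(\mathfrak m)^{(i^*)}\cong\zeta(\mathfrak m^-)$ holds as an honest isomorphism (rather than merely on the graded pieces of the geometric-lemma filtration); this should follow from unlinkedness considerations among the relevant truncations and the corresponding Ext-vanishing between the successive quotients.
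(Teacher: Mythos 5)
First, this statement is a \emph{conjecture} in the paper, not a theorem, and the paper provides no proof of it; the remark immediately following it even exhibits a closely related failure, showing $\zeta(\mathfrak m)^{\vee}|_{G_n}$ need not be strongly indecomposable. So there is no proof in the paper to compare against, and the question is whether your argument stands on its own.

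It does not: Step 2 fails. The extension of Theorem~\ref{lem same quo derivative} you need --- that for $i$ not the level of $\zeta(\mathfrak m)$, the shifted derivatives $\zeta(\mathfrak m)^{[i]}$ and ${}^{[i]}\zeta(\mathfrak m)$ share no isomorphic irreducible submodule --- is simply false. Take $\rho$ cuspidal on $G_l$ and $\mathfrak m=\left\{[\rho],[\nu\rho]\right\}$, so $\zeta(\mathfrak m)=\nu\rho\times\rho$ on $G_{2l}$ has level $i^*=2l$. At $i=l<i^*$, Lemma~\ref{lem filt standard} gives $\zeta(\mathfrak m)^{(l)}\cong\rho\oplus\nu\rho$ and ${}^{(l)}\zeta(\mathfrak m)\cong\rho\oplus\nu\rho$, both semisimple since the two cuspidal summands are non-isomorphic. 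Hence
\[\zeta(\mathfrak m)^{[l]}\cong\nu^{1/2}\rho\oplus\nu^{3/2}\rho,\qquad {}^{[l]}\zeta(\mathfrak m)\cong\nu^{-1/2}\rho\oplus\nu^{1/2}\rho\]
share the irreducible direct summand $\nu^{1/2}\rho$. This is exactly the coincidence that Theorem~\ref{lem same quo derivative} excludes for irreducible $\pi$, and the claim inside the proof of Theorem~\ref{thm indecomp irred} derives its contradiction from that exclusion via Proposition~\ref{prop intersect same}. Once the exclusion disappears, the contradiction evaporates and the transplanted argument proves nothing. The proof of Theorem~\ref{lem same quo derivative} itself rests on the embedding $\langle\mathfrak m\rangle\hookrightarrow\zeta(\mathfrak m)$, the Speh approximation Proposition~\ref{lem speh approx}, and the union--intersection count of Lemma~\ref{lem segment numbers}, all of which are intrinsic to the irreducible module and have no evident analogue for $\zeta(\mathfrak m)$; the ``successive quotient by successive quotient'' comparison you sketch would have to resolve precisely the coincidences just exhibited. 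The Step~1 obstacle you flagged (Lemma~\ref{lem product strong indecomp} requires the second tensor factor irreducible, which $\zeta(\mathfrak m^-)$ generally is not) is also genuine, but it is secondary: even if it were overcome, Step~2 blocks the conclusion. Any proof of the conjecture in this spirit would have to track the actual submodules of the Bernstein--Zelevinsky layers rather than just of their derivatives, since Proposition~\ref{prop intersect same} is a one-way implication and its converse fails.
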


We remark that $\zeta(\mathfrak m)^{\vee}|_{G_n}$ is not strongly indecomposable in general. An example is $\mathfrak m=\left\{[\nu^{-1/2}],[\nu^{1/2}] \right\}$. In this case, the short exact sequence:
\[  0 \rightarrow \langle \Delta \rangle \rightarrow \zeta(\mathfrak m)^{\vee}|_{G_n} \rightarrow \mathrm{St}(\Delta) \rightarrow 0 ,
\]
where $\Delta=[\nu^{-1/2}, \nu^{1/2}]$. Restriction to $G_1$ gives a split sequence since $\mathrm{St}(\Delta)|_{G_1}$ is projective (see Theorem \ref{thm class restrict proj}). Hence the Iwahori component of $\zeta(\Delta)^{\vee}|_{G_1}$ is not indecomposable.

\section{Asymmetric property of left and right derivatives} \label{sec proof lemma}

We are going to prove Theorem \ref{lem same quo derivative} in this section. The idea lies in two simple cases. The first one is a generic representation. Since an irreducible generic representation is isomorphic to $\lambda(\mathfrak m) \cong \mathrm{St}(\mathfrak m)$ for $\mathfrak m \in \mathrm{Mult}$ (with the property that any two segments in $\mathfrak m$ are unlinked), a simple counting on cuspidal supports of derivatives can show Theorem \ref{lem same quo derivative} for that case. The second one is an irreducible representation whose Zelevinsky multisegment has all segments with relative length strictly greater than $1$. In such case, one can narrow down the possibility of irreducible submodule of the derivatives via the embedding $\langle \mathfrak m\rangle^{(i)} \hookrightarrow \zeta(\mathfrak m)^{(i)}$ and ${}^{(i)}\langle \mathfrak m \rangle \hookrightarrow {}^{(i)}\zeta(\mathfrak m)$, and use geometric lemma to compute the possible submodules of derivatives of $\zeta(\mathfrak m)^{(i)}$ and ${}^{(i)}\zeta(\mathfrak m)$. The combination of these two cases seems to require some extra work. The strategy is to use Speh representations, which can be viewed as a generalization of generalized Steinberg representations, and then apply Lemma \ref{lem speh multisegment 2} to obtain information on submodules.

\subsection{Union-intersection operation}

Let $\mathfrak m=\left\{ \Delta_1, \ldots , \Delta_r \right\}$. For two segments $\Delta$ and $\Delta'$ in $\mathfrak m$ which are linked, the process of replacing $\Delta$ and $\Delta'$ by $\Delta \cap \Delta'$ and $\Delta \cup \Delta'$ is called the union-intersection process. It follows from \cite[Chapter 7]{Ze} that the Zelevinsky multisegment of any irreducible composition factor in 
\[  \langle \Delta_1 \rangle \times \ldots \times \langle \Delta_r \rangle
\]
can be obtained by a chain of intersection-union process. For a positive integer $l$, define $N(\mathfrak m, l)$ to be the number of segments in $\mathfrak m$ with relative length $l$.

\begin{lemma} \label{lem segment numbers}
Let $\mathfrak m=\left\{ \Delta_1, \ldots, \Delta_r \right\} \in \mathrm{Mult}$. Let $\mathfrak m'$ be a Zelevinsky multisegment obtained from $\mathfrak m$ by a chain of union-intersection operations.  Then there exists a positive integer $l$ such that 
\[ N(\mathfrak m', l) > N(\mathfrak m, l)   ,\]
and for any $l' > l$, 
\[ N(\mathfrak m', l') \geq N(\mathfrak m, l') .\]

\end{lemma}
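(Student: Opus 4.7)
The plan is to induct on the length of the chain of union-intersection operations connecting $\mathfrak m$ to $\mathfrak m'$. The key observation, which drives the base case, is purely combinatorial: if $\Delta$ and $\Delta'$ are linked segments, then $|\Delta \cup \Delta'| > \max(|\Delta|, |\Delta'|)$, while $|\Delta \cap \Delta'| < \min(|\Delta|, |\Delta'|)$ (with empty intersections having length $0$). This holds because ``linked'' forces $\Delta, \Delta'$ to be incomparable yet have a segment as union, so one properly extends the other on each end.

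For the base case (one operation), let $l = |\Delta \cup \Delta'|$. Both original segments have length $< l$, so they do not contribute to $N(\cdot, l)$; the only change at length $l$ is the addition of $\Delta \cup \Delta'$, giving $N(\mathfrak m', l) = N(\mathfrak m, l) + 1$. For $l' > l$, every segment involved in the operation has length $\leq l < l'$, so $N(\mathfrak m', l') = N(\mathfrak m, l')$.

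For the inductive step, factor the chain as $\mathfrak m \rightsquigarrow \mathfrak m'' \rightsquigarrow \mathfrak m'$, where the second arrow is a single operation and the first has one fewer step. Apply the inductive hypothesis to get some $l''$ with $N(\mathfrak m'', l'') > N(\mathfrak m, l'')$ and $N(\mathfrak m'', l') \geq N(\mathfrak m, l')$ for $l' > l''$. Let $l$ denote the length of the new union segment created in the final operation. The verification then splits into the cases $l > l''$, $l = l''$, and $l < l''$; in each case the appropriate choice (either $l$ or $l''$) works, using the fact that the final operation only modifies the counts at lengths $\leq l$, with a strict gain of $+1$ precisely at length $l$ and no change at lengths $> l$.

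The argument is essentially bookkeeping once the base-case inequality $|\Delta \cup \Delta'| > \max(|\Delta|, |\Delta'|)$ is in hand; no real obstacle arises beyond carefully tracking which length strictly increases as we compose multiple steps. The only subtle point is that when $l < l''$ in the inductive step, one must confirm that the final operation does not disturb $N(\cdot, l'')$ or $N(\cdot, l')$ for $l' \geq l''$, which is immediate since all four segments involved have length at most $l < l''$.
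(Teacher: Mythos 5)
Your proof is correct and follows essentially the same route as the paper: both arguments induct on the number of union-intersection operations, and both take $l$ to be the maximum of the value produced by the inductive hypothesis and the relative length of the new union segment created in the last step. You simply spell out the base case and the case analysis that the paper dismisses as "straightforward."
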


\begin{proof}
We shall prove inductively on the number of union-intersection operations to obtain $\mathfrak m'$ from $\mathfrak m$. We assume the targeted statement is true for $X \geq 2$ number of union-intersection operations. Let $\mathfrak m''$ be a new multisegment obtained from $\mathfrak m$ by $X+1$ number of union-intersection operations. Then we can find a multisegment $\mathfrak m'$ obtained from $\mathfrak m$ by $X$ number of union-intersection operations and $\mathfrak m''$ is obtained from $\mathfrak m'$ by one union-intersection operation. 

By inductive hypothesis, we can find a positive number $l_X$ such that 
\[ N(\mathfrak m', l_X) > N(\mathfrak m, l_X)   ,\]
and for any $l' > l_X$, 
\[ N(\mathfrak m', l') \geq N(\mathfrak m, l') .\]

Now let $\Delta_i, \Delta_j$ be the multisegments in $\mathfrak m'$ involved in the union-intersection operation to obtain $\mathfrak m''$. In particular, $\Delta_i$ and $\Delta_j$ are linked. Let $l_0$ be the relative length of $\Delta_i \cup \Delta_j$. If $l_0 \geq l_X$, set $l=l_0$, and otherwise set $l=l_X$. Now it is straightforward to check that such $l$ satisfies the required properties.

It remains to prove for $X=1$. This can be proved by a similar argument as in the last paragraph.
\end{proof}

Indeed, the proof of Lemma \ref{lem segment numbers} gives the following: 

\begin{lemma} \label{lem segment numbers 2}
Let $k \geq 2$. Let $\mathfrak m_1, \ldots \mathfrak m_k \in \mathrm{Mult}$ such that for $i=2,\ldots, k$, each $\mathfrak m_i$ is obtained from $\mathfrak m_{i-1}$ by one-step of union-intersection operation. Suppose $\Delta_i, \Delta_i'$ are two segments in $\mathfrak m_{i-1}$ involved in the union-intersection operation to obtain $\mathfrak m_i$. Let $l_i$ be the relative length of $\Delta_i \cup \Delta_i'$. Then there exists $l$ such that $l \geq l_i$ for all $i$, and 
\[  N(\mathfrak m_k, l) > N(\mathfrak m_1, l)
\]
and, for any $l' >l$,
\[  N(\mathfrak m_k, l') \geq N(\mathfrak m_1, l') .
\]
\end{lemma}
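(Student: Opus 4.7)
My plan is to induct on $k$, following the blueprint of the proof of Lemma \ref{lem segment numbers}. For the base case $k=2$ I would take $l = l_2$: since $\Delta_2$ and $\Delta_2'$ are linked with neither containing the other, both have relative length strictly less than $l_2$, so passing from $\mathfrak m_1$ to $\mathfrak m_2$ removes two segments of relative length $< l_2$ and introduces one segment of relative length $l_2$ (together with $\Delta_2 \cap \Delta_2'$, of still smaller length). This gives $N(\mathfrak m_2, l_2) = N(\mathfrak m_1, l_2) + 1$ and $N(\mathfrak m_2, l') = N(\mathfrak m_1, l')$ for all $l' > l_2$, and trivially $l \geq l_2$.

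For the inductive step I would apply the hypothesis to the shorter chain $\mathfrak m_1, \ldots, \mathfrak m_{k-1}$ to obtain $l^*$ with $l^* \geq l_i$ for $2 \leq i \leq k-1$, with $N(\mathfrak m_{k-1}, l^*) > N(\mathfrak m_1, l^*)$, and with $N(\mathfrak m_{k-1}, l') \geq N(\mathfrak m_1, l')$ for every $l' > l^*$. I would then set $l := \max(l^*, l_k)$, which by construction dominates every $l_i$ for $2 \leq i \leq k$. The same elementary observation as in the base case, applied to the last operation, shows that $N(\mathfrak m_k, l') = N(\mathfrak m_{k-1}, l')$ for $l' > l_k$ and $N(\mathfrak m_k, l_k) = N(\mathfrak m_{k-1}, l_k) + 1$; everything one removes at the $(k-1)\to k$ step has relative length strictly between the length of $\Delta_k \cap \Delta_k'$ and $l_k$.

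Combining these with the inductive inequalities in the two subcases $l^* \geq l_k$ and $l_k > l^*$ yields both $N(\mathfrak m_k, l) > N(\mathfrak m_1, l)$ and $N(\mathfrak m_k, l') \geq N(\mathfrak m_1, l')$ for all $l' > l$. I do not anticipate any genuine obstacle here; the only step demanding care is the borderline situation $l^* = l_k$, where one must invoke the strict inequality $N(\mathfrak m_{k-1}, l^*) > N(\mathfrak m_1, l^*)$ furnished by the inductive hypothesis, and the situation $l_k > l^*$, where the weak inequality $N(\mathfrak m_{k-1}, l_k) \geq N(\mathfrak m_1, l_k)$ is available precisely because $l_k > l^*$. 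Both amount to careful bookkeeping rather than any new idea.
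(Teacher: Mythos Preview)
Your proposal is correct and follows essentially the same inductive scheme as the paper, which simply remarks that the proof of Lemma~\ref{lem segment numbers} already yields this statement. Your choice $l=\max(l^*,l_k)$ is exactly the paper's rule ``if $l_0\geq l_X$ set $l=l_0$, otherwise set $l=l_X$'', and your case analysis $l^*\geq l_k$ versus $l_k>l^*$ (including the borderline $l^*=l_k$) matches the verification the paper leaves to the reader; the only addition is the bookkeeping observation that this $l$ dominates every $l_i$, which is immediate from the inductive construction.
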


\subsection{Speh multisegments} \label{ss derivative speh}

\begin{definition}
Let $\Delta$ be a segment. Let 
\[ \mathfrak m(m,\Delta)= \left\{ \nu^{-(m-1)/2}\Delta,\nu^{1-(m-1)/2}\Delta,\ldots, \nu^{(m-1)/2}\Delta  \right\} .\]
We shall call $\mathfrak m(m,\Delta)$ to be a Speh multisegment. Define
\[  u(m,\Delta) = \langle \mathfrak m(m,\Delta) \rangle ,
\]
which will be called a Speh representation. In the literature, it is sometimes called essentially Speh representation. Denote by $L(\mathfrak m(m,\Delta))$ the relative length of $\Delta$.

We similarly define 
\[   u_r(m,i,\Delta)=\langle \nu^{-(m-1)/2}\Delta^-,\ldots, \nu^{-(m-2i+1)/2}\Delta^-, \nu^{-(m-2i-1)/2}\Delta, \ldots, \nu^{(m-1)/2}\Delta \rangle  ,\]
and
\[   u_l(m,i,\Delta)=\langle \nu^{-(m-1)/2}\Delta, \ldots,   ,\nu^{(m-2i-1)/2}\Delta, \nu^{(m-2i+1)/2}({}^-\Delta), \ldots , \nu^{(m-1)/2}({}^-\Delta) \rangle .
\]
\end{definition}

Let $l=n(\rho)$. It follows from \cite{Ta87, LM14} (also see \cite{CS17}) that 
\begin{align} \label{eqn left deriv sp}
 u(m,\Delta)^{(li)} \cong u_r(m, i, \Delta) ,
\end{align} 
and $u(m,\Delta)^{(k)}$ is zero if $l$ does not divide $k$. Applying (\ref{eqn left derivative}), we have that 
\begin{align} \label{eqn right deriv sp} {}^{(li)}u(m,\Delta) \cong u_l(m,i, \Delta) ,
\end{align}
and ${}^{(k)}u(m,\Delta) =0$ if $l$ does not divide $k$.

\subsection{A notation for multisegment}

For a multisegment $\mathfrak m=\left\{\Delta_1, \ldots, \Delta_r \right\} $, define 
\[  \mathfrak m^{(i_1, \ldots, i_r)}= \left\{ \Delta_1^{(i_1)}, \ldots, \Delta_r^{(i_r)}\right\} , \quad \mathfrak m^{(i)}= \left\{ \mathfrak m^{(i_1, \ldots, i_r)} : i_1+\ldots +i_r=i \right\},\]
\[  {}^{(i_1, \ldots, i_r)}\mathfrak m= \left\{ {}^{(i_1)}\Delta_1, \ldots, {}^{(i_r)}\Delta_r  \right \} , \quad  {}^{(i)}\mathfrak m =\left\{ {}^{(i_1, \ldots, i_r)}\mathfrak m : i_1+\ldots +i_r=i \right\} .
\]

We shall need the following lemma: 

\begin{lemma} \label{lem filt standard}
Let $\mathfrak m \in \mathrm{Mult}$. Then $\zeta(\mathfrak m)^{(i)}$ (resp. ${}^{(i)}\zeta(\mathfrak m)$) admits a filtration whose successive quotients are isomorphic to $\zeta(\mathfrak n)$ for some $\mathfrak n \in \mathfrak m^{(i)}$ (resp. $\mathfrak n \in {}^{(i)}\mathfrak m$). 
\end{lemma}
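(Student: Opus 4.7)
The approach is induction on $r = |\mathfrak m|$, reducing to the case of a single segment and then iterating via a Leibniz rule for derivatives.

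For the base case $r=1$, $\mathfrak m = \{\Delta\}$ and $\zeta(\mathfrak m) = \langle \Delta \rangle$. A direct Bernstein-Zelevinsky computation (the degenerate $m=1$ case of (\ref{eqn left deriv sp})) shows that $\langle \Delta \rangle^{(i)}$ vanishes unless $i = kl$ with $l = n(b(\Delta))$ and $0 \leq k$ at most the relative length of $\Delta$, in which case $\langle \Delta \rangle^{(kl)} \cong \langle \Delta^{(kl)}\rangle = \zeta(\{\Delta^{(kl)}\}) \in \mathfrak m^{(i)}$, as required.

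For the inductive step, I would pick $\Delta_1 \in \mathfrak m$ that does not precede any other segment in $\mathfrak m$ and set $\mathfrak m' = \mathfrak m \setminus \{\Delta_1\}$, so that $\zeta(\mathfrak m) \cong \langle \Delta_1 \rangle \times \zeta(\mathfrak m')$. The Leibniz rule for Bernstein-Zelevinsky derivatives (a consequence of the geometric lemma for the parabolic $P_{n-i}$ together with the subsequent $\psi_i$-coinvariants) produces a filtration on $\zeta(\mathfrak m)^{(i)}$ whose successive quotients are
\[ \langle \Delta_1 \rangle^{(k)} \times \zeta(\mathfrak m')^{(i-k)} \qquad (0 \leq k \leq i). \]
Combining the base case (so $\langle \Delta_1 \rangle^{(k)} \cong \langle \Delta_1^{(k)} \rangle$ when nonzero) with the inductive hypothesis applied to $\zeta(\mathfrak m')^{(i-k)}$, I refine this into a filtration whose quotients are of the form $\langle \Delta_1^{(k)} \rangle \times \zeta(\mathfrak n')$ with $\mathfrak n' \in (\mathfrak m')^{(i-k)}$. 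Setting $\mathfrak n = \{\Delta_1^{(k)}\} \cup \mathfrak n' \in \mathfrak m^{(i)}$, one then identifies each such quotient with $\zeta(\mathfrak n)$. The left derivative statement follows by the symmetric argument using the opposite parabolic $R_{n-i}^-$ and left truncation $\Delta \mapsto {}^{(i)}\Delta$.

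The main obstacle will be the final identification step: even though $\Delta_1$ does not precede any segment in $\mathfrak m'$, after truncation $\Delta_1^{(k)}$ may become linked to, and precede, some segment appearing in $\mathfrak n'$, so that $\langle \Delta_1^{(k)}\rangle \times \zeta(\mathfrak n')$ is not literally in the canonical $\zeta$-ordering for $\mathfrak n$. Addressing this requires commuting unlinked factors via (\ref{eqn commute product trivial}) and, in borderline cases, refining the filtration further so that each new successive quotient is of the form $\zeta(\mathfrak n)$ for some $\mathfrak n \in \mathfrak m^{(i)}$.
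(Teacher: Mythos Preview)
Your approach via the Leibniz rule for derivatives is exactly the paper's approach, just organised inductively rather than all at once: the paper applies the geometric lemma directly to the $r$-fold product $\langle \Delta_1\rangle \times \cdots \times \langle \Delta_r\rangle$ and asserts that each resulting layer $\langle \Delta_1^{(i_1)}\rangle \times \cdots \times \langle \Delta_r^{(i_r)}\rangle$ is already $\zeta(\mathfrak n)$.

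You are right that there is an ordering issue, and in fact the paper glosses over the same point. However, your proposed remedy---commuting unlinked factors and then ``refining the filtration further'' in the linked case---does not work. Take $\mathfrak m=\{[\rho,\nu^2\rho],[\nu\rho]\}$ with $\rho$ a character of $G_1$, and look at $i=2$. With $\Delta_1=[\rho,\nu^2\rho]$ placed first (it does not precede $[\nu\rho]$), the layer $(i_1,i_2)=(2,0)$ is $\rho\times\nu\rho$. Its only proper nonzero submodule is $\mathrm{St}([\rho,\nu\rho])$, and neither this Steinberg nor the trivial quotient is of the form $\zeta(\mathfrak n)$ for any $\mathfrak n\in\mathfrak m^{(2)}$ (the only candidates being $\zeta(\{[\rho],[\nu\rho]\})=\nu\rho\times\rho$ and $\zeta(\{[\rho,\nu\rho]\})$). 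So no refinement of that single layer produces the required shape.

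The clean fix is in the choice of $\Delta_1$, not in the filtration: for right derivatives pick $\Delta_1$ with $a(\Delta_1)$ maximal (equivalently, order $\mathfrak m$ by $a(\Delta)$ weakly decreasing). Right truncation leaves every $a(\Delta)$ unchanged, so $\Delta_1^{(k)}$ still has maximal $a$ among the segments in $\{\Delta_1^{(k)}\}\cup\mathfrak n'$. Since $\Delta'$ preceding $\Delta''$ forces $a(\Delta')<a(\Delta'')$, the segment $\Delta_1^{(k)}$ can precede nothing in $\mathfrak n'$, and $\langle\Delta_1^{(k)}\rangle\times\zeta(\mathfrak n')=\zeta(\{\Delta_1^{(k)}\}\cup\mathfrak n')$ on the nose, with no commuting or refining needed. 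The left-derivative case is symmetric: order by $b(\Delta)$ weakly decreasing, which left truncation preserves.
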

\begin{proof}
Write $\mathfrak m=\left\{ \Delta_1, \ldots, \Delta_r \right\}$. We shall assume that for $i<j$, $b(\Delta_i) \not\leq b(\Delta_j)$ and if $b(\Delta_i) \cong b(\Delta_j)$, then $a(\Delta_i) \not< a(\Delta_j)$. By definition,
\[  \zeta(\mathfrak m) \cong \langle \Delta_1 \rangle \times \ldots \times \langle \Delta_r \rangle .
\]
By geometric lemma, we have that $\zeta(\mathfrak m)^{(i)}$ admits a filtration whose successive quotients are:
\[   \langle \Delta_1 \rangle^{(i_1)} \times \ldots \times \langle \Delta_r \rangle^{(i_r)} 
\]
for $i_1+\ldots +i_r =i$. Now the lemma follows from that the product $\langle \Delta_1 \rangle^{(i_1)} \times \ldots \times \langle \Delta_r \rangle^{(i_r)}$ is isomorphic to $\zeta(\mathfrak n)$ for $\mathfrak n=\left\{ \Delta_1^{(i_1)}, \ldots , \Delta_r^{(i_r)} \right\}$.
\end{proof}


\subsection{Proof of Theorem \ref{lem same quo derivative}} \label{ss proof thm deri}

By Lemma \ref{cor generic quo sub}, it suffices to prove the statement for submodules of the derivatives. 

Let $\mathfrak m$ be the Zelevinsky multisegment with $\pi \cong \langle \mathfrak m \rangle$. We shall assume that any cuspidal representation in each segment of $\mathfrak m$ is an unramified twist of a fixed cuspidal representation $\rho$, i.e. 
\[ \mathrm{csupp}'(\langle \mathfrak m \rangle) \subset \left\{ \nu^c \rho: c\in \mathbb C \right\} . \]
 We shall prove that Theorem \ref{lem same quo derivative} for such $\pi$. The general case follows from this by writing an irreducible representation as a product of irreducible representations of such specific form. \\

\noindent
{\bf Step 1: First approximation using Lemma \ref{lem filt standard}.}
Let $\pi'$ be a common isomorphic irreducible quotient of $ \pi{}^{[i]}$ and ${}^{[i]}\pi$. (Here we assume that $ \pi{}^{[i]}$ and $ {}^{[i]}\pi$ are non-zero.) Recall that we have that 
\[   \pi \hookrightarrow \zeta(\mathfrak m)  .
\]
 Since taking derivatives is an exact functor, $\nu^{1/2} \cdot \pi^{(i)}$ embeds to $\nu^{1/2}\cdot\zeta(\mathfrak m)^{(i)}$ and so does $\pi'$.

By Lemma \ref{lem filt standard}, there is a filtration on $\zeta(\mathfrak m)^{(i)}$ given by $\zeta( \mathfrak m^{(i_1,\ldots, i_r)} )$
for $i_1+\ldots +i_r=i$, where $i_k=0$ or $n(\rho)$.  Then $\pi'$ is isomorphic to the unique submodule of $\nu^{1/2}\zeta(\mathfrak m^{(i_1',\ldots, i_r')})$ for some $(i_1',\ldots, i_r')$ and so $\pi' \cong \nu^{1/2}\langle \mathfrak m^{(i_1', \ldots, i_r')} \rangle$. Similarly, $\pi'$ is isomorphic to $\nu^{-1/2} \langle {}^{(j_1',\ldots, j_r')}\mathfrak m \rangle$ for some $j_1'+\ldots +j_r'=i$. 

 Suppose $i$ is not the level of $\pi$. Then there exists at least one $i_{k}'=0$ and at least one $j_k'=0$. Among all those segment $\Delta_k$ with either $i_k'=0$ or $j_k'=0$, we shall choose $\Delta_{k^*}$ has the largest relative (and absolute) length. Denote the relative length of $\Delta_{k^*}$ by $L$. \\

\noindent
{\bf Step 2: Second approximation using Lemma \ref{lem speh multisegment 2} }

We write $\mathfrak m$ as the sum of Speh multisegments 
\begin{equation} \label{eqn speh multiseg}
\mathfrak m= \mathfrak m_1'+\ldots + \mathfrak m_s '
\end{equation}
satisfying properties in Proposition \ref{lem speh approx}. 

 Let
\[  \mathfrak m_1 , \ldots , \mathfrak m_r \]
be all the Speh multisegments appearing in the sum (\ref{eqn speh multiseg}) such that $L(\mathfrak m_k)=L$. For each $\mathfrak m_k$, write $\mathfrak m_k=\mathfrak m(m_k, \Delta_k)$ and define $b(\mathfrak m_k)=\nu^{(m_k-1)/2}b(\Delta_k)$. We shall label $\mathfrak m_k$ in the way that $b(\mathfrak m_k) \not< b(\mathfrak m_l)$ for $k<l$. Furthermore, the labelling satisfies the properties that
\begin{enumerate}
\item[($\Diamond$)] for any $\mathfrak m_p$ and $p<q$, $\mathfrak m_p +\Delta$ is not a Speh multisegment for any $\Delta \in \mathfrak m_q$.
\item[($\Diamond \Diamond$)] if $\mathfrak m_p \cap \mathfrak m_q \neq \emptyset$ and $p \leq q$, then $\mathfrak m_q \subset \mathfrak m_p$.
\end{enumerate}

 Let $\mathfrak n_1$ be the collection of all segments in $\mathfrak m \setminus (\mathfrak m_1+\ldots +\mathfrak m_r)$ which satisfies either (1) $b(\mathfrak m_1) <b(\Delta')$ or (2) $b(\mathfrak m_1) \cong b(\Delta')$. Define inductively that $\mathfrak n_k$ is the collection of all segments $\Delta'$ in $\mathfrak m \setminus (\mathfrak m_1+\ldots +\mathfrak m_{r}+\mathfrak n_1+\ldots \mathfrak n_{k-1})$ that satisfies the property that either (1) $b(\mathfrak m_k)< b(\Delta')$, or (2) $b(\mathfrak m_k) \cong b(\Delta')$. (It is possible that some $\mathfrak n_k$ is empty.)

 By Lemma \ref{lem speh multisegment 2}, we have a series of embeddings:
\begin{align*}
  \langle \mathfrak m \rangle \hookrightarrow &  \zeta(\mathfrak n_1)\times \langle \mathfrak m_1 \rangle \times \ldots \zeta(\mathfrak n_r) \times \langle \mathfrak m_r \rangle \times \zeta( \mathfrak n_{r+1} )\\
	\hookrightarrow	&   \ldots \\
\hookrightarrow 	& \zeta(\mathfrak n_1) \times \langle \mathfrak m_1 \rangle \times \zeta(\mathfrak n_2) \times \langle \mathfrak m_2 \rangle \times \zeta(\mathfrak n_3+\ldots +\mathfrak n_r+\mathfrak m_3+\mathfrak m_r+\mathfrak n_{r+1}) \\
\hookrightarrow 	& \zeta(\mathfrak n_1) \times \langle \mathfrak m_1 \rangle \times \zeta(\mathfrak n_2+\ldots +\mathfrak n_r+\mathfrak m_2+\mathfrak m_r+\mathfrak n_{r+1}) \\
\hookrightarrow & \zeta(\mathfrak n_1 +\ldots \mathfrak n_{r+1}+\mathfrak m_1+\ldots +\mathfrak m_r)=\zeta(\mathfrak m)
\end{align*}

For simplicity, define, for $k\geq 0$,
\begin{align*}
  \lambda_k & =\zeta(\mathfrak n_1) \times \langle \mathfrak m_1 \rangle \times \ldots \times \zeta(\mathfrak n_{k})\times \langle \mathfrak m_{k} \rangle \times \zeta(\mathfrak n_{k+1}+\ldots +\mathfrak n_{r+1} +\mathfrak m_{k+1}+\ldots +\mathfrak m_{r}) 
\end{align*}

\noindent
{\bf Step 3: Approximation on right derivatives.}
Then for each $k$, we again have an embedding:
\[  \pi' \hookrightarrow \nu^{1/2}\cdot \pi^{(i)}  \hookrightarrow \nu^{1/2} \cdot \lambda_k^{(i)} .
\]
As $\lambda_k$ is a product of representations, we again have a filtration on $\lambda_k^{(i)}$. This gives that $\pi'$ embeds to a successive quotient of the filtration:
\[  \pi' \hookrightarrow \nu^{1/2}\cdot( \zeta(\mathfrak n_1)^{(p^k_1)} \times \langle \mathfrak m_1 \rangle^{(q^k_1)}  \times \ldots \times \zeta(\mathfrak n_{k})^{(p^k_{k})} \times \langle \mathfrak m_k \rangle^{(q^k_k)} \times \zeta(\mathfrak o_{k+1})^{(s^k)})
\]
with $p^k_1+\ldots +p^k_k+q_1^k+\ldots +q^k_k+s^k=i$, 
\[\mathfrak o_{k+1} = \mathfrak n_{k+1}+\ldots +\mathfrak n_{r+1}+\mathfrak m_{k+1}+\ldots +\mathfrak m_r .\] 

\begin{lemma} \label{lem exists level speh}
\begin{enumerate}
\item $i_k= 0$ for some $\Delta_k$ in $\mathfrak m$ with relative length $L$ (see the choice of $L$ in Step 1). 
\item Following above notations, there exists a $k' \geq 1$ such that at least one of $q^{k'}_l$ is not equal to the level of $\langle \mathfrak m_l \rangle$. 
\end{enumerate}
\end{lemma}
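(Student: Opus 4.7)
The plan is to handle the two assertions by separating the combinatorial content from the symmetry. For Part (1), the symmetry of Theorem \ref{lem same quo derivative} in left and right derivatives allows a reduction. Write $L_i$ for the largest relative length among segments $\Delta_k$ with $i'_k=0$, and $L_j$ for the analogous quantity among segments with $j'_k=0$. Since $i$ is below the level of $\pi$, Step 1 guarantees both sets are non-empty, and by definition $L=\max(L_i,L_j)$. By replacing $\pi$ with $\theta_n(\pi)$ (which swaps left and right derivatives by the definition in (\ref{eqn left derivative})) if necessary, we may assume $L=L_i$. Then Part (1) is immediate: take $\Delta_{k^*}$ to be any segment realising $L_i$.

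For Part (2), I would track which Speh component $\mathfrak m_{l^*}\in\{\mathfrak m_1,\ldots,\mathfrak m_r\}$ contains the segment $\Delta_{k^*}$ supplied by Part (1); such an $l^*$ exists because $L(\mathfrak m_{l^*})=L$ equals the relative length of $\Delta_{k^*}$ and Proposition \ref{lem speh approx} forces $\Delta_{k^*}$ to sit in a Speh piece of maximal length. Set $k'=l^*$, and consider the filtration layer of $\lambda_{l^*}^{(i)}$ into which $\pi'$ embeds:
\[
\pi' \hookrightarrow \nu^{1/2}\bigl(\zeta(\mathfrak n_1)^{(p^{l^*}_1)}\times\langle\mathfrak m_1\rangle^{(q^{l^*}_1)}\times\cdots\times\langle\mathfrak m_{l^*}\rangle^{(q^{l^*}_{l^*})}\times\zeta(\mathfrak o_{l^*+1})^{(s^{l^*})}\bigr).
\]
The goal is to show that $q^{l^*}_l$ is not the level of $\langle\mathfrak m_l\rangle$ for at least one $l\le l^*$. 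Suppose for contradiction that every $q^{l^*}_l$ equals the level of $\langle\mathfrak m_l\rangle$; then by (\ref{eqn left deriv sp}) each factor $\langle\mathfrak m_l\rangle^{(q^{l^*}_l)}$ is a Speh representation whose segments all have relative length $L-1$. Consequently no composition factor of the right-hand side can carry a Zelevinsky segment of length $L$ arising from the $\mathfrak m_l$ piece; any such length-$L$ segment must come from $\mathfrak n_j$ or $\mathfrak o_{l^*+1}$, or be produced by a chain of union--intersection operations out of Lemma \ref{lem segment numbers 2}.

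I would then compare this with the Zelevinsky identity $\pi'\cong \nu^{1/2}\langle\mathfrak m^{(i'_1,\ldots,i'_r)}\rangle$ from Step 1: because $i'_{k^*}=0$, the multisegment $\mathfrak m^{(i'_1,\ldots,i'_r)}$ still contains $\Delta_{k^*}$, so $N(\mathfrak m^{(i'_1,\ldots,i'_r)},L)\ge 1$. On the other hand, every union--intersection ancestor of $\mathfrak m^{(i'_1,\ldots,i'_r)}$ reachable from the right-hand-side multisegment has $N(\cdot,L)=0$ by the hypothesis (since the Speh pieces contribute only segments of length $\le L-1$, the pieces $\mathfrak n_j$ and $\mathfrak o_{l^*+1}$ contain no segments of length $\ge L$ by the maximality built into the choice of the $\mathfrak m_l$ in Step 2, and union--intersection can only enlarge lengths, giving at most the wrong cuspidal lines). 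Lemma \ref{lem segment numbers 2} applied to the chain from the RHS multisegment down to $\mathfrak m^{(i'_1,\ldots,i'_r)}$ then forces $N(\mathfrak m^{(i'_1,\ldots,i'_r)},L)=0$, contradicting the presence of $\Delta_{k^*}$.

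The main obstacle will be the bookkeeping in the second paragraph above: precisely matching the Speh-based filtration (indexed by the tuple $(p^{l^*}_\bullet,q^{l^*}_\bullet,s^{l^*})$) with the Zelevinsky filtration (indexed by $(i'_1,\ldots,i'_r)$), and certifying that the union--intersection operations from Lemma \ref{lem segment numbers 2} genuinely preserve or strictly shorten the count $N(\cdot,L)$ when the hypothesis $q^{l^*}_l=\text{level}(\langle\mathfrak m_l\rangle)$ is in force. The properties $(\Diamond)$ and $(\Diamond\Diamond)$ chosen for the labelling of the $\mathfrak m_l$ and the construction of the $\mathfrak n_j$ via Proposition \ref{lem speh approx} should be exactly what is needed to control these intermediate steps.
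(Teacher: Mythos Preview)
Your approach to Part~(1) via a $\theta_n$-swap is not the paper's argument and does not actually prove the assertion as stated: after the swap you have only reduced to the case $L=L_i$, not shown that the original $i'_k$ vanishes for a length-$L$ segment. The paper instead observes that $\nu^{1/2}\mathfrak m^{(i'_1,\ldots,i'_r)}$ and $\nu^{-1/2}\,{}^{(j'_1,\ldots,j'_r)}\mathfrak m$ are the \emph{same} multisegment (both give $\pi'$), hence $N(\mathfrak m^{(i'_1,\ldots,i'_r)},L)=N({}^{(j'_1,\ldots,j'_r)}\mathfrak m,L)$; since by the choice of $L$ at least one of these exceeds $N(\mathfrak m,L+1)$, both do, and this forces (1) directly.

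For Part~(2) there are genuine gaps. First, $L$ is \emph{not} the maximal relative length occurring in $\mathfrak m$; it is the maximum only among those $\Delta_k$ with $i'_k=0$ or $j'_k=0$. So the pieces $\mathfrak n_j$ can and typically do contain segments of length strictly greater than $L$, contrary to what you assert. Second, your choice $k'=l^*$ is too small: the tail $\mathfrak o_{l^*+1}$ still contains the remaining Speh blocks $\mathfrak m_{l^*+1},\ldots,\mathfrak m_r$, each built from length-$L$ segments, so $\zeta(\mathfrak o_{l^*+1})^{(s^{l^*})}$ can contribute length-$L$ segments regardless of the values of $q^{l^*}_1,\ldots,q^{l^*}_{l^*}$. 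The paper instead takes $k'=r$, so that \emph{all} length-$L$ Speh blocks are separated out. Third, and most importantly, you are missing the decisive use of Lemma~\ref{lem speh multisegment 2}: once $k'=r$ and every $q^r_l$ is the level, each $\langle\mathfrak m_l\rangle^{(q^r_l)}$ is again a Speh representation, and Lemma~\ref{lem speh multisegment 2} upgrades the embedding to an \emph{exact} identification $\pi'\cong \nu^{1/2}\langle \widetilde{\mathfrak n}_1+\mathfrak m_1^-+\cdots+\mathfrak m_r^-+\widetilde{\mathfrak n}_{r+1}\rangle$. One then compares this multisegment with $\mathfrak m^{(i'_1,\ldots,i'_r)}$ level by level to force all length-$(\geq L{+}1)$ segments in each $\mathfrak n_a$ to be truncated, and finally counts $N(\cdot,L)$ on both sides to reach a contradiction with (1). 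Your union--intersection bookkeeping via Lemma~\ref{lem segment numbers 2} cannot substitute for this: without the exact multisegment identity you only have subquotient information, and moreover union--intersection of shorter segments can produce a segment of length $L$ (two linked length-$(L{-}1)$ segments can union to length $L$), so the claim $N(\cdot,L)=0$ does not propagate.
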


\begin{remark}
Similarly, we have $j_k=0$ for some $\Delta_k$ in $\mathfrak m$ with relative length $L$.
\end{remark}

\begin{proof}
We first assume (1) to prove (2). It suffices to show that when $k'=r$, at  least one of $q^{k'}_l$ is not equal to the level of $\langle \mathfrak m_l \rangle$. Suppose not. Let $\langle \mathfrak m_i \rangle^-$ be the highest derivative of $\langle \mathfrak m_i \rangle$. Then we obtain an embedding:
\[  \pi' \hookrightarrow \nu^{1/2}\cdot (\zeta(\mathfrak n_1)^{(p_1^k)}\times \langle \mathfrak m_1 \rangle^- \times \ldots \times \zeta(\mathfrak n_r)^{(p_r^k)}\times \langle \mathfrak m_r \rangle^- \times \zeta(\mathfrak n_{r+1})^{(s_r)}) .
\]
Now set $\mathfrak m_i^-$ to be the multisegment such that $\langle \mathfrak m_i^- \rangle \cong \langle \mathfrak m_i \rangle^-$. By definitions, $\mathfrak m_i^-$ is still a Speh multisegment. Now, by Lemma \ref{lem filt standard}, we have that
\[  \pi' \hookrightarrow \nu^{1/2}\cdot (\zeta(\widetilde{\mathfrak n}_1)\times \langle \mathfrak m_1^- \rangle \times \ldots \times \zeta(\widetilde{\mathfrak n}_r)\times \langle \mathfrak m_r^- \rangle \times \zeta(\widetilde{\mathfrak n}_{r+1})) ,
\]
where $\widetilde{\mathfrak n}_a \in \mathfrak n_a^{(p_i^k)}$. From the construction of $\mathfrak n_a$, we can check that those $\widetilde{\mathfrak n}_i$ satisfies the conditions in Lemma \ref{lem speh multisegment 2}. Hence, by Lemma \ref{lem speh multisegment 2}, 
\[  \pi' \cong \nu^{1/2} \cdot \langle \widetilde{\mathfrak n}_1+\mathfrak m_1^- +\ldots +\widetilde{\mathfrak n}_k+\mathfrak m_k^-+\widetilde{\mathfrak n}_{k+1} \rangle .
\]

Now we write $\mathfrak n_a=\left\{ \Delta_{a,1}, \ldots, \Delta_{a,r(a)} \right\}$ for each $a$. Then we have that  
\[ \widetilde{\mathfrak n}_a = \left\{ \Delta_{a,1}^{(p_{a,1})}, \ldots, \Delta_{a,r(a)}^{(p_{a,r(a)})} \right\} ,
\]
where each $p_{a,k}=0$ or $n(\rho)$. \\

\noindent
{\it Claim:} For any segment $\Delta_{a,k}$ with relative length at least $L+1$, $p_{a,k}=n(\rho)$. \\
{\it Proof of claim:} Recall that we also have $\pi' \cong \nu^{1/2} \cdot \langle \mathfrak m^{(i_1, \ldots, i_r)} \rangle$ in Step 1. Hence,
\begin{align} \label{eqn same multisegment}  \mathfrak m^{(i_1, \ldots, i_r)} = \widetilde{\mathfrak n}_1+\mathfrak m_1^- +\ldots +\widetilde{\mathfrak n}_k+\mathfrak m_k^-+\widetilde{\mathfrak n}_{k+1} .
\end{align}
Let $L^{**}$ be the largest relative length among all the relative length of segments in $\mathfrak m$. If $L^{**}=L$, then there is nothing to prove (for the claim). We assume $L^{**}>L$. In such case, there is no segment in $\mathfrak m^{(i_1, \ldots, i_r)}$ has relative length $L^{**}$. This implies that all segments $\Delta_{a,k}$ with relative length $L^{**}$ have $p_{a,k}=n(\rho)$. With the use of (\ref{eqn same multisegment}) and
\[ \mathfrak m=\mathfrak n_1 +\mathfrak m_1^- +\ldots +\mathfrak n_k+\mathfrak m_k^-+\mathfrak n_{k+1},\] we can proceed to the length $L^{**}-1$ in a similar fashion. Inductively, we obtain the claim. \\

Now we go back to the proof of the lemma. By using the claim (and the definitions of $\mathfrak m_i$), 
\[ N(\widetilde{\mathfrak n}_1 +\mathfrak m_1^- +\ldots +\widetilde{\mathfrak n}_k+\mathfrak m_k^-+\widetilde{\mathfrak n}_{k+1},L) = N(\mathfrak m, L+1). \]
 On the other hand,
\[ N(\mathfrak m^{(i_1,\ldots, i_r)}, L)\geq N(\mathfrak m, L+1)+1 \]
 by using the hypothesis in the lemma. However, $\mathfrak m=\widetilde{\mathfrak n}_1 +\mathfrak m_1^- +\ldots +\widetilde{\mathfrak n}_k+\mathfrak m_k^-+\widetilde{\mathfrak n}_{k+1}$ and this gives a contradiction. This proves (2) modulo (1). \\

It remains to prove (1). From our choice of $L$, either 
\[N({}^{(j_1, \ldots, j_r)}\mathfrak m,L) \geq  N(\mathfrak m, L+1)+1 \]
or
\[ N(\mathfrak m^{(i_1, \ldots, i_r)}, L) \geq N(\mathfrak m, L+1)+1 .
 \]
however, since $\nu^{1/2}\cdot\mathfrak m^{(i_1, \ldots, i_r)} \cong \nu^{-1/2}\cdot {}^{(j_1, \ldots, j_r)}\mathfrak m$, we have that 
\[  N(\mathfrak m^{(i_1, \ldots, i_r)}, L)=N({}^{(j_1,\ldots, j_r)}\mathfrak m, L).
\]
Thus, we must have that $N(\mathfrak m^{(i_1, \ldots, i_r)}, L) \geq N(\mathfrak m, L+1)+1$. But this forces (1).

\end{proof}

Now let $\bar{k}$ be the smallest number such that at least one of $q^{\bar{k}}_l$ is not equal to the level of $\langle \mathfrak m_l \rangle$. Now we shall denote such $l$ by $l^*$, and so $\bar{k}\geq l^*$. 

 We use similar strategy to further consider the filtrations on each $\mathfrak n_a$ by geometric lemma. For that we write, for each $a$,
\[ \mathfrak n_a =\left\{ \Delta_{a,1} ,\ldots , \Delta_{a, r(a)} \right\} 
\]
and 
\[ \mathfrak o_{\bar{k}+1}=\left\{ \Delta_{\bar{k}+1,1 }, \ldots  , \Delta_{\bar{k}+1
, r(\bar{k}+1) }\right\} . \]
Here $r(a)$ is an index counting the number of segments in $\mathfrak n_a$ depending on $a$.

Then we again have an embedding
\[  \pi' \hookrightarrow \nu^{1/2} \cdot(\zeta(\widetilde{\mathfrak n}_1) \times \langle \mathfrak m_1 \rangle^{(q_1^{\bar{k}})} \times \ldots \times \zeta(\widetilde{\mathfrak n}_{\bar{k}}) \times \langle \mathfrak m_{\bar k} \rangle^{(q_{\bar k}^{\bar{k}})} \times \zeta(\mathfrak o_{\bar k+1})^{(s_{\bar k})} ),
\]
where, for $a=1,\ldots, \bar{k}$,
\[  \widetilde{\mathfrak n}_a = \left\{ \Delta_{a,1}^{(p_{a,1})}, \ldots,  \Delta_{a, r(a)}^{(p_{a,r(a)})} \right\} \in \mathfrak n_a^{(p_{a}^{\bar{k}})}
\]
with $p_{a,1}+\ldots +p_{a,r(a)}=p_a^{\bar{k}}$ and each $p_{a,b}=0$ or $n(\rho)$, and
\[  \widetilde{\mathfrak o}_{\bar{k}+1}=\left\{ \Delta_{\bar{k}+1,1}^{(p_{\bar{k}+1,1})}, \ldots , \Delta_{\bar{k}+1,r(\bar{k}+1)}^{(p_{\bar k+1,r(\bar{k}+1)})} \right\} \in \mathfrak o_{\bar{k}+1}^{(s_{\bar{k}})}
\] 
with $p_{\bar{k}+1,1}+\ldots+p_{\bar{k}+1, r(\bar{k}+1)}=s_{\bar{k}}$, with each $p_{\bar{k}+1,b}=0$ or $n(\rho)$. \\

\noindent
{\bf Step 4: Computing some indexes $p_{a,b}$ and approximating the number of special segments by union-intersection operations.}
We claim (*) that if $\Delta_{a,b}$ has a relative length at least $L+1$, then $p_{a,b}=n(\rho)$. This is indeed similar to the proof of Lemma \ref{lem exists level speh} and the main difference is that we do not have an analogous form of (\ref{eqn same multisegment}) (since we cannot apply Lemma \ref{lem speh multisegment 2}). Instead, we can obtain this from Lemma \ref{lem segment numbers 2} (and its proof of Lemma \ref{lem segment numbers}).



Now from our choice of $\bar{k}$, we have that $\langle \mathfrak m_{l^*} \rangle^{(q^{\bar k}_{l^*})}$ is not a Speh representation. We can write 
\[ \mathfrak m_{l^*} =\left\{ \nu^{-x+1}\Delta^*, \ldots , \Delta^* \right\} \]
for a certain $\Delta^*$ with relative length $L$ and some $x$. By ($\Diamond$), $\nu \Delta^* \notin \mathfrak m_l$ for any $l \geq l^*$ from our labelling on $\mathfrak m_l$. Rephrasing the statement, we get the following statement: \\
\begin{center}
(**) {\it  $ \nu^{1/2}\Delta^* \notin \nu^{-1/2}\mathfrak m_l$ for any $l \geq l^*$ . }
\end{center}
Now with (*), we have that $\pi'$ is a composition factor of $\zeta(\mathfrak m''')$, where $\mathfrak m'''$ is some multisegment containing all the segments $\Delta^-$ (counting multiplicities) with $\Delta$ in $\mathfrak m$ that has relative length at least $L+1$ and containing an additional segment $\nu^{1/2}\Delta^*$, and we shall call the former segments (i.e. the segment in the form of $\Delta^-$) to be {\it special} for convenience.

We can apply the intersection-union process to obtain the Zelevinsky multisegment for $\pi'$ from $\mathfrak m'''$. However in each step of the process, any one of the two segments involved in the intersection-union cannot be special. Otherwise, by Lemma \ref{lem segment numbers 2}, there exists $l \geq L+1$ such that the number of segments in the resulting multisegment with relative length $l$ is more than the number of segments in $\mathfrak m^{(i_1,\ldots, i_r)}$. (Here we also used that $L(\mathfrak m^{(i_1,\ldots, i_r)}, l)=L(\mathfrak m''',l)$ for any $l\geq L+1$.) Hence we obtain the following:
\begin{center}
 (***) {\it  the number of segments $\Delta$ in $\nu^{1/2}\cdot \mathfrak m^{(i_1,\ldots, i_r)}$ such that $\nu^{1/2}\Delta^* \subset \Delta$ is at least equal to {\bf one plus} the number special segments in $\mathfrak m'''$ satisfying the same property } \\
\end{center}

\ \\

\noindent
{\bf Step 5: Approximation on left derivatives.} Now we come to the final part of the proof. We now consider $\nu^{-1/2}\cdot {}^{(i)}\pi$. Following the strategy for right derivatives, we have that for each $k$
\[  \pi' \hookrightarrow \nu^{-1/2}\cdot{}^{(i)}\pi \hookrightarrow \nu^{-1/2}\cdot {}^{(i)}\lambda_k .
\]
This gives that $\pi'$ 
\[  \pi' \hookrightarrow \nu^{-1/2} \cdot ({}^{(u^k_1)}\zeta(\mathfrak n_1) \times {}^{(v^k_1)}  \langle \mathfrak m_1 \rangle  \times \ldots \times {}^{(u^k_{k})}\zeta(\mathfrak n_{k}) \times  {}^{(v^k_{k})}\zeta(\mathfrak m_{k})\times {}^{(w^{k})}\zeta(\mathfrak o_{k+1}))
\]
with $u^k_1+\ldots +u^k_k+v^k_1+\ldots +v^k_k+w^k=i$. Let $\widetilde{k}$ be the smallest integer such that at least one of $v^{\widetilde{k}}_l$ is not equal to the level of $\langle \mathfrak m_l \rangle$. (For the existence of $\widetilde{k}$, see Lemma \ref{lem exists level speh}.) We shall denote such $l$ by $\widetilde{l}$. \\

\noindent
{\bf Step 6: the case that $\widetilde{k} \geq \bar{k}$ and determining the number of segments.}


We firstly consider the case that $\widetilde{k} \geq \bar{k}$. Similar to left derivatives, we have that
\[  \pi' \hookrightarrow \nu^{-1/2} \cdot \zeta(\widehat{\mathfrak n}_1) \times {}^{(v_1)}\langle \mathfrak m_1 \rangle \times \ldots \times \zeta(\widehat{\mathfrak n}_{\bar{k}-1}) \times {}^{(v_{\bar k-1})}\langle \mathfrak m_{\bar k-1} \rangle \times {}^{(w_{\bar k-1})} \zeta(\widehat{\mathfrak o}_{\bar k}),
\]
where 
\[  \widehat{\mathfrak n}_a = \left\{ {}^{(u_{a,1})}\Delta_{a,1}, \ldots,  {}^{(u_{a,r(a)})}\Delta_{a, r(a)} \right\} .
\]
with $u_{a,1}+\ldots +u_{a,r(a)}=u_a$ and each $u_{a,b}=0$ or $n(\rho)$. Since we assume that $\widetilde{k}\geq \bar{k}$, we have that ${}^{(v_l)}\langle \mathfrak m_l \rangle$ is a highest derivative for any $l\leq \bar{k}-1$ and so is a Speh representation, and we can apply Lemma \ref{lem speh multisegment 2}(1). Hence the unique subrepresentation of 
\[\nu^{-1/2}\cdot \zeta(\widehat{\mathfrak n}_1) \times {}^{(v_1)}\langle \mathfrak m_1 \rangle \times \ldots \times \zeta(\widehat{\mathfrak n}_{\bar{k}-1}) \times {}^{(v_{\bar k-1})}\langle \mathfrak m_{\bar k-1} \rangle  \times \zeta(\widehat{\mathfrak o}_{\bar{k}})\]
is isomorphic to
\begin{equation}\label{eqn isomorphism} \nu^{-1/2} \cdot \langle \widehat{\mathfrak n}_1+\widehat{\mathfrak m}_1+\ldots + \widetilde{\mathfrak n}_{\bar k-1}+\widehat{\mathfrak m}_{\bar k-1}+\widehat{\mathfrak o}_{\bar k} \rangle ,
\end{equation}
where $\langle \widehat{\mathfrak m}_l \rangle ={}^{(v_l)}\langle \mathfrak m_l \rangle$. Similar to (*) for right derivatives (but the proof could be easier here), we obtain the analogous statement for those $\widehat{\mathfrak n}_a$.  Now if 
\[\Delta \in \nu^{-1/2}( \widehat{\mathfrak n}_1+\widehat{\mathfrak m}_1+\ldots + \widetilde{\mathfrak n}_{\bar k-1}+\widehat{\mathfrak m}_{\bar k-1}+\widehat{\mathfrak o}_k )\]
such that $\Delta=\nu^{1/2}\Delta^*$, then we must have that $\Delta =\nu^{-1/2}\cdot {}^-\Delta_0$ or $\nu^{-1/2}\cdot \Delta_0$ for some segment $\Delta_0$ in $\mathfrak m$. For the latter case to happen, we must have that $\Delta_0 \in \mathfrak m_l \subset \mathfrak o_{\bar{k}}$ with $l \geq \bar{k}\geq l^* $, but, by (**), the possibility $\Delta=\nu^{-1/2}\Delta_0$ cannot happen. Thus we must have that $\Delta$ is a special in the same sense as the discussion in right derivatives. This concludes the following:
\begin{center}
(****) {\it The number of segments $\Delta$ in $\nu^{-1/2}( \widehat{\mathfrak n}_1+\widehat{\mathfrak m}_1+\ldots + \widetilde{\mathfrak n}_{\bar k-1}+\widehat{\mathfrak m}_{\bar k-1}+\widehat{\mathfrak o}_{\bar k})$ with the property that $\nu^{1/2}\Delta^* \subset \Delta$ is equal to the number of special segments satisfying the same property. }
\end{center}
Now the above statement contradicts to (***) since both Zelevinsky multisegments give an irreducible representation isomorphic to $\pi'$. \\

\begin{remark}
The isomorphism (\ref{eqn isomorphism}) is a key to obtain the equality in (****), in comparison with the inequality in (***).
\end{remark}

\noindent
{\bf Step 7: the case that $\bar{k} >\widetilde{k}$.} Now the way to get contradiction in the case $\bar{k} \geq \widetilde{k}$ is similar by interchanging the role of left and right derivatives. We remark that to prove the analogue of (**), one uses ($\diamond$). And  to obtain the similar isomorphism as (\ref{eqn isomorphism}), one needs to use Lemma \ref{lem speh multisegment 2}(2). We can argue similarly to get an analogue of (***) and (****). Hence the only possibility that $\nu^{1/2}\cdot \pi^{(i)}$ and so $\nu^{-1/2}\cdot {}^{(i)}\pi$ have an isomorphic irreducible quotient only if $i$ is the level for $\pi$. This completes the proof of Theorem \ref{ss proof thm deri}.

\subsection{Branching law in opposite direction}

Here is another consequence of the asymmetric property on the Hom-branching law in another direction:

\begin{corollary} \label{cor branching law direction}
Let $\pi' \in \mathrm{Irr}(G_n)$. Let $\pi \in \mathrm{Irr}(G_{n+1})$.
\begin{enumerate}
\item Suppose $\pi$ is not $1$-dimensional. Then 
\[ \mathrm{Hom}_{G_n}(\pi', \pi|_{G_n}) = 0 .
\]
\item Suppose $\pi$ is $1$-dimensional. Then 
\[ \mathrm{Hom}_{G_n}(\pi', \pi|_{G_n}) \neq 0
\]
if and only if $\pi'$ is also $1$-dimensional and $\pi'=\pi|_{G_n}$.
\end{enumerate}
\end{corollary}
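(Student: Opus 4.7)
The plan is to trace the embedding $\pi' \hookrightarrow \pi|_{G_n}$ through both the left and right Bernstein-Zelevinsky filtrations and apply the asymmetric property of derivatives (Theorem \ref{lem same quo derivative}), in the same spirit as the proof of Theorem \ref{thm indecomp irred}. Let $\iota : \pi' \hookrightarrow \pi|_{G_n}$ be a non-zero embedding. First I would pick the largest $i$ with $\iota(\pi') \subseteq \pi_i$ in the right Bernstein-Zelevinsky filtration of $\pi$; since $\pi'$ is irreducible, $\iota(\pi') \cap \pi_{i+1} = 0$, so $\pi'$ embeds into the layer $\pi^{[i+1]} \times \Pi_i$. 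Applying the same argument to the left Bernstein-Zelevinsky filtration produces an embedding $\pi' \hookrightarrow \Pi_j \times {}^{[j+1]}\pi$ for some $j$. Lemma \ref{lem bz filtration} forces $i = j$, and Proposition \ref{prop intersect same} then gives a common non-zero submodule of the admissible derivatives $\pi^{[i+1]}$ and ${}^{[i+1]}\pi$.

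By Theorem \ref{lem same quo derivative}, such a common submodule can exist only when $i+1$ equals the level $i^*$ of $\pi$. Hence $\pi' \hookrightarrow \pi^{[i^*]} \times \Pi_{i^*-1}$, where $\pi^{[i^*]}$ is irreducible as the highest derivative. Next I would apply second Frobenius adjointness for the parabolic with Levi $G_{n-i^*+1} \times G_{i^*-1}$ to obtain a non-zero $G_{n-i^*+1} \times G_{i^*-1}$-equivariant map $\pi'_{N^-} \to \pi^{[i^*]} \boxtimes \Pi_{i^*-1}$. Since $\pi^{[i^*]}$ is irreducible, the external tensor product is $\pi^{[i^*]}$-isotypic as a $G_{n-i^*+1}$-representation, so any $G_{n-i^*+1} \times G_{i^*-1}$-submodule takes the form $\pi^{[i^*]} \boxtimes W$ for a $G_{i^*-1}$-submodule $W \subseteq \Pi_{i^*-1}$. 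The image of the map is a quotient of the admissible representation $\pi'_{N^-}$, so $W$ is a non-zero admissible submodule of the Gelfand-Graev representation $\Pi_{i^*-1}$, which Lemma \ref{lem zero hom gg} rules out whenever $i^* - 1 \geq 1$. Therefore $i^* = 1$.

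The Zelevinsky classification then forces $\pi$ to be one-dimensional: writing $\pi \cong \langle \mathfrak m \rangle$ with $\mathfrak m = \{\Delta_1, \ldots, \Delta_r\}$, the level equals $\sum_k n(b(\Delta_k))$, so $i^* = 1$ forces $r=1$ and $b(\Delta_1)$ to be a character of $G_1$, whence $\pi$ is a character. Then $\pi|_{G_n}$ is itself a character of $G_n$, and its unique irreducible submodule is $\pi|_{G_n}$ itself, yielding both (1) and the ``only if'' direction of (2); the converse in (2) is immediate. The main obstacle is the passage through Theorem \ref{lem same quo derivative} to pin down $i + 1 = i^*$; once the asymmetric property is in hand, the remaining steps reduce to straightforward applications of results from Sections \ref{sec submod structure} and \ref{bz der generic} together with the absence of admissible submodules of the Gelfand-Graev representation.
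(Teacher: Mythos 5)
Your proof is correct, but it follows a genuinely different route than the paper's. The paper argues more directly: it shows that $\mathrm{Hom}_{G_n}(\pi',\,\text{layer }i)=0$ for all $i\geq 2$ in both the left and right Bernstein--Zelevinsky filtrations, simply because $\pi'_{N}$ is admissible and $\pi^{[i]}\boxtimes\Pi_{i-1}$ has no admissible $G_{n+1-i}\times G_{i-1}$-submodule (Lemma~\ref{lem zero hom gg}); this forces any embedding of $\pi'$ to land in the \emph{top} layers $\pi^{[1]}$ and ${}^{[1]}\pi$, which Theorem~\ref{lem same quo derivative} forbids since the level of $\pi$ is $>1$. You instead invoke the heavier machinery of Section~\ref{sec submod structure} (Lemma~\ref{lem bz filtration} and Proposition~\ref{prop intersect same}, the tools built for Theorem~\ref{thm indecomp irred}) to pin down the layer index via the asymmetry theorem first, concluding $i+1=i^*$, and only then apply the Gelfand--Graev admissibility argument at the level-$i^*$ layer to force $i^*=1$. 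Both routes are valid; the paper's avoids Section~\ref{sec submod structure} entirely and is shorter, while yours makes the structural parallel with the indecomposability proof explicit. Two minor inaccuracies in your write-up: you cite \emph{second} Frobenius adjointness and use $\pi'_{N^-}$, but since the parabolically induced module sits in the \emph{second} slot of the Hom, it is ordinary Frobenius reciprocity that applies and produces $\pi'_{N}$; this changes nothing because both Jacquet modules of the irreducible $\pi'$ are admissible, which is all the argument needs. The isotypic decomposition step (every $G_{n+1-i^*}\times G_{i^*-1}$-submodule of the external tensor $\pi^{[i^*]}\boxtimes\Pi_{i^*-1}$ has the form $\pi^{[i^*]}\boxtimes W$) is correct and relies on Schur's lemma plus a density argument for the irreducible admissible factor.
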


\begin{proof}
The second statement is trivial. We consider the first one. Since $\pi$ is not one-dimensional, the level of $\pi$ is not $1$ by Zelevinsky classification. By Theorem \ref{lem same quo derivative} and Proposition \ref{prop multi free}, $ \pi^{[1]}$ and $ {}^{[1]}\pi$ have no common irreducible submodule if $\pi^{(1)} \neq 0$ and ${}^{(1)}\pi \neq 0$. Then at least one of
\[  \mathrm{Hom}_{G_n}(\pi', \pi^{[1]})=0 \quad \mbox{or} \quad \mathrm{Hom}_{G_n}(\pi',  {}^{[1]}\pi)=0 .
\]
On the other hand, we have that for all $i \geq 2$, by Frobenius reciprocity,
\[  \mathrm{Hom}_{G_n}(\pi', \mathrm{ind}_{R_{n-i+1}}^{G_n} \pi^{[i]} \boxtimes \psi_{i-1}) =\mathrm{Hom}_{G_{n+1-i}\times G_{i-1}}( \pi'_{N_{i-1}},  \pi^{[i]} \boxtimes \Pi_{i-1}) ,
\]
and
\[  \mathrm{Hom}_{G_n}(\pi', \mathrm{ind}_{R_{n-i+1}^-}^{G_n}  {}^{[i]}\pi \boxtimes \psi_{i-1}) =\mathrm{Hom}_{G_{i-1} \times G_{n+1-i} }(\pi'_{N_{n+1-i}}, \Pi_{i-1}\boxtimes  {}^{[i]}\pi) 
\]
However, in the last two equalities, the Hom in the right hand side is always zero (see Lemma \ref{lem zero hom gg}). Now applying a Bernstein-Zelevinsky filtration implies the corollary.


\end{proof}

\section{Preserving indecomposability of Bernstein-Zelevinsky induction} \label{s preserve indecomp}

We shall use  the following criteria of indecomposable representations:

\begin{lemma} \label{lem algebra indcompose crit}
Let $G$ be a connected reductive $p$-adic group. Let $\pi$ be a smooth representation of $G$. The only idempotents in $\mathrm{End}_G(\pi)$ are $0$ and the identity (up to automorphism) if and only if $\pi$ is indecomposable.
\end{lemma}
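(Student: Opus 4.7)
The plan is to give the standard idempotent-splitting argument, which works verbatim for smooth representations since an idempotent $e \in \mathrm{End}_G(\pi)$ automatically yields a $G$-equivariant direct sum decomposition $\pi = e(\pi) \oplus (1-e)(\pi)$.

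For the ``only if'' direction, I would argue by contrapositive. Suppose $\pi$ decomposes nontrivially as $\pi = \pi_1 \oplus \pi_2$ with both summands nonzero. Then the projection $e \colon \pi \twoheadrightarrow \pi_1 \hookrightarrow \pi$ is $G$-equivariant, lies in $\mathrm{End}_G(\pi)$, and satisfies $e^2 = e$, $e \neq 0$, $e \neq \mathrm{id}$. This produces a nontrivial idempotent, contradicting the hypothesis.

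For the ``if'' direction, let $e \in \mathrm{End}_G(\pi)$ be an idempotent. Since $1 - e$ is also an idempotent and $e(1-e) = 0 = (1-e)e$, one gets a $G$-stable direct sum decomposition $\pi = e(\pi) \oplus (1-e)(\pi)$: any $v \in \pi$ is written as $v = e(v) + (1-e)(v)$, and the intersection $e(\pi) \cap (1-e)(\pi)$ is zero because applying $e$ to an element of the intersection gives $0$ on one presentation and the element itself on the other. Indecomposability of $\pi$ forces one of the two summands to vanish, which translates to $e = 0$ or $e = \mathrm{id}$.

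No obstacle is expected; this is a purely formal module-theoretic fact, and nothing special about smooth representations of $p$-adic groups is used beyond the fact that $\mathrm{End}_G(\pi)$ is a ring and $G$-submodules are closed under the obvious operations. The phrase ``up to automorphism'' in the statement is harmless and simply acknowledges that one could conjugate $e$ by a unit in $\mathrm{End}_G(\pi)$ without changing the conclusion.
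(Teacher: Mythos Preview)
Your proof is correct and follows exactly the same approach as the paper: the paper also argues that a nontrivial direct summand yields a non-identity idempotent projection, and conversely that a non-identity idempotent $\sigma$ gives $\pi \cong \mathrm{im}(\sigma) \oplus \mathrm{im}(1-\sigma)$. Your version is simply more explicit in verifying the directness of the sum.
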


\begin{proof}
If $\pi$ is not indecomposable, then any projection to a direct summand gives a non-identity idempotent. On the other hand, if $\sigma \in \mathrm{End}_G(\pi)$ is a non-identity idempotent (i.e. $\sigma(\pi) \neq \pi$), then $\pi \cong \mathrm{im}(\sigma) \oplus \mathrm{im}(1-\sigma)$.
\end{proof}

\begin{lemma} \label{lem isomorphic to center}
Set $\Pi=\Pi_k$. Let $\mathfrak s \in \mathfrak B(G_k)$. Then ${}^{(k)}(\Pi_{\mathfrak s}) \cong \mathfrak Z_{\mathfrak s}$, where $\mathfrak Z_{\mathfrak s}$ is the Bernstein center of $\mathfrak R_{\mathfrak s}(G_k)$.
\end{lemma}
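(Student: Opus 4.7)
The plan is to transfer the computation to the affine Hecke algebra side via Theorem~\ref{thm affine}, which realizes $\Pi_{\mathfrak{s}}$ as the antispherical module $\mathcal{H}_{\mathfrak{s}} \otimes_{\mathcal{H}_{W,\mathfrak{s}}} \mathrm{sgn}_{\mathfrak{s}}$. This module, restricted to $\mathcal{A}_{\mathfrak{s}}$, is isomorphic to $\mathcal{A}_{\mathfrak{s}}$ itself as a left $\mathcal{A}_{\mathfrak{s}}$-module. Under the equivalence (\ref{eqn equi cat h modules}), the highest derivative ${}^{(k)}$ ought to correspond to the \emph{sgn-isotypic} functor $M \mapsto \mathrm{Hom}_{\mathcal{H}_{W,\mathfrak{s}}}(\mathrm{sgn}_{\mathfrak{s}}, M)$ on $\mathcal{H}_{\mathfrak{s}}$-modules: on irreducibles, both functors yield a one-dimensional output precisely on generic representations, by Whittaker multiplicity-one together with the Zelevinsky classification of irreducible generic representations. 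The first step of the plan is to pin down this functorial identification.

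Granting it, the core computation is to determine the $\mathrm{sgn}_{\mathfrak{s}}$-isotypic component of $\mathcal{A}_{\mathfrak{s}}$ under the induced left $\mathcal{H}_{W,\mathfrak{s}}$-action. Using the Bernstein--Lusztig relation
\[ T_{s_i} \theta^{x} = \theta^{s_i(x)} T_{s_i} + (q-1) \tfrac{\theta^{x} - \theta^{s_i(x)}}{1 - \theta^{-\alpha_i}} \]
together with the defining relation $T_{s_i} \cdot (1 \otimes 1) = -(1 \otimes 1)$ in the antispherical module, I would show that $T_{s_i}$ acts on $f \in \mathcal{A}_{\mathfrak{s}}$ as the Demazure--Lusztig-type operator $f \mapsto -s_i(f) + (q-1) D_i(f)$, where $D_i(f) = (f - s_i(f))/(1 - \theta_{i+1}\theta_i^{-1})$. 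The sgn-eigenvalue condition $T_{s_i} \cdot f = -f$ then simplifies to $D_i(f) \cdot (q - \theta_{i+1}\theta_i^{-1}) = 0$. Since $\mathcal{A}_{\mathfrak{s}}$ is an integral domain and $q - \theta_{i+1}\theta_i^{-1}$ is nonzero, this forces $s_i(f) = f$ for every simple reflection $s_i$, so the $\mathrm{sgn}_{\mathfrak{s}}$-isotypic space is precisely $\mathcal{A}_{\mathfrak{s}}^{W}$, which coincides with the center of $\mathcal{H}_{\mathfrak{s}}$.

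Finally, under the Bushnell--Kutzko equivalence, the center of $\mathcal{H}_{\mathfrak{s}}$ is identified with the Bernstein center $\mathfrak{Z}_{\mathfrak{s}}$ of $\mathfrak{R}_{\mathfrak{s}}(G_k)$, yielding the desired ${}^{(k)}\Pi_{\mathfrak{s}} \cong \mathfrak{Z}_{\mathfrak{s}}$. The hard part is the first step: promoting the numerical Whittaker multiplicity-one match on simple objects to a genuine isomorphism of functors on all of $\mathfrak{R}_{\mathfrak{s}}(G_k)$, since $\Pi_{\mathfrak{s}}$ is far from admissible. One route is a direct analysis of how the Bushnell--Kutzko type interacts with $(U_k^{-}, \psi_k')$-coinvariants; another is to exploit that $\Pi_{\mathfrak{s}}$ is a progenerator of $\mathfrak{R}_{\mathfrak{s}}(G_k)$ with $\mathrm{End}_{G_k}(\Pi_{\mathfrak{s}}) \cong \mathfrak{Z}_{\mathfrak{s}}$, from which the rank-one freeness of ${}^{(k)}\Pi_{\mathfrak{s}}$ over $\mathfrak{Z}_{\mathfrak{s}}$ can be extracted.
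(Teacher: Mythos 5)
Your Demazure--Lusztig computation in the middle is correct: unwinding the Bernstein relation in $\mathcal H_{\mathfrak s}\otimes_{\mathcal H_{W,\mathfrak s}}\mathrm{sgn}_{\mathfrak s}$ does give $T_{s_i}\cdot f=-s_i(f)+(q-1)D_i(f)$, the eigenvalue condition does reduce to $D_i(f)(q-\theta_{i+1}\theta_i^{-1})=0$, and since $\mathcal A_{\mathfrak s}$ is a domain this forces $f\in\mathcal A_{\mathfrak s}^W$, which is $\mathcal Z_{\mathfrak s}$ by Lusztig's description of the center. So the internal Hecke-algebra part of your argument is sound.

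The genuine gap is exactly the one you flag, and neither of your two suggested repairs as stated closes it. Agreement of two functors on irreducible objects gives no functorial isomorphism, and your second route is built on a false premise: $\Pi_{\mathfrak s}$ is projective but \emph{not} a generator of $\mathfrak R_{\mathfrak s}(G_k)$ (indeed $\mathrm{Hom}_{G_k}(\Pi_{\mathfrak s},\pi)=\mathrm{Wh}(\pi)$ vanishes on every non-generic $\pi$, and a progenerator with commutative endomorphism ring would force $\mathfrak R_{\mathfrak s}(G_k)$ to be equivalent to modules over a commutative ring, which is false in general). What actually closes the gap — and makes the whole Demazure--Lusztig computation unnecessary — is the \emph{second adjointness} for Bernstein--Zelevinsky functors \cite[Lemma~2.2]{CS18}: the top derivative ${}^{(k)}$ is right adjoint to $\mathrm{ind}_{U_k}^{G_k}(\cdot)\boxtimes\psi_k$, so
\[ {}^{(k)}(\Pi_{\mathfrak s})\;\cong\;\mathrm{Hom}_{\mathbb C}\bigl(\mathbb C,\ {}^{(k)}(\Pi_{\mathfrak s})\bigr)\;\cong\;\mathrm{Hom}_{G_k}(\Pi_k,\Pi_{\mathfrak s})\;\cong\;\mathrm{End}_{G_k}(\Pi_{\mathfrak s}), \]
the last step by the Bernstein decomposition, and then $\mathrm{End}_{G_k}(\Pi_{\mathfrak s})\cong\mathfrak Z_{\mathfrak s}$ is precisely Bushnell--Henniart \cite{BH}. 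This is the paper's proof. In effect your argument re-derives the Bushnell--Henniart isomorphism at the Hecke-algebra level after identifying $\mathrm{Hom}_{G_k}(\Pi_k,\Pi_{\mathfrak s})$ with $\mathrm{Hom}_{\mathcal H_{W,\mathfrak s}}(\mathrm{sgn}_{\mathfrak s},\mathcal A_{\mathfrak s})$ via Theorem~\ref{thm affine}; that is a valid alternative once the adjunction is invoked, but the adjunction is the missing ingredient, not the Whittaker multiplicity-one heuristic or the (incorrect) progenerator claim.
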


\begin{proof}
We have 
\[ {}^{(k)}(\Pi_{\mathfrak s}) \cong \mathrm{Hom}_{\mathbb C}(\mathbb C, {}^{(k)}(\Pi_{\mathfrak s})) \cong \mathrm{Hom}_{G_k}(\Pi, \Pi_{\mathfrak s}) \cong \mathrm{End}_{G_k}(\Pi_{\mathfrak s}) ,
\]
where the second isomorphism follows from the second adjointness theorem of \cite{CS18}, and the third isomorphism follows from the Bernstein decomposition. Now the lemma follows from that the final endomorphism is isomorphic to $\mathfrak Z_{\mathfrak s}$ by \cite{BH}.
\end{proof}

\begin{theorem} \label{thm indecomposable bz}
Let $\pi$ be an admissible indecomposable smooth representation of $G_{n-i}$. For each $\mathfrak{s} \in \mathfrak{B}(G_n)$, the Bernstein component $(\mathrm{ind}_{R_{n-i}}^{G_n} \pi \boxtimes \psi_i)_{\mathfrak{s}}$ is indecomposable. 
\end{theorem}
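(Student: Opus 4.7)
The plan is to analyze the endomorphism algebra of the Bernstein component and rule out non-trivial idempotents; the approach parallels that of Theorem \ref{thm strong indecomp}, but works at the level of the endomorphism ring rather than the submodule lattice.

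First, by Lemma \ref{lem unique bcomponent product}, when the Bernstein component is non-zero, $(\mathrm{ind}_{R_{n-i}}^{G_n} \pi \boxtimes \psi_i)_{\mathfrak s} \cong \pi \times (\Pi_i)_{\mathfrak t}$ for a unique $\mathfrak t \in \mathfrak B(G_i)$ (the zero case being trivial). By Lemma \ref{lem algebra indcompose crit}, it suffices to show $\mathrm{End}_{G_n}(\pi \times (\Pi_i)_{\mathfrak t})$ has no idempotents other than $0$ and the identity. The second adjointness \cite[Lemma 2.2]{CS18} yields
\[ \mathrm{End}_{G_n}(\pi \times (\Pi_i)_{\mathfrak t}) \cong \mathrm{Hom}_{G_{n-i}}(\pi, {}^{(i)}(\pi \times (\Pi_i)_{\mathfrak t})), \]
and the geometric lemma decomposes ${}^{(i)}(\pi \times (\Pi_i)_{\mathfrak t})$ via a Leibniz-type filtration with successive quotients ${}^{(a)}\pi \times {}^{(i-a)}(\Pi_i)_{\mathfrak t}$, $a = 0, \ldots, i$. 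The extremal layer at $a = 0$ is $\pi \otimes_{\mathbb C} {}^{(i)}(\Pi_i)_{\mathfrak t}$, identified with $\pi \otimes_{\mathbb C} \mathfrak Z_{\mathfrak t}$ via Lemma \ref{lem isomorphic to center}. This yields a natural subring embedding $\mathrm{End}_{G_{n-i}}(\pi) \otimes_{\mathbb C} \mathfrak Z_{\mathfrak t} \hookrightarrow \mathrm{End}_{G_n}(\pi \times (\Pi_i)_{\mathfrak t})$, in which the tensor factors act via $\mathrm{End}_{G_{n-i}}(\pi)$ on $\pi$ and via the Bernstein center on $(\Pi_i)_{\mathfrak t}$.

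The central step, and the principal obstacle, is to show that every idempotent of $\mathrm{End}_{G_n}(\pi \times (\Pi_i)_{\mathfrak t})$ is contained in this subring. The intuition is that the higher layers ($a \geq 1$) carry a Gelfand--Graev factor $\Pi_a$ with $a \geq 1$ (visible from Lemma \ref{lem restrict}), and, in the spirit of Lemma \ref{lem bz filtration}, homomorphisms from $\pi$ into such layers contribute ``correction terms'' whose iterated compositions land in progressively deeper layers of the filtration, and hence become eventually zero. A candidate idempotent whose component in $\mathrm{End}_{G_{n-i}}(\pi) \otimes \mathfrak Z_{\mathfrak t}$ vanishes is thereby forced to be nilpotent. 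Rigorously matching the composition in the endomorphism algebra, transferred via the second adjointness isomorphism, against the filtration is the main technical task.

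To conclude, $\mathrm{End}_{G_{n-i}}(\pi) \otimes_{\mathbb C} \mathfrak Z_{\mathfrak t}$ has only the trivial idempotents. Admissibility and indecomposability of $\pi$ imply (using finite length of admissible $G_{n-i}$-representations within a Bernstein block) that $\mathrm{End}_{G_{n-i}}(\pi)$ is a finite-dimensional local $\mathbb C$-algebra with nilpotent maximal ideal, while $\mathfrak Z_{\mathfrak t}$ is a connected commutative finitely generated $\mathbb C$-algebra (its spectrum being the quotient of a complex torus by a finite group). Reduction modulo the nilpotent maximal ideal of $\mathrm{End}_{G_{n-i}}(\pi)$ then identifies idempotents of the tensor product with those of $\mathfrak Z_{\mathfrak t}$, which are trivial.
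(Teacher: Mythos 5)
Your overall strategy matches the paper's: reduce via Lemma \ref{lem algebra indcompose crit} to ruling out nontrivial idempotents in the endomorphism ring, compute that ring using the second adjointness of \cite[Lemma 2.2]{CS18} and the geometric lemma, and recognize the role of the Bernstein center via Lemma \ref{lem isomorphic to center}. Your final paragraph (local endomorphism ring of an indecomposable admissible representation, connectedness of the Bernstein variety for a single component $\mathfrak t$, reduction modulo the nilradical) is a correct and in fact more explicit justification of the last step than the paper itself provides.

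However, there is a genuine gap exactly where you flag the ``central step'' as the ``principal obstacle.'' You only establish a subring embedding $\mathrm{End}_{G_{n-i}}(\pi)\otimes\mathfrak Z_{\mathfrak t}\hookrightarrow\mathrm{End}_{G_n}(\pi\times(\Pi_i)_{\mathfrak t})$ from the extremal layer, and then argue heuristically that the contributions from the other layers should be nilpotent ``correction terms.'' The paper's Lemma \ref{lem induction change} proves something sharper: the embedding is actually an \emph{isomorphism}. The decisive input you are missing is Lemma \ref{lem zero hom gg} (and the argument behind it): any admissible representation has zero $\mathrm{Hom}$ into a parabolic induction that carries a Gelfand--Graev factor $\Pi_a$ with $a\geq 1$. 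This kills $\mathrm{Hom}_{G_{n-i}}\bigl(\pi,\,{}^{(a)}\pi\times{}^{(i-a)}(\Pi_i)_{\mathfrak t}\bigr)$ for all $a\geq 1$ outright, so there are no correction terms at all, and the putative nilpotency analysis is unnecessary. You cite Lemma \ref{lem bz filtration} for your heuristic, but that lemma addresses a different comparison (between left and right Bernstein--Zelevinsky inductions of different depths); the lemma that directly closes your gap is Lemma \ref{lem zero hom gg}. Invoking it on each non-extremal layer of the filtration of ${}^{(i)}(\pi\times(\Pi_i)_{\mathfrak t})$ yields the clean identification $\mathrm{End}_{G_n}\bigl((\mathrm{ind}_{R_{n-i}}^{G_n}\pi\boxtimes\psi_i)_{\mathfrak s}\bigr)\cong\mathrm{End}_{G_{n-i}}(\pi)\otimes\mathfrak Z_{\mathfrak t}$, after which your concluding paragraph finishes the proof.
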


\begin{proof}

We first prove the following:
\begin{lemma} \label{lem induction change}
Let $\pi \in \mathrm{Alg}(G_{n-i})$ be admissible. Then, as endomorphism algebras,
\[  \mathrm{End}_{G_n}(\mathrm{ind}_{R_{n-i}}^{G_n} \pi \boxtimes \psi_i ) \cong \mathrm{End}_{G_{n-i}}(\pi ) \otimes \mathfrak{Z}_i ,\]
where $\mathfrak Z_i=\prod_{\mathfrak s \in \mathfrak B(G_{i})} \mathfrak Z_{\mathfrak s}$ is the Bernstein center.
\end{lemma}

\begin{proof}

 Let $\mathfrak s_1, \mathfrak s_2 \in \mathfrak B(G_{n-i})$. Let $\pi_1, \pi_2$ be two admissible $G_{n-i}$ representations in $\mathfrak R_{\mathfrak s_1}(G_{n-i})$ and $\mathfrak R_{\mathfrak s_2}(G_{n-i})$ respectively. Then
\begin{align*}
    &\mathrm{Hom}_{G_n}(\mathrm{ind}_{R_{n-i}}^{G_n}\pi_1\boxtimes \psi_i, \mathrm{ind}_{R_{n-i}}^{G_n}\pi_2 \boxtimes \psi_i) \\
 \cong & \prod_{\mathfrak s \in \mathfrak R(G_n)}\mathrm{Hom}_{G_n}(\mathrm{ind}_{R_{n-i}}^{G_n} \pi_1 \boxtimes \psi_i, (\mathrm{ind}_{R_{n-i}}^{G_n}\pi_2 \boxtimes \psi_i)_{\mathfrak s}) \\
 \cong &  \prod_{\mathfrak s \in \mathfrak R(G_n)}\mathrm{Hom}_{G_n}(\pi_1, {}^{(i)}(\mathrm{ind}_{R_{n-i}}^{G_n}\pi_2 \boxtimes \psi_i)_{\mathfrak s}) \quad \mbox{\cite[Lemma 2.2]{CS18} }\\
 \cong  &  \prod_{\mathfrak u \in \mathfrak R(G_i)}\mathrm{Hom}_{G_{n-i}}(\pi_1, {}^{(i)}(\pi_2 \times (\Pi_i)_{\mathfrak u})) 
\end{align*}
We remark that in order to apply the second adjointness theorem \mbox{\cite[Lemma 2.2]{CS18} }, we need to get the first isomorphism. For the third isomorphisms, see discussions in Section \ref{ss inertial equ class}.

Now we apply the geometric lemma on ${}^{(i)}(\pi_2 \times (\Pi_i)_{\mathfrak u})$ and we obtain a filtration  on ${}^{(i)}(\pi_2 \times (\Pi_i)_{\mathfrak u})$ whose successive quotients are of the form 
\[ {}^{(j)}(\pi_2) \times {}^{(k)}((\Pi_i)_{\mathfrak u}) ,
\]
where $j+k=i$. Using Lemma \ref{lem zero hom gg}, 
\[  \mathrm{Hom}_{G_{n-i}}(\pi_1, {}^{(j)}(\pi_2) \times {}^{(k)}((\Pi_i)_{\mathfrak u})) =0
\]
unless $j=0$ and $k=i$. Combining the above isomorphisms, we now have
\begin{align*}
    &\mathrm{Hom}_{G_n}(\mathrm{ind}_{R_{n-i}}^{G_n}\pi_1\boxtimes \psi_i, \mathrm{ind}_{R_{n-i}}^{G_n}\pi_2 \boxtimes \psi_i) \\
\cong & \prod_{\mathfrak u \in \mathfrak B(G_i)} \mathrm{Hom}_{G_{n-i}}(\pi_1, \pi_2 \otimes {}^{(i)}((\Pi_i)_{\mathfrak u})) \\
\cong & \prod_{\mathfrak u \in \mathfrak B(G_i)} \mathrm{Hom}_{G_{n-i}}(\pi_1, \pi_2\otimes \mathfrak Z_{\mathfrak u}) \\
\cong & \prod_{\mathfrak u \in \mathfrak B(G_i)} \mathrm{Hom}_{G_{n-i}}(\pi_1,\pi_2) \otimes \mathfrak Z_{\mathfrak u} \\
\cong & \mathrm{Hom}_{G_{n-i}}(\pi_1,\pi_2) \otimes \mathfrak Z
\end{align*}

We remark that the third and last isomorphisms require the admissibility of $\pi_1$ and $\pi_2$. The second isomorphism follows from Lemma \ref{lem isomorphic to center}. One further traces the above isomorphisms to see that they give an isomorphism preserving the algebra structure and we omit the details. Now we specialize to $\pi_1=\pi_2=\pi$ and we obtain the lemma when each of $\pi_1, \pi_2$ lying in one Bernstein component.

The general case follows by writing $\pi=\oplus_{\mathfrak s} \pi_{\mathfrak s}$.
\end{proof}

 Now the theorem follows from Lemmas \ref{lem algebra indcompose crit} and \ref{thm indecomposable bz} since the right hand side has only 0 and 1 as idempotents if and only if the left hand side does.
\end{proof}

Let $M_n$ be the mirabolic subgroup in $G_{n+1}$, that is the subgroup containing all matrices with the last row of the form $(0,\ldots, 0,1)$. We have the following consequence restricted from $M_n$ to $G_n$. Here $G_n$ is viewed as the subgroup of $M_n$ via the embedding $g \mapsto \mathrm{diag}(g,1)$. 

\begin{corollary}
Let $\pi$ be an irreducible smooth representation of $M_n$. Then for any $\mathfrak s \in \mathfrak B(G_n)$, $\pi_{\mathfrak s}$ is indecomposable.
\end{corollary}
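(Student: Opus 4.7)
The plan is to reduce this statement directly to Theorem \ref{thm indecomposable bz} via the Bernstein-Zelevinsky classification of $\mathrm{Irr}(M_n)$. Recall from \cite{BZ} that every irreducible smooth representation $\pi$ of $M_n$ is isomorphic to $(\Phi^+)^i \Psi^+(\tau)$ for a unique pair $(i,\tau)$ with $0 \le i \le n$ and $\tau \in \mathrm{Irr}(G_{n-i})$. Unwinding the iterated BZ-induction functors realizes $\pi$ as a single compactly induced representation
\[
\pi \cong \mathrm{ind}_{R''}^{M_n}(\tau \boxtimes \psi'),
\]
where $R'' \subset M_n$ is the subgroup consisting of matrices of the form $\begin{pmatrix} g & * \\ 0 & u \end{pmatrix}$ with $g \in G_{n-i}$ and $u$ an upper-triangular unipotent element of the bottom $(i+1)\times(i+1)$ block (with the last row automatically of the form $(0,\ldots,0,1)$ by the mirabolic condition), and $\psi'$ is the standard generic character on the unipotent radical.

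The next step is to restrict $\pi$ to $G_n$. Writing $M_n = G_n \ltimes U$ with $U$ the last-column unipotent subgroup, a direct inspection gives $R'' \supset U$ and $R'' \cap G_n = R_{n-i}$, with $\psi'|_{R_{n-i}} = \psi_i$. Since $M_n = R'' \cdot G_n$, the single-double-coset case of Mackey's restriction formula yields
\[
\pi|_{G_n} \cong \mathrm{ind}_{R_{n-i}}^{G_n}(\tau \boxtimes \psi_i).
\]
As $\tau$ is irreducible and therefore admissible and indecomposable, Theorem \ref{thm indecomposable bz} applies, giving that $\pi_{\mathfrak s}$ is indecomposable for every $\mathfrak s \in \mathfrak B(G_n)$ for which it is nonzero.

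The main obstacle is the explicit bookkeeping for the Mackey step: one must identify the subgroup $R''$ and the character $\psi'$ in $M_n$ produced by the functors $(\Phi^+)^i \Psi^+$ with the subgroup $R_{n-i}$ and character $\psi_i$ appearing in Theorem \ref{thm indecomposable bz}. The two extreme cases provide useful sanity checks: for $i = 0$, one has $R_n = G_n$ and $\pi \cong \tau$ as $G_n$-representations (inflated from $G_n$), so the identification is tautological; for $i = n$, $\pi|_{G_n}$ becomes the Gelfand-Graev representation $\mathrm{ind}_{U_n}^{G_n}(\psi_n) = \mathrm{ind}_{R_0}^{G_n}(1 \boxtimes \psi_n)$, matching the expected form. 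Once the identification is secured, no further new ingredients are needed beyond Theorem \ref{thm indecomposable bz}.
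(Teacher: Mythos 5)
Your argument is correct and is essentially the paper's intended route: the paper's one-line proof cites Corollary 3.5 of \cite{BZ} (the classification of $\mathrm{Irr}(M_n)$ via $(\Phi^+)^i\Psi^+$) together with Theorem \ref{thm indecomposable bz}, and you have simply unwound the Mackey bookkeeping that shows $\pi|_{G_n}\cong\mathrm{ind}_{R_{n-i}}^{G_n}(\tau\boxtimes\psi_i)$ so that the theorem applies.
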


\begin{proof}
This follows from \cite[Corollary 3.5]{BZ} and Theorem \ref{thm indecomposable bz}.
\end{proof}

\section{Appendix: Speh representation approximation} \label{ss app speh approx}

\subsection{Two lemmas}

Some studies on parabolic induction can be found in \cite{Ta15, LM16}. We give a proof of the following specific cases, using the theory of derivatives.  We use notations in Section \ref{ss derivative speh}.

\begin{lemma} \label{lem switch Speh}
Let $m\in \mathbb{Z}$. Let $\Delta=[\nu^a \rho, \nu^b \rho]$. If $\Delta' = [\nu^k \rho, \nu^{l}\rho]$ such that
\[ -\frac{m-1}{2}+a \leq k \leq l \leq \frac{m-1}{2}+b , \] then 
\[ u(m,\Delta) \times \langle \Delta' \rangle \cong  \langle \Delta' \rangle \times u(m,\Delta) ,\]
and is  irreducible.
\end{lemma}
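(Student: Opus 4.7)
My plan is to prove the lemma by induction on $m$. For the base case $m=1$, we have $u(1,\Delta)=\langle\Delta\rangle$, and the hypothesis reduces to $a\le k\le l\le b$, forcing $\Delta'\subseteq\Delta$. Then $\Delta$ and $\Delta'$ are unlinked, so (\ref{eqn commute product trivial}) together with Zelevinsky's irreducibility criterion \cite[Theorem 4.2]{Ze} yields both the commutation and the irreducibility.

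For the inductive step $m\ge 2$, I would exploit the two natural embeddings of the Speh representation as the unique irreducible submodule of a suitably ordered standard module:
\[ u(m,\Delta)\hookrightarrow \nu^{(m-1)/2}\Delta\times u(m-1,\nu^{-1/2}\Delta), \qquad u(m,\Delta)\hookrightarrow u(m-1,\nu^{1/2}\Delta)\times \nu^{-(m-1)/2}\Delta. \]
Inserting $\langle\Delta'\rangle$ on the right and tracking the hypothesis on the inner $(m-1)$-Speh, one checks that in each embedding, one of the two bounds on $(k,l)$ is preserved while the other tightens by exactly one. So, provided at least one of the original bounds $k\ge a-(m-1)/2$ or $l\le b+(m-1)/2$ is strict, the induction hypothesis applies to the inner product; combined with the fact that the outermost segment $\nu^{\pm(m-1)/2}\Delta$ is unlinked with $\Delta'$ under the given bounds (a direct check), we obtain the desired commutation. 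Irreducibility of the resulting product follows by identifying its unique irreducible submodule as $\langle\mathfrak{n}\rangle$ with $\mathfrak{n}=\mathfrak{m}(m,\Delta)\cup\{\Delta'\}$ (cf.\ Proposition \ref{thm subrep}), and matching its unique irreducible quotient via Aubert-Zelevinsky duality together with the dual formulas.

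The main obstacle is the corner case where $k=a-(m-1)/2$ and $l=b+(m-1)/2$ simultaneously, i.e., $\Delta'$ is the maximal diagonal segment spanning the two extreme corners of the Speh rectangle; here neither inductive embedding applies and $\Delta'$ is genuinely linked with several rows of $\mathfrak{m}(m,\Delta)$. I would handle this by a direct derivative computation: apply the geometric lemma to $(u(m,\Delta)\times\langle\Delta'\rangle)^{(i)}$ and $(\langle\Delta'\rangle\times u(m,\Delta))^{(i)}$, exploiting (\ref{eqn left deriv sp})--(\ref{eqn right deriv sp}) for the Speh derivatives together with the elementary derivatives of $\langle\Delta'\rangle$. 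The rigidity of the Speh rectangle, encoded in the fact that only one non-trivial shifted derivative $u_r(m,k,\Delta)$ contributes at each level, forces both filtrations to collapse to the same single composition factor $\langle\mathfrak{n}\rangle$ at every degree of derivation. A semisimplification count via comparison of cuspidal supports then confirms that both products have composition length one, yielding both the irreducibility and the isomorphism $u(m,\Delta)\times\langle\Delta'\rangle\cong\langle\Delta'\rangle\times u(m,\Delta)$.
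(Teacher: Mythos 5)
Your inductive scheme on $m$ has a genuine gap in the non-corner step: the claim that the peeled-off segment $\nu^{\pm(m-1)/2}\Delta$ is unlinked with $\Delta'$ under the (even tightened) bounds is false. Take $m=2$, $\Delta=[\rho]$ and $\Delta'=[\nu^{-1/2}\rho]$; then $k=l=-1/2$ satisfies $-\frac{1}{2}\le k\le l\le\frac{1}{2}$ and $l<b+\frac{m-1}{2}$ is strict, so your argument routes through the first embedding and tries to commute $\langle\Delta'\rangle$ past $\nu^{1/2}\langle\Delta\rangle=\langle[\nu^{1/2}\rho]\rangle$. But $[\nu^{1/2}\rho]$ and $[\nu^{-1/2}\rho]$ are linked, so $\langle[\nu^{1/2}\rho]\rangle\times\langle[\nu^{-1/2}\rho]\rangle$ is reducible and does not commute via (\ref{eqn commute product trivial}). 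Even when the outer segment happens to be unlinked, an isomorphism between the two ambient triple products only gives an embedding of $u(m,\Delta)\times\langle\Delta'\rangle$ into $\langle\Delta'\rangle\times\nu^{(m-1)/2}\langle\Delta\rangle\times u(m-1,\nu^{-1/2}\Delta)$, not the claimed isomorphism $u(m,\Delta)\times\langle\Delta'\rangle\cong\langle\Delta'\rangle\times u(m,\Delta)$ nor irreducibility; you would still need to identify the socles on both sides. Also, Proposition \ref{thm subrep} as stated concerns $\lambda(\mathfrak m)$ (products of Steinbergs), so invoking it for $u(m,\Delta)\times\langle\Delta'\rangle$ needs more justification. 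Finally, the corner-case treatment is sketched at a level of generality (``the rigidity... forces both filtrations to collapse'') that I cannot verify; this is the hardest case and needs a concrete count.

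For comparison, the paper does not induct on $m$ but on the relative length of $\Delta$. It first handles the singleton case $\Delta=[\rho]$ (where $u(m,\Delta)$ is a Steinberg) directly. For non-singleton $\Delta$ it shows, via the union-intersection bookkeeping and a comparison of highest derivatives, that any composition factor $\tau$ of $u(m,\Delta)\times\langle\Delta'\rangle$ must have level $m+1$, and its highest derivative must equal $u(m,\Delta^-)\times\langle(\Delta')^-\rangle$, which is irreducible by the inductive hypothesis on the (shorter) segment $\Delta^-$. Exactness of the derivative functor then forces the product to have composition length one, and the commutation follows from the Gelfand--Kazhdan involution. This sidesteps the outer-segment linkage problem entirely; if you want to keep your induction on $m$, you will at minimum need a separate argument for the cases where $\nu^{\pm(m-1)/2}\Delta$ and $\Delta'$ are linked, which in fact covers much of the parameter range.
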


\begin{proof}
When $-(m-1)/2+a-k \notin \mathbb{Z}$, this case is easier and we omit the details. We now consider $-(m-1)/2+a-k \in \mathbb{Z}$. 

The statement is not difficult when $\Delta$ is singleton $[\rho]$ because 
\[ u(m,\Delta) \cong \mathrm{St}([\nu^{-(m-1)/2}\rho, \nu^{(m-1)/2}\rho]) \]
 and $ \mathrm{St}(\Delta') \times \langle \Delta'' \rangle \cong \langle \Delta'' \rangle \times \mathrm{St}(\Delta')$ whenever $\Delta'' \subset \Delta'$. (Indeed, one can also prove the latter fact by similar arguments as below by noting that the Zelevinsky multisegment of any simple composition factor in $\mathrm{St}(\Delta')\times \langle \Delta''\rangle$ contains a segment $\widetilde{\Delta}$ with $b(\widetilde{\Delta}) \cong b(\Delta')$ and at least one segment $\widehat{\Delta}$ with $b(\widehat{\Delta})\cong b(\Delta'')$.)

We now assume $\Delta$ is not a singleton. We consider two cases:
\begin{enumerate}
\item[(1)] Case 1: $l=b+\frac{m-1}{2}$. Suppose $\tau$ is a composition factor of $a(m,\Delta ) \times \langle \Delta' \rangle$ with the associated Zelevinsky multisegment $\mathfrak m$. Then we know that at least two segments $\Delta_1, \Delta_2$ in $\mathfrak m$ takes the form $b(\Delta_1)\cong b(\Delta_2) \cong \nu^{\frac{m-1}{2}+b}\rho$. If $\tau^{(i^*)}$ is the highest derivative of $\tau$, then we know that the cuspidal support of $\tau^{(i^*)}$ does not contain $\nu^{\frac{m-1}{2}}\rho$. We also have that $\tau^{(i^*)}$ is a composition factor of $(u(m,\Delta) \times \langle \Delta \rangle)^{(i^*)}$. The only possibility is that $i^*=m+1$  i.e  $(u(m,\Delta )\times \langle \Delta' \rangle)^{(i^*)}=u(m,\Delta^-) \times \langle (\Delta')^- \rangle$, which the latter one is irreducible by induction. This proves the lemma. Since taking derivative is an exact functor, we have that $u(m,\Delta ) \times \langle \Delta' \rangle$ is irreducible. Using the Gelfand-Kazhdan involution \cite[Section 7]{BZ1}, we have that $\langle \Delta' \rangle \times u(m,\Delta) \cong u(m,\Delta) \times \langle \Delta' \rangle$.
\item[(2)] Case 2: $l<b+\frac{m-1}{2}$. The argument is similar. Again suppose $\tau$ is a composition factor of $u(m,\Delta) \times \langle \Delta' \rangle$. Using an argument similar to above, we have that the level of $\tau$ is either $m+1$ or $m$. However, if the level of $\tau$ is $m$, then $\tau^{(m)}$ would be $u(m,\Delta^-) \times \langle \Delta' \rangle$, which is irreducible by induction. Then it would imply that the number of segments in Zelevinsky multisegment of $\tau^{(m)}$ is $m$ and contradicts that the number of segments for the Zelevinsky multisegment of the highest derivative of an irreducible representation $\pi$ must be at most that for $\pi$. Hence, the level of $\tau$ must be $m+1$. Now repeating a similar argument as in (1) and using the induction, we obtain the statements. 
\end{enumerate}

\end{proof}

\begin{lemma} \label{lem speh commute with one more}
Let $\Delta$ be a segment. Then 
\[   u(m,\Delta^-) \times \nu^{(m-1)/2}\langle \Delta \rangle \cong \nu^{(m-1)/2}\langle \Delta \rangle \times u(m, \Delta^-)
 \]
is irreducible.
\end{lemma}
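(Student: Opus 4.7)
The plan is to adapt the derivative-based argument of the proof of Lemma \ref{lem switch Speh}, proceeding by induction on the relative length of $\Delta$. When $\Delta^-$ is empty the statement is vacuous. For the inductive step, write $\Delta = [\nu^a \rho, \nu^b \rho]$ and let $\tau$ be an irreducible composition factor of $u(m, \Delta^-) \times \nu^{(m-1)/2}\langle \Delta \rangle$.

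Any Zelevinsky multisegment $\mathfrak{m}(\tau)$ is obtained from $\mathfrak{m}(m, \Delta^-) + \{\nu^{(m-1)/2}\Delta\}$ by a chain of intersection-union operations. Observe that $\nu^{(m-1)/2}\Delta^- \subsetneq \nu^{(m-1)/2}\Delta$ are \emph{nested}, so these two segments cannot interact directly under union-intersection; however, $\nu^{(m-1)/2}\Delta$ is still linked with segments $\nu^{(m-2k-1)/2}\Delta^-$ of the Speh multisegment for $k \geq 1$, so nontrivial rearrangements do occur. I next study the highest derivative $\tau^{(i^*)}$ via the Bernstein-Zelevinsky filtration of $(u(m, \Delta^-) \times \nu^{(m-1)/2}\langle \Delta \rangle)^{(i^*)}$ obtained from the geometric lemma together with the Speh derivative formula (\ref{eqn left deriv sp}): the top stratum at level $(m+1)\,n(\rho)$ is $u(m, \Delta^{--}) \times \nu^{(m-1)/2}\langle \Delta^-\rangle$, which by the inductive hypothesis (with $\Delta^-$ in place of $\Delta$) is irreducible. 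The cuspidal element $\nu^{(m-1)/2+b}\rho$ appears in $\mathfrak{m}(m,\Delta^-)+\{\nu^{(m-1)/2}\Delta\}$ only inside the segment $\nu^{(m-1)/2}\Delta$ and hence in $\mathfrak{m}(\tau)$ only inside one segment of maximal right endpoint; it must therefore be removed when passing to $\tau^{(i^*)}$. Combining this with the multisegment-counting argument used in the proof of Lemma \ref{lem switch Speh} case~(2) (the number of segments in the Zelevinsky multisegment of the highest derivative of an irreducible representation is at most that of the representation itself) forces $i^* = (m+1)\,n(\rho)$ and pins down $\tau^{(i^*)}$ as the above irreducible product.

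Once $\tau^{(i^*)}$ is determined uniquely, so is $\tau$; moreover the top stratum of the BZ filtration at level $i^*$ has multiplicity one, so the whole product $u(m, \Delta^-) \times \nu^{(m-1)/2}\langle \Delta\rangle$ must be irreducible. The commuting isomorphism $\nu^{(m-1)/2}\langle \Delta \rangle \times u(m, \Delta^-) \cong u(m, \Delta^-) \times \nu^{(m-1)/2}\langle \Delta \rangle$ then follows from the Gelfand--Kazhdan involution, exactly as at the end of the proof of Lemma \ref{lem switch Speh} case~(1). The main obstacle is the level-counting step: one must rule out values of $i^*$ strictly smaller than $(m+1)\,n(\rho)$, which requires combining the nested-versus-linked observation above with the inequality on segment counts for highest derivatives, so that no ``unbalanced'' partial derivative from the BZ filtration can yield a valid $\tau^{(i^*)}$.
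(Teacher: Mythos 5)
Your proposal follows the same strategy as the paper: both proofs induct on the relative length of $\Delta$, track irreducible composition factors via their Zelevinsky multisegments, use the highest-derivative functor together with the geometric-lemma filtration on $\left(u(m,\Delta^-)\times\nu^{(m-1)/2}\langle\Delta\rangle\right)^{(i^*)}$, and close the commuting isomorphism with the Gelfand--Kazhdan involution. The observation that the cuspidal $\nu^{(m-1)/2}b(\Delta)$ sits in exactly one segment (of maximal right endpoint) is precisely the paper's ``$\Delta_1$'' counting remark, and your nesting observation for $\nu^{(m-1)/2}\Delta^-\subsetneq\nu^{(m-1)/2}\Delta$ is also in the right spirit. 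So the skeleton is correct and matches the paper.

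The one step I would flag as a real gap is the claim that the ``multisegment-counting argument'' (that the highest derivative has no more segments than the original) together with the $\Delta_1$ observation already forces $i^*=(m+1)\,n(\rho)$. That inference does not go through as stated: union--intersection operations \emph{can} strictly decrease the number of segments when two linked segments are disjoint (adjacent), since the resulting intersection is empty and is dropped from the multisegment. This happens precisely when the relative length of $\Delta$ is $\leq m$, so one cannot assume every composition factor $\tau$ has $m+1$ segments. The $\Delta_1$ observation does pin down $j_2=n(\rho)$ in the stratum $u(m,\Delta^-)^{(j_1)}\times\nu^{(m-1)/2}\langle\Delta\rangle^{(j_2)}$, but it leaves $j_1$ open; the segment-count inequality then gives only $|\mathfrak m(\tau^{(i^*)})|\leq m+1$, which is not a contradiction for $j_1<m\,n(\rho)$. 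What actually closes the gap is the second counting remark that the paper states explicitly: $\mathfrak m(\tau)$ contains a \emph{unique} segment with right endpoint $\nu^{(m-1)/2}b(\Delta^-)$, so the cuspidal support of $\tau^{(i^*)}$ has exactly one copy of $\nu^{(m-1)/2}b(\Delta^-)$, whereas the stratum $u_r(m,j_1/n(\rho),\Delta^-)\times\nu^{(m-1)/2}\langle\Delta^-\rangle$ with $j_1<m\,n(\rho)$ has two copies (one from the unshortened $\nu^{(m-1)/2}\Delta^-$ inside the Speh factor, one from $\langle\Delta^-\rangle$). That comparison is what forces $j_1=m\,n(\rho)$, hence $i^*=(m+1)\,n(\rho)$. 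Your nestedness observation hints at this (it is why the right endpoint $\nu^{(m-1)/2}b(\Delta^-)$ cannot be absorbed into the maximal segment) but it is not actually used in your chain of reasoning; replacing the appeal to segment-counting by this cuspidal-support count would repair the argument.
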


\begin{proof}

The statement is clear if $\Delta$ is a singleton. For the general case, we note by simple counting that the Zelevinsky multisegment of any composition factor must contain a segment $\Delta_1$ with $b(\Delta_1) \cong \nu^{(m-1)/2}b(\Delta)$ and at least one segment $\Delta_2$ with $b(\Delta_2)\cong \nu^{(m-1)/2}b(\Delta^-)$. Then one proves the statement by a similar argument using highest derivative as in the previous lemma.

\end{proof}

\subsection{Speh representation approximation}

For a Speh multisegment 
\[ \mathfrak m=\left\{ \Delta, \nu^{-1}\Delta, \ldots, \nu^{-k}\Delta \right\}, \]
define $b(\mathfrak m)=b(\Delta)$.

\begin{proposition}  \label{lem speh approx}
 Let $\mathfrak{m}=\left\{ \Delta_1,\ldots, \Delta_k \right\}$. Then there exists Speh multisegments $\mathfrak{m}_1, \ldots \mathfrak{m}_r$ satisfying the following properties:
\begin{enumerate}
\item $\mathfrak{m}=\mathfrak{m}_1+\ldots +\mathfrak{m}_r$;
\item For each Speh multisegment $\mathfrak{m}_i$ and any $j >i$, there is no segment $\Delta$ in $\mathfrak{m}_i$ such that $\mathfrak{m}_j +\left\{ \Delta \right\}$ is a Speh multisegment;
\item $\langle \mathfrak{m} \rangle$ is the unique submodule of $\langle \mathfrak{m}_1 \rangle \times \ldots \times \langle \mathfrak{m}_r \rangle$;
\item $b(\mathfrak m_i) \not\leq b(\mathfrak m_j)$ if $i<j$
\item if $\mathfrak m_i \cap \mathfrak m_j \neq \emptyset$ and $i\leq j$, then $\mathfrak m_j \subset \mathfrak m_i$.
\end{enumerate}
\end{proposition}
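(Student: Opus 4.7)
The plan is to construct the Speh decomposition via a greedy iterative extraction and then to verify the five properties in turn, with (3) as the main work.

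\textbf{Construction.} Set $\mathfrak{m}^{(0)} := \mathfrak{m}$. Inductively at stage $i \geq 1$: if $\mathfrak{m}^{(i-1)} \neq \emptyset$, pick $\Delta^*_i \in \mathfrak{m}^{(i-1)}$ such that $b(\Delta^*_i)$ is maximal in the precedence order among $\{b(\Delta) : \Delta \in \mathfrak{m}^{(i-1)}\}$, breaking ties by taking a segment of longest relative length among those with that top. Then let $\mathfrak{m}_i$ be the longest downward chain $\{\nu^{-k}\Delta^*_i, \ldots, \nu^{-1}\Delta^*_i, \Delta^*_i\}$ of successive $\nu$-translates whose every member lies as a sub-multiset in $\mathfrak{m}^{(i-1)}$. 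Put $\mathfrak{m}^{(i)} := \mathfrak{m}^{(i-1)} \setminus \mathfrak{m}_i$, and halt when it becomes empty, after $r$ stages.

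\textbf{Combinatorial properties.} Property (1) is immediate, and (5) holds because the $\mathfrak{m}_i$ are pairwise disjoint as multisets. Property (4) follows because $b(\mathfrak{m}_i) = b(\Delta^*_i)$ is by choice maximal in $\mathfrak{m}^{(i-1)} \supset \mathfrak{m}_j$ for every $j > i$. For property (2), in the form used in the applications (for $j > i$, no segment $\Delta \in \mathfrak{m}_j$ extends $\mathfrak{m}_i$ to a Speh multisegment), one argues as follows: a top extension would require $b(\Delta)$ to strictly exceed $b(\mathfrak{m}_i)$, violating the maximal choice of $\Delta^*_i$ since $\Delta \in \mathfrak{m}^{(i-1)}$; and a bottom extension would correspond to $\nu^{-(k+1)}\Delta^*_i$ lying in $\mathfrak{m}^{(i-1)}$, which would have been absorbed when $\mathfrak{m}_i$ was formed, contradicting the maximal-chain rule.

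\textbf{Property (3), the main obstacle.} I would show $\langle \mathfrak{m} \rangle$ is the unique submodule of $\langle \mathfrak{m}_1 \rangle \times \cdots \times \langle \mathfrak{m}_r \rangle$ by induction on $r$, peeling off $\langle \mathfrak{m}_1 \rangle$ and invoking Lemmas \ref{lem switch Speh} and \ref{lem speh commute with one more}. The maximality in (2) and the ordering (4) force that for each $j \geq 2$ and each segment $\Delta'$ of $\mathfrak{m}_j$, either $\Delta'$ is unlinked with every segment of $\mathfrak{m}_1$ (so the factors commute via (\ref{eqn commute product trivial})), or $\Delta'$ lies in the admissible range of Lemma \ref{lem switch Speh} relative to the Speh $\langle \mathfrak{m}_1 \rangle$. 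Applying these commutations iteratively yields an embedding
\[
\langle \mathfrak{m}_1 \rangle \times \cdots \times \langle \mathfrak{m}_r \rangle \hookrightarrow \zeta(\mathfrak{m}_1) \times \cdots \times \zeta(\mathfrak{m}_r) \cong \zeta(\mathfrak{m}),
\]
where the second isomorphism reorganizes segments into descending Zelevinsky order using (4). Since $\zeta(\mathfrak{m})$ has irreducible socle $\langle \mathfrak{m} \rangle$, so does the embedded subrepresentation. The delicate step is verifying that the admissibility hypothesis of Lemma \ref{lem switch Speh} really holds at each commutation, which is enforced precisely by the extremal choices of $b(\Delta^*_i)$ and of the downward chain in the construction; the argument has to be organized so that at each peeling step one commutes the Speh factor past either a compatible segment or a compatible Speh factor, and never encounters a linked pair outside the admissible range.
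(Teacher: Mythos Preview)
Your construction matches the paper's and (1), (2), (4) are handled correctly. For (5), disjointness as sub-multisets is not the right argument: the same segment can occur in $\mathfrak m$ with multiplicity and then land in distinct $\mathfrak m_i$'s, so $\mathfrak m_i\cap\mathfrak m_j$ (as sets of segments) need not be empty. The actual reason (5) holds is again the maximal-chain rule: if two $\mathfrak m_i,\mathfrak m_j$ with $i<j$ share a segment they are chains of translates of the same $\Delta$, and if the bottom of $\mathfrak m_j$ dipped strictly below that of $\mathfrak m_i$, the extra translate was already in $\mathfrak m^{(i-1)}$ and would have been absorbed into $\mathfrak m_i$.

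The real gap is in (3). Your displayed chain $\langle\mathfrak m_1\rangle\times\cdots\times\langle\mathfrak m_r\rangle\hookrightarrow\zeta(\mathfrak m_1)\times\cdots\times\zeta(\mathfrak m_r)\cong\zeta(\mathfrak m)$ fails at the last isomorphism: the concatenated segment order is not in general a Zelevinsky order, and the misplaced adjacent pairs can be linked. Take $\mathfrak m_1=\{[1,\nu^5],[\nu^{-1},\nu^4],[\nu^{-2},\nu^3],[\nu^{-3},\nu^2]\}$ and $\mathfrak m_2=\{[1,\nu^4]\}$; then $[\nu^{-3},\nu^2]$ precedes $[1,\nu^4]$ and the two are linked, so one cannot swap them, and $\zeta(\mathfrak m_1)\times\zeta(\mathfrak m_2)\not\cong\zeta(\mathfrak m)$. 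Your dichotomy (``unlinked with all of $\mathfrak m_1$ or in the hull'') is also false: with $\mathfrak m_1=\{[1,\nu^2]\}$ and $\Delta'=[\nu^{-5},\nu]$ the pair is linked yet $\Delta'$ is outside the hull. The paper does not expand the Speh factors first. It proves $\langle\{\Delta^*,\ldots,\nu^{-i}\Delta^*\}\rangle\times\zeta(\mathfrak m\setminus\{\Delta^*,\ldots,\nu^{-i}\Delta^*\})\hookrightarrow\zeta(\mathfrak m)$ by induction on $i$, at each step separating the remaining segments with $b$-value in $\{\nu^{-i}b(\Delta^*),\ldots,b(\Delta^*)\}$ into a short part $\mathfrak n$ (those lying inside the current hull, so Lemma~\ref{lem switch Speh} commutes the partial Speh past $\zeta(\mathfrak n)$) and a long part $\bar{\mathfrak n}$ (each of whose segments contains $\nu^{-i-1}\Delta^*$, so (\ref{eqn commute product trivial}) commutes $\zeta(\bar{\mathfrak n})$ past $\langle\nu^{-i-1}\Delta^*\rangle$), and then absorbing $\langle\nu^{-i-1}\Delta^*\rangle$ into the Speh. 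The point is that Lemma~\ref{lem switch Speh} is only ever invoked against segments verified to lie in the hull of the \emph{current partial} Speh chain; your outline asks it (and unlinkedness) to cover all later segments against the full $\langle\mathfrak m_1\rangle$ at once, which it cannot.
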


\begin{proof}
We shall label the segments $\mathfrak{m}$ in the way that for $i <j$, either (i) $b(\Delta_i) \not< b(\Delta_j)$ and (ii) if $b(\Delta_i)=b(\Delta_j)$, then $a(\Delta_i) \not< a(\Delta_j)$. Let $\Delta=\Delta_1$. Let $k$ be the largest integer ($k \geq 0$) such that $\Delta, \nu^{-1}\Delta, \ldots, \nu^{-k}\Delta$ are segments in $\mathfrak{m}$. We claim that 
\begin{equation} \label{eqn embedding speh}
  \langle \mathfrak{m} \rangle \hookrightarrow    \langle \mathfrak{m}' \rangle \times \langle \mathfrak{m} \setminus \mathfrak{m}'  \rangle 
\end{equation}
and moreover $\langle \mathfrak{m} \rangle$ is the unique submodule of $\langle \mathfrak{m}' \rangle \times \langle \mathfrak{m} \setminus \mathfrak{m}' \rangle$. By induction, the claim proves (1) and (3), and the remaining ones follow from the inductive construction.

We now prove the claim. We shall prove by induction that for $i=0,\ldots, k$, 
\[ \langle \mathfrak{m} \rangle \hookrightarrow \langle \mathfrak{m}_i \rangle \times \zeta(\mathfrak{m} \setminus \mathfrak{m}_i ) \hookrightarrow \zeta(\mathfrak m), \]
where $\mathfrak{m}_i = \left\{ \Delta, \nu^{-1} \Delta, \ldots, \nu^{-i}\Delta \right\}$. When $i=0$ the statement is clear from the definition. Now suppose that we have the inductive statement for $i$. To prove the statement for $i+1$. We now set $\mathfrak{n}$ to be the collection of all segments $\Delta'$ in $\mathfrak{m}$ such that $b(\Delta')=b(\Delta), \nu^{-1}b(\Delta), \ldots, \nu^{-i}b(\Delta)$ and $\nu^{-i}a(\Delta)$ proceeds $a(\Delta')$. We also set $\bar{\mathfrak{n}}$ to be the collection of all segments $\Delta'$ in $\mathfrak{m}$ such that $b(\Delta')=b(\Delta), \nu^{-1}b(\Delta),\ldots, \nu^{-i}b(\Delta)$ and $a(\Delta')$ precedes $\nu^{-i-1}a(\Delta)$. By using (\ref{eqn commute product trivial} ) several times, we have that $\zeta(\mathfrak{n})\times \zeta(\bar{\mathfrak{n}}) \cong \zeta(\bar{\mathfrak{n}}+\mathfrak{n})$ and 
\begin{align} \label{eqn first separate iso}    \zeta(\mathfrak{m} \setminus \mathfrak{m}_i)  \cong \zeta(\mathfrak{n}) \times \zeta(\bar{\mathfrak{n}}) \times \zeta(\mathfrak{m}\setminus (\mathfrak{n}+\bar{\mathfrak{n}}+\mathfrak{m}_i)).\end{align}
From our construction (and $i \neq k$), we have that $\nu^{-i-1}\Delta$ is a segment in $\mathfrak{m}\setminus (\mathfrak{n}+\bar{\mathfrak{n}}+\mathfrak{m}_i)$ and 
\[ \zeta(\mathfrak{m}\setminus (\mathfrak{n}+\bar{\mathfrak{n}}+\mathfrak{m}_i)) =\langle \nu^{-i-1}\Delta \rangle \times \zeta(\mathfrak{m}\setminus (\mathfrak{n}+\bar{\mathfrak{n}}+\mathfrak{m}_{i+1}) )\]
On the other hand
\begin{align} 
& \langle \mathfrak{m}_i \rangle \times \zeta(\mathfrak{n}) \times \zeta(\bar{\mathfrak{n}}) \times \langle \nu^{-i-1}\Delta \rangle \\
\cong &  \zeta(\mathfrak{n}) \times \langle \mathfrak{m}_i \rangle \times \zeta(\bar{\mathfrak{n}}) \times \langle \nu^{-i-1}\Delta \rangle \\
\cong &  \zeta(\mathfrak{n}) \times \langle \mathfrak{m}_i \rangle \times \langle  \nu^{-i-1}\Delta \rangle \times \zeta(\bar{\mathfrak{n}}) \\
\hookleftarrow &  \zeta(\mathfrak{n}) \times \langle \mathfrak{m}_{i+1} \rangle \times \zeta(\bar{\mathfrak{n}})\\
\cong &  \langle \mathfrak{m}_{i+1} \rangle \times \zeta(\mathfrak{n}) \times \zeta(\bar{\mathfrak{n}})
\end{align}

The first and last isomorphism is by Lemma \ref{lem speh approx}. The injectivity in forth line comes from the uniqueness of submodule in $\zeta(\mathfrak{m}_{i+1})$. The second isomorphism follows from again by (\ref{eqn commute product trivial}). Combining (\ref{eqn first separate iso}), the above series of isomorphisms and the inductive case, we have:
\[ \zeta(\mathfrak m) \hookleftarrow \langle \mathfrak{m}_{i+1} \rangle \times \zeta(\mathfrak{n}) \times \zeta(\bar{\mathfrak{n}}) \times \zeta(\mathfrak{m}\setminus (\mathfrak{n}+\bar{\mathfrak{n}}+\mathfrak{m}_{i+1})).
\]
This gives the desired injectivity by using the uniqueness of submodule of $\zeta(\mathfrak m)$ and proves the claim.

\end{proof}

We shall need a variation which is more flexible in our application.

\begin{lemma} \label{lem speh multisegment 2}
Let $\mathfrak m=\mathfrak m(m,\Delta)$ be a Speh multisegment. Let $\mathfrak n_1$ be a Zelevinsky multisegment such that for any segment $\Delta'$ in $\mathfrak n_1$ satisfying $b(\Delta) < b(\Delta')$ or $b(\Delta) \cong b(\Delta')$. Let $\mathfrak n_2$ be a Zelevinsky multisegment such that for any segment $\Delta'$ in $\mathfrak n_2$ satisfying either one of the following properties:
\begin{enumerate}
\item  $b(\Delta) \not<b(\Delta')$; or
\item  $b(\Delta)\not< b(\Delta')$, or if $b(\Delta)< b(\Delta')$, then $(\Delta')^-=\Delta$.
\end{enumerate}
Then
\[\langle \mathfrak n_1+\mathfrak m+\mathfrak n_2 \rangle \hookrightarrow  \zeta(\mathfrak n_1) \times \langle \mathfrak m \rangle \times \zeta(\mathfrak n_2) \hookrightarrow \zeta(\mathfrak n_1+\mathfrak m+\mathfrak n_2) .
\]
In particular, $\zeta(\mathfrak n_1) \times \langle \mathfrak m \rangle \times \zeta(\mathfrak n_2)$ has unique submodule isomorphic to $\langle \mathfrak n_1 +\mathfrak m+\mathfrak n_2 \rangle$.
\end{lemma}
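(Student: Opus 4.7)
The plan is to reduce both embeddings to the uniqueness of the irreducible submodule of $\zeta(\mathfrak n_1 + \mathfrak m + \mathfrak n_2)$ by first establishing the second embedding through an ordering-and-commutation argument analogous to Proposition~\ref{lem speh approx}. The second embedding will come from $\langle \mathfrak m \rangle \hookrightarrow \zeta(\mathfrak m)$ combined with the identification
\[
\zeta(\mathfrak n_1) \times \zeta(\mathfrak m) \times \zeta(\mathfrak n_2) \cong \zeta(\mathfrak n_1 + \mathfrak m + \mathfrak n_2),
\]
which requires verifying that the ordering ``$\mathfrak n_1$ first (in any admissible internal order), then $\mathfrak m$ top-down, then $\mathfrak n_2$'' satisfies the non-precedence condition used to define a Zelevinsky standard module. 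The hypotheses on $\mathfrak n_1$ and $\mathfrak n_2$ relative to $\Delta$ -- essentially that $\mathfrak n_1$ consists of segments whose $b$-values are non-negative twists of $b(\Delta)$, and that $\mathfrak n_2$ consists of segments whose $b$-values are non-positive twists of $b(\Delta)$ (or fall into the exceptional case $(\Delta')^- = \Delta$ of condition (2)) -- secure this after a careful case analysis of linkage. Exactness of parabolic induction then yields the second embedding.

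The first embedding would follow by a socle argument. The admissible representation $\zeta(\mathfrak n_1) \times \langle \mathfrak m \rangle \times \zeta(\mathfrak n_2)$ is a non-zero submodule of $\zeta(\mathfrak n_1 + \mathfrak m + \mathfrak n_2)$, hence has a non-zero socle. Any irreducible in this socle is a submodule of $\zeta(\mathfrak n_1 + \mathfrak m + \mathfrak n_2)$, which has the unique irreducible submodule $\langle \mathfrak n_1 + \mathfrak m + \mathfrak n_2 \rangle$. Consequently $\langle \mathfrak n_1 + \mathfrak m + \mathfrak n_2 \rangle$ is the unique irreducible submodule of $\zeta(\mathfrak n_1) \times \langle \mathfrak m \rangle \times \zeta(\mathfrak n_2)$, simultaneously giving the first embedding and the ``in particular'' uniqueness clause.

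The hard part will be the linkage analysis in the product identification, especially under condition (2). When $\Delta' \in \mathfrak n_2$ satisfies $b(\Delta) < b(\Delta')$ with $(\Delta')^- = \Delta$, then $\Delta'$ is typically linked to segments near the top of $\mathfrak m$, so the raw ordering ``$\mathfrak m$ top-down before $\mathfrak n_2$'' is no longer a valid labelling. Here I will invoke Lemma~\ref{lem speh commute with one more}: its hypothesis $(\Delta')^- = \Delta$ is tailored exactly to this case, and it permits commuting $\nu^{(m-1)/2}\langle \Delta' \rangle$ past the relevant Speh-type pieces of $\langle \mathfrak m \rangle$. Iterating such moves -- together with (\ref{eqn commute product trivial}) for genuinely unlinked pairs and Lemma~\ref{lem switch Speh} for segments nested inside the Speh range -- will produce the required isomorphism $\zeta(\mathfrak n_1) \times \zeta(\mathfrak m) \times \zeta(\mathfrak n_2) \cong \zeta(\mathfrak n_1 + \mathfrak m + \mathfrak n_2)$, bridging the gap between the naive product and a legitimate Zelevinsky labelling.
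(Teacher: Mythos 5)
Your plan to derive the first embedding from the second via the socle of $\zeta(\mathfrak n_1 + \mathfrak m + \mathfrak n_2)$ is correct and matches the paper. However, the route you take to the second embedding contains a genuine gap: the claimed identification
\[
\zeta(\mathfrak n_1) \times \zeta(\mathfrak m) \times \zeta(\mathfrak n_2) \cong \zeta(\mathfrak n_1 + \mathfrak m + \mathfrak n_2)
\]
is false in general, even under hypothesis~(1). The problem is the interaction of the \emph{lower} Speh segments $\nu^{-k}\Delta$ ($k>0$) with $\mathfrak n_2$. Concretely, take $\Delta=[1,\nu]$, $m=3$, so $\mathfrak m=\{[\nu^{-1},1],[1,\nu],[\nu,\nu^2]\}$, and take $\mathfrak n_2=\{[1,\nu]\}$ (which has $b(\Delta)\not< b([1,\nu])$). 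Then $[\nu^{-1},1]$ and $[1,\nu]$ are linked with $[\nu^{-1},1]<[1,\nu]$, so the ordering ``all of $\mathfrak m$ (top-down), then $\mathfrak n_2$'' ends with the linked pair in the wrong order: $\zeta(\mathfrak m)\times\zeta(\mathfrak n_2)$ ends in $\langle[\nu^{-1},1]\rangle\times\langle[1,\nu]\rangle$, whereas $\zeta(\mathfrak m+\mathfrak n_2)$ has $\langle[1,\nu]\rangle\times\langle[\nu^{-1},1]\rangle$, and these two products are not isomorphic. No sequence of moves via \eqref{eqn commute product trivial} can fix this, because the offending pair is linked.

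The commutation lemmas you invoke to bridge this gap do not actually apply to what you need. Lemma~\ref{lem speh commute with one more} and Lemma~\ref{lem switch Speh} commute a factor $\langle\Delta'\rangle$ past the \emph{irreducible Speh representation} $\langle\mathfrak m\rangle = u(m,\cdot)$, not past the standard module $\zeta(\mathfrak m)=\langle\nu^{(m-1)/2}\Delta\rangle\times\cdots\times\langle\nu^{-(m-1)/2}\Delta\rangle$. You cannot use them to permute $\langle\Delta'\rangle$ past the individual factors $\langle\nu^{-k}\Delta\rangle$ of $\zeta(\mathfrak m)$, which is precisely what the counterexample above requires.

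The correct structure of the argument — and the one the paper uses — never passes through $\zeta(\mathfrak m)$: it keeps the Speh representation $\langle\mathfrak m\rangle$ intact throughout. For case~(1), the second embedding comes from the inductive chain already established in the proof of Proposition~\ref{lem speh approx}, which yields $\langle\mathfrak m\rangle\times\zeta(\mathfrak m+\mathfrak n_2\setminus\mathfrak m)\hookrightarrow\zeta(\mathfrak m+\mathfrak n_2)$ directly (this is the content of \eqref{eqn embedding speh} and the embedding one line below it), and one then multiplies by $\zeta(\mathfrak n_1)$ on the left using $\zeta(\mathfrak n_1)\times\zeta(\mathfrak m+\mathfrak n_2)\cong\zeta(\mathfrak n_1+\mathfrak m+\mathfrak n_2)$. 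For case~(2), one first isolates $\mathfrak n'\subset\mathfrak n_2$ (the segments with $(\Delta')^-\cong\Delta$), commutes $\zeta(\mathfrak n')$ past $\langle\mathfrak m\rangle$ via Lemma~\ref{lem speh commute with one more}, so that $\langle\mathfrak m\rangle\times\zeta(\mathfrak n_2)\cong\zeta(\mathfrak n')\times\langle\mathfrak m\rangle\times\zeta(\mathfrak n_2\setminus\mathfrak n')$, and then embeds the right-hand side into $\zeta(\mathfrak n')\times\zeta((\mathfrak m+\mathfrak n_2)\setminus\mathfrak n')\hookrightarrow\zeta(\mathfrak m+\mathfrak n_2)$ as in case~(1). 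Re-orienting your draft so that $\langle\mathfrak m\rangle$ is the object being moved around, rather than $\zeta(\mathfrak m)$, will close the gap.
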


\begin{remark}
Case (2) covers case (1). But for the purpose of clarity of an argument used in Section \ref{ss proof thm deri}, we divide into two cases. We also recall that the case $b(\Delta) \not\leq b(\Delta')$ includes the possibility $b(\Delta) \cong b(\Delta')$.
\end{remark}

\begin{proof}
For all cases, we have that
\[   \zeta(\mathfrak n_1) \times \zeta(\mathfrak m+\mathfrak n_2) \cong \zeta(\mathfrak n_1+\mathfrak m+\mathfrak n_2) .
\]
Using (\ref{eqn embedding speh}) for (1) we obtain the lemma. For (2), let $\mathfrak n'$ be all the segments in $\mathfrak n_2$ with the property that $(\Delta')^-\cong \Delta$. Then we have that
\[ \zeta(\mathfrak n') \times \zeta(\mathfrak m)\times \zeta(\mathfrak n_2 \setminus \mathfrak n') \hookrightarrow \zeta(\mathfrak n') \times \zeta((\mathfrak m+\mathfrak n_2)\setminus \mathfrak n')\hookrightarrow  \zeta(\mathfrak m+\mathfrak n_2) .
\]
By Lemma \ref{lem speh commute with one more}, we have that 
\[ \zeta(\mathfrak n') \times \zeta(\mathfrak m)\times \zeta(\mathfrak n_2 \setminus \mathfrak n')  \cong \zeta(\mathfrak m)\times \zeta(\mathfrak n')\times \zeta(\mathfrak n_2 \setminus \mathfrak n') \cong \zeta(\mathfrak m)\times \zeta(\mathfrak n_2),\]
which proves the lemma.
\end{proof}



\section{Appendix}

 Let $G$ be a connected reductive group over a non-Archimedean local field. 

\begin{lemma} \label{lem subquot}
Let $\pi$ be a smooth representation of $G$. Let $\pi$ admits a filtration  
\[   0=\pi_0' \subset \pi_1'  \subset \ldots \subset \pi_r' =\pi 
\] Suppose $\pi$ admits an irreducible subquotient $\tau$. Let $\pi_k=\pi_k'/\pi_{k-1}'$ for $k=1,\ldots, r$. Then there exists $s$ such that $\pi_s$ contains an irreducible subquotient isomorphic to $\tau$.

\end{lemma}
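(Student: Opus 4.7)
The plan is a standard refinement argument. First, I would express $\tau$ as $\sigma/\sigma'$ for some smooth subrepresentations $\sigma' \subset \sigma \subset \pi$; this is possible by the definition of an irreducible subquotient.

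Next, I would intersect the given filtration with $\sigma$ and add $\sigma'$ in order to obtain a chain of subrepresentations of $\sigma$ connecting $\sigma'$ to $\sigma$. Explicitly, set
\[
N_k \;=\; (\sigma \cap \pi_k') + \sigma' \qquad (k=0,1,\ldots,r).
\]
Then each $N_k$ is a smooth submodule of $\sigma$ containing $\sigma'$, we have $N_0 = \sigma'$ and $N_r = \sigma$, and $N_{k-1} \subset N_k$ for every $k$. Passing to $\sigma/\sigma' = \tau$ gives a chain of submodules of $\tau$. Since $\tau$ is irreducible, there exists exactly one index $s$ with $N_{s-1} \subsetneq N_s$, and for that $s$ we have $N_s/N_{s-1} \cong \tau$.

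Finally, I would identify this quotient as a subquotient of $\pi_s = \pi_s'/\pi_{s-1}'$. By standard isomorphism theorems, $N_s/N_{s-1}$ is a quotient of $(\sigma \cap \pi_s')/(\sigma \cap \pi_{s-1}')$, and the latter embeds into $\pi_s'/\pi_{s-1}'$ via the inclusion $\sigma \cap \pi_s' \hookrightarrow \pi_s'$, whose kernel modulo $\pi_{s-1}'$ is exactly $\sigma \cap \pi_{s-1}'$. Thus $\tau \cong N_s/N_{s-1}$ is a subquotient of $\pi_s$, as desired.

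There is no real obstacle here; the only thing to be slightly careful about is checking that the relevant subquotient of $\pi_s$ is obtained by the correct combination of kernels and cokernels, which is handled by two applications of the third isomorphism theorem in the smooth category (where these operations are exact).
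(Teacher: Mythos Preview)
Your proof is correct and follows essentially the same refinement argument as the paper: with your $\sigma,\sigma'$ playing the role of the paper's $\lambda_2,\lambda_1$, your quotients $N_k/\sigma'$ are exactly the paper's $\gamma_k=(\pi_k'\cap\lambda_2)/(\pi_k'\cap\lambda_1)$ via the second isomorphism theorem. If anything, your final identification of $N_s/N_{s-1}$ as a subquotient of $\pi_s'/\pi_{s-1}'$ is stated more carefully than in the paper's own write-up.
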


\begin{proof}
Let $\lambda_1, \lambda_2$ be submodules of $\pi$ such that $\lambda_2/\lambda_1 =\tau$. Now we have a filtration on $\lambda_p$ ($p=1,2$):
\[  0 \subset (\pi_1' \cap \lambda_p) \subset (\pi_2'\cap \lambda_p) \subset \ldots \subset (\pi_r'\cap \lambda_p) =\lambda_p  .
\]
Now $\lambda_2/\lambda_1$ admits a filtration 
\[  0 \subset \gamma_1 \subset \gamma_2 \subset \ldots \subset \gamma_r ,
\]
where $\gamma_k = (\pi_{k}'\cap \lambda_2) /(\pi_k'\cap \lambda_1)$ and so
\[ \gamma_{k+1}/\gamma_k \cong \frac{(\pi_{k+1}' \cap \lambda_2)/(\pi_{k+1}'\cap \lambda_1)}{(\pi_{k}'\cap \lambda_2) /(\pi_k'\cap \lambda_1)} .\]
This implies $\pi_{k+1}' \cap \lambda_2$ and so $\pi_{k+1}'$ has irreducible subquotient isomorphic to $\tau$. 

\end{proof}

\begin{lemma} \label{lem reduction}
Let $\pi$ be a non-zero smooth representation of $G$. Let $\tau$ be a non-zero $G$-submodule of $\pi$. Suppose $\pi$ admits a filtration on 
\[  0=\pi_0 \subset \pi_1 \subset \ldots \subset \pi_r=\pi .
\] 
Then there exists a non-zero $G$-submodule $\tau'$ of $\tau$ such that
\[  \tau' \hookrightarrow \pi_{k+1}/\pi_k. 
\]
for some $k$.
\end{lemma}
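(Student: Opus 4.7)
The plan is to use the filtration on $\pi$ to induce a filtration on $\tau$ by taking intersections, and then locate the first layer in which $\tau$ first becomes nontrivial. Specifically, set $\tau_j = \tau \cap \pi_j$, so that
\[ 0 = \tau_0 \subset \tau_1 \subset \ldots \subset \tau_r = \tau. \]
Since $\tau \neq 0$ and $\tau_0 = 0$, there exists a smallest index $k+1 \in \{1, \ldots, r\}$ such that $\tau_{k+1} \neq 0$. By minimality, $\tau_k = \tau \cap \pi_k = 0$.

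Now define $\tau' = \tau_{k+1} = \tau \cap \pi_{k+1}$, which is a nonzero $G$-submodule of $\tau$. Consider the composition of the inclusion $\tau' \hookrightarrow \pi_{k+1}$ with the quotient map $\pi_{k+1} \twoheadrightarrow \pi_{k+1}/\pi_k$. The kernel of this composition is $\tau' \cap \pi_k = \tau \cap \pi_{k+1} \cap \pi_k = \tau \cap \pi_k = 0$, so the composition is injective, giving the required embedding $\tau' \hookrightarrow \pi_{k+1}/\pi_k$.

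There is no real obstacle here; the argument is entirely formal and relies only on the exactness of intersection with a fixed submodule in the category of smooth $G$-representations, together with the basic observation that a submodule meeting $\pi_{k+1}$ but not $\pi_k$ injects into the quotient $\pi_{k+1}/\pi_k$. The only thing to be careful about is the indexing convention: one must choose the \emph{smallest} $k+1$ with $\tau \cap \pi_{k+1}\neq 0$ so that $\tau \cap \pi_k$ vanishes, rather than the largest.
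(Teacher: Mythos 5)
Your proof is correct and follows exactly the same approach as the paper: choose the smallest index at which $\tau$ meets the filtration nontrivially and observe that the intersection then embeds into the corresponding graded piece. Your write-up simply spells out the details (the induced filtration, the kernel computation) that the paper leaves implicit.
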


\begin{proof}
Let $k$ be the smallest positive integer such that
\[  \tau \cap \pi_k \neq 0 .
\]
Define $\tau'=\tau \cap \pi_k$. The non-zero $G$-submodule $\tau' $ embeds to $\pi_k/\pi_{k-1}$ as desired.
\end{proof}

\begin{lemma} \label{lem two filtration intersect}
Let $0 \neq \pi \in \mathrm{Alg}(G)$. Suppose $\pi$ admits two $G_n$-filtrations:
\[  0=\pi_0 \subsetneq \pi_1\subset \pi_2\subset \ldots \subset \pi_r =\pi
\]
and
\[  0= \pi_0' \subsetneq \pi_1' \subset \pi_2' \subset \ldots \subset \pi_s'=\pi .
\]
Then there exists a non-zero $\tau \in \mathrm{Alg}(G)$ such that for some $i,j$,
\[  \tau \hookrightarrow \pi_{i+1}/\pi_i, \quad \tau \hookrightarrow \pi_{j+1}'/\pi_j'  .
\]

\end{lemma}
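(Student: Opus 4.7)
The plan is to reduce the statement directly to Lemma~\ref{lem reduction} by exploiting the asymmetry in how the two filtrations enter the conclusion. The key observation is that since $\pi_0 = 0 \subsetneq \pi_1$, the bottom layer $\pi_1/\pi_0$ of the first filtration is simply the non-zero submodule $\pi_1$ of $\pi$. So if we can find any non-zero submodule of $\pi_1$ that also embeds into some successive quotient of the second filtration, we are done (taking $i=0$).

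Concretely, I would proceed as follows. Set $\tau_0 := \pi_1$, which is a non-zero $G$-submodule of $\pi$ by hypothesis. Apply Lemma~\ref{lem reduction} to $\tau_0 \subset \pi$ together with the second filtration $0 = \pi_0' \subset \pi_1' \subset \ldots \subset \pi_s' = \pi$. This produces a non-zero submodule $\tau \subset \tau_0 = \pi_1$ together with an index $j$ such that
\[
\tau \hookrightarrow \pi_{j+1}'/\pi_j'.
\]
On the other hand, since $\tau \subset \pi_1 = \pi_1/\pi_0$, we also have $\tau \hookrightarrow \pi_{i+1}/\pi_i$ for $i=0$. This $\tau$ satisfies both required embeddings simultaneously.

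There is essentially no obstacle: the lemma is a direct application of Lemma~\ref{lem reduction}, using the fact that $\pi_1$ itself plays the role of the bottom non-zero layer of the first filtration, which turns the symmetric-looking statement into a one-sided reduction. The only thing to be careful about is ensuring that the candidate $\tau$ remains non-zero throughout; this is guaranteed because Lemma~\ref{lem reduction} produces a \emph{non-zero} submodule of any non-zero input, and we started with $\pi_1 \neq 0$.
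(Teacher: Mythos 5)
Your proof is correct and is essentially the same argument as the paper's: the paper's one-line proof ("consider the smallest $k$ with $\pi_1 \cap \pi'_k \neq 0$") is precisely Lemma~\ref{lem reduction} applied to $\pi_1$ and the second filtration, which you invoke explicitly. Using $\pi_1 = \pi_1/\pi_0$ as the bottom layer to make the reduction one-sided is exactly the intended mechanism.
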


\begin{proof}
This follows by considering the smallest integer $k$ such that $\pi_1 \cap \pi'_k \neq 0$. 
\end{proof}

\end{document}